\documentclass[a4paper,12pt]{article}

\usepackage{amsmath}
\usepackage{amsthm}
\usepackage{amssymb}
\usepackage{latexsym}
\usepackage{graphicx}
\usepackage{stmaryrd}
\usepackage{mathrsfs}
\usepackage{enumerate}
\usepackage{cases}
\usepackage{color}

\newcommand{\BLACK}{\color{black}}

\definecolor{dGREEN}{rgb}{0.0,0.5,0.5}

\newcommand{\glalign}[2]{\lower.6ex\vbox{
\baselineskip\lineskip\ialign{$#1\hfil##\hfil$\crcr#2\crcr=\crcr}}}

\newcommand{\del}{\partial}

\newcommand{\Om}{\Omega}

\newcommand{\dlambda}{\,{\rm d}\lambda}

\newcommand{\dsigma}{\,{\rm d}\sigma}

\newcommand{\dtau}{\,{\rm d}\tau}

\newcommand{\ds}{\,{\rm d}s}

\newcommand{\dx}{\,{\rm d}x}
\newcommand{\dy}{\,{\rm d}y}

\renewcommand{\div}{\mbox{\rm div}\,}

\newcommand{\supp}{\mbox{\rm supp}\,}

\newcommand{\IR}{\mathbb{R}}
\newcommand{\IC}{\mathbb{C}}
\newcommand{\IN}{\mathbb{N}}

\newcommand{\R}{\mathcal{R}}
\newcommand{\D}{\mathcal{D}}
\newcommand{\HH}{\mathcal{H}}

\renewcommand{\Re}{\operatorname{Re}}

\def\eqn#1$$#2$${\begin{equation}\label#1#2\end{equation}}

\numberwithin{equation}{section}

\newtheorem{defi}{Definition}[section]
\newtheorem{thm}[defi]{Theorem}
\newtheorem{cor}[defi]{Corollary}
\newtheorem{prop}[defi]{Proposition}
\newtheorem{lem}[defi]{Lemma}
\newtheorem{rem}[defi]{Remark}
\newtheorem{assumption}[defi]{Assumption}

\def\eqn#1$$#2$${\begin{equation}\label#1#2\end{equation}}

\numberwithin{equation}{section}
\numberwithin{equation}{section}
\allowdisplaybreaks[4]

\begin{document}

\title{
\bf \large
The Stokes Operator on Exterior Domains\\ in Homogeneous Weighted Function Spaces:\\ From Weak Theory to $\mathscr H^\infty$-calculus to Fractional Domains}\BLACK
%$\mathscr H^\infty$-calculus of the Stokes Operator \\
%on Exterior Domains in Weighted Spaces}
% 
%\\  of non-localized impact\BLACK}
%\\ with non-local interaction\BLACK}
%the Muckenhoupt class} 
\author{{\normalsize
Reinhard Farwig\footnote{
Fachbereich Mathematik, Technische Universit\"at Darmstadt, Schlossgartenstr. 7, 64289 Darmstadt, \quad Germany, \texttt{farwig@mathematik.tu-darmstadt.de}} \;
and \;
Kazuyuki Tsuda\footnote{
Kyushu Sangyo University, 3-1 Matsukadai 2-chome,
Higashi-ku, Fukuoka,
813-8503 Japan, \texttt{k-tsuda@ip.}\texttt{kyusan-u.ac.jp}}   }\\[2ex]
%{\normalsize\it }
}
\date{{\normalsize Dedicated to our colleague Yoshikazu Giga}}
\maketitle
\vspace*{-4mm}
\begin{abstract}
\noindent
We consider the Stokes operator $A$ on smooth exterior domains $\Omega$ of $\IR^n$ in homogeneous Sobolev spaces $\widehat H^{\kappa,q}_w(\Omega)$ with radially symmetric Muckenhoupt weights $w\in \mathscr A_q$. A fundamental property is the existence of a bounded $\mathscr H^\infty$-calculus of the Stokes operator on weighted nonhomogeneous and homogeneous $L^q$ Sobolev spaces. This property implies the existence of uniformly bounded purely imaginary powers $A^{it}$, $t\in\IR$, and the characterization of domains of fractional powers $A^\theta$ equipped with nonhomogeneous  ($\| u\|_{L^q_w} + \|A^\theta u\|_{L^q_w}$) as well as homogeneous norm ($\|A^\theta u\|_{L^q_w}$) as complex interpolation spaces. 
%i.e., $\mathcal D(A^\theta) = [L^q_{\sigma,w},\mathcal D(A_{q,w}]_\theta$, $0<\theta<1$, 
The final aim is the identification with homogeneous spaces $\widehat{\mathcal D}((-\Delta_{q,w})^\theta) = [L^q_{w},\widehat{\mathcal D}((-\Delta_{q,w}]_\theta$ intersected by a space of solenoidal vector fields. Moreover, we obtain  weighted variational inequalities for weak solutions of the Stokes equations, \BLACK weighted $L^q$-$L^r$ decay estimates of the Stokes semigroup and $L^p$-maximal regularity on $L^q_{\sigma,w}(\Omega)$.    
\end{abstract}

\noindent {\bf Key Words and Phrases.} 
Navier-Stokes equations; Muckenhoupt weights; exterior domains; $\mathscr H^\infty$-calculus; bounded imaginary powers; fractional operators; homogeneous function spaces;  weighted variational inequalities \BLACK
\\

\noindent {\bf 2010 Mathematics Subject Classification Numbers.} 35Q30; 46B70; 76D05

\section{Introduction}\label{S1}

In the analysis of the Navier-Stokes system 
$$ u_t -\Delta u + u\cdot\nabla u + \nabla p = f,\;\; \div u =0\;\textrm{ in }\Omega,\;\; u=0\;\textrm{ on  }\;\partial \Omega,\;\; u(0)=u_0$$
the Stokes operator $A = -\mathbb P\Delta$ plays a leading role. Here $\mathbb P$ is the Helmholtz projection which is known to be a bounded linear operator on various types of function spaces and domains $\Omega\subset \IR^n$. Assuming that the Stokes operator is sectorial and generates a bounded analytic semigroup $e^{-tA}$, $t\geq 0$, a mild solution of the Navier-Stokes system is given by Duhamel's formula 
$$ u(t) = e^{-tA}u_0 + \int_0^t e^{-(t-\tau)A} \mathbb P(-u\cdot\nabla u +f)(\tau)\dtau. $$
Important tools to solve this nonlinear integral equation are decay properties of the Stokes semigroup $e^{-tA}$ and in particular the theory of fractional powers $A^\theta$, $-1\leq \theta\leq 1$. Therefore, the domains $\D(A^\theta)$ must be compared to each other and to more classical functions spaces of Sobolev type, {\em e.g.} $H^{2\theta,q}(\Omega)$. 

The background for these tools is the existence of bounded imaginary powers $A^{is}$, $s\in\IR$, (property $BIP$) so that complex interpolation can be applied to the family of spaces $\D(A^\theta)$. Instead of defining $A^{is}$ as a limit of fractional powers $A^z$, $z\to is$, a powerful and much more flexible method is the bounded $\mathscr H^\infty$-calculus which allows to define $f(A)$ for a large class of bounded analytic functions $f$ as in Dunford's calculus, but on infinite contours with possible singularites at $\lambda=0$ and as $|\lambda|\to \infty$.
Given the property $BIP$ the domain $\D(A^\theta)$ is shown to equal the intersection $\D((-\Delta)^{2\theta}) \cap L^q_\sigma(\Omega)$, {\em i.e.} the intersection of the fractional Sobolev space $H^{2\theta,q}_0(\Omega)$ with the $L^q$ space of solenoidal vector fields.

In this approach the property $\lambda=0\in\rho(A)$, {\em i.e.,} $A$ has a bounded inverse, is very helpful and satisfied for bounded domains. However, for exterior domains, $0\in\sigma(A)$  and homogeneous Sobolev spaces such as $\widehat H^{1,q}(\Omega) = \{u\in H^{1,q}_{\rm loc}(\Omega): \nabla u\in L^q\}$ equipped with the norm $\|\nabla u\|_{L^q}$ naturally occur, but are more difficult to handle in estimates of both perturbation and nonlinear terms. A typical tool is the embedding 
$\widehat H^{1,q}(\Omega) \hookrightarrow L^{q^*}(\Omega)$ which holds with $q^*=\frac{nq}{n-q}$ when $q<n$. By analogy, $\widehat H^{2,q}(\Omega) \hookrightarrow L^{q^{**}}$ with $q^{**}=\frac{nq}{n-2q}$ which requires even that $q<\frac{n}{2}$. These conditions are very restrictive in estimates of the nonlinear term $u\cdot\nabla u$.

A possibility to gain more flexibility is the use of weighted function spaces. 
For an exterior domain a natural choice are radially symmetric power weights $\langle x\rangle^\ell = (1+|x|^2)^{\ell/2}$ and the weighted Lebesgue space $L^q_\ell(\IR^n)$ with norm $\|u\|_{L^q_\ell} = \big(\int |u|^q \langle x\rangle^{\ell q} \dx\big)^{1/q} < \infty$. 
By the $\mathscr A_q$ Muckenhoupt condition there must hold $-\frac{n}{q} < \ell <\frac{n}{q'}$. For the embeddings $\widehat H^{1,q}_\ell(\IR^n) \hookrightarrow L^{q^*}_\ell$ and $\widehat H^{2,q}_\ell(\IR^n) \hookrightarrow L^{q^{**}}_\ell$
the new restrictions are $1-\frac{n}{q} < \ell <\frac{n}{q'}$ and $2-\frac{n}{q} < \ell <\frac{n}{q'}$, respectively. 
Obviously, depending on $\ell$, the power $q$ is allowed to be larger than $n$ and even to be arbitrarily large whereas for $\ell=0$, {\em i.e.} no weight, $q<n$ and $q<\frac{n}{2}$ recur. 
We refer to \cite{Farwig-Tsuda-stab} for a recent application in the analysis of the Navier-Stokes equations about stability of stationary solutions in unbounded domains. Moreover, 
we can solve the periodically in time moving boundary  problem of viscous fluid flow in exterior domains, see \cite{Farwig-Tsuda-periodic-exterior-moving}.

\BLACK 

In view of these advantages it is our aim to analyze the Stokes operator $A$ on weighted homogeneous Sobolev spaces $\widehat H^{k,q}_\ell(\Omega)$ for an exterior domain. Moreover, the domain $\D(A)$ which is equipped with the graph norm $\|u\|_{L^q_\ell} + \|Au\|_{L^q_\ell}$ is to be changed to a homogeneous domain $\widehat \D(A)$, the closure of $\D(A)$ with respect to the norm $\|Au\|_{L^q_\ell}$.
For this setting we prove the existence of a bounded $\mathscr H^\infty$-calculus and the property $BIP$ to characterize homogeneous domains $\widehat\D(A^\kappa)$ by complex interpolation and embeddings into weighted Sobolev spaces. 
Finally we apply these results to the Stokes semigroup to derive spatially weighted $L^q$-$L^r$ decay estimates and $L^p$-maximal regularity. 

Let us recall several important results and ideas of proofs which are important for our aims. 
A relatively easy approach to the Helmholtz projection $ \mathbb P$ due to Simader and Sohr \cite{Sim_Sohr92} is based on a variational inequality for gradients to get the existence of the pressure term $\nabla \pi=(I-\mathbb P)u$ defined by a weak Neumann problem. 
That approach was generalized by Sohr and the first author of this article \cite{FS} to get the Helmholtz projection also in weighted $L^q$ spaces. 
The variational approach will be important also for a weak theory of the Stokes system in exterior domains with Dirichlet data, see Borchers and Miyakawa \cite [Sect. 3]{BoMi90},  Kozono and Sohr \cite{KoSo92}, Simader, Sohr and the first author of this article\cite{FaSiSo93} in $L^q$ \BLACK and Proposition \ref{lem-var-ineq-nabla_Omega0} below in weighted $L^q$ spaces.

The next important tool are resolvent estimates of the Stokes operator. 
Giga \cite{Giga81} proved the sectoriality and resolvent estimates of the Stokes operator in  bounded domains via the theory of pseudo-differential operators. 
The case of exterior domains in $\IR^n$ was solved by Solonnikov \cite{Sol77} when $|\lambda|\geq \epsilon>0$, see also \cite{Giga81}.
The limit case $\lambda\to 0$ for $n=3$ was included by Borchers and Sohr \cite{BoSo87}, and the case $n=2$ was solved by Borchers and Varnhorn \cite{BoVa93}. 
Another approach via multiplier theory and cut-off techniques was used by Sohr and the first author of this article in \cite{FS94}; the result covers bounded, perturbed half spaces and exterior domains. 
In particular for exterior domains the resolvent estimate was extended in \cite{FS} to $L^q$ spaces with radially symmetric weights. 

For sharp results the Stokes operator should possess the property $BIP$ of bounded imaginary powers $A^{is}\in \mathcal L(L^q_\sigma(\Omega))$, $s\in\IR$; 
here $L^q_\sigma(\Omega)$ denotes the $L^q$ space of solenoidal vector fields with vanishing normal component on the boundary $\partial\Omega$, and $\mathcal L$ denotes the set of linear bounded operators. 
By complex interpolation $BIP$ implies that 
\begin{equation}\label{DAtheta-interp}
 [L^q_\sigma(\Omega),\D(A)]_\theta = \D(A^\theta).
\end{equation}
Then, by a retraction-coretraction argument, the domain $\D(A^\theta)$ can be identified with a solenoidal subset of the fractional Sobolev space $H^{2\theta,q} \sim \D((-\Delta)^\theta)$ for $0<\theta<1$. 
The first results on $BIP$ for the Stokes operator on bounded domains are due to Giga \cite{Giga85} using the theory of pseudo-differential operators. 
The exterior domain case was considered by Giga-Sohr \cite{Giga-Sohr89}. 
A crucial tool is the comparison of the Stokes resolvent $(\lambda-A)^{-1}$ with the whole space resolvent $(\lambda-A_{\IR^n})^{-1}$ for $|\lambda|\to\infty$ and $\lambda\to 0$. 
Another tool is the representation of complex powers $A^z$ by contour integrals which do not converge in the operator norm, but hold pointwise on a dense subset of $L^q_\sigma(\Omega)$. 
Similar methods for exterior domains were used by Giga and Sohr in \cite{Giga-Sohr91} with their focus put on maximal regularity estimates proved via the theory of Dore and Venni. 
A further aim is to get estimates not only in $\D(A^\theta)$, $0<\theta<1$, but also in homogeneous spaces $\widehat\D(A^\theta)$.
%defined as the closure of $\D(A)$ with respect to the norm in $\D(A^\theta)$.
Although \eqref{DAtheta-interp} holds for homogeneous spaces as well, the general analysis for $\IR^n$ or an exterior domain is much more involved since $\widehat\D(A^\theta)$ is not contained in $L^q_\sigma(\Omega)$.

As an even stronger property the Stokes operator possesses a bounded $\mathscr H^\infty$-calculus. This property can be proved easily by multiplier theory for the whole space $\IR^n$; for the half space we refer to Desch, Hieber and Pr\"u{\ss} \cite{DHP01}. 
This property for bounded, bent half spaces and exterior domains was proved by Noll and Saal \cite{NollSaal}; their proof is based on ideas of \cite{Giga85} and \cite{DHP01} exploiting special techniques in half spaces during a cut off process. 
The advantage of the $\mathscr H^\infty$-calculus is to allow perturbation arguments with do not work for the property $BIP$. 
Moreover, under further assumptions (property $\alpha$) which are fulfilled for $L^q_\sigma$ spaces, the bounded $\mathscr H^\infty$-calculus implies the so-called $\mathcal R$-bounded $\mathscr H^\infty$-calculus and hence maximal regularity, see \cite{KuWe04}, \cite[p. 652]{NollSaal}.

The difficulties of homogeneous spaces $\widehat \D(A^\theta)$ or $\widehat H^{\kappa,q}(\Omega)$ occur when multiplication with further functions, {\em e.g.} via a partition of unity, or perturbation arguments are exploited. 
By definition, elements in homogeneous spaces are defined as equivalence classes and - on the whole space - unique up to polynomials of any degree, {\em cf.} the monograph by Triebel \cite[Sect. 5]{Triebel2010} and Bergh and L\"ofstr\"om \cite[Sect. 6.3, 6.4, 6.5]{BerghL}. 
To control the degree of those polynomials restrictions depending on the dimension $n$ must be put on exponents $\kappa,q$ and, if available, on weight functions. 
Another idea is to exclude polynomials completely, but to require that the spaces are Banach spaces; this restricts the class of admissible spaces a lot, see {\em e.g.} the monograph by Bahouri, Chemin and Danchin \cite{BCD} in which the set of tempered distributions is strongly controlled for frequencies $\xi$ as $|\xi|\to 0$, see also \cite[Chapter 3]{DHMT}. 
Similar problems and strategies are used for homogeneous Besov spaces $\widehat B^{\kappa}_{q,r}$. For more concrete results we refer to Gaudin
\cite{Gau-Tunis}, \cite{Gau-lip} and Danchin, Hieber, Mucha and Tolksdorf \cite{DHMT} who discuss for $\IR^n$ and $\IR_+^n$ homogeneous Sobolev and Besov spaces, subspaces of solenoidal vector fields, and interrelations by real and complex interpolation. 
In \cite{Gau-Tunis} Gaudin examines real and complex interpolation, duality, density  and trace results for homogeneous Sobolev spaces. %, whereas in  \cite{Gau-JEE} the focus is put on maximal regularity results including weights in time.
In \cite{Gau-lip} the author considers these questions on Lipschitz domains of half space type. 
The aim of \cite{DHMT} is the analysis of the Stokes operator with Neumann boundary condition, {\em i.e.}, the normal component of the stress tensor, $T(u,\pi)\cdot \textsl{n}$, vanishes on $\partial\IR_+^n$, and its maximal regularity. 
Special focus is put on $L^1$-maximal regularity which does not hold in reflexive spaces so that {\em e.g.} endpoint homogeneous Besov spaces with third parameter $1$ are to be considered.  
In particular, in the theory of inhomogeneous
Navier-Stokes equations and of free surface flow, the problem of $L^1$-maximal regularity occurs; for an analysis in homogeneous Besov spaces with third endpoint parameter we also refer to \cite{DanMu09},  \cite{Shimizu-O}. \BLACK

The first main goal of this article is to prove that the Stokes operator on smooth exterior domains possesses a bounded $\mathscr H^\infty$-calculus and hence the property $BIP$ in weighted spaces. The weighted Lebesgue spaces $L^q_\ell(\Omega)$ are equipped 
 with the norm 
 $$ \|u\|_{L^q_\ell} = \Big(\int_\Omega |u(x)|^q \,\langle x\rangle ^{q\ell} \dx \Big)^{1/q} <\infty $$ 
where $1<q< \infty$, and the weight satisfies the Muckenhoupt $\mathscr A_q$ condition 
$$ -\frac{n}{q} < \ell < \frac{n}{q'}\,.$$
The set $\HH_0(\Sigma_\phi)$ below is a set of bounded holomorphic functions on a sector $\Sigma_\phi\subset\IC$ with $|\arg\lambda|<\phi$ for $\lambda\in \Sigma_\phi$; for details see Subsect. \ref{S2.4} below.

\begin{thm}\label{theorem-H-infty-all}
Let $\Omega\subset\IR^n$, $n\geq 3$, be an exterior domain with boundary of class $C^3$. 
Further let $1<q< \infty$, and $A=A_{q,\ell} = -\mathbb P \Delta$ be the Stokes operator on $L^q_{\sigma, \ell }(\Omega)$ for $-\frac{n}{q} < \ell < \frac{n}{q'}$.

(i) There exists a constant $C=C(q,\ell,\phi)>0$ such that for any $\phi\in (0,\frac{\pi}{2})$, $f \in L^q_{\sigma, \ell }(\Omega)$ and all $h\in \HH_0(\Sigma_\phi)$
 \begin{align}\label{H-infty-all}
\|h(A)\|_{\mathcal{L}(L^q_{\sigma,\ell}(\Omega))} \leq C |h|_{\infty,\phi\BLACK}.    
\end{align}
In particular, $A$ possesses a bounded $\mathscr H^\infty$-calculus on $L^q_{\sigma.\ell}(\Omega)$ with $\mathscr H^\infty$-angle $\Phi_ {A}^\infty=0$. 

(ii) The Stokes operator has the property $BIP$ with power angle $\Theta_A=0$. Thus  for any $\theta>0$ there exist a constant $C(\theta)$ such that 
$$ \|A^{it}\|_{\mathcal L(L^q_{\sigma,\ell}(\Omega))} \leq C e^{\theta|t|},\quad t\in\IR. $$
\end{thm}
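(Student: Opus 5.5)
The plan is to prove (i) by a cut-off/perturbation argument. The Stokes resolvent on $\Omega$ is compared with the whole space resolvent near infinity and with a bounded domain resolvent near the boundary. Part (ii) then follows from (i), since $h_t(\lambda)=\lambda^{it}$ satisfies $|h_t|_{\infty,\phi}\le e^{\phi|t|}$ for every $\phi>0$.

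\textbf{Model problems.} In $\IR^n$ the weighted Stokes operator coincides with $-\Delta$ on solenoidal fields. The weight $\langle x\rangle^\ell$ lies in $\mathscr A_q$, so weighted Mikhlin multiplier theory gives $\|h(-\Delta)\|_{\mathcal L(L^q_\ell)}\le C|h|_{\infty,\phi}$ for every $\phi>0$. The symbol $h(|\xi|^2)$ is a Mikhlin multiplier with constant $\lesssim |h|_{\infty,\phi}$ by Cauchy estimates on the sector, and $\mathbb P$ commutes with it. On a bounded domain $\Omega_R=\Omega\cap B_{R}$ the weight is equivalent to $1$. There the unweighted bounded $\mathscr H^\infty$-calculus (Noll--Saal) is available with angle $0$, and $0\in\rho(A_{\Omega_R})$.

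\textbf{Cut-off representation.} Fix $\varphi\in C_0^\infty$ with $\varphi=1$ near $\partial\Omega$. For $\lambda\in\Sigma_{\pi-\phi'}$ I write
\[
(\lambda+A)^{-1}f=(1-\varphi)(\lambda-\Delta)^{-1}\tilde f+\varphi(\lambda+A_{\Omega_R})^{-1}\tilde f_R+\text{(Bogovski\u{\i} corrections)}+ T(\lambda)f,
\]
where $T(\lambda)$ collects commutator terms involving $\nabla\varphi$, pressures, and Bogovski\u{\i} operators. All of these are supported in a fixed compact annulus, so weights play no role there. Inserting this into the Dunford integral $h(A)=\frac1{2\pi i}\int_\Gamma h(\lambda)(\lambda+A)^{-1}\,d\lambda$, the first two pieces produce $(1-\varphi)h(-\Delta)$ and $\varphi h(A_{\Omega_R})$, which are bounded by the model results. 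It remains to show
\[
\int_\Gamma\|T(\lambda)\|_{\mathcal L(L^q_{\sigma,\ell})}\,|d\lambda|<\infty .
\]

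\textbf{Main obstacle.} The hard step is the integrability of $T(\lambda)$ at both ends of $\Gamma$. For large $|\lambda|$, the compact support together with the resolvent estimates of \cite{FS} gives $\|T(\lambda)\|\lesssim|\lambda|^{-1-\delta}$, because the commutators cost only one derivative, giving $|\lambda|^{-1/2}$ times $|\lambda|^{-1}$ from the resolvent, up to Bogovski\u{\i} negative norms. For $\lambda\to0$ I need $\|T(\lambda)\|\lesssim|\lambda|^{-1+\delta}$. I would first try local (compact-support) low-frequency expansions of the whole space resolvent for $n\ge3$: $(\lambda-\Delta)^{-1}$ restricted to compact sets is bounded uniformly as $\lambda\to0$, with H\"older continuity in $\lambda$. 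I would combine these with the uniform resolvent bounds of \cite{FS} for the exterior problem. If the direct cut-off fails near $0$, the fallback is the following. Prove the bound $\|h(A)f\|\le C|h|_\infty\|f\|$ first for $h$ with extra decay $h(\lambda)=O(|\lambda|^{\pm\epsilon})$. Then use the McIntosh square-function characterization: the weighted estimates $\int_0^\infty\|(tA)^{1/2}e^{-tA}f\|^2\,dt/t$ for $A$ and for $A'$ on the dual space $L^{q'}_{\sigma,-\ell}$ reduce to the same cut-off argument applied to $\psi(\lambda)=\lambda^{1/2}e^{-\lambda}$, which decays at both ends.

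**Angle.** Since $\phi\in(0,\pi/2)$ is arbitrary and all constants depend only on $(q,\ell,\phi)$, the $\mathscr H^\infty$-angle is $0$.
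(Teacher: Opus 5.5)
Your architecture (cut-off near $\partial\Omega$, weighted Mikhlin calculus on $\IR^n$, the Noll--Saal calculus on the bounded piece, integrability of the remainders along $\Gamma$) matches the first step of the paper's proof of Theorem \ref{theorem-H-infty}. However, the step you call the main obstacle is not carried out, and what you assert about it is not correct.

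At large $|\lambda|$, not every remainder is $O(|\lambda|^{-1-\delta})$. The pressure commutator $p\nabla\varphi$ gains nothing from derivative counting, since the resolvent estimates only bound $\nabla p$ uniformly. The paper needs a separate local decay estimate, $\|p\|_{L^q(D)}\le C|\lambda|^{-\alpha}\|f\|$ with $\alpha<\frac1{2q}$ (Lemma \ref{p-local-decay}). More seriously, the divergence correction is only $O(|\lambda|^{-1})$. If you apply $\mathbb B$ to the model solutions, the parametrix error contains $\lambda\,\mathbb B(\nabla\varphi\cdot(\cdots))$, which is only $O(1)$, so $(\lambda+A)^{-1}$ of it is not integrable on $\Gamma$. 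The paper isolates this piece as $Q_{j,4}=(I-\mathbb P_{\Omega_j})w_j$, where $w_j=L_j(u\cdot\nabla\eta_j)$ and $L_j$ is a $\lambda$-independent solution operator (the generalized Stokes problem at $\lambda=1$). It does not estimate this piece pointwise in $\lambda$. Instead it pulls the contour integral inside, obtaining $L_j(\nabla\eta_j\cdot h(A)f)$. It then bounds $\|h(A)f\|_{L^q(\Omega)}$ by the \emph{unweighted} $\mathscr H^\infty$-calculus of the exterior Stokes operator together with $\|f\|_{L^q}\le\|f\|_{L^q_\ell}$, which requires $\ell\ge0$.

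Your remark that the remainders sit in a compact annulus, so that ``weights play no role'', is also wrong. The remainders are resolvents applied to compactly supported data that are themselves built from the global solution, and both steps depend essentially on $\ell$ as $\lambda\to0$. Already in $\IR^n$, the far field $|x|^{2-n}$ lies in $L^q_\ell$ only if $\ell<\frac n{q'}-2$. Likewise, $\sup_\lambda\|(\lambda-\Delta)^{-1}f\|_{L^q(B_1)}\le C\|f\|_{L^q_\ell}$ holds only if $\ell>2-\frac nq$.

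For small $|\lambda|$ you give no mechanism, and this is exactly where the range of $\ell$ enters. The paper's direct argument works only for $2-\frac nq<\ell<\frac n{q'}$ with $\ell\ge0$. There, the uniform bounds $\|\nabla^2u\|_{L^q_\ell}+\|\nabla p\|_{L^q_\ell}\le C\|f\|_{L^q_\ell}$ (Theorem \ref{res-weighted}) and the weighted Hardy and Rellich inequalities control the cut-off terms in $L^{\tilde q}_\ell$ with $\tilde q<q$. The weighted Sobolev embedding and the moment inequality then give $\|(\lambda+A_{\IR^n})^{-1}\|_{\mathcal L(L^{\tilde q}_\ell,L^q_\ell)}\le C|\lambda|^{\alpha-1}$, which is integrable at $0$.

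The full range is reached by an ingredient absent from your plan (Corollary \ref{theorem-H-infty-2}). Duality, $A_{q,\ell}'=A_{q',-\ell}$, gives the result for $-\frac nq<\ell<\frac n{q'}-2$ with $\ell\le0$. Complex interpolation in the weight exponent at fixed $q$ then fills the remaining gap.

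Your fallback does not repair this. The equivalence $\int_0^\infty\|\psi(tA)f\|^2\,dt/t\simeq\|f\|^2$ is a Hilbert-space characterization. In $L^q_\ell$ with $q\ne2$ this quantity is a real-interpolation norm of Besov $\dot B^0_{q,2}$ type, not equivalent to $\|f\|_{L^q_\ell}$. The correct Cowling--Doust--McIntosh--Yagi square functions $\|(\int_0^\infty|\psi(tA)f|^2\,dt/t)^{1/2}\|_{L^q_\ell}$ require the same uniform low-frequency control you are missing. Proving the bound ``first for $h$ with extra decay'' is also empty: every $h\in\HH_0(\Sigma_\phi)$ already decays at $0$ and $\infty$, and the whole point of \eqref{H-infty-all} is that $C$ does not depend on the decay rates.

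Deducing (ii) from (i) via $h(\lambda)=\lambda^{it}$ is fine, once the calculus is extended from $\HH_0$ to $\HH^\infty$.
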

\BLACK

For details we refer to Theorem \ref{theorem-H-infty} and Corollary \ref{theorem-H-infty-2}. The problem is split into one for the whole space and one on bounded domains for which there exists a bounded $\mathscr H^\infty$-calculus due to Noll and Sall \cite{NollSaal}. 
For the whole space we use multiplier theory on $L^q_\ell(\IR^n)$ and the residue theorem. Perturbation terms are treated as in \cite{NollSaal} involving the Bogovski\u{\i} operator and special estimates of the pressure. 

The second goal concerns the characterization of homogeneous domains $\widehat \D(A^\theta)$ by the fractional Sobolev space $\widehat H^{2\theta,q}_0(\Omega)$ "intersected" by the condition $\div u=0$. 
Note that the seminal result 
$$ \D(A^\theta) = [L^q_\sigma(\Omega), \D(A)]_\theta = [L^q(\Omega), \D(-\Delta)]_\theta \cap L^q_\sigma(\Omega), $$ 
see \cite[Theorem 3]{Giga85} is not meaningful in the context of homogeneous spaces, since $\widehat\D(A^\theta) \not\subset L^q_\sigma(\Omega)$.
Moreover, the retraction-coretraction method based on the retraction $\mathcal R = A^{-1} \mathbb P(-\Delta)$ fails to work for pairs $q,\ell$ of interest since the formal adjoint $(-\Delta)\mathbb P A^{-1}$ can not be shown for $n\leq 4$ to be bounded for the conjugate pair $q',-\ell$. 
Indeed the analysis in homogeneous spaces requires restrictions, as the classical condition $q<\frac{n}{2}$ for a twofold Sobolev embedding. %namely $2-\frac{n}{q} < \ell < \frac{n}{q'}$. 
To be more precise, the conditions $2-\frac{n}{q} < \ell < \frac{n}{q'}$ and in the dual setting that $2-\frac{n}{q'} < -\ell < \frac{n}{(q')'} = \frac{n}{q}$ imply that $n>4$. 
Therefore, we follow ideas of Borchers and Miyakawa \cite{BoMi90},  restrict ourselves to $\theta \leq \frac12$ 
%\BLUE (I think that we need $\theta \leq \frac12$ since  Theorems \ref{theorem-H-infty}, \ref{angles},  Proposition \ref{Complex Interpolation} (i) are not enough to prove Prop. \ref{domain-of-fractional-power}.
%Indeed, on the BIP on the homogeneous domains, we need Cor. \ref{domains:A1/2-nabla_A1/2}, and thus $\theta \leq 1/2$. ) 
and base important estimates on variational inequalities in weighted homogeneous Sobolev spaces. 
The proof needs a cut off technique as in \cite{BoMi90}, \cite{KoSo92}.

\begin{thm}\label{A1/2-nabla_A1/2-all}
(i) For $1<q<\infty$ and $-\frac{n}{q}<\ell<\frac{n}{q'}$ there holds
\begin{equation}\label{A1/2-nabla-all}
    \big\|A_{q,\ell}^{1/2}\big\|_{L^q_\ell(\Omega)} \leq C\|\nabla u \|_{L^q_\ell(\Omega)}, \quad u\in \widehat H^{1,q}_{\sigma,\ell}(\Omega),
\end{equation}
with a constant $C>0$ independent of $u$.

(ii) Let $1<q<\infty$ and $1-\frac{n}{q} <\ell< \frac{n}{q'}$. Then
\begin{equation}\label{nabla-A1/2-all}
    \|\nabla u \|_{L^q_\ell(\Omega)} \leq C\big\|A_{q,\ell}^{1/2} u\big\|_{L^q_\ell(\Omega)}, \quad u\in \widehat\D(A_{q,\ell}^{1/2}),
    \end{equation}
with $C>0$ independent of $u$.
\end{thm}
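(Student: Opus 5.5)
The two estimates are dual to each other in spirit. I would prove (i) with a square-root/duality argument and deduce (ii) from (i) for the conjugate pair, using the weak Stokes theory (the weighted variational inequality of Proposition \ref{lem-var-ineq-nabla_Omega0}). Throughout I would first work on the dense subspace $\D(A_{q,\ell})$ (resp. $C^\infty_{0,\sigma}(\Omega)$) and then extend by closure to $\widehat H^{1,q}_{\sigma,\ell}(\Omega)$ and $\widehat\D(A^{1/2}_{q,\ell})$.

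\emph{Step 1: proof of (i).} By Theorem \ref{theorem-H-infty-all}, $A=A_{q,\ell}$ has a bounded $\mathscr H^\infty$-calculus. The adjoint is $A^*=A_{q',-\ell}$ on $L^{q'}_{\sigma,-\ell}(\Omega)$, since $-\ell$ is admissible for $q'$, and it also has BIP. For $u\in \D(A_{q,\ell})$ and $v\in C^\infty_{0,\sigma}(\Omega)$ I would write, via the pairing induced by $L^q_\ell\times L^{q'}_{-\ell}$,
\[
\langle A^{1/2}u, v\rangle = \langle A u, A^{-1/2}v\rangle = \langle \nabla u, \nabla A_{q',-\ell}^{-1/2} v\rangle ,
\]
after integrating by parts. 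This is justified because $A^{-1/2}v$ lies in the homogeneous domain and vanishes on $\partial\Omega$. Hence
\[
|\langle A^{1/2}u,v\rangle|\le \|\nabla u\|_{L^q_\ell}\,\|\nabla A_{q',-\ell}^{-1/2}v\|_{L^{q'}_{-\ell}} .
\]
This reduces (i) to the estimate $\|\nabla A_{q',-\ell}^{-1/2}v\|\le C\|v\|$, which is (ii) for the pair $(q',-\ell)$. The trouble is that (ii) needs $1-\frac n{q'}<-\ell$, which is not always available. So instead I would prove (i) directly: write $A^{1/2}u=\frac1\pi\int_0^\infty \lambda^{-1/2}A(\lambda+A)^{-1}u\,d\lambda$, test against $v$ to move one derivative onto $(\lambda+A^*)^{-1}v$, and estimate $\int_0^\infty \lambda^{-1/2}\|\nabla(\lambda+A^*)^{-1}v\|\,d\lambda$ by a square function. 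The square-function bound $\|(\int_0^\infty |\lambda^{1/2}\nabla(\lambda+A^*)^{-1}v|^2 d\lambda)^{1/2}\|\lesssim \|v\|$ follows from the $\mathscr H^\infty$-calculus together with the weighted gradient resolvent estimate $\|\nabla(\lambda+A)^{-1}\|\le C\lambda^{-1/2}$. That resolvent estimate holds for all admissible $\ell$, since the whole-space multiplier theory plus cut-off works without the Sobolev restriction. Density in $\widehat H^{1,q}_{\sigma,\ell}$ then gives \eqref{A1/2-nabla-all}.

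\emph{Step 2: proof of (ii) by duality.} For $u\in \D(A)$ and $F\in C_0^\infty(\Omega)^n$ I would solve the weak Stokes problem $\langle\nabla w,\nabla\varphi\rangle=\langle F,\nabla\varphi\rangle$ in $\widehat H^{1,q'}_{0,\sigma,-\ell}(\Omega)$. Proposition \ref{lem-var-ineq-nabla_Omega0} gives $\|\nabla w\|_{L^{q'}_{-\ell}}\le C\|F\|_{L^{q'}_{-\ell}}$. Then
\[
\langle \nabla u,F\rangle=\langle\nabla u,\nabla w\rangle=\langle A^{1/2}u,A_{q',-\ell}^{1/2}w\rangle ,
\]
and by (i) applied to $(q',-\ell)$ we have $\|A^{1/2}_{q',-\ell}w\|\le C\|\nabla w\|\le C\|F\|$. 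Taking the supremum over $F$ gives \eqref{nabla-A1/2-all} on $\D(A)$, and closure extends it to $\widehat\D(A^{1/2}_{q,\ell})$. The condition $1-\frac nq<\ell$ enters exactly here. It is what the weighted variational inequality for the dual pair requires, since it makes $\widehat H^{1,q}_\ell$ embed into $L^{q^*}_\ell$ and excludes constants, so the weak solution is unique and the identity $\langle\nabla u,\nabla w\rangle=\langle A^{1/2}u,A^{1/2}w\rangle$ is justified.

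\emph{Main obstacle.} The hardest part is justifying the identities between homogeneous objects. In particular, one must show that the weak solution $w$ lies in $\widehat\D(A^{1/2}_{q',-\ell})$ and that the integrations by parts hold in spite of the missing decay at infinity. I would handle this with the cut-off technique of Borchers–Miyakawa and Kozono–Sohr. One approximates with $\varphi_R(x)=\varphi(x/R)$, splits into a bounded-domain part near $\partial\Omega$ and a whole-space part, and controls the commutator terms $\nabla\varphi_R\cdot u$ (with a Bogovski\u{\i} correction for the divergence) by weighted Hardy-type inequalities. These vanish as $R\to\infty$ precisely under the stated range of $\ell$.
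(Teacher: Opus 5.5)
There is a genuine gap in Step 1. It also breaks Step 2, because Step 2 uses (i) for $(q',-\ell)$ over the whole admissible range. Three things go wrong.

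\begin{enumerate}
\item After writing $\langle A^{1/2}u,v\rangle=\frac1\pi\int_0^\infty\lambda^{-1/2}\langle\nabla u,\nabla(\lambda+A^*)^{-1}v\rangle\,d\lambda$, all the $\lambda$-dependence sits on the $v$-side. You therefore need $\int_0^\infty|\lambda^{1/2}\nabla(\lambda+A^*)^{-1}v|\,\frac{d\lambda}{\lambda}$ in $L^{q'}_{-\ell}$, which is an $L^1(d\lambda/\lambda)$ quantity. A square function only controls the $L^2(d\lambda/\lambda)$ norm. The pointwise bound $\|\nabla(\lambda+A^*)^{-1}\|\le C\lambda^{-1/2}$ leads to $\int_0^\infty\lambda^{-1}d\lambda=\infty$. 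A square-function proof would need a second square function on the $u$-side, i.e.\ commuting $\nabla$ with the resolvents, and that is exactly what is unavailable on $\Omega$.
\item A square-function bound for $\lambda^{1/2}\nabla(\lambda+A^*)^{-1}$ does not follow from the $\mathscr H^\infty$-calculus, since this is not an operator of the calculus of $A^*$. Factoring out $\nabla(A^*)^{-1/2}$ would require (ii) for $(q',-\ell)$, which is what you wanted to avoid.
\item The uniform bound $\|\nabla(\lambda+A_{q',-\ell})^{-1}\|\le C|\lambda|^{-1/2}$ as $\lambda\to0$ does not hold for all admissible weights. The paper has it only for $2-\frac n{q'}<-\ell<\frac nq$ (Theorem \ref{res-weighted}(iii)); the small-$\lambda$ analysis needs Hardy/Rellich control of the cut-off terms. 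Via the Dunford integral it would imply $\|\nabla e^{-tA}\|\le Ct^{-1/2}$ for all $t>0$. In the unweighted case $\ell=0$, $q'>n$, that estimate is known to fail on exterior domains (Maremonti--Solonnikov). This is precisely the regime $-\ell\le 1-\frac n{q'}$ that Step 1 was meant to cover.
\end{enumerate}

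The missing idea is Proposition \ref{HhatOmega}(ii). For $2-\frac nq<\ell<\frac n{q'}$ the paper proves both inequalities directly:
\begin{itemize}
\item $\|\nabla u\|\le C\|A^{1/2}u\|$ comes from the homogeneous extension of Lemma \ref{extension}, the bound \eqref{equ:nabla2-A}, and interpolation against the whole-space result.
\item The converse uses a solenoidality-preserving restriction $\mathscr R$, built with a Bogovski\u{\i} correction near $\partial\Omega$ and controlled by Hardy and Rellich. It is bounded $L^q_{\sigma,\ell}(\IR^n)\to L^q_{\sigma,\ell}(\Omega)$ and $\widehat H^{2,q}_{\sigma,\ell}(\IR^n)\to\widehat\D(A_{q,\ell})$, and interpolation gives the estimate.
\end{itemize}
The duality you wrote at the start of Step 1 then yields (i) for $-\frac nq<\ell<-2+\frac n{q'}$, because $(q',-\ell)$ lies in the good range. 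The leftover strip $[-2+\frac n{q'},2-\frac nq]$ is nonempty only for $n=3,4$. It is handled by complex interpolation in the weight exponent at fixed $q$, as in Corollary \ref{theorem-H-infty-2}.

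With (i) in hand, your Step 2 is essentially the paper's proof of (ii). The right tool, though, is the solenoidal variational inequality of Theorem \ref{var-ineq-sol} for the pair $(q,\ell)$; that is where $1-\frac nq<\ell$ enters. Via the closed range theorem it produces your $w$ with $\|\nabla w\|\le C\|F\|$. Proposition \ref{lem-var-ineq-nabla_Omega0} applied to $(q',-\ell)$ would instead require $\ell<\frac n{q'}-1$. You can also bypass $w$ and take the supremum over $v\in\widehat H^{1,q'}_{\sigma,-\ell,0}(\Omega)$ directly. Such $v$ lie in $\widehat\D(A^{1/2}_{q',-\ell})$ by (i) and density, so no cut-off argument is needed.
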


Note that the restriction $1-\frac{n}{q} <\ell< \frac{n}{q'}$ in Theorem \ref{A1/2-nabla_A1/2-all} (ii) corresponds to the constraint $q<n$ when $\ell=0$. The proof of Theorem \ref{A1/2-nabla_A1/2-all} is based on the variational inequality 
\begin{align}\label{var-ineq-nabla-all}
  \|\nabla u\|_{L^q_\ell} \leq C\sup_{v\neq 0} \frac{\big|\langle \nabla u,\nabla v\rangle\big|}{\|\nabla v\|_{L^{q'}_{-\ell}}} \leq C\|\nabla u\|_{L^q_\ell},\quad  u\in \widehat \D(A_{q\ell}),
\end{align}
where $v\neq 0$ is running through $\widehat H^{1,q'}_{\sigma,-\ell,0}(\Omega)$, for details see   Proposition \ref{lem-var-ineq-nabla_Omega0} and Theorem \ref{var-ineq-sol} below. Whereas Proposition \ref{lem-var-ineq-nabla_Omega0} considers {\em a priori} estimates of the weak Stokes system on $\Omega$ yielding the variational inequality 
\eqref{var-ineq-nabla-all} with test functions $v\in \widehat H^{1,q'}_{-\ell,0}(\Omega)$,  Theorem \ref{var-ineq-sol} improves that result to \eqref{var-ineq-nabla-all} with $v\in \widehat H^{1,q'}_{\sigma,-\ell,0}(\Omega)$ via de Rham's theory. 
%The crucial point in \eqref{var-ineq-nabla-all} is the fact that $v$ is restricted to solenoidal vector fields. To get this improvement compared to allow all $v\in \widehat H^{1,q'}_{-\ell,0}(\Omega)$ we construct in Proposition \ref{Bog-ext} a Bogovski\u{\i} operator $\mathbb B$ for exterior domains on weighted spaces, {\em i.e.,} $\mathbb B: L^q_\ell(\Omega) \to \widehat H^{1,q}_{\ell,0}(\Omega)$,  $\div \mathbb Bg=g$ for $g\in L^q_\ell(\Omega)$, and 
%$$ \|\mathbb Bg\|_{\widehat H^{1,q}_{\ell,0}(\Omega)} \leq C \|g\|_{L^q_\ell(\Omega)}. $$  
%For the construction we extend ideas from the monograph of Galdi \cite{Galdi-steady} to weighted spaces.

Now the main characterization of fractional domains $\widehat \D(A^\theta)=[L^q_{\sigma,\ell}(\Omega),\widehat \D(A)]_\theta$ reads as follows:

\begin{prop}\label{domain-of-fractional-power-all} 
 Let $1< q\leq r<\infty$, let $2\theta-\frac{n}{q} < \ell < \frac{n}{q'}$ with  $0<\theta<\frac12$ \BLACK and assume that
$$ \ell-\ell' = 2\theta + \frac{n}{r} -\frac{n}{q}\geq 0.
$$
%\RED OLD \BLACK Then, for $0\leq \theta \leq 1$, there holds with $r$ defined by $\frac{1}{r} = \frac{1}{q} - \frac{2\theta}{n}$, the identity
%Then, for $0\leq \theta \leq 1$, there holds with $p$ defined by $\frac{1}{p} = \frac{1}{q} - \frac{2\theta}{n}$, the identity
 Then the Stokes operator $A=A_{q,\ell} = -\mathbb P \Delta$ on $L^q_{\sigma, \ell }(\Omega)$ satisfies  
\begin{align*} 
\widehat{\D}(A^\theta) = [L^q_{\sigma,\ell}, \widehat{\D}(A)]_{\theta} =  [L^q_{\ell}, \widehat{\D}(-\Delta_{q,\ell})]_\theta \cap L^r_{\sigma,\ell'}(\Omega) = \widehat{\D}((-\Delta_{q,\ell})^\theta) \cap L^r_{\sigma,\ell'}(\Omega) 
\end{align*}  
with the norm equivalence 
\begin{equation*} 
 \|A^{\theta} x\|_{L^q_{\ell}(\Omega)} \simeq \|x\|_{[L^q_{\sigma, \ell}, \widehat{\D}(A)]_{\theta} } \simeq \|x\|_{\widehat{H}^{2\theta,q}_{\ell}(\Omega)}. 
\end{equation*}
\end{prop}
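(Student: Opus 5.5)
The identification proceeds in three steps.

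\emph{Step 1: the first identity.} From Theorem \ref{theorem-H-infty-all} we know that $A=A_{q,\ell}$ has a bounded $\mathscr H^\infty$-calculus and property $BIP$ on $L^q_{\sigma,\ell}(\Omega)$. Since $A$ is injective with dense range, these properties pass to the homogeneous scale, where $A$ acts as an isomorphism $\widehat\D(A)\to L^q_{\sigma,\ell}$. The abstract interpolation theorem for sectorial operators with $BIP$ on homogeneous domains (Triebel's theorem in the form used for $\IR^n$ and for exterior domains) then gives
\[
\widehat\D(A^\theta)=[L^q_{\sigma,\ell},\widehat\D(A)]_\theta, \qquad \|A^\theta x\|_{L^q_\ell}\simeq\|x\|_{[L^q_{\sigma,\ell},\widehat\D(A)]_\theta}.
\]
The same argument applies to $-\Delta_{q,\ell}$ on $\Omega$ with Dirichlet condition. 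That operator also has a bounded $\mathscr H^\infty$-calculus: on the whole space this follows from the multiplier theory on $L^q_\ell(\IR^n)$ used for Theorem \ref{theorem-H-infty-all}, and the local part is handled by the same cut-off. Hence $[L^q_\ell,\widehat\D(-\Delta_{q,\ell})]_\theta=\widehat\D((-\Delta_{q,\ell})^\theta)$. This space is normed by $\widehat H^{2\theta,q}_\ell(\Omega)$, provided we have a weighted Sobolev embedding valid under $2\theta-\frac nq<\ell$. That embedding is where the stated lower bound on $\ell$ enters.

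\emph{Step 2: the embedding into $L^r_{\sigma,\ell'}$.} Because $\ell-\ell'=2\theta+\frac nr-\frac nq\ge0$, a weighted Hardy--Littlewood--Sobolev (Stein--Weiss) inequality gives $\widehat H^{2\theta,q}_\ell\hookrightarrow L^r_{\ell'}$. Applied on $\IR^n$ after extension by zero, it turns every element of either space into a genuine function rather than an equivalence class. The solenoidal property is preserved since $\div u=0$ is closed under this limit. So both sides of the claimed identity are subspaces of $L^r_{\sigma,\ell'}(\Omega)$, and intersecting makes sense.

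\emph{Step 3: the two inclusions.} The inclusion ``$\subset$'' is the easy direction. We have $\widehat\D(A)\subset\widehat\D(-\Delta_{q,\ell})$ continuously, via the Stokes resolvent estimates and the weighted pressure estimate, which give $\|\nabla^2u\|\le C\|Au\|$. We also have $L^q_{\sigma,\ell}\subset L^q_\ell$. Complex interpolation of these two inclusions yields ``$\subset$'' together with the norm bound.

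For ``$\supset$'' the natural tool is the retraction $\mathcal R=A^{-1}\mathbb P(-\Delta)$. The introduction notes that it fails in the needed generality, so instead we use the restriction $\theta<\frac12$ together with Theorem \ref{A1/2-nabla_A1/2-all}. By reiteration,
\[
\widehat\D(A^\theta)=[L^q_{\sigma,\ell},\widehat\D(A^{1/2})]_{2\theta}, \qquad \widehat\D(A^{1/2})=\widehat H^{1,q}_{\sigma,\ell,0}(\Omega),
\]
where the second identity comes from \eqref{A1/2-nabla-all} and \eqref{nabla-A1/2-all}, using $1-\frac nq<2\theta\cdot\frac1{2\theta}$-type conditions on $\ell$. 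It then remains to show
\[
[L^q_{\sigma,\ell},\widehat H^{1,q}_{\sigma,\ell,0}]_{2\theta}\supset[L^q_\ell,\widehat H^{1,q}_{\ell,0}]_{2\theta}\cap L^r_{\sigma,\ell'} .
\]
For this we need a common bounded projection onto solenoidal fields, acting on both $L^q_\ell$ and $\widehat H^{1,q}_{\ell,0}$. The candidate is $\mathbb P$ corrected by a weighted Bogovski\u{\i} operator near $\partial\Omega$, with a cut-off as in Borchers--Miyakawa so that the boundary condition is respected.

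This projection step is the main obstacle. The difficulty is the $\widehat H^{1,q}$-boundedness of the corrected projection in homogeneous weighted spaces. I would verify it through the variational inequality \eqref{var-ineq-nabla-all}, applied by duality with $(q',-\ell)$, and control the compact perturbation terms using the $L^r_{\ell'}$ embedding from Step 2.
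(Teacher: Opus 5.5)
Your route is the paper's. Part (i) of Proposition \ref{domain-of-fractional-power} comes from the bounded $\mathscr H^\infty$-calculus and Proposition \ref{Complex Interpolation}(i). For $\theta<\frac12$ you then reiterate through $\widehat\D(A^{1/2})=\widehat H^{1,q}_{\sigma,\ell,0}(\Omega)$ and $\widehat\D((-\Delta_{q,\ell})^{1/2})=\widehat H^{1,q}_{\ell,0}(\Omega)$ (Theorem \ref{A1/2-nabla_A1/2}, Corollary \ref{domains:A1/2-nabla_A1/2}). Proposition \ref{Hardy-Rellich}(iii) only serves to realize the elements in $L^r_{\ell'}$; the norm identification with $\widehat H^{2\theta,q}_{\ell}$ comes from this reiteration and \eqref{spaces2}, not from the embedding.

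However, your argument stops exactly at the inclusion
\[
[L^q_{\sigma,\ell},\widehat H^{1,q}_{\sigma,\ell,0}]_{2\theta}\supset[L^q_\ell,\widehat H^{1,q}_{\ell,0}]_{2\theta}\cap L^r_{\sigma,\ell'}(\Omega),
\]
together with its norm bound, and the plan you sketch would not close it. The variational inequality \eqref{var-ineq-nabla-all} controls $\nabla u$ only for fields that are already solenoidal, so it gives no bound for a projection. A correction of the exterior Helmholtz projection near $\partial\Omega$ that is built from its tangential boundary trace is undefined on $L^q_\ell$, so it cannot be bounded on both spaces of the couple. The paper's proof of Proposition \ref{domain-of-fractional-power}(ii) only cites \eqref{spaces2} and Proposition \ref{Hardy-Rellich}(iii) at this point. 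Those give ``$\subset$'' and the meaning of the intersection, so you were right to single this step out.

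The step can be closed with an operator the paper already builds: $R:=\mathscr R\,\mathbb P\,E_0$ from the proof of Proposition \ref{HhatOmega}. Here $E_0$ is extension by zero, $\mathbb P$ is the whole-space Helmholtz projection and $\mathscr R$ is defined in \eqref{def-mathscr R}. Its boundary correction uses the values on $\Omega^c$ through $E_c r_{\Omega^c}$ and the negative-order Bogovski\u{\i} bound on $\Omega\cap B$, not traces.

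The estimates proved there, together with Lemma \ref{Helmh}, show two things. First, $R$ maps $L^q_\ell(\Omega)$ boundedly into $L^q_{\sigma,\ell}(\Omega)$. Second, for $1-\frac nq<\ell<\frac n{q'}$, $R$ maps $\widehat H^{1,q}_{\ell,0}(\Omega)$ boundedly into $\widehat H^{1,q}_{\sigma,\ell,0}(\Omega)$. Moreover $R=I$ on solenoidal fields, cf.\ \eqref{interp-ZP}. Theorem \ref{retract-co}, with the inclusion as coretraction, then gives $u=Ru$ in the solenoidal interpolation space and $\|u\|_{[L^q_{\sigma,\ell},\widehat H^{1,q}_{\sigma,\ell,0}]_{2\theta}}\le C\|u\|_{[L^q_\ell,\widehat H^{1,q}_{\ell,0}]_{2\theta}}$. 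The opposite inclusion is just monotonicity of interpolation for the inclusion of couples, so your detour via $\|\nabla^2u\|\le C\|Au\|$ is unnecessary.

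Finally, check your hypotheses. Theorem \ref{A1/2-nabla_A1/2-all}(ii), Corollary \ref{domains:A1/2-nabla_A1/2} and the $\widehat H^{1,q}$-bound for $R$ all need $1-\frac nq<\ell$. Your version of ``$\subset$'' via \eqref{equ:nabla2-A} needs $2-\frac nq<\ell$. Neither follows from $2\theta-\frac nq<\ell$ with $\theta<\frac12$, and ``$1-\frac nq<2\theta\cdot\frac1{2\theta}$-type conditions'' does not supply them. The paper's detailed version, Proposition \ref{domain-of-fractional-power}, works under the standing assumption $2-\frac nq<\ell<\frac n{q'}$.
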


 As application we mention the $L^q_\ell$-$L^r_{\ell'}$ decay of the Stokes semigroup $e^{- tA_{q,\ell}}$ and the $L^p$-maximal regularity of $A_{q,\ell}$.
\BLACK

\begin{cor}\label{Lp-Lq At-all} 
Let $1< q\leq r<\infty$, let $\kappa-\frac{n}{q} < \ell < \frac{n}{q'}$  with  $\kappa\in(0,n)$, %(see Cor. 3.12) 
\BLACK %(By Cor. 3.12 this condition is not needed, see also the proof of Cor. 3.12)
and assume that
$$ \ell-\ell' = \kappa + \frac{n}{r} -\frac{n}{q}\geq 0.
$$
Then there holds the weighted $L^q$-$L^r$-estimate
$$ \|e^{- tA_{q,\ell}}u\|_{L^r_{\ell'}(\Omega)} \leq C t^{-\frac{n}{2}\big(\frac{1}{q}-\frac{1}{r}\big) -\frac{\ell-\ell'}{2}} \|u\|_{L^q_{\ell}(\Omega)}, \quad u\in L^q_{\sigma,\ell }(\Omega),$$
with a constant $C>0$ independent of $t>0$. 
\end{cor}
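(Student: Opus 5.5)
The plan is to factor the semigroup as $e^{-tA} = A^{-\kappa/2}\,(A^{\kappa/2}e^{-tA})$. The first factor, $A^{-\kappa/2}$, is handled by an embedding of the homogeneous fractional domain into a weighted Lebesgue space. The second factor, $A^{\kappa/2}e^{-tA}$, is handled by analyticity together with a scaling argument. The homogeneity in $t$ is forced: the exponent $-\frac n2(\frac1q-\frac1r)-\frac{\ell-\ell'}2$ equals $-\frac{\kappa}{2}$, because $\ell-\ell'=\kappa+\frac nr-\frac nq$. So the claim is equivalent to
\[
\|e^{-tA}u\|_{L^r_{\ell'}}\le C\,t^{-\kappa/2}\|u\|_{L^q_\ell}.
\]

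\textbf{Step 1 (analytic semigroup).} By Theorem \ref{theorem-H-infty-all}, $A$ is sectorial of angle $0$ on $L^q_{\sigma,\ell}(\Omega)$. Hence $e^{-tA}$ is a bounded analytic semigroup, and for every $\alpha\ge0$
\[
\|A^{\alpha}e^{-tA}u\|_{L^q_\ell}\le C\,t^{-\alpha}\|u\|_{L^q_\ell}.
\]
For $\alpha=\kappa/2$ this can be obtained directly from the $\mathscr H^\infty$-calculus bound \eqref{H-infty-all}, applied to $h(\lambda)=(t\lambda)^{\kappa/2}e^{-t\lambda}$. This function is uniformly bounded on $\Sigma_\phi$, and its decay at $0$ and at $\infty$ places it in $\HH_0(\Sigma_\phi)$. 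In particular $e^{-tA}u\in\widehat\D(A^{\kappa/2})$.

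\textbf{Step 2 (embedding).} I would show
\[
\|v\|_{L^r_{\ell'}}\le C\|A^{\kappa/2}v\|_{L^q_\ell},\qquad v\in\widehat\D(A^{\kappa/2}).
\]
For $\kappa<1$ (that is, $\theta=\kappa/2<\frac12$), Proposition \ref{domain-of-fractional-power-all} gives $\widehat\D(A^{\theta})\subset L^r_{\sigma,\ell'}$ with $\|v\|_{L^r_{\ell'}}\lesssim\|A^\theta v\|_{L^q_\ell}$ under exactly the stated hypotheses, so this case is done. For general $\kappa\in(0,n)$ I would reduce by iteration. Choose intermediate pairs $(q_j,\ell_j)$ with $q_0=q$, $\ell_0=\ell$, and $(q_m,\ell_m)=(r,\ell')$, each step consuming $\kappa_j<1$ with $\sum\kappa_j=\kappa$. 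Then write
\[
e^{-tA}=e^{-\frac tm A}\cdots e^{-\frac tm A}.
\]
Since the Stokes operators on different $L^{q_j}_{\sigma,\ell_j}$ are consistent, each factor maps $L^{q_j}_{\ell_j}\to L^{q_{j+1}}_{\ell_{j+1}}$ with norm $\lesssim t^{-\kappa_j/2}$, by Step 1 combined with the $\kappa_j$-embedding. Multiplying the bounds gives $t^{-\kappa/2}$.

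\textbf{Main obstacle: choosing the chain.} The intermediate parameters must satisfy, at every step, the Muckenhoupt and embedding constraints
\[
\kappa_j-\frac n{q_j}<\ell_j<\frac n{q_j'},\qquad \ell_j-\ell_{j+1}=\kappa_j+\frac n{q_{j+1}}-\frac n{q_j}\ge0,\qquad q_j\le q_{j+1}.
\]
I would interpolate linearly in $(1/q,\ell)$ between $(1/q,\ell)$ and $(1/r,\ell')$. The endpoint conditions are convex in these variables, so linear interpolation should keep each intermediate pair admissible. The hypothesis $\kappa<n$ ensures that the lower constraint $\kappa_j-n/q_j<\ell_j$ can be kept along the way, using $\ell'>-n/r$ at the final pair. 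If the strict endpoint conditions fail at some intermediate pair, I would perturb $\kappa_j$ slightly. Beyond the chain, the remaining technical point is the compatibility of the semigroups on different weighted spaces, which I would check on the dense intersection $L^{q_j}_{\sigma,\ell_j}\cap L^{q_{j+1}}_{\sigma,\ell_{j+1}}$ via uniqueness of resolvents.
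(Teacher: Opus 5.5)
Your proposal is correct and follows the paper's own argument. For $\kappa\le 1$ the paper combines the fractional Sobolev embedding with the norm equivalence \eqref{fract-norm-equiv} and the analytic bound $\|A^{\kappa/2}e^{-tA}\|\le Ct^{-\kappa/2}$; for larger $\kappa$ it splits $\kappa$ into pieces and iterates via the semigroup property, exactly as you do (the paper leaves both the intermediate weighted spaces and the consistency of the semigroups implicit). Your linear chain in $(1/q,\ell)$ is admissible as stated, with no perturbation needed, since $\ell_j+\frac{n}{q_j}=\ell+\frac nq-\frac{j\kappa}{m}\in\big(\frac{\kappa}{m},n\big)$ for $0\le j\le m-1$; moreover $\kappa<n$ already follows from $\kappa-\frac nq<\ell<\frac n{q'}$, so it is not an extra ingredient.
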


%\vspace*{1ex}

%\begin{thm}\label{domain-of-fractional-power-all} \RED (not yet correct) \BLACK 
%Let $1<q< \infty$ with $3 \leq n$ and $A_q = -\mathbb P_q \Delta$ be the Stokes operator on $L^q_{\sigma, \ell }(\Omega)$ for $2-\frac{n}{q} < \ell < \frac{n}{q'}$ with $0\leq \ell$. 
%For $0<\theta < \frac{1}{2q}$
%\begin{align}\label{BIP-homog-ext-domain-all}
%\widehat{\D}(A_q^\theta) = [L^q_{\sigma, \ell}, \widehat{\D}(A_q)]_{\theta} \RED = [L^q_{\sigma, \ell}, \widehat{\D}(\Delta_\Omega)]_\theta = \widehat{H}^{2\theta}_{q,\ell}(\Omega) \cap L^q_{\sigma,\ell}(\Omega)\BLACK 
%\end{align}
%\end{thm}

\begin{prop}\label{maxreg-all}
Let $1<q< \infty$ and  $-\frac{n}{q} < \ell < \frac{n}{q'}$. Then $A_{q,\ell}$ possesses for each $1<p<\infty$ maximal $L^p$-regularity on $L^q_{\sigma,\ell}(\Omega)$, {\em i.e.,} for $f\in L^p(\IR_+;L^q_{\sigma,\ell}(\Omega))$ the instationary Stokes system $u_t -\Delta u + \nabla\pi = f,\; \div u=0,\; u(0)=0,\; u\big|_{\partial\Omega} = 0$ possesses a unique solution $(u,\nabla\pi)$ such that 
    \begin{align*} 
     \|u_t\|_{L^p(\IR_+;L^q_\ell)}  + \|A_{q,\ell}u\|_{L^p(\IR_+;L^q_\ell)}  
     \leq C\|f\|_{L^p(\IR_+;L^q_\ell)}.
    \end{align*}
Under the assumption  $2-\frac{n}{q} < \ell < \frac{n}{q'}$ also the estimate
    \begin{align*} 
    \|\nabla^2 u\|_{L^p(\IR_+;L^q_\ell)}  + \|\nabla\pi\|_{L^p(\IR_+;L^q_\ell)} & \leq C\|f\|_{L^p(\IR_+;L^q_\ell)} 
    \end{align*}
holds. In case of the whole space both estimates hold for all $-\frac{n}{q} < \ell < \frac{n}{q'}$.
\end{prop}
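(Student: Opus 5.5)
The plan is to obtain maximal regularity from the bounded $\mathscr H^\infty$-calculus in Theorem \ref{theorem-H-infty-all}. This goes through the Kalton--Weis result \cite{KuWe04} on $\mathcal R$-boundedness.

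\textbf{Step 1: the space.} $L^q_{\sigma,\ell}(\Omega)$ is a closed subspace of $L^q_\ell(\Omega)^n$. Since $L^q_\ell$ is an $L^q$ space with respect to the measure $\langle x\rangle^{q\ell}\dx$, it is UMD and has property $(\alpha)$. Closed subspaces inherit both properties. Hence $L^q_{\sigma,\ell}(\Omega)$ is a UMD space with property $(\alpha)$.

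\textbf{Step 2: $\mathcal R$-sectoriality and the first estimate.} Theorem \ref{theorem-H-infty-all}(i) gives a bounded $\mathscr H^\infty$-calculus with angle $0<\frac{\pi}{2}$. By \cite{KuWe04} (see also \cite[p.~652]{NollSaal}), on such spaces this upgrades to an $\mathcal R$-bounded $\mathscr H^\infty$-calculus. In particular $\{\lambda(\lambda+A)^{-1}: |\arg\lambda|<\pi-\phi\}$ is $\mathcal R$-bounded, so $A$ is $\mathcal R$-sectorial of angle $<\frac{\pi}{2}$. Weis' operator-valued Mikhlin theorem then yields, for $f\in L^p(\IR_+;L^q_{\sigma,\ell})$, a unique mild solution
$$u(t)=\int_0^t e^{-(t-s)A}f(s)\ds$$
with $\|u_t\|_{L^p(L^q_\ell)}+\|Au\|_{L^p(L^q_\ell)}\le C\|f\|_{L^p(L^q_\ell)}$. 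The constant is uniform on $\IR_+$ because the estimate is homogeneous in $A$ and does not need $0\in\rho(A)$. Uniqueness follows from the injectivity of $A$ (sectoriality on a reflexive space) and a standard energy/semigroup argument. The pressure is defined by $\nabla\pi=(I-\mathbb P)\Delta u$, using the weighted Helmholtz decomposition of \cite{FS}.

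\textbf{Step 3: second derivatives and pressure.} This step is the main obstacle: we need, for a.e.\ $t$, an estimate on the homogeneous domain,
$$\|\nabla^2 u\|_{L^q_\ell}+\|\nabla\pi\|_{L^q_\ell}\le C\|Au\|_{L^q_\ell}\qquad\text{for } u\in\widehat\D(A),$$
under $2-\frac{n}{q}<\ell<\frac{n}{q'}$.
\begin{enumerate}[(a)]
\item Rewrite the stationary problem as the Stokes system $-\Delta u+\nabla\pi=Au$ on $\Omega$.
\item Cut off with $\varphi$ near $\partial\Omega$. On the bounded part $\varphi u$, use the local (unweighted, since the weight is comparable to $1$ there) $H^{2,q}$ Stokes regularity. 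On the whole-space part $(1-\varphi)u$, use the weighted Calder\'on--Zygmund estimate on $\IR^n$; the correction of the divergence is done with the Bogovski\u{\i} operator.
\item The commutator terms contain $u$ and $\nabla u$, or $\pi$, on a bounded annulus. Bound them by $\|\nabla^2 u\|_{L^q_\ell}$ via the homogeneous weighted Sobolev embeddings $\widehat H^{2,q}_\ell\hookrightarrow L^{q^{**}}_\ell$ and $\widehat H^{1,q}_\ell\hookrightarrow L^{q^*}_\ell$. Exactly these embeddings require $2-\frac{n}{q}<\ell$; normalizing $\pi$ to have mean zero on the annulus handles the pressure similarly.
\item Absorb the lower-order compact terms by a contradiction/compactness argument. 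This uses the injectivity of the Stokes system in $\widehat H^{2,q}_\ell$: the polynomial behaviour at infinity is excluded by the weight condition, and uniqueness comes from the variational inequality \eqref{var-ineq-nabla-all}.
\end{enumerate}
Applying this estimate pointwise in $t$ and integrating in $L^p$ gives the second estimate of the proposition.

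\textbf{Whole-space case.} Here no cut-off is needed. We have $\mathbb P$ commuting with $\Delta$ and $\nabla\pi=0$ for solenoidal $f$. Moreover $\nabla^2(-\Delta)^{-1}$ is a Riesz-transform multiplier bounded on $L^q_\ell$ for all $\mathscr A_q$ weights. So both estimates hold for the full range $-\frac{n}{q}<\ell<\frac{n}{q'}$.
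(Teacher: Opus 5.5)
\textbf{Comparison with the paper.} Your Steps 1 and 2 and your whole-space argument match the paper's proof. The bounded $\mathscr H^\infty$-calculus (Theorem \ref{theorem-H-infty} and Corollary \ref{theorem-H-infty-2}), together with property $(\alpha)$ of the closed subspace $L^q_{\sigma,\ell}(\Omega)\subset L^q_\ell(\Omega)$, gives an $\mathcal R$-bounded calculus and hence maximal $L^p$-regularity via \cite{KuWe04}. On $\IR^n$ the second-order bound follows from the multiplier estimates \eqref{St-res-wRn}; the paper lets $\lambda\to0$ there, and your Riesz-transform argument amounts to the same thing.

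You genuinely differ only in Step 3. The paper proves nothing new there. It invokes the known stationary estimate \eqref{equ:nabla2-A}, i.e.\ Theorem \ref{res-weighted}(iv) from \cite{FS}, which is the $\lambda\to0$ limit of the uniform resolvent bound \eqref{equ:rse-w2}. It applies this for a.e.\ $t$, where $u(t)\in\D(A_{q,\ell})$, so the homogeneous domain is never needed. The pressure is then controlled through $\nabla\pi=(I-\mathbb P)\Delta u$ and the boundedness of $\mathbb P$ on $L^q_\ell$ (Lemma \ref{Helmh}). You instead re-derive the stationary estimate by cut-off, weighted Calder\'on--Zygmund bounds, Bogovski\u{\i} correction and a compactness/contradiction argument. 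That route is legitimate and more self-contained, and it even gives the bound on $\widehat\D(A)$. However, it costs a full localization argument that the paper gets by citation. Localizing the pressure is also superfluous, since $\|\nabla\pi\|_{L^q_\ell}\le C\|\nabla^2u\|_{L^q_\ell}$ follows directly from the Helmholtz projection.

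\textbf{Two details in Step 3 need correcting.}

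First, the embeddings $\widehat H^{2,q}_\ell\hookrightarrow L^{q^{**}}_\ell$ and $\widehat H^{1,q}_\ell\hookrightarrow L^{q^*}_\ell$ keep the weight unchanged. They therefore require $q<\frac n2$ and $q<n$ respectively, and fail on much of the admissible range; for example $n=3$, $q=4$, $\frac54<\ell<\frac94$. What you actually need, and what holds exactly under $2-\frac nq<\ell<\frac n{q'}$, are Rellich's and Hardy's inequalities \eqref{Rellich-ineq} and \eqref{Hardy-ineq}, i.e.\ embeddings into $L^q_{\ell-2}$ and $L^q_{\ell-1}$. These suffice because the commutator terms live on a bounded set where the weight plays no role.

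Second, the limit produced by your contradiction argument only satisfies $\nabla^2u\in L^q_\ell$, hence $\nabla u\in L^q_{\ell-1}$ and $\pi-c\in L^q_{\ell-1}$. So \eqref{var-ineq-nabla-all} at weight $\ell$ is not directly applicable. Use instead either of the following:
\begin{itemize}
\item the uniqueness statement of Lemma \ref{conv-u-D-u}(ii);
\item the injectivity part of Proposition \ref{lem-var-ineq-nabla_Omega0} with weight $\ell-1$, which is admissible because $1-\frac nq<\ell-1<\frac n{q'}$.
\end{itemize}
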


The article is structured as follows. In Sect.~2 we introduce notation, several functions spaces with and without weights, subspaces of solenoidal vector fields, recall important results from interpolation theory, applied to classical operators (multiplier, Calder\'on-Zygmund, Helmholtz projection, Stokes operators extended to weighted spaces, {\em etc.}). 
Further, in Subsect.~\ref{S2.2} we construct in Proposition \ref{Bog-ext} a Bogovski\u{\i} operator on exterior domains in weighted homogeneous spaces. 
Subsect.~\ref{S2.4} discusses for the Stokes operator on the whole space the bounded $\mathscr H^\infty$-calculus, its property $BIP$ and consequences of complex interpolation. 
The weak Stokes system on exterior domains in weighted homogeneous spaces in Subsect.~\ref{S3.1} is an important tool since it yields a variational inequality for gradients of solenoidal fields. 
Then Subsect.~\ref{S3.2} on the bounded $\mathscr H^\infty$-calculus of the Stokes operator in weighted $L^q$ spaces is central for all further results. For applications, {\em e.g.} the characterization of the fractional domain $\widehat\D(A^\theta)$, we need that norms based on $\nabla u$ are similar to norms of $A^{1/2}$ on weighted spaces. 
 This result, obvious in $L^2$, is extended in Subsect.~\ref{S3.3} to weighted spaces $\widehat H^{1,q}_{\sigma,\ell,0}(\Omega)$, see Theorem \ref{A1/2-nabla_A1/2}. It is \BLACK the starting point for the full characterization of $\widehat \D(A^\theta)$ in Subsect.~\ref{S3.4} where also further fundamental estimates on embeddings and decay of the Stokes semigroup will be proved.\\

\section{Preliminaries}\label{S2}

\subsection{Basic function spaces, weights, and operators}\label{S2.1}

{\bf Preliminaries:}
Exterior domains in $\IR^n$ are denoted by $\Omega$. Without loss of generality we assume that $0\in \overline{\Omega}^c$ where $\Omega^c$ is the complement of $\Omega$ in $\IR^n$. 
Let $R>0$ denote a radius such that $\Omega^c\subset B_R$ where  $B_R=B_R(0)$ is the open ball with center $0$ and radius $R$. Set $\Omega_R := \Omega\cap B_R$.
Then 
$L^q(\Omega)$, $1\leq q \leq \infty$, denotes the usual Lebesgue space with norm $\|\cdot\|_{L^q(\Omega)} = \|\cdot\|_{L^q}$.
Note that we frequently omit the symbol $\Omega$ in $L^q$, $\|\cdot\|_{L^q}$, {\em etc.}, of the underlying domain when it is known from the context. 
Moreover, we omit the subscript $n$ of $L^q(\Omega)^n$ to denote a corresponding space of vector fields; the same applies to matrix-valued fields.
In addition, $H^{k,q}(\Omega)$ is the standard Sobolev space of order $k\in\IN_0$ and with norm $\|\cdot\|_{H^{k,q}(\Omega)}$.
We denote the spaces of test functions and of solenoidal test functions by 
$C^{\infty}_0(\Omega)$  and $C^{\infty}_{0,\sigma}(\Omega):=\{\varphi \in C^{\infty}_{0}(\Omega): \div \varphi=0\}$, respectively. Moreover,
$L^q_{\sigma}(\Omega):= \overline{C^{\infty}_{0,\sigma}(\Omega)}^{\|\cdot\|_{L^q(\Omega)}}$, $1<q<\infty$, is the $L^q$ space of weakly solenoidal vector fields, $u$, with vanishing normal component, $u\cdot \textsl{n}$, on $\partial\Om$.

%Exterior domains in $\IR^n$ are denoted by $\Omega$. Without loss of generality we assume that $0\in \overline{\Omega}_0^c$. Let $R>0$ denote a radius such that $\Omega^c\subset B_R$ where $\Omega^c$ denotes the complement of $\Omega$ in $\IR^n$ and $B_R=B_r(0)$ denotes the open ball with center $0$ and radius $R$. Then $\Omega_R = \Omega\cap B_R$.  

For general Banach spaces $X,\,\|\cdot\|_X$ and $Y,\,\|\cdot\|_Y$ and a linear operator $A:\D(A)\subset X \to Y$ let $\D(A)$ denote the domain of $A$, $\mathcal R(A)$ and $\mathcal N(A)$ its range and null space, respectively. Further, $\rho(A)$ and $\sigma(A)$ are its resolvent set and spectrum, respectively. For $u\in \D(A)$ the graph norm $\|u\|_A$ is defined by $\|u\|_X + \|Au\|_X$. \BLACK The set of bounded linear operators from $X$ to $Y$ is $\mathcal L=\mathcal L(X,Y)$, its norm, $\|\cdot\|_{\mathcal L(X,Y)}$, is also denoted simply by $\|\cdot\|$, if no confusion can occur. The dual space to $X$ is denoted by $X^*$.

\vspace*{2ex} 
\noindent
{\bf Muckenhoupt weights:}
We define the Muckenhoupt class and weighted Lebesgue spaces as follows.

\begin{defi}\label{Muckenhoupt-class}
Let $1<q <\infty$.

(i) A weight function $0 \leq w \in L^1_{\rm loc}(\mathbb{R}^n)$ belongs to the Muckenhoupt class $\mathscr{A}_q(\IR^n) $ if $w$ satisfies 
$$
\sup_Q \bigg(\frac{1}{|Q|}\displaystyle\int_Q w \dx\bigg)\bigg(\frac{1}{|Q|}\displaystyle\int_Q w^{-1/(q-1)}\dx\bigg)^{q-1} \leq C <\infty
$$
for all cubes $Q \subset \mathbb{R}^n$, where $|Q|$ denotes the Lebesgue measure of $Q$. 

(ii) The weighted $L^q$ space with weight $w\in \mathscr{A}_q(\IR^n)$ is defined by 
$$ \mathbb{L}^q_w(\Omega) = \Bigg\{u\in L^1_{\rm loc}(\Omega): \|u\|_{\mathbb L^q_w(\Omega)} = \Big(\int_\Omega |u|^q w(x)\dx \Big)^{1/q}<\infty \Bigg\}. $$

(iii) For weight functions of radially symmetric type $w(x) = \langle x\rangle^{q\ell} = \big(1+|x|^2\big)^{ q\ell/2}$ we introduce the weighted space 
$$
L^q_\ell (\Omega) := \bigg\{ u \in L^1_{\rm  loc} (\bar{\Omega}): \|u\|_{L^q_\ell (\Omega)}= \Big(\displaystyle\int_{\Omega} |u|^q \langle x\rangle^{q\ell} \dx \Big)^{1/q} <\infty\bigg\}.  
$$
\end{defi}

Note that in Definition \ref{Muckenhoupt-class} (iii)
we replace the index $w$ by $\ell$ and use the weight $\langle x\rangle^{q\ell}$ rather than $\langle x\rangle^{\ell}$ in the above integrand. 
Similarly, weighted Sobolev spaces $\mathbb H^{k,p}_{w}$, $H^{k,p}_{\ell}$, {\em etc.}~will be defined below. 
We recall that 
$$ w =\langle x\rangle^{q\ell}\BLACK \in \mathscr A_q(\IR^n) \quad\textrm{ if and only if} \quad
-\frac{n}{q} < \ell < n\Big(1-\frac{1}{q}\Big) = \frac{n}{q'};$$  
see {\em e.g.} \cite[Lemma 2.3]{FS}. Since the focus of this article is on exterior domains $\Omega
\subset \IR^n$ and weights with singularities at a finite point in $\overline \Omega$ are not adequate, we pose the following simplifying assumption for weights on $\Omega$:

\begin{assumption}\label{weight-Omega} 
For any weight $w\in \mathscr A_q(\IR^n)$ there holds $w(x) \sim 1$ in an open neighborhood of $\Omega^c$. In other words, there exists $R>0$ and $c_0>0$ such that $\Omega^c \subset B_R$ and $1/c_0 < w(x) < c_0$ in $\overline  B_R$. Moreover, let $\mathscr A_q = \mathscr A_q(\IR^n)$.
\end{assumption}

\vspace*{1ex}

We prepare several fundamental estimates on weighted spaces. Especially, the following operator estimates play an important role.

\begin{prop}\label{thm-hoermander}
Let $1<q<\infty$ and $w\in \mathscr A_q $. 

(i) Calder\'on-Zygmund operators are bounded from $\mathbb L^q_w(\IR^n) \to \mathbb L^q_w(\IR^n)$.

(ii) Hörmander-Mikhlin operators defined by a multiplier function $m$ such that 
$$ \sup_{0\neq \xi\in\IR^n} |\xi|^{|\alpha |} |\nabla_\xi^\alpha m(\xi)| \leq C<\infty $$
for all multi-indices $\alpha\in\IN_0^n$ with $|\alpha|\leq n$ are bounded from $ \mathbb L^q_w(\IR^n) \to \mathbb L^q_w(\IR^n)$.
\end{prop}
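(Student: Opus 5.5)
This statement collects classical weighted estimates, so I will not prove it from scratch. The plan is to reduce both parts to the standard weighted Calderón--Zygmund theory for Muckenhoupt weights. For (i) I take a Calderón--Zygmund operator $T$ in the standard sense: $T$ is bounded on $L^2(\IR^n)$ and has a kernel $K(x,y)$ with size bound $|K(x,y)|\le C|x-y|^{-n}$ and Hölder smoothness off the diagonal. The approach is the Coifman--Fefferman good-$\lambda$ argument, or equivalently Lerner's sparse domination.

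The key steps for (i) are as follows.
\begin{enumerate}
\item Prove the sharp maximal function estimate
\[
M^\#(Tf)(x)\le C_s\,\big(M(|f|^s)\big)^{1/s}(x) \quad\text{for some } s>1 .
\]
This uses the $L^s$-boundedness of $T$ on the local part of $f$ and the kernel smoothness on the far part.
\item Use two properties of $\mathscr A_q$ weights. They are \emph{open}: the reverse Hölder inequality gives $w\in\mathscr A_{q/s}$ for $s>1$ close to $1$. They also satisfy $\mathscr A_\infty$, which yields the Fefferman--Stein inequality $\|Mg\|_{\mathbb L^q_w}\le C\|M^\#g\|_{\mathbb L^q_w}$ for suitable $g$.
\item Combine these with Muckenhoupt's theorem, which states that the maximal operator is bounded on $\mathbb L^{q/s}_{w}$. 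This gives $\|Tf\|_{\mathbb L^q_w}\le C\|f\|_{\mathbb L^q_w}$ for $f$ in a dense class.
\item Extend to all of $\mathbb L^q_w$ by density.
\end{enumerate}
Step 3 applies Fefferman--Stein to $g=Tf$, which needs the a priori finiteness $\|M(Tf)\|_{\mathbb L^q_w}<\infty$. I handle this by first working with truncated operators or with $f\in C_0^\infty$ and a truncation of $\min(|Tf|,N)$, and then letting $N\to\infty$.

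For (ii), the Mikhlin condition with $|\alpha|\le n$ derivatives does not directly give a Calderón--Zygmund kernel. It only yields Hörmander's condition in an $L^r$-averaged sense. The approach I would follow is Kurtz--Wheeden:
\begin{enumerate}
\item Decompose $m=\sum_j m\,\psi(2^{-j}\cdot)$ dyadically.
\item Estimate the pieces $K_j=\mathcal F^{-1}(m\psi(2^{-j}\cdot))$ in weighted $L^r$ norms on annuli using Plancherel and Hausdorff--Young. With $n$ derivatives (which exceeds $n/2$) this gives the $L^r$-Hörmander condition
\[
\sum_k 2^{kn/r'}\Big(\int_{2^k|y|<|x|\le 2^{k+1}|y|}|K(x-y)-K(x)|^r\dx\Big)^{1/r}\le C
\]
for some $r>1$; in fact any $r\le 2$ works here.
\item Deduce the pointwise bound $M^\#(T_mf)\le C\,(M|f|^{r'})^{1/r'}$. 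This gives boundedness on $\mathbb L^q_w$ for $w\in\mathscr A_{q/r'}$.
\end{enumerate}

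The main obstacle is the gap between the limited smoothness of $m$ and the full class $\mathscr A_q$. Kurtz--Wheeden with exactly $n$ derivatives gives the result for all $w\in\mathscr A_q$, because $n$ derivatives allow $r'$ arbitrarily close to $1$, and openness of $\mathscr A_q$ then covers every $w\in\mathscr A_q$. I would check this dependence carefully. If it fails, I would fall back on citing the result as in \cite{FS}.
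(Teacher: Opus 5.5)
The paper does not prove this proposition. It cites Garc\'{\i}a-Cuerva--Rubio de Francia (Thm.~IV.3.1) for (i), and Kurtz--Wheeden (Thm.~2) or Garc\'{\i}a-Cuerva--Rubio de Francia (Thm.~IV.3.9) for (ii). Your sketch of (i) is the standard Coifman--Fefferman argument behind that citation, and it is fine. One detail in the a priori finiteness step: capping at $N$ alone does not put $\min(|Tf|,N)$ into $\mathbb L^q_w$, because $w$ need not be integrable. You also need the decay $|Tf(x)|\le C|x|^{-n}$ away from $\supp f$, together with $\int w(x)(1+|x|)^{-nq}\dx<\infty$ for $w\in\mathscr A_q$.

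In (ii) there is a genuine gap, exactly at the step that decides the range of weights. You get the $L^r$-H\"ormander condition for $r\le 2$ from Plancherel (``$n>n/2$''). That yields $M^\#(T_mf)\le C(M|f|^{r'})^{1/r'}$ with $r'\ge 2$. Hence you only get $w\in\mathscr A_{q/r'}\subset\mathscr A_{q/2}$, and only for $q>2$. This contradicts your closing claim that $r'$ can be taken arbitrarily close to $1$. H\"older on annuli only passes from larger to smaller $r$, so no amount of $L^2$ information repairs this; you need the condition for every $r<\infty$.

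The missing idea is that Mikhlin's bounds are pointwise, so $\nabla^k m_j$, with $m_j=m\,\psi(2^{-j}\cdot)$, is controlled in every $L^{r'}$. Hausdorff--Young with $r'\in(1,2]$ close to $1$ then gives, for $0\le k\le n$,
\[
\|\,|x|^kK_j\|_{L^r}\lesssim 2^{j(n/r'-k)},\qquad \|\,|x|^k\nabla K_j\|_{L^r}\lesssim 2^{j(1+n/r'-k)} .
\]
Work on the annuli $A_i=\{2^i|y|<|x|\le 2^{i+1}|y|\}$.
\begin{enumerate}
\item The pieces with $2^j|y|>1$ sum only if you can take $k>n/r'$. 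Letting $r'\downarrow 1$ forces $k=n$; this is precisely where all $n$ derivatives are used.
\item The pieces with $2^j|y|\le 1$ are handled by the mean value theorem with $\nabla K_j$. Take $k=0$ for $2^j\le (2^i|y|)^{-1}$ and $k=n$ otherwise.
\end{enumerate}
This gives $\sup_y\sum_i (2^i|y|)^{n/r'}\|K(\cdot-y)-K\|_{L^r(A_i)}<\infty$ for every $r<\infty$. Note the factor $(2^i|y|)^{n/r'}$; the $2^{in/r'}$ in your display is not scale invariant. It follows that $M^\#(T_mf)\le C_s(M|f|^s)^{1/s}$ for every $s>1$, and openness of $\mathscr A_q$ then covers every $w\in\mathscr A_q$.
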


For (i) we refer to \cite[Theorem IV 3.1]{G-C and RdF}. (ii) is found in \cite[Theorem 2]{KurtzWheeden} or \cite[Theorem IV 3.9]{G-C and RdF}.

\vspace{2ex}

\noindent
{\bf Homogeneous Sobolev spaces:}
In view of the unboundedness of domains we have to deal with homogeneous function spaces, their density  and interpolation properties. For the whole space $\IR^n$ homogeneous Riesz potential spaces (Sobolev spaces) $\widehat{\mathbb{H}}^{\kappa,q}_w(\IR^n)$ are defined as distributions in the space $Z'(\IR^n) = \mathcal S'(\IR^n)/\Pi(\IR^n)$ where $\mathcal S'(\IR^n)$ denotes the space of Schwartz' tempered distributions and $\Pi(\IR^n)$ the set of all polynomials over $\IR^n$. Then  for any $\kappa\in\IR$, $1<q<\infty$ and $w\in\mathscr A_q$
$$
\widehat{\mathbb{H}}^{\kappa,q}_w(\IR^n) = \Big\{f\in Z'(\IR^n): \|f\|_{\widehat H^{\kappa,q}_w} = \Big(\int_{\IR^n} \big|\mathcal F^{-1}(|\xi|^\kappa \hat f(\xi))(x)\big|^q \,w(x) \dx \Big)^{1/q} <\infty\Big\}. 
$$
For more details we refer to \cite[Chapter 5]{Triebel2010} where these spaces are introduced  - without weights - as the class $\dot F^{\kappa;q,2}_w(\IR^n)$. 
If $\kappa=k\in\IN$,  Proposition \ref{thm-hoermander} shows that $\widehat{\mathbb{H}}^{k,q}_w(\IR^n)$ coincides with the classical homogeneous Sobolev space of functions $f$ such that - with equivalent norms - $\sum_{|\alpha|=k} \| \nabla^\alpha f\|_{q,w}<\infty$. 
The drawback of the spaces 
$\widehat{\mathbb{H}}^{\kappa,q}_w(\IR^n)$  
is the fact that elements are uniquely determined only up to polynomials. 
However, by embedding theorems it can be shown that in the case $1<q<n$, $\kappa=1$, and $w=1$ elements of $\widehat{\mathbb{H}}^{\kappa,q}(\IR^n)$ can be identified with functions in $L^{q^*}(\IR^n)$, $q^*=nq/(n-q)$; for an analogue in weighted spaces see Propositions \ref{embed-ext-weight} and \ref{Hardy-Rellich} below. 
In view of interpolation of spaces $L^r_v(\IR^n)$ and $\widehat{\mathbb{H}}^{\kappa,q}_w(\IR^n)$ we identify $L^r_v(\IR^n)$, $1\leq r<\infty$, with $(L^r_v(\IR^n) + \Pi(\IR^n))/\Pi(\IR^n)$.

Similar results hold for homogeneous spaces defined on an  exterior domain $\Omega \subset\IR^n$. Recall the definition
$$ \widehat{\mathbb{H}}^{k,q}_w(\Omega) := \big\{u: u=v\big|_{\Omega},\, v\in \widehat{\mathbb{H}}^{k,q}_w(\IR^n)\big\}, \quad 
\|u\|_{\widehat{\mathbb{H}}^{k,q}_w(\Omega)}: = \inf_{v,\, u=v|_{\Omega}} \|v\|_{\widehat{\mathbb{H}}^{k,q}_w(\IR^n)}, $$
for $k\in\IN$ and, in case of vanishing boundary values,
$$  \widehat{\mathbb{H}}^{k,q}_{w,0}(\Omega) := \overline{ C^\infty_0(\Omega)}^{\widehat{\mathbb H}^{k,q}_{w}(\Omega)}. $$
Moreover,  for $\kappa=k+\theta$ where $k\in \IN_0$ and $0<\theta<1$ let 
\begin{equation*}
\widehat{\mathbb H}^{\theta,q}_{w,0}(\Omega) := [\widehat{\mathbb H}^{k,q}_{w,0}(\Omega), \widehat{\mathbb H}^{k+1,q}_{w,0}(\Omega)]_\theta.
\end{equation*}  
%
%
%\RED ((Note: In the definition above the norm is defined by the infimum of {\bf all} suitable extensions from $\Omega$ to $\IR^n$. Another possibility is to use the norm of $\widehat{\mathbb{H}}^{k,q}_w(\IR^n)$ where first of all $u\in C^\infty_0(\Omega)$ is simply extended by $0$ to $\IR^n$.  Obviously, $\overline{C^\infty_0(\Omega)}^{\|\cdot\|_{\widehat{\mathbb{H}}^{\kappa,q}_w(\Omega)}} \supset \overline{C^\infty_0(\Omega)}^{\|\cdot\|_{\widehat{\mathbb{H}}^{\kappa,q}_w(\IR^n)}}$.  But are they equivalent? It seems to be that this problem is explained by [Triebel, §4.3.2, pp. 317-320]. There a space $\tilde H^{s,q}$ is defined which coincides with $H^{s,q}_0$ unless $s \in 1/q +\IZ$)) \BLACK \normalsize

Concerning spaces of solenoidal vector fields we define $\mathbb L^q_{\sigma,w}(\Omega)$ and $L^q_{\sigma,\ell}(\Omega)$ as the closure of $C^\infty_{0,\sigma}(\Omega)$ with respect to the norm of $\mathbb L^q_{\sigma,w}(\Omega)$ and $L^q_\ell(\Omega)$, respectively.
By analogy, we introduce the subspace of solenoidal vector fields in $\widehat{\mathbb{H}}^{1,q}_{w,0}(\Omega)$ by
\begin{align}\label{spaces}
 \widehat{\mathbb{H}}^{1,q}_{\sigma,w,0}(\Omega) & :=  \overline{C^\infty_{0,\sigma}(\Omega)}^{\|\nabla \cdot\|_{L^q_w(\Omega)}},
\end{align} 
 and, motivated by complex interpolation, 
\begin{equation}\label{spaces2}
\widehat{\mathbb{H}}^{\theta,q}_{\sigma,w,0}(\Omega) := [\mathbb L^q_{\sigma,w}(\Omega), \widehat{\mathbb{H}}^{1,q}_{\sigma,w,0}(\Omega)]_\theta.
\end{equation}
\BLACK
Obviously there holds the identity
$$
\mathbb L^q_{\sigma,w}(\Omega) \cap \widehat{\mathbb H}^{1,q}_{\sigma,w,0}(\Omega)  = \widehat{\mathbb H}^{1,q}_{\sigma,w,0}(\Omega). $$
%\quad \widehat{\mathbb H}^{1,q}_{\sigma,w,0}(\Omega) =  \overline{C^\infty_{0,\sigma}(\Omega)}^{\|\nabla\cdot\|_{\mathbb L^q_w(\IR^n)}},$$
%where the last norm is the norm of $\widehat{\mathbb H}^{1,q}_{\sigma,w,0}(\IR^n)$ used as a quotient norm. 
Homogeneous spaces with weights $\langle\cdot\rangle^{\ell  q}$ are defined analogously. For instance
$$\widehat H^{1,q}_{\sigma,\ell,0}(\Omega) := \overline{C^\infty_{0,\sigma}(\Omega)}^{\|\nabla\cdot\|_{L^q_\ell(\Omega)}}. $$

\begin{prop}\label{dense Htheta}
Let $\Omega\subset \IR^n$ be an exterior domain as above. Then, for each $1<q<\infty$, $0<\theta<1$ and $w\in\mathscr A(\IR^n)$,  the set $C^\infty_{0,\sigma}(\Omega)$ is dense in the interpolation space $[\mathbb L^q_{\sigma,w}(\Omega), \widehat{\mathbb{H}}^{1,q}_{\sigma,w,0}(\Omega)]_\theta$, {\em i.e.} by definition in $\widehat{\mathbb{H}}^{\theta,q}_{\sigma,w,0}(\Omega)$. 

By analogy, $C^\infty_{0,\sigma}(\Omega)$ is dense in $[\mathbb L^q_{w}(\Omega), \widehat{\mathbb{H}}^{1,q}_{w,0}(\Omega)]_\theta = \widehat{\mathbb{H}}^{\theta,q}_{w,0}(\Omega)$.  
\end{prop}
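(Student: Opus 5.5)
The plan is to use the standard fact that for a compatible couple $(X_0,X_1)$ with $X_0\cap X_1$ dense in $X_1$, the intersection $X_0\cap X_1$ is dense in $[X_0,X_1]_\theta$ for $0<\theta<1$ (see Bergh--L\"ofstr\"om, Theorem 4.2.2). Thus it suffices to approximate elements of $X_0\cap X_1=\mathbb L^q_{\sigma,w}(\Omega)\cap \widehat{\mathbb H}^{1,q}_{\sigma,w,0}(\Omega)$ by $C^\infty_{0,\sigma}(\Omega)$ in the norm $\|\cdot\|_{\mathbb L^q_w}+\|\nabla\cdot\|_{\mathbb L^q_w}$. By the embedding of the intersection into the interpolation space, with $\|x\|_{[X_0,X_1]_\theta}\le C\|x\|_{X_0}^{1-\theta}\|x\|_{X_1}^\theta$, convergence in this sum norm implies convergence in $\widehat{\mathbb H}^{\theta,q}_{\sigma,w,0}(\Omega)$.

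First we need the compatible couple. Here $\widehat{\mathbb H}^{1,q}_{\sigma,w,0}(\Omega)$ is realized, modulo constants, inside a common Hausdorff space such as $L^1_{\rm loc}$ modulo constants, and $\mathbb L^q_{\sigma,w}$ is identified as in the remark before the statement. Since $C^\infty_{0,\sigma}(\Omega)$ is by definition dense in $X_1$ and lies in $X_0\cap X_1$, density of $X_0\cap X_1$ in $X_1$ is immediate, and the abstract theorem applies.

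The main step is that $C^\infty_{0,\sigma}(\Omega)$ is dense in $X_0\cap X_1$ with respect to the sum norm. Take $u\in X_0\cap X_1$. We pick $\varphi_k\in C^\infty_{0,\sigma}$ with $\nabla\varphi_k\to\nabla u$ in $\mathbb L^q_w$. This alone does not control $\|\varphi_k-u\|_{\mathbb L^q_w}$. To fix this, we first truncate $u$ itself with a cut-off $\chi_k(x)=\chi(x/k)$. We then correct the divergence $\nabla\chi_k\cdot u$, which is supported in the annulus $k<|x|<2k$ and has mean zero, by a Bogovski\u{\i} operator $B_k$ on that annulus. The annulus estimate scales as $\|\nabla B_k g\|_{q,w}\le C\|g\|_{q,w}$, uniformly in $k$ for $\mathscr A_q$ weights by dilation invariance of the $\mathscr A_q$ constant, and $\|B_k g\|_{q,w}\le Ck\|g\|_{q,w}$. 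With $g=\nabla\chi_k\cdot u$ we get $\|g\|_{q,w}\le Ck^{-1}\|u\|_{L^q_w(k<|x|<2k)}\to0$. Hence $u_k=\chi_k u - B_k g$ is compactly supported, solenoidal, and converges to $u$ in both norms; the gradient term $u\nabla\chi_k$ is handled the same way. A compactly supported solenoidal $H^{1,q}$ field on the bounded set $\Omega_{2k}$, where the weight is locally in $\mathscr A_q$ and hence the space embeds in $L^q$ up to the boundary, can then be approximated by $C^\infty_{0,\sigma}$ via the known bounded-domain result, using the standard mollification plus Bogovski\u{\i} correction in weighted spaces (cf.\ \cite{FS}).

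The main obstacle is this truncation step. It needs both $u\in L^q_w$ and $\nabla u\in L^q_w$ tied together on annuli, together with the uniform-in-$k$ weighted Bogovski\u{\i} bound. I would invoke Proposition \ref{Bog-ext} or its annulus analogue and check scaling of the $\mathscr A_q$ constant. The non-solenoidal statement follows the same way without the Bogovski\u{\i} correction, using simple cut-off and mollification.
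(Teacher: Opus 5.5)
Your proposal is correct and follows the paper's route: density of $X_0\cap X_1$ in $[X_0,X_1]_\theta$ (the paper cites Triebel, Thm.~1.9.3(c)), combined with density of $C^\infty_{0,\sigma}(\Omega)$ in $X_0\cap X_1$ for the sum norm. Your cut-off argument with uniformly bounded weighted Bogovski\u{\i} operators on the annuli $\{k<|x|<2k\}$ actually proves that second step, which the paper only asserts by identifying $X_0\cap X_1$ with $\mathbb H^{1,q}_{\sigma,w,0}(\Omega)$.

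One caution: realize both spaces in $L^1_{\rm loc}(\Omega)$ itself rather than modulo constants; this is legitimate because elements of $\widehat{\mathbb H}^{1,q}_{w,0}(\Omega)$ have a unique zero-trace representative. In the solenoidal case the vanishing normal trace forces the two representatives to coincide anyway. In the non-solenoidal case, however, the quotient would put $-\zeta$ into the intersection, where $\zeta\in C^\infty_0(\IR^n)$ with $\zeta=1$ near $\Omega^c$, whenever $1-\zeta\in\widehat{\mathbb H}^{1,q}_{w,0}(\Omega)$ (cf.\ Lemma~\ref{D^1}). Such a field has nonzero trace and cannot be approximated by $C^\infty_0(\Omega)$ in the sum norm.
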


\begin{proof}
By \cite[Theorem 1.9.3 (c)]{Triebel} the set $\mathbb L^q_{\sigma,w}(\Omega) \cap \widehat{\mathbb H}^{1,q}_{\sigma,w,0}(\Omega)$ is dense in the interpolation space $[\mathbb L^q_{\sigma,\ell}(\Omega), \widehat{\mathbb H}^{1,q}_{\sigma,\ell,0}(\Omega)]_\theta$.
Moreover, since $C^\infty_{0,\sigma}(\Omega) $ is dense in $\mathbb H^{1,q}_{\sigma,w,0}(\Omega)$, we obtain by (3.68) and (3.67) that $C^\infty_{0,\sigma}(\Omega)$ is dense in $\widehat{\mathbb H}^{\theta,q}_{\sigma,w,0}(\Omega)$.
\end{proof}

\small
%\RED (({\bf Conclusion:} The space $\widehat H^{\theta,q}_{\sigma,\ell,0}(\Omega)$ can be defined by complex interpolation
%and, equivalently, by density of $C^\infty_{0,\sigma}(\Omega)$. I guess or hope that similar results for $1<\theta<2$ can be obtained easily. There is also the method of reiteration. \BLACK))\normalsize
%
\begin{lem}\label{hatH0-dense}
Let $1<q<\infty$, $w\in\mathscr  A_q(\IR^n)$.

(i) The set $C^\infty_{0}(\IR^n)$ is dense in $\widehat{\mathbb{H}}^{1,q}_{w}(\IR^n)$.

(ii) The set $\Delta C_0^\infty(\IR^n)$ is dense in $\mathbb L^q_w(\IR^n)$.

(iii) The set $C^\infty_{0}(\IR^n)$ is dense in $\widehat{\mathbb{H}}^{\kappa,q}_{w}(\IR^n)$ for each $0\leq\kappa\leq 2$.  
\end{lem}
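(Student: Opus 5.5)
We plan to prove the three density statements in order, reducing each to truncation and mollification together with the Calder\'on--Zygmund and multiplier bounds of Proposition \ref{thm-hoermander}.

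\emph{(i)} Let $u\in\widehat{\mathbb H}^{1,q}_w(\IR^n)$, so that $\nabla u\in\mathbb L^q_w$. First we mollify: $u_\varepsilon=\rho_\varepsilon*u$ satisfies $\nabla u_\varepsilon\to\nabla u$ in $\mathbb L^q_w$, because $\rho_\varepsilon*$ is dominated by the Hardy--Littlewood maximal function, which is bounded on $\mathbb L^q_w$ for $w\in\mathscr A_q$. Next we cut off with $\chi_R=\chi(\cdot/R)$ and subtract the mean $c_R$ of $u_\varepsilon$ over the annulus $A_R=B_{2R}\setminus B_R$. This gives
$$\nabla(\chi_R(u_\varepsilon-c_R))=\chi_R\nabla u_\varepsilon+R^{-1}(\nabla\chi)(\cdot/R)(u_\varepsilon-c_R).$$
The first term converges to $\nabla u_\varepsilon$. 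The second term is the main obstacle: we need a weighted Poincar\'e inequality on the annulus,
$$\int_{A_R}|u-c_R|^q w\le C R^q\int_{\tilde A_R}|\nabla u|^q w,$$
with $C$ independent of $R$, where $\tilde A_R$ is a slightly enlarged annulus. For $\mathscr A_q$ weights this holds on balls and cubes (Fabes--Kenig--Serapioni), and by covering the annulus with a bounded number of scaled cubes it transfers to $A_R$. Since the right-hand side is a tail integral of $|\nabla u|^q w$, it tends to $0$ as $R\to\infty$. Because constants vanish in $Z'$, this proves (i).

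\emph{(ii)} We argue by duality. The dual of $\mathbb L^q_w$ is $\mathbb L^{q'}_{w'}$ with $w'=w^{-1/(q-1)}\in\mathscr A_{q'}$. Suppose $g\in \mathbb L^{q'}_{w'}$ annihilates $\Delta C_0^\infty$. Then $\Delta g=0$ in $\mathcal S'$, so $\hat g$ is supported at $\{0\}$ and $g$ is a polynomial. A nonzero polynomial does not lie in $\mathbb L^{q'}_{w'}$: any $\mathscr A_{q'}$ weight satisfies $\int_{B_R}w'\gtrsim R^{n\delta}$ for some $\delta>0$, by the reverse doubling property, so already constants fail to be integrable. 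Hence $g=0$, and Hahn--Banach gives the density.

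\emph{(iii)} For $\kappa=0$ the claim is standard density of $C_0^\infty$ in $\mathbb L^q_w$ modulo polynomials. For $\kappa=1$ it is (i). For $\kappa=2$ we take $f\in\widehat{\mathbb H}^{2,q}_w$, so that $\Delta f\in \mathbb L^q_w$. By (ii) there are $\varphi_j\in C_0^\infty$ with $\Delta\varphi_j\to\Delta f$. The norm of $\widehat{\mathbb H}^{2,q}_w$ is equivalent to $\|\Delta\cdot\|_{\mathbb L^q_w}$, since $\partial_i\partial_j\Delta^{-1}$ are Riesz-transform multipliers covered by Proposition \ref{thm-hoermander}(ii); hence $\varphi_j\to f$. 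For general $0<\kappa<2$ the norm is $\|I^{-\kappa}f\|$, where $I^{-\kappa}$ has symbol $|\xi|^\kappa$. We write $|\xi|^\kappa=|\xi|^{\kappa-2}|\xi|^2$ and approximate $|\xi|^\kappa\hat f$ in $\mathbb L^q_w$ by elements of $\Delta C_0^\infty$, adapting the duality argument of (ii) to the operator $(-\Delta)^{\kappa/2}$ (annihilators again have Fourier support in $\{0\}$). Alternatively, we use the interpolation $[\mathbb L^q_w,\widehat{\mathbb H}^{2,q}_w]_{\kappa/2}=\widehat{\mathbb H}^{\kappa,q}_w$ and the density of the intersection (Triebel 1.9.3). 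Beyond the weighted Poincar\'e estimate, the delicate point in this last step is handling elements modulo polynomials so that the approximants are genuinely compactly supported. If necessary we mollify in frequency, first removing a neighbourhood of $\xi=0$, and then truncate in space as in (i).
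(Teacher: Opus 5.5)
Your proposal is correct, and for (iii) it follows the paper. The case $\kappa=2$ is the paper's argument: approximate $\Delta f$ by $\Delta C_0^\infty$ using (ii), then apply Riesz transforms. Your alternative for $0<\kappa<2$ is also exactly the paper's route: the identity $[\mathbb L^q_w,\widehat{\mathbb H}^{2,q}_w]_{\kappa/2}=\widehat{\mathbb H}^{\kappa,q}_w$ from the $BIP$ of $-\Delta$, plus density of the intersection in the interpolation space (Triebel, Theorem 1.9.3). You should add, as the paper does, that $C_0^\infty$ is dense in $\mathbb L^q_w\cap\widehat{\mathbb H}^{2,q}_w$ with the intersection norm. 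Your $\kappa=2$ step only gives density for the $\widehat{\mathbb H}^{2,q}_w$ seminorm.

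The genuine difference is in (i) and (ii). There the paper gives no argument and cites Farwig--Sohr (their Corollary 4.3 and Lemma 4.1(iii)). You instead prove both with standard $\mathscr A_q$ tools:
\begin{itemize}
\item maximal-function domination for the mollification;
\item a dilation-invariant weighted Poincar\'e inequality on annuli for the cut-off with subtracted annular mean (ball version plus a chain of boundedly many balls, with doubling to compare overlapping means);
\item Hahn--Banach combined with a Liouville argument in $\mathcal S'$ for (ii).
\end{itemize}
This makes the lemma self-contained at the cost of length, whereas the paper's version rests entirely on the external reference.

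Two points should be tightened.

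In (ii), $w'(B_R)\to\infty$ only rules out nonzero \emph{constants}. For a nonconstant polynomial $P$, use the $\mathscr A_{q'}$ inequality $w'(E)\ge c\,(|E|/|B_R|)^{q'}\,w'(B_R)$ with $E=\{x\in B_R:|P(x)|\ge 1\}$, whose relative measure tends to $1$.

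In (iii), your first suggestion (duality for $(-\Delta)^{\kappa/2}$) does not go through as written. For $\varphi\in C_0^\infty$, $(-\Delta)^{\kappa/2}\varphi$ is not a Schwartz function and $|\xi|^\kappa$ is not smooth at $0$. So you cannot directly conclude that an annihilator has Fourier support in $\{0\}$, and the Poincar\'e-based truncation of (i) does not apply at fractional order.

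Both issues are settled by one multiplier bound. The symbol $|\xi|^\kappa(1+|\xi|^2)^{-1}$ satisfies the Mikhlin condition for $0<\kappa\le 2$, so Proposition~\ref{thm-hoermander} gives
\[
\|(-\Delta)^{\kappa/2}h\|_{\mathbb L^q_w}\le C\|(I-\Delta)h\|_{\mathbb L^q_w}.
\]
Since $w(B_R)$ grows at most polynomially, this gives $\chi_R\psi\to\psi$ in $\widehat{\mathbb H}^{\kappa,q}_w$ for every $\psi\in\mathcal S$. Now test an annihilator against $(-\Delta)^{\kappa/2}\psi$ with $\hat\psi\in C_0^\infty(\IR^n\setminus\{0\})$. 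This shows its Fourier transform is supported in $\{0\}$, so it is a polynomial and hence vanishes. Completed this way, your duality route proves (iii), and in fact (i) as well, without the $BIP$ of $-\Delta$ or the identification of the complex interpolation space that the paper's route needs.
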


\begin{proof}
For (i) and (ii) we refer to \cite[Corollary 4.3]{FS} 
and \cite[Lemma 4.1 (iii)]{FS}, respectively. 

(iii) First we consider the case when $\kappa=2$. Given $u \in \widehat{\mathbb{H}}^{2,q}_{w}(\IR^n)$ so that $\Delta u\in \mathbb L^q_w(\IR^n)$ there exists due to (ii) a sequence $(u_k)_k\subset C^\infty_0(\IR^n)$ such that $\Delta u_k\to \Delta u$  in $\mathbb L^q_w(\IR^n)$. Applying Riesz transforms  Proposition \ref{thm-hoermander} implies that $\nabla^2 u_k\to \nabla^2 u$ in $\mathbb L^q_w(\IR^n)$ as $k\to\infty$. 

Now let $0<\kappa<2$. We exploit that $-\Delta$ has the property $BIP$ of bounded imaginary powers; a proof via the bounded $\mathscr H^\infty$-calculus of $-\Delta$ on $\mathbb L^q_w(\IR^n)$ can  be performed by analogy to the proof for the Stokes operator as in § \ref{S2.4}. 
In particular, we get the identity $[\mathbb L^q_w(\IR^n),\widehat H^{2,q}_w(\IR^n)]_{\kappa/2} = \widehat{\mathbb H}^{\kappa,q}_w(\IR^n)$ for each $\kappa\in (0,2)$.
Since $C_0^\infty(\IR^n)$ is dense in $\mathbb H^{2,q}_w(\IR^n) = \mathbb L^q_w(\IR^n) \cap \widehat{\mathbb H}^{2,q}_w(\IR^n)$ and the latter intersection is dense in $\widehat{\mathbb H}^{\kappa,q}_w(\IR^n)$ by \cite[Theorem 1.9.3 (b)]{Triebel}, we get that $C_0^\infty(\IR^n)$ is dense in $\widehat{\mathbb H}^{\kappa,q}_w(\IR^n)$.
\end{proof}

\vspace*{2ex}

\noindent
{\bf Estimates in homogeneous Sobolev spaces:}
An essential tool for inequalities in weighted Sobolev spaces are the general Gagliardo-Nirenberg estimates from which also Propositions \ref{embed-ext-weight} and \ref{Hardy-Rellich} can be derived.

\vspace*{1ex}

\begin{prop}\label{DS} {\em (Duarte and Silva \cite[Theorem 1.7]{Du-Si})} 
Let $1 < p, q, r <  \infty$, $0\leq k_0 <k$, $\frac{k_0}{k}\leq \theta\leq 1$, and $\alpha\in (-\frac{n}{p}, \frac{n}{p'})$, $\beta\in (-\frac{n}{q}, \frac{n}{q'})$, $\gamma >-\frac{n}{r}$, 
satisfy  
\begin{equation}\label{theta-pqr-alpha-beta-N}
\theta\Big(\frac{1}{p}-\frac{k}{n}\Big) + (1-\theta)  \frac{1}{q} +\frac{\gamma}{n} \leq \frac{1}{r} -\frac{k_0-\gamma}{n} \leq  \theta\Big(\frac{1}{p}-\frac{k-\alpha}{n}\Big) + (1-\theta)\Big(\frac{1}{q}+\frac{\beta}{n}\Big)
\end{equation} 
and 
$$ \gamma \leq \theta \alpha + (1-\theta)\beta $$
with 
$$
\frac{1}{r} \leq \frac{\theta}{p}+\frac{1-\theta}{q}.
$$
Then there holds for any Schwartz function $f \in \mathcal S(\mathbb{R}^n)$ %\RED (Note the correction of weight $\langle x\rangle^\alpha$ instead of $|x|^\alpha$ as in \cite{Du-Si}) \BLACK
\begin{align}\label{GNI-weighted-N}  
\|(-\Delta)^{k_0/2} f \|_{L^r_\gamma} 
\leq C\|(-\Delta)^{k/2} f\|_{L^p_\alpha}^\theta \, \|f\|_{L^q_\beta}^{1-\theta}.
\end{align}   
\end{prop}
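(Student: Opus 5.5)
The plan is to split the estimate into a pointwise interpolation part and a weighted fractional integration part. Set $\kappa:=\theta k\in[k_0,k]$ and $s:=\kappa-k_0\ge 0$, and write
$$(-\Delta)^{k_0/2}f = I_s\,(-\Delta)^{\kappa/2}f,$$
where $I_s=(-\Delta)^{-s/2}$ is the Riesz potential (with $I_0={\rm id}$). Since $f\in\mathcal S(\IR^n)$ and $0\le k_0<k$, all these objects are well defined, so this identity is legitimate. The estimate \eqref{GNI-weighted-N} will follow from two inequalities:
\begin{itemize}
\item[(a)] a weighted Stein--Weiss (Hardy--Littlewood--Sobolev) inequality $\|I_s g\|_{L^r_\gamma}\le C\|g\|_{L^{m}_{\mu}}$ for suitable intermediate exponents $m$ and $\mu$;
\item[(b)] a multiplicative bound $\|(-\Delta)^{\kappa/2}f\|_{L^m_\mu}\le C\|(-\Delta)^{k/2}f\|_{L^p_\alpha}^\theta\|f\|_{L^q_\beta}^{1-\theta}$.
\end{itemize}

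For (b) I would use a Hedberg-type pointwise estimate. Write $(-\Delta)^{\kappa/2}f=I_{k-\kappa}(-\Delta)^{k/2}f$ and decompose with a Littlewood--Paley cut-off at frequency $\delta^{-1}$. The low-frequency piece is bounded by $\delta^{-\kappa}Mf$, and the high-frequency piece by $\delta^{k-\kappa}M\big((-\Delta)^{k/2}f\big)$; for noninteger $\kappa$ the kernels are Mikhlin multipliers, so Proposition \ref{thm-hoermander} (ii) and its maximal-function version apply. Optimizing in $\delta$ pointwise gives
$$|(-\Delta)^{\kappa/2}f|\le C\,\big(M(-\Delta)^{k/2}f\big)^{\theta}(Mf)^{1-\theta}.$$
Then I would apply Hölder's inequality with $\frac1m=\frac\theta p+\frac{1-\theta}q$, splitting the weight as $\langle x\rangle^{\mu m}=\langle x\rangle^{\theta\alpha m}\langle x\rangle^{(1-\theta)\beta m}$, i.e. $\mu=\theta\alpha+(1-\theta)\beta$. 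Finally, boundedness of $M$ on $L^p_\alpha$ and $L^q_\beta$ holds because $\alpha\in(-\frac np,\frac n{p'})$ and $\beta\in(-\frac nq,\frac n{q'})$ are exactly the $\mathscr A_p$ and $\mathscr A_q$ conditions.

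For (a), note that the hypothesis $\frac1r\le\frac1m$ and the two-sided condition \eqref{theta-pqr-alpha-beta-N} rewrite as
$$\frac1m-\frac{s}{n}+\frac{\gamma}{n}\le\frac1r\le\frac1m-\frac{s-\mu}{n},\qquad \gamma\le\mu.$$
This is the form of a Stein--Weiss inequality with inhomogeneous weights $\langle x\rangle$ rather than $|x|$. Because $\langle x\rangle\sim 1$ near the origin and $\langle x\rangle\sim|x|$ at infinity, balance of homogeneity is only needed as an inequality, and this explains why a range of exponents occurs instead of a single scaling relation. To prove it I would split $I_s g$ into a local part $|x-y|<\frac12\langle x\rangle$ and a far part. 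On the local part the weights are comparable, $\langle y\rangle\sim\langle x\rangle$, so the estimate reduces to unweighted HLS on dyadic annuli (or to Young's inequality when $\frac1r>\frac1m-\frac sn$). The far part is dominated by Hardy-type operators $\langle x\rangle^{s-n}\int_{\langle y\rangle\lesssim\langle x\rangle}|g|$ and $\int_{\langle y\rangle\gtrsim\langle x\rangle}|g|\,|y|^{s-n}$. These are bounded by the weighted Hardy inequality, using $\gamma>-\frac nr$ and $\mu<\frac n{m'}$; the latter holds as a convex combination of the $\mathscr A$ conditions. When $s=0$, (a) reduces to the elementary embedding $L^m_\mu\hookrightarrow L^r_\gamma$, valid for $\gamma\le\mu$ and $\frac1r\le\frac1m$ (the conditions above then say $\frac1r-\frac1m\ge\frac{\gamma-\mu}{n}$), by Hölder.

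I expect step (a) to be the main obstacle. The parameter bookkeeping must show that exactly the hypotheses of the proposition give boundedness of both Hardy pieces together with the local HLS piece, including the borderline case where the left inequality in \eqref{theta-pqr-alpha-beta-N} is an equality. I would first verify the pure power case $\gamma=\mu=0$ and then handle the weights annulus by annulus.
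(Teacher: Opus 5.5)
\textbf{There is a concrete error in the reduction to (a).} The paper gives no proof of this proposition; it only quotes Duarte--Silva. Your route is a legitimate self-contained alternative: a Hedberg-type pointwise interpolation by maximal functions, then H\"older, then the weighted maximal theorem for $\langle x\rangle^{\alpha p}\in\mathscr A_p$, then a Stein--Weiss inequality for the weights $\langle x\rangle$. But as written, the error makes (a) false.

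The middle term of \eqref{theta-pqr-alpha-beta-N} is $\frac1r-\frac{k_0-\gamma}{n}$, not $\frac1r-\frac{k_0}{n}$. With $\frac1m=\frac\theta p+\frac{1-\theta}q$, $\mu=\theta\alpha+(1-\theta)\beta$ and $s=\theta k-k_0$, the hypotheses are equivalent to
\[
\frac1m-\frac sn\;\le\;\frac1r\;\le\;\frac1m-\frac{s+\gamma-\mu}{n},\qquad \gamma\le\mu,\qquad \frac1r\le\frac1m .
\]
Your conditions are shifted by $\gamma/n$, and for $\gamma\neq0$ the inequality (a) you set out to prove then fails.
\begin{itemize}
\item If $\gamma<0$, your range contains $\frac1r<\frac1m-\frac sn$. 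Rescaling $h_\varepsilon=h(\cdot/\varepsilon)$ near the origin, where both weights are $\sim1$, gives the ratio $\varepsilon^{s+n/r-n/m}\to\infty$.
\item If $\gamma>0$, your range contains $\frac1r>\frac1m-\frac{s+\gamma-\mu}{n}$ (for instance when $\gamma=\mu\in(0,s]$). Dilating $h_R=h(\cdot/R)$, with $h\ge0$ supported in $\{1<|y|<3\}$, gives the ratio $R^{s+\gamma-\mu+n/r-n/m}\to\infty$.
\end{itemize}
Your remark on $s=0$ is wrong for the same reason: $L^m_\mu\not\hookrightarrow L^r_\gamma$ for $r>m$, because of local singularities, and H\"older goes the other way. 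With the correct conditions, $s=0$ forces $r=m$ and that case is trivial.

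\textbf{With the corrected conditions your splitting closes, and the hypotheses are exactly what is needed.}
\begin{itemize}
\item Local piece. On $\{\langle x\rangle\sim 2^j\}$, use Young's inequality with the truncated kernel $|z|^{s-n}\chi_{\{|z|\lesssim 2^j\}}$. Its $L^t$ norm, with $1+\frac1r=\frac1m+\frac1t$, is $\sim 2^{j(s-n/m+n/r)}$ when $\frac1r>\frac1m-\frac sn$. At the endpoint, use HLS with a $j$-independent constant. The weights then contribute $2^{j(\gamma+s-n/m+n/r-\mu)}\le 1$, which is precisely the right-hand inequality (or $\gamma\le\mu$ at the endpoint). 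Summing over $j$ uses $r\ge m$.
\item Hardy pieces. Both Hardy operators have Muckenhoupt--Bradley constants $\sup_{R\ge1}R^{s+\gamma-\mu+n/r-n/m}$, again finite by the right-hand inequality.
\item Integrability. You need $s+\gamma<\frac n{r'}$ and $s-\mu<\frac nm$. These follow from the right-hand inequality together with $\mu<\frac n{m'}$ and $\gamma>-\frac nr$. They also give $s<n$, so $I_s$ is meaningful.
\end{itemize}

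Two smaller corrections.
\begin{itemize}
\item In the second Hardy operator use $\langle y\rangle^{s-n}$, not $|y|^{s-n}$. The latter creates a nonintegrable singularity at $y=0$ when $s\le \frac nm$.
\item The pointwise bounds in (b) do not come from Mikhlin's theorem: Proposition \ref{thm-hoermander} gives only norm bounds, not domination by $M$. What you need is that both kernels have radially decreasing integrable majorants, for every $0<\kappa<k$, integer or not. This holds because $\mathcal F^{-1}[|\xi|^\kappa\psi]$ is bounded and $O(\langle x\rangle^{-n-\kappa})$, and $\mathcal F^{-1}[|\xi|^{\kappa-k}(1-\psi)]$ has an integrable singularity at $0$ and decays rapidly.
\end{itemize}

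Once repaired, your argument does more than the paper's citation: it shows where each hypothesis enters. The left-hand inequality is local HLS, the right-hand inequality is scaling at infinity, and $\gamma\le\mu$ controls small scales far from the origin.
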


\vspace{1ex}

{\rm To apply Proposition \ref{DS} also to exterior domains we need special extension operators that take into account estimates in homogeneous function spaces. 
Chua \cite{Chua} proved the extension property for homogeneous weighted Sobolev spaces for general unbounded $(\varepsilon,\infty)$-domains, including smooth exterior domains. The crucial point is that homogeneous norms are conserved under these operators.}

%In Lemma \ref{extension} below the symbol $\mathbb H_w$ stands for weighted Sobolev spaces $H$ with weight $w$ rather than $\langle\cdot\rangle^{sq}$, {\em cf.} $\mathbb L$ and $L$ in Definition \ref{Muckenhoupt-class} (ii) and (iii).

\vspace*{1ex}

\begin{lem}\label{extension} {\rm(\cite[Theorem 1.5]{Chua}, \cite[Theorem 2.2]{FroehII})}
Let $1<q <\infty$, $w\in \mathscr{A}_q$, and $k_1<\ldots<k_N\in\mathbb N_0$. Further let $\Omega\subset\IR^n$ be an exterior Lipschitz domain. \BLACK Then there exists a linear extension operator $E: \bigcap_{i=1}^N \widehat{\mathbb{H}}^{k_i,q}_{w}(\Omega)\rightarrow  \bigcap_{i=1}^N \widehat{\mathbb{H}}^{k_i,q}_{w}(\mathbb{R}^n)$ such that 
$$
\|\nabla^{k_i} Eu \|_{\mathbb{L}^q_w(\mathbb{R}^n)} \leq C_i \|\nabla^{k_i} u\|_{\mathbb{L}^q_w(\Omega)}
$$
for all $i=1, \ldots,N$ and $u \in \bigcap_{i=1}^N \widehat{\mathbb{H}}^{k_i,q}_{w}(\Omega)$. 
\end{lem}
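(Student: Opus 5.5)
The statement is cited from Chua and Fröhlich, so the plan is to reduce it to Chua's construction for $(\varepsilon,\infty)$-domains rather than build an operator from scratch. We first check that a Lipschitz exterior domain is an $(\varepsilon,\infty)$-domain in Jones' sense. Near the compact boundary this follows from local Lipschitz graphs. Far away, $\Omega$ agrees with the complement of a ball and is trivially uniform. The two regimes are glued by a covering argument, with constants uniform because $\partial\Omega$ is compact.

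The extension itself will be Jones' reflection via Whitney cubes, in the weighted form of Chua. Let $\{Q_j\}$ be a Whitney decomposition of $\overline\Omega^c$ and $\{Q_j^*\}$ reflected cubes in $\Omega$ of comparable size and distance. With a smooth partition of unity $\{\varphi_j\}$ subordinate to the Whitney cubes, set $Eu=u$ on $\Omega$ and
\[
Eu=\sum_j \varphi_j\, P_j u \quad\text{on } \overline\Omega^c ,
\]
where $P_j u$ is the polynomial of degree $<k_N$ that best approximates $u$ on $Q_j^*$, for instance its averaged Taylor polynomial. This operator is linear and does not depend on $k_i$, which is what allows a single $E$ to act on the whole intersection.

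The key estimate is, for each $i$,
\[
\|\nabla^{k_i}Eu\|_{\mathbb L^q_w(\overline\Omega^c)}\le C\,\|\nabla^{k_i}u\|_{\mathbb L^q_w(\Omega)} .
\]
Two points are essential here. First, $\nabla^{k_i}$ annihilates polynomials of degree $<k_i$, so only differences $P_ju-P_{j'}u$ between neighbouring cubes and the approximation errors $u-P_ju$ contribute. Second, these are controlled by weighted Poincaré inequalities on cubes and on chains of cubes connecting $Q_j^*$ to $Q_{j'}^*$, and such Poincaré inequalities hold because $w\in\mathscr A_q$, which gives the doubling property and the weighted local Poincaré estimate. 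Taking the polynomial degree to be $k_N-1$ is harmless: the Bramble–Hilbert type bound controls $\nabla^{k_i}(u-P_ju)$ by $\nabla^{k_i}u$ for every $k_i\le k_N$.

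The main obstacle is that only the homogeneous seminorm $\|\nabla^{k_i}u\|$ appears on the right, with no lower-order terms, and this must hold uniformly over the unbounded Whitney family. Large cubes lie far away, and the comparability of $w(Q_j)$ and $w(Q_j^*)$ must come from doubling alone. Since $\overline\Omega^c$ is bounded, however, only finitely many size scales occur, so the unbounded-domain difficulty in Chua reduces to a bounded one. Under Assumption \ref{weight-Omega} we may even take $w\sim1$ near $\Omega^c$. In that case one can instead use a Stein-type extension on a ball, localized with a cutoff and corrected by subtracting a polynomial. The subtraction is legitimate because elements of homogeneous spaces are defined only modulo polynomials, and the leftover lower-order terms on a bounded annulus are removed by Poincaré. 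This reproduces Fröhlich's argument.
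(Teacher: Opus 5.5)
Your proposal is correct and follows the paper's route. The paper gives no argument of its own but invokes Chua's weighted Jones-type extension for $(\varepsilon,\infty)$-domains and Fr\"ohlich's multi-order version, and your outline (uniformity of exterior Lipschitz domains, Whitney reflection with one linear family of degree-$(k_N-1)$ polynomials, doubling and weighted Poincar\'e from $w\in\mathscr A_q$) is exactly that construction.

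Two small corrections. First, the Whitney cubes of $\overline\Omega^{c}$ still occupy infinitely many small scales, so "only finitely many size scales" is wrong; boundedness of the obstacle only bounds their size from above, which is harmless because the local estimates are scale invariant. Second, for $k_i<k_N$ the term $\nabla^{k_i}P_k u$ does not vanish, so the cancellation comes from $\sum_j\nabla^\gamma\varphi_j=0$ for $\gamma\neq0$ together with your lower-order Bramble--Hilbert bounds, not from $\nabla^{k_i}$ annihilating the polynomials.
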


\vspace*{1ex}

As a consequence of Proposition \ref{DS}  and Lemma \ref{hatH0-dense} (i), \BLACK we have the following inequalities of Hardy, Rellich and Sobolev on weighted spaces.

\vspace*{1ex}

\begin{prop}\label{embed-ext-weight}{\em \cite[Corollary 3.8]{FS}, \cite[Proposition 2.2]{Farwig-Tsuda-stab}} 
Let $\Omega\subset \IR^n$ be either an exterior domain of class $C^1$ or let $\Omega=\IR^n$. Assume $1 < q\leq r < \infty$ and for $\ell, \ell'\in \IR$ that $1-\frac{n}{q} < \ell < \frac{n}{q'}$ and
\begin{equation}\label{E3.12}
\ell-\ell' = 1 + \frac{n}{r} -\frac{n}{q}\geq 0.
\end{equation}
Then the embedding
$\widehat H_\ell^{1,q}(\Omega)\hookrightarrow L_{\ell'}^r(\Omega)$ is continuous, {\em i.e.,} for each representative
$ u\in \widehat H_\ell^{1,q}(\Omega)$ there exists a constant $\mathcal C\in\IR$ such that 
\begin{equation}\label{E3.12'} 
\|u-\mathcal C\|_{L^r_{\ell'}}  \leq c \|\nabla u\|_{L^{q}_\ell} 
\end{equation}
where $c=c(\ell,\ell', r,q,\Omega)>0$ is independent of $u$. In particular, there holds the embedding 
\begin{equation}\label{embedHhat-L}
    \widehat H^{1,q}_{\ell,0}(\Omega) \hookrightarrow L^r_{\ell'}(\Omega).
\end{equation}
\end{prop}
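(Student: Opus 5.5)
The plan is to reduce the exterior-domain statement to the whole-space case via the extension operator of Lemma \ref{extension}, and then apply the weighted Gagliardo--Nirenberg inequality of Proposition \ref{DS} with $k_0=0$, $k=1$, $\theta=1$. In that choice $p=q$, $\alpha=\ell$, and the target has exponent $r$ with weight $\gamma=\ell'$. I first treat $\Omega=\IR^n$ and then pass to exterior domains.

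\emph{Whole-space case.} For $f\in\mathcal S(\IR^n)$, Proposition \ref{DS} gives
$$\|f\|_{L^r_{\ell'}}\le C\|(-\Delta)^{1/2}f\|_{L^q_\ell}\le C\|\nabla f\|_{L^q_\ell},$$
where the last step uses the Riesz transforms and Proposition \ref{thm-hoermander}. I check the hypotheses with $\theta=1$. The condition \eqref{theta-pqr-alpha-beta-N} reads
$$\frac1q-\frac1n+\frac{\ell'}{n}\le \frac1r+\frac{\ell'}{n}\le \frac1q-\frac{1-\ell}{n}.$$
The left inequality is $\frac1q-\frac1r\le\frac1n$. This follows from \eqref{E3.12} together with $\ell'\le\ell<\frac{n}{q'}$, since the latter gives $\frac1q-\frac1r=\frac{1-(\ell-\ell')}{n}\le \frac1n$ as soon as $\ell\ge\ell'$. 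The right inequality is equality by \eqref{E3.12}. The condition $\gamma\le\alpha$ is exactly $\ell'\le\ell$, and $\frac1r\le\frac1q$ holds since $q\le r$. It remains to verify $\gamma=\ell'>-\frac nr$, which is $\ell-1+\frac nq-\frac nr>-\frac nr$, i.e.\ $\ell>1-\frac nq$. This is exactly the lower bound in the hypothesis, and it is precisely where that bound enters.

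\emph{Density step.} I then extend from $\mathcal S$ to $\widehat H^{1,q}_\ell(\IR^n)$ using density of $C_0^\infty$ (Lemma \ref{hatH0-dense}(i)). Take $\varphi_k\in C_0^\infty$ with $\nabla\varphi_k\to\nabla u$ in $L^q_\ell$. The sequence $(\varphi_k)$ is then Cauchy in $L^r_{\ell'}$, so it converges to some $v\in L^r_{\ell'}$ with $\nabla v=\nabla u$ in the sense of distributions. Hence $u-v$ is constant, which yields the constant $\mathcal C$ together with \eqref{E3.12'}.

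\emph{Exterior domain.} For $\Omega$ exterior, given $u\in\widehat H^{1,q}_\ell(\Omega)$, I apply Lemma \ref{extension} with $N=1$, $k_1=1$, and $w=\langle x\rangle^{q\ell}\in\mathscr A_q$. This gives $Eu$ with $\|\nabla Eu\|_{L^q_\ell(\IR^n)}\le C\|\nabla u\|_{L^q_\ell(\Omega)}$. Applying the whole-space result to $Eu$ and restricting to $\Omega$ gives \eqref{E3.12'}.

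\emph{Embedding \eqref{embedHhat-L}.} For $u\in C_0^\infty(\Omega)$, the function $u-\mathcal C$ lies in $L^r_{\ell'}$ and $u$ itself has compact support. If $\mathcal C\ne0$, the constant $\mathcal C$ would have to be in $L^r_{\ell'}$ near infinity, which requires $\ell' r<-n$. But $\ell'>-\frac nr$, so this fails and $\mathcal C=0$. The estimate $\|u\|_{L^r_{\ell'}}\le c\|\nabla u\|_{L^q_\ell}$ then holds on $C_0^\infty(\Omega)$ and extends by closure to $\widehat H^{1,q}_{\ell,0}(\Omega)$. Limits are identified consistently because convergence in both norms implies convergence in $L^1_{\rm loc}$.

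\emph{Main obstacle.} The delicate step is the density and constant-identification in the whole space. One must ensure that equivalence classes modulo constants, rather than modulo arbitrary polynomials, are the right setting. This holds because $\nabla u\in L^q_\ell$ determines $u$ up to constants.
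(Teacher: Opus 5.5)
Your proof is correct and follows the same route the paper indicates for this cited result. You use Proposition \ref{DS} with $\theta=1$, $k_0=0$, $k=1$, where the hypothesis $\ell>1-\frac nq$ is exactly what makes $\ell'>-\frac nr$. You then pass from $\mathcal S$ by density via Lemma \ref{hatH0-dense}~(i) and handle exterior domains with the extension operator of Lemma \ref{extension}, just as in the paper's proof of Proposition \ref{Hardy-Rellich}~(iii). For $u\in C_0^\infty(\Omega)$, the paper gets $\mathcal C=0$ slightly more directly by applying the $\mathcal S$-estimate to the zero extension, though your argument via $\ell'>-\frac nr$ is also fine.
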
 

\vspace*{1ex}

\begin{prop}\label{Hardy-Rellich}{\em \cite[Lemma 2.4]{Farwig-Tsuda-stab}}  
Let $\Omega\subset \IR^n$, $n\geq 3$, be an exterior Lipschitz domain, and let $q \in (1,\infty)$.

(i) (Hardy's inequality) Let $u \in H^{1,q}_{\ell}(\Omega)$ with  $1-\frac{n}{q} <\ell< \frac{n}{q'}$.
Then there holds  
\begin{equation}\label{Hardy-ineq}
\Big\|\dfrac{u}{1+|x|} \Big\|_{L^q_{\ell}(\Omega)} 
\leq C\|\nabla u \|_{L^q_{\ell}(\Omega)}. 
\end{equation}

(ii) (Rellich's inequality) 
Let $u \in H^{2,q}_{\ell}(\Omega)$  and $2-\frac{n}{q} < \ell < \frac{n}{q'}$.   
Then  
\begin{equation}\label{Rellich-ineq}
\Big\|\dfrac{u}{(1+|x|)^2} \Big\|_{L^q_{\ell}(\Omega)} 
\leq C\|\nabla^2 u \|_{L^q_{\ell}(\Omega)}. 
\end{equation}
  
%(iii) (Fractional Sobolev inequality) Let $1<p<q<\infty$, $\beta=\frac{n}{2}\big(\frac1p-\frac1q\big)$ and  $-\frac{n}{q} < s < \frac{n}{p'}$. 
%(not: $2-\frac{n}{p} < s < \frac{n}{p'}$ since $A$ is not yet used). 
%Then for $u\in \widehat H^{2\beta}_p(\IR^n)$ there holds the Sobolev embedding estimate
%\begin{equation}\label{Sob-ineq}
%\|u\|_{L^q_s(\IR^n)} \leq C\|(-\Delta)^\beta u\|_{L^p_s(\IR^n)}.
%\end{equation}

(iii) (Sobolev inequality) Let $1< q\leq r<\infty$, let $\kappa\in(0,2]$ \BLACK satisfy $\kappa-\frac{n}{q} < \ell < \frac{n}{q'}$, \BLACK and assume that
\begin{equation}\label{fract-ell}
\ell-\ell' = \kappa + \frac{n}{r} -\frac{n}{q}\geq 0.
\end{equation}
Then there holds the fractional Sobolev embedding 
\begin{equation}\label{fractSob}
\widehat H^{\kappa,q}_{\ell}(\IR^n) \hookrightarrow L^r_{\ell'}(\IR^n)
\end{equation} 
in the sense that for every representative $u\in \widehat H^{\kappa,q}_{\ell}(\IR^n)$ \BLACK there exists a polynomial $\pi_k$ of degree at most $k=\lfloor \kappa \rfloor -1$ such that 
\begin{equation} \label{pi_k} 
\|u+\pi_k\BLACK\|_{L^r_{\ell'}(\IR^n)} \leq C \|u\|_{\widehat H^{\kappa,q}_{\ell}(\IR^n)}.
\end{equation}

For an exterior domain $\Omega\subset \IR^n$ the fractional Sobolev embedding $\widehat H^{\kappa,q}_{\ell,0}(\Omega) \hookrightarrow L^r_{\ell'}(\Omega)$ and \eqref{pi_k} hold with $\pi_k = 0$.
\end{prop}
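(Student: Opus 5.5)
The plan is to derive all three inequalities from the weighted Gagliardo--Nirenberg inequality of Proposition~\ref{DS} on $\IR^n$ with $\theta=1$, $p=q$, $k_0=0$. I would then pass to the exterior domain either by the extension operator of Lemma~\ref{extension} (for (i), (ii)) or by zero extension and density (for (iii)).

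\emph{(i) and (ii).} In Proposition~\ref{DS} I take $k=1$ (resp.\ $k=2$), $\alpha=\ell$, $r=q$ and $\gamma=\ell-k$.
\begin{itemize}
\item Both inequalities in \eqref{theta-pqr-alpha-beta-N} become equalities, since each side equals $\frac1q-\frac{k-\ell}{n}$.
\item The conditions $\gamma\le\alpha$ and $\frac1r\le\frac1q$ are trivial.
\item The requirement $\gamma>-\frac nq$ is exactly $\ell>k-\frac nq$, which is the stated hypothesis.
\end{itemize}
This gives $\|f\|_{L^q_{\ell-k}}\le C\|(-\Delta)^{k/2}f\|_{L^q_\ell}$ for $f\in\mathcal S(\IR^n)$. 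Since $\langle x\rangle^{\ell-k}\sim\langle x\rangle^{\ell}(1+|x|)^{-k}$ and, by Proposition~\ref{thm-hoermander}, $\|(-\Delta)^{k/2}f\|_{L^q_\ell}\sim\|\nabla^k f\|_{L^q_\ell}$, this is Hardy's, resp.\ Rellich's, inequality on $\IR^n$.

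For $u\in H^{1,q}_\ell(\Omega)$ (resp.\ $H^{2,q}_\ell(\Omega)$) I apply Lemma~\ref{extension} with $k_1=0,k_2=1$ (resp.\ $k_1=0,k_2=1,k_3=2$). This yields $Eu\in H^{k,q}_\ell(\IR^n)$ with $\|\nabla^kEu\|_{L^q_\ell(\IR^n)}\le C\|\nabla^ku\|_{L^q_\ell(\Omega)}$. Because $Eu$ lies in the nonhomogeneous space, no constants or polynomials arise. Standard cut-off and mollification give Schwartz approximants converging in $H^{k,q}_\ell(\IR^n)$, so the $\IR^n$ inequality extends to $Eu$ by Fatou's lemma. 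Restricting to $\Omega$ proves \eqref{Hardy-ineq} and \eqref{Rellich-ineq}.

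\emph{(iii).} Now I choose $k=\kappa$, $\alpha=\ell$, $\gamma=\ell'$. Using \eqref{fract-ell} in the form $\frac1r=\frac1q+\frac{\ell-\ell'-\kappa}{n}$, the conditions of Proposition~\ref{DS} check out as follows.
\begin{itemize}
\item The right inequality in \eqref{theta-pqr-alpha-beta-N} is an equality.
\item The left inequality is equivalent to $\ell-\ell'\ge0$.
\item $\gamma\le\alpha$ is again $\ell'\le\ell$, and $\frac1r\le\frac1q$ holds since $r\ge q$.
\item $\gamma=\ell'>-\frac nr$ is equivalent to $\ell>\kappa-\frac nq$.
\end{itemize}
Hence $\|f\|_{L^r_{\ell'}}\le C\|f\|_{\widehat H^{\kappa,q}_\ell}$ for $f\in\mathcal S(\IR^n)$, in particular for $f\in C_0^\infty(\IR^n)$. (If Proposition~\ref{DS} is only available for integer $k$, I would instead obtain the case $0<\kappa<2$ by complex interpolation between $\kappa=0$ and $\kappa=2$, as in the proof of Lemma~\ref{hatH0-dense}(iii).)

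Given $u\in\widehat H^{\kappa,q}_\ell(\IR^n)$, Lemma~\ref{hatH0-dense}(iii) provides $u_j\in C_0^\infty(\IR^n)$ with $u_j\to u$ in $\widehat H^{\kappa,q}_\ell$. By the estimate just proved, $(u_j)$ is Cauchy in $L^r_{\ell'}$, hence converges there to some $g$. Both convergences imply convergence in $Z'(\IR^n)$, so $u-g$ is a polynomial $P$. Setting $\pi_k=-P$ gives \eqref{pi_k}.

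\emph{Main obstacle.} The step I expect to be the hardest is the degree bound $\deg\pi_k\le\lfloor\kappa\rfloor-1$. I would argue that the realization of $u$ is fixed modulo polynomials annihilated by the homogeneous structure. Since $g$ belongs to $L^r_{\ell'}$, the polynomial part can only be of degree below $\kappa$, and I would use the growth restriction $\ell>\kappa-\frac nq$ to exclude the borderline degree. This bookkeeping is delicate and needs care with the choice of representative.

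\emph{Exterior domain.} For $\Omega$, the zero extension is bounded from $L^q_\ell(\Omega)$ to $L^q_\ell(\IR^n)$. It is also bounded from $\widehat H^{1,q}_{\ell,0}(\Omega)$ and $\widehat H^{2,q}_{\ell,0}(\Omega)$ into the corresponding spaces on $\IR^n$, since these are closures of $C_0^\infty(\Omega)$. By complex interpolation it is therefore bounded from $\widehat H^{\kappa,q}_{\ell,0}(\Omega)$ to $\widehat H^{\kappa,q}_\ell(\IR^n)$. I apply the whole-space estimate to $C_0^\infty(\Omega)$ functions, which carry no polynomial part, and use the density from Proposition~\ref{dense Htheta}. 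The $L^r_{\ell'}$-limit then coincides with $u$ itself, because nonzero polynomials do not lie in $L^r_{\ell'}(\Omega)$. This gives the embedding with $\pi_k=0$.
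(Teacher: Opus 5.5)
Your overall route is the paper's. You use Proposition~\ref{DS} with $\theta=1$, $k_0=0$, $p=q$, then the density of $C_0^\infty(\IR^n)$ from Lemma~\ref{hatH0-dense} and a Cauchy argument in $L^r_{\ell'}$. For (i), (ii) you pass to $\Omega$ via Lemma~\ref{extension}, as the paper indicates in its remark after the proof. Your choice $r=q$, $\gamma=\ell'=\ell-k$ for Hardy/Rellich is the correct specialization; the paper's ``$\ell=\ell'$'' would give a Sobolev rather than a Hardy inequality. One harmless slip: in (i), (ii) the left inequality in \eqref{theta-pqr-alpha-beta-N} is not an equality. Its left side is $\frac1q+\frac{\ell-2k}{n}<\frac1q+\frac{\ell-k}{n}$, and only ``$\le$'' is needed.

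The genuine gap is the degree bound for $\pi_k$, which you flag but do not prove, and the mechanism you sketch cannot give it. From $u_j\to u$ in $\widehat H^{\kappa,q}_\ell$ and $u_j\to g$ in $L^r_{\ell'}$ you only get $u-g\in\Pi$ in $Z'(\IR^n)$. That carries no degree information at all: a class in $Z'$ has representatives differing by arbitrary polynomials, and $g\in L^r_{\ell'}$ constrains $g$, not $P=u-g$. The missing step is to show that $g$ has the same homogeneous derivative as $u$; this is the content of the paper's identity $(-\Delta)^{\kappa/2}\tilde u=(-\Delta)^{\kappa/2}u$. Since $u_j\to g$ in $L^r_{\ell'}\subset\mathcal S'$, one has $\nabla^\kappa u_j\to\nabla^\kappa g$ in $\mathcal S'$ for $\kappa\in\{1,2\}$. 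On the other hand, $\nabla^\kappa u_j\to\nabla^\kappa u$ in $L^q_\ell$, where a representative $u$ is a distribution whose $\kappa$-th derivatives are the given $L^q_\ell$-function. Hence $\nabla^\kappa(u-g)=0$ and $\deg(u-g)\le\kappa-1=\lfloor\kappa\rfloor-1$. No growth condition enters here; $\ell>\kappa-\frac nq$ is used only to get $\gamma=\ell'>-\frac nr$ in Proposition~\ref{DS}.

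Conversely, your plan to use that growth restriction to ``exclude the borderline degree'' cannot succeed. For non-integer $\kappa$ the kernel of $(-\Delta)^{\kappa/2}$ contains every polynomial of degree $<\kappa$, e.g.\ constants if $0<\kappa<1$. Then $g+c$ with $c\neq0$ is another admissible representative, and it is not in $L^r_{\ell'}$ because $\ell'>-\frac nr$. So without fixing a canonical representative, the kernel argument gives degree $<\kappa$, which agrees with $\lfloor\kappa\rfloor-1$ only for integer $\kappa$. The paper's one-line deduction passes over this point as well.
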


\begin{proof}
All assertions follow from Proposition \ref{DS} with $\theta=1$ so that condition \eqref{theta-pqr-alpha-beta-N} simplifies with $k=\kappa$, $\gamma=\ell'$, $\alpha=\ell$ to 
$$  \frac{1}{p}-\frac{\kappa}{n}  + \frac{\ell'}{n} \leq \frac{1}{r} + \frac{\ell'}{n} \leq \frac{1}{p}-\frac{\kappa-\ell}{n}   
$$
which can be written in the form $\ell-\ell' \geq  \kappa + \frac{n}{r} -\frac{n}{q}\geq 0$, {\em i.e.} \eqref{fract-ell} with "=" replaced by "$\geq$". 
For (iii), assuming \eqref{fract-ell} and $\kappa-\frac{n}{q} < \ell < \frac{n}{q'}$, we easily deduce that $\ell'\leq\ell$ and $\ell'>-\frac{n}{r}$. %as well as $\ell'<\frac{n}{r'}$ since $q\leq r$. 
The polynomial $\pi_k$ vanishes when $u\in \mathcal S(\IR^n)$.

Now let $\in \widehat H^{\kappa,q}_{\ell}(\IR^n)$ be arbitrary. Then Lemma \ref{hatH0-dense} yields a sequence $(u_m)_m\subset C_0^\infty(\IR^n)$ such that $u_m\to u$ in $\widehat H^{\kappa,q}_{\ell}(\IR^n)$, and  \eqref{fractSob}, proved already on $\mathcal S(\IR^n)$, \BLACK
%Proposition \ref{DS} 
applied to $u_m-u_{m'}$, shows that
$$ \|u_m-u_{m'}\|_{L^r_{\ell'}(\IR^n)} \leq C \|u_m-u_{m'}\|_{\widehat H^{\kappa,q}_{\ell}(\IR^n)}.$$
We easily conclude that $(u_m)$ converges in  $L^r_{\ell'}(\IR^n)$ to some function $\tilde u\in L^r_{\ell'}(\IR^n)$ \BLACK and that $(-\Delta)^{\kappa/2}\tilde u = (-\Delta)^{\kappa/2}u$. 
Hence there exists a polynomial $\pi_k$ of degree $k=\lfloor \kappa \rfloor -1$ such that $\tilde u=u+\pi_k$, thus proving \eqref{pi_k}.
%Hence Proposition \ref{DS} implies \eqref{fractSob}. 

Finally, for  an exterior domain and $u\in \widehat H^{\kappa,q}_{\ell,0}(\Omega)$ we find by definition a sequence $(u_m)_m \subset C^\infty_0(\Omega) \subset\mathcal S(\IR^n)$ such that \eqref{pi_k} holds for each $u_m$ with $\pi_k=0$. Passing to the limit $m\to \infty$, also in the definition of $\nabla$ in the sense of distributions, we find a unique representative $u\in H^{1,q}_{\rm loc}(\overline \Omega)$ such that $u|_{\partial\Omega}=0$ and $\pi_k=0$ in \eqref{pi_k}. \BLACK

(i) and (ii) follow with (iii) when $\ell=\ell'$ and $\kappa=1$ or $=2$, respectively. The polynomial $\pi_K$ vanishes since by assumption $u$ is contained in a nonhomogeneous Sobolev space. \BLACK  
\end{proof}
\vspace{2ex}

Note that in \cite[Lemma 2.4]{Farwig-Tsuda-stab}) inequalities \eqref{Hardy-ineq},  \eqref{Rellich-ineq} are proved only when $\ell\geq 0$. However, since the proof is mainly based on Proposition \ref{DS} applied to $Eu$ rather than $u$ where $E$ is an extension operator of Lemma \ref{extension}, the condition $\ell\geq 0$ can be ignored. 

\vspace*{2ex}

\noindent{\bf Helmholtz projections:}
For bounded and exterior domains $\Omega\subset\IR^n$ with boundary of class $C^1$  
we recall the Helmholtz projection, {\em i.e.,} the projection $\mathbb{P}_q\colon L^q(\Omega)\to L^q_{\sigma}(\Omega) \subset L^q(\Omega)$, $1<q<\infty$, such that the kernel of $\mathbb{P}_q$ equals the space $G_q(\Omega)$ of all weak gradient fields in $L^q(\Omega)$ and the range $\mathcal R(\mathbb P_q) = L^q_\sigma(\Omega)$. 
The adjoint of $\mathbb P_q$ coincides with the Helmholtz projection $\mathbb{P}_{q'}: L^{q'}(\Omega) \to L^{q'}(\Omega)$ where $\frac1q+\frac1{q'}=1$. 
Since $\mathbb{P}_q u = \mathbb{P}_ru$ for all vector fields $u\in L^q(\Omega)\cap L^r(\Omega)$, $1<q,r<\infty$, and a similar result holds in weighted spaces, we will omit indices such as $q$ and $q,w$ or $q,\ell$ in most cases for $\mathbb{P}.$
\vspace*{1ex}

In weighted function spaces we have the following results:

\begin{lem}\label{Helmh}{\rm \cite[Theorem 1.3, Corollary 4.4]{FS}}
For an exterior domain $\Omega \subset\IR^n$ with boundary of class $C^1$, all $1<q<\infty$ and weights $w\in \mathscr A_q$ there exists the bounded Helmholtz projection 
$$ \mathbb P = \mathbb P_{q,w}: \mathbb L^q(\Omega) \to \mathbb L^q_{\sigma}(\Omega) $$
with null space $\mathcal N(\mathbb P) = \nabla \widehat{\mathbb H}^{1,q}_{w}(\Omega)$ and range $\mathcal R(\mathbb P) = \mathbb L^q_{\sigma}(\Omega)$. The adjoint of $\mathbb P_{q,w}$ equals $\mathbb P_{q',w'}$ on $L^{q'}_{w'}(\Omega)$ with $w'=w^{-1/(q-1)}$.

Similar results hold for the whole space $\IR^n$. 
\end{lem}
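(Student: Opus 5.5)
The statement to be proved is Lemma \ref{Helmh}. It is cited from \cite{FS}, so the plan is to reconstruct the Simader--Sohr variational approach in the weighted setting. Given $u\in\mathbb L^q_w(\Omega)$, we look for $p\in\widehat{\mathbb H}^{1,q}_w(\Omega)$ solving the weak Neumann problem
\[
\langle\nabla p,\nabla\varphi\rangle=\langle u,\nabla\varphi\rangle \quad\text{for all } \varphi\in\widehat{\mathbb H}^{1,q'}_{w'}(\Omega),
\]
and then set $\mathbb P u:=u-\nabla p$. Uniqueness of $\nabla p$, boundedness of $u\mapsto\nabla p$, and the algebraic properties of $\mathbb P$ all follow from one estimate, the weighted variational inequality
\[
\|\nabla p\|_{\mathbb L^q_w}\le C\sup_{0\neq\varphi}\frac{|\langle\nabla p,\nabla\varphi\rangle|}{\|\nabla\varphi\|_{\mathbb L^{q'}_{w'}}},\qquad p\in\widehat{\mathbb H}^{1,q}_w(\Omega),
\]
together with the same inequality for the dual pair $(q',w')$. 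Note that $w'=w^{-1/(q-1)}\in\mathscr A_{q'}$. Given both inequalities, existence follows by a standard closed-range/duality argument: the operator $\nabla p\mapsto\langle\nabla p,\nabla\cdot\rangle$ is injective with closed range, and its adjoint is injective, so it is surjective onto the functionals $\varphi\mapsto\langle u,\nabla\varphi\rangle$.

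\textbf{Whole space.} On $\IR^n$ the variational inequality is explicit. Write $\nabla p=\nabla\Delta^{-1}\div(\nabla p)$, which is a composition of Riesz transforms. Test with $\varphi=\Delta^{-1}\div(|\nabla p|^{q-2}\nabla p\,w)$, using the density statements of Lemma \ref{hatH0-dense}. Proposition \ref{thm-hoermander} on $\mathbb L^{q'}_{w'}$ then bounds $\|\nabla\varphi\|_{\mathbb L^{q'}_{w'}}$ by $\||\nabla p|^{q-1}w\|_{\mathbb L^{q'}_{w'}}=\|\nabla p\|^{q-1}_{\mathbb L^q_w}$. So on $\IR^n$ we get $\mathbb P=I-\nabla\Delta^{-1}\div$, which is bounded on $\mathbb L^q_w$ by Proposition \ref{thm-hoermander}.

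\textbf{Exterior domain.} Choose a cut-off $\chi$ equal to $1$ near $B_R$. Split $p=\chi p+(1-\chi)p$ and apply the whole-space inequality to $(1-\chi)p$. On the bounded region $\Omega_{R+1}$ we have $w\sim1$ by Assumption \ref{weight-Omega}, so the unweighted bounded-domain theory of Simader--Sohr applies to $\chi p$. This yields
\[
\|\nabla p\|_{\mathbb L^q_w}\le C\bigl(\sup(\cdots)+\|p-c\|_{L^q(\Omega_{R+1})}\bigr),
\]
where $c$ is a suitable constant and the sup is the quotient from the variational inequality. The lower-order term is then removed by the usual compactness/contradiction argument. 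Take a sequence $p_k$ with $\|\nabla p_k\|_{\mathbb L^q_w}=1$ and the sup tending to zero. By Rellich compactness on $\Omega_{R+1}$, a subsequence converges to a limit $p$ with $\langle\nabla p,\nabla\varphi\rangle=0$ for all $\varphi$. One then shows that such a weak harmonic $p$ with $\nabla p\in\mathbb L^q_w$ and vanishing normal derivative is constant. This Liouville-type step, showing that the kernel is trivial, is the main obstacle. To handle it, I would use the decay of harmonic functions outside $B_R$ together with the $\mathscr A_q$ bounds, which control the growth of $\nabla p$ via the embedding $\mathbb L^q_w\hookrightarrow L^1_{\rm loc}$ with polynomial growth. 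I would combine this with unweighted regularity near $\partial\Omega$ to show $\nabla p\in L^s$ for some $s$ for which the unweighted result of \cite{Sim_Sohr92} gives $\nabla p=0$.

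\textbf{Identification of null space, range and adjoint.} By construction $\mathcal N(\mathbb P)=\nabla\widehat{\mathbb H}^{1,q}_w(\Omega)$ and $\mathbb P^2=\mathbb P$. For the range, $\mathbb Pu$ annihilates all gradients of test functions in the dual space. By the weighted de Rham argument and the density of $C^\infty_{0,\sigma}$, this identifies the annihilator $(\nabla\widehat{\mathbb H}^{1,q'}_{w'})^\perp$ with $\mathbb L^q_\sigma(\Omega)$. For the adjoint, the symmetric weak formulation gives $\langle\mathbb P_{q,w}u,v\rangle=\langle\mathbb Pu,\mathbb Pv\rangle=\langle u,\mathbb P_{q',w'}v\rangle$, hence $\mathbb P_{q,w}^*=\mathbb P_{q',w'}$.
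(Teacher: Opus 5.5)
Your proposal is correct and follows essentially the route behind the lemma. The paper gives no proof of its own but cites \cite{FS}, which, as its introduction explains, extends the Simader--Sohr weak Neumann/variational-inequality approach of \cite{Sim_Sohr92} to $\mathscr A_q$ weights: Calder\'on--Zygmund operators on $\IR^n$, a cut-off to the unweighted bounded-domain theory near $\partial\Omega$ (where $w\sim 1$) plus a compactness argument in the exterior domain, and de Rham's theorem and duality for the range and the adjoint, just as you outline.
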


\vspace*{1ex}
To discuss interpolation properties of homogeneous and nonhomogeneous function spaces we recall the retraction-coretraction principle, see \cite{Triebel}.

\begin{defi}
Let $A,B$ be complex Banach spaces and $R\in \mathcal L(A,B)$. Then $R$ is called a {\em retraction} if there exists an operator $S\in \mathcal L(B,A)$ such that $RS=I$ on $B$. In this case, $S$ is called a {\em coretraction} corresponding to $R$.
\end{defi}

\vspace*{1ex}

\begin{thm}\label{retract-co}
Consider interpolation couples of Banach spaces, $\{A_0,A_1\}$ and $\{B_0,B_1\}$. 
Assume that there exists a retraction $R: A_j\to B_j$ with coretraction $S:B_j\to A_j$, $j=1,2$. 
Let $(\cdot,\cdot)$ denote an interpolation functor, applied to $\{A_0,A_1\}$ and $\{B_0,B_1\}$.
Then $S$ is an isormorphism,  
$$ S:(B_0,B_1) \to SR((A_0,A_1)),$$
$ S((B_0,B_1)) =  SR((A_0,A_1)),$ and 
$$ (B_0,B_1)  = R ((A_0,A_1))\quad \textrm {with equivalent norms}.$$
Moreover, $(SR) \big|_{(A_0,A_1)}$ is a projection, and  $\mathcal R \big((SR) \big|_{(A_0,A_1)}\big)$ is a complemented subspace of  $(B_0,B_1)$.

In particular, if $P$ is bounded projection on $A_j$, $j=0,1$, then 
\begin{equation}\label{interpol-P}
   (PA_0,PA_1) = P((A_0,A_1)). 
\end{equation}  
\end{thm}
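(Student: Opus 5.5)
The plan is to reduce everything to the definitions of retraction and coretraction plus the universal (functorial) property of an interpolation functor $F=(\cdot,\cdot)$: each $T\in\mathcal L(A_j,B_j)$, $j=0,1$, maps $F(A_0,A_1)$ boundedly into $F(B_0,B_1)$. Write $\mathcal A=(A_0,A_1)$ and $\mathcal B=(B_0,B_1)$ for the interpolation spaces. Applied to $R$ and $S$, this gives $R\in\mathcal L(\mathcal A,\mathcal B)$ and $S\in\mathcal L(\mathcal B,\mathcal A)$. The identity $RS=I$ holds on $B_0+B_1$, hence on $\mathcal B$, because $RS$ is the identity on each $B_j$ and both operators are linear on the sums.

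Next I would handle $S$. Since $RS=I$ on $\mathcal B$, the map $S$ is injective and has the bounded left inverse $R|_{SR(\mathcal A)}$. The ranges match: $S(\mathcal B)\subset SR(\mathcal A)$ because every $b\in\mathcal B$ equals $RSb$ with $Sb\in\mathcal A$, and conversely $SR(\mathcal A)\subset S(\mathcal B)$ since $R(\mathcal A)\subset\mathcal B$. Thus $S:\mathcal B\to SR(\mathcal A)$ is a bijection. Its norm is two-sided: $\|Sb\|_{\mathcal A}\le C\|b\|_{\mathcal B}$ and $\|b\|_{\mathcal B}=\|RSb\|_{\mathcal B}\le C\|Sb\|_{\mathcal A}$, so $S$ is an isomorphism onto the closed subspace $SR(\mathcal A)$.

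For the identity $\mathcal B=R(\mathcal A)$, the inclusion $R(\mathcal A)\subset\mathcal B$ holds by boundedness, and $\mathcal B=RS(\mathcal B)\subset R(\mathcal A)$. For equivalence of norms, I would equip $R(\mathcal A)$ with the quotient norm $\inf\{\|a\|_{\mathcal A}: Ra=b\}$. This quotient norm is bounded above by $\|Sb\|_{\mathcal A}\le C\|b\|_{\mathcal B}$ and bounded below by $C^{-1}\|b\|_{\mathcal B}$ via $\|Ra\|_{\mathcal B}\le C\|a\|_{\mathcal A}$. The operator $SR$ is a projection on $\mathcal A$ since $SRSR=S(RS)R=SR$, and it is bounded, so its range is complemented in $\mathcal A$. (The statement says "in $(B_0,B_1)$", but via the isomorphism $S$ this range is identified with $\mathcal B$; I would phrase it that way.)

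Finally, for \eqref{interpol-P}, I would take $B_j=PA_j$ with the norm of $A_j$, let $R=P:A_j\to PA_j$, and let $S$ be the inclusion. Then $RS=I$ on $PA_j$, and the preceding step gives $(PA_0,PA_1)=P((A_0,A_1))$ with equivalent norms. The only delicate point, and the step I expect to be the main obstacle, is compatibility: one must check that $\{PA_0,PA_1\}$ is an interpolation couple and that $P$ is consistent on $A_0+A_1$. This follows by assuming $P$ acts as a single linear operator on $A_0+A_1$, which is implicit in the hypothesis. Everything else is bookkeeping.
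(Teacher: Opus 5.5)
Your argument is correct and is the standard one. The paper gives no proof of its own here but recalls the result from Triebel, and that proof runs as yours does: functoriality of $(\cdot,\cdot)$ applied to $R$ and $S$, combined with $RS=I$ on $B_0+B_1$. Your reading that the range of $SR$ is complemented in $(A_0,A_1)$ is the right one; the statement's ``$(B_0,B_1)$'' there and its ``$j=1,2$'' are typos. The compatibility issue you flag in the projection case is harmless: $PA_j$ is closed in $A_j$ as the range of a bounded projection and $PA_0,PA_1\hookrightarrow A_0+A_1$, provided $P$ is a single linear operator on $A_0+A_1$, as you assume.
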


\begin{lem}\label{interp-hatH-Rn in q}
Let $1<q_0<q_1<\infty$, $0\leq \theta\leq 1$ and $\frac1q = \frac{1-\theta}{q_0} + \frac{\theta}{q_1}$. Further assume $-\frac{n}{q_1} <\ell< \frac{n}{q_0'}$. Then there holds
$$ [\widehat H^{1,q_0}_\ell(\IR^n), \widehat H^{1,q_1}_\ell(\IR^n)]_\theta = \widehat H^{1,q}_\ell(\IR^n).$$
\end{lem}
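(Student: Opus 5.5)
We transfer the problem to a problem with a fixed target space and then interpolate a bounded family of operators, which we prove with the Stein--Weiss weighted interpolation / Calder\'on method.

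\textbf{Set-up.} Map $u\mapsto \nabla u$ identifies $\widehat H^{1,q}_\ell(\IR^n)$ isometrically, up to equivalent norms, with the closed subspace
$$G^q_\ell=\{\nabla u: u\in \widehat H^{1,q}_\ell(\IR^n)\}\subset L^q_\ell(\IR^n)^n.$$
The natural candidate for a projection onto gradients is $Q=\nabla(-\Delta)^{-1}(-\div)=R\otimes R$, built from Riesz transforms. By Proposition~\ref{thm-hoermander}(i) (or (ii)), $Q$ is bounded on $L^{p}_\ell(\IR^n)^n$ whenever $\langle x\rangle^{p\ell}\in\mathscr A_p$, i.e.\ $-\frac np<\ell<\frac n{p'}$. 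The hypothesis $-\frac{n}{q_1}<\ell<\frac n{q_0'}$ gives exactly $-\frac{n}{q_j}<\ell<\frac{n}{q_j'}$ for $j=0,1$, because $q_0<q_1$. It then follows for all intermediate $q$ as well. Moreover $Q$ is a projection with range $G^p_\ell$: it fixes gradients of $C_0^\infty$ functions, which are dense by Lemma~\ref{hatH0-dense}(i), and its range consists of gradients. Consistency of $Q$ on $L^{q_0}_\ell\cap L^{q_1}_\ell$ is clear from the Fourier symbol.

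\textbf{Interpolation.} Use the retraction--coretraction Theorem~\ref{retract-co}, in its form \eqref{interpol-P} with $P=Q$ and $A_j=L^{q_j}_\ell(\IR^n)^n$. This gives
$$[G^{q_0}_\ell,G^{q_1}_\ell]_\theta=Q\big([L^{q_0}_\ell,L^{q_1}_\ell]_\theta\big)=Q\,L^q_\ell=G^q_\ell .$$
Here we use the Stein--Weiss identity $[L^{q_0}_{w_0},L^{q_1}_{w_1}]_\theta=L^q_w$ with $w^{1/q}=w_0^{(1-\theta)/q_0}w_1^{\theta/q_1}$. With $w_j=\langle x\rangle^{q_j\ell}$ this yields $w^{1/q}=\langle x\rangle^{\ell}$, i.e.\ $w=\langle x\rangle^{q\ell}$, the same $\ell$. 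We then pull back through the isomorphism $\nabla^{-1}: G^p_\ell\to\widehat H^{1,p}_\ell(\IR^n)$, which gives the stated identity of spaces with equivalent norms.

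\textbf{Main obstacle: compatibility.} The spaces $\widehat H^{1,q_j}_\ell$ live in $Z'(\IR^n)=\mathcal S'/\Pi$, so one must check that $\{\widehat H^{1,q_0}_\ell,\widehat H^{1,q_1}_\ell\}$ is a genuine interpolation couple and that the map $u\mapsto\nabla u$ is the same operator on both.

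Modulo constants, $u$ is determined by $\nabla u$; this is the only polynomial ambiguity, since $\nabla u\in L^{q_j}_\ell$ excludes nonconstant polynomials in the $\mathscr A_q$ range. So it is cleanest to embed both spaces into the quotient of $\mathcal S'$ by constants, via $u\mapsto\nabla u\in L^{q_0}_\ell+L^{q_1}_\ell$. The inverse on $G^{q_0}_\ell+G^{q_1}_\ell$ is well defined because a distribution with vanishing gradient is constant. I expect this bookkeeping, together with the consistency of $Q$ on the sum space, to be the main technical point. The rest is direct from Proposition~\ref{thm-hoermander} and Theorem~\ref{retract-co}.
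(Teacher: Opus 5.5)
Your proposal is correct and follows the paper's proof. Your $Q=\nabla(-\Delta)^{-1}(-\div)$ has symbol $\xi_j\xi_k/|\xi|^2$, so it is exactly the paper's retraction $I-\mathbb P_{q,\ell}$ (strictly $-R\otimes R$ rather than $R\otimes R$, which is immaterial); like the paper, you apply \eqref{interpol-P} to it together with $[L^{q_0}_\ell,L^{q_1}_\ell]_\theta=L^q_\ell$ and then pull back through the isomorphism $\nabla$, and your extra check that the couple is compatible modulo constants is a detail the paper leaves implicit.
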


\begin{proof} 
To apply \eqref{interpol-P} we define the projection (retraction) 
$$ P=R=I-\mathbb P_{q,\ell}: L^q_\ell(\IR^n) \to \nabla \widehat H^{1,q}_\ell(\IR^n),\;\; u\mapsto \nabla p = (I-\mathbb P_{q,\ell})u, $$ 
%with corresponding coretraction $S:\nabla \widehat H^{1,q}_\ell(\IR^n) \to  L^q_\ell(\IR^n)$ where $S(\nabla p)=\nabla p$. 
Then Theorem \ref{retract-co} implies  for $0< \theta< 1$ that 
$$ [\nabla \widehat H^{1,q_0}_\ell(\IR^n), \nabla\widehat H^{1,q_1}_\ell(\IR^n) ]_\theta 
= P \big([ L^{q_0}_\ell(\IR^n), L^{q_1}_\ell(\IR^n)]_\theta\big)
= P L^q_\ell(\IR^n) = \nabla\widehat H^{1,q}_\ell(\IR^n).$$
Considering $\nabla$ as an isomorphism from $\widehat H^{1,q}_\ell(\IR^n)$ to $\nabla \widehat H^{1,q}_\ell(\IR^n)$, the lemma is proved. \BLACK
\end{proof}

\vspace*{1ex}

\begin{lem}\label{interp-hatH-Rn in k}
For any $1<q<\infty$, $w\in \mathscr A_q(\IR^n)$ and $0\leq \kappa_0<\kappa_1<\infty$ there holds with $\kappa = (1-\theta)\kappa_0 + \theta\kappa_1$, $0\leq \theta\leq 1$ and equivalent norms 
\begin{align}\label{interp hatH}
 [\widehat H^{\kappa_0,q}_w(\IR^n),\widehat H^{\kappa_1,q}_w(\IR^n)]_\theta & = \widehat H^{\kappa,q}_w(\IR^n),\\
 \label{interp hatH sigma}
 [\widehat H^{\kappa_0,q}_{\sigma,w}(\IR^n),\widehat H^{\kappa_1,q}_{\sigma,w}(\IR^n)]_\theta & = \widehat H^{\kappa,q}_{\sigma,w}(\IR^n). 
\end{align}
\end{lem}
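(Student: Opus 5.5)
The plan for \eqref{interp hatH} is to realize $\widehat H^{\kappa,q}_w(\IR^n)$ as a retract of a single fixed space by means of the Riesz potentials. For $s\in\IR$ let $I^{s}=(-\Delta)^{s/2}$ be the Fourier multiplier $|\xi|^{s}$ acting on $Z'(\IR^n)$. By definition,
\[
I^{\kappa}:\widehat H^{\kappa,q}_w(\IR^n)\to \mathbb L^q_w(\IR^n)
\]
is an isometry. Using the density of $C_0^\infty(\IR^n)$ from Lemma \ref{hatH0-dense}, and the identification of $\mathbb L^q_w$ with a subspace of $Z'$, it is in fact an isometric isomorphism onto $\mathbb L^q_w(\IR^n)$.

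Since $I^{\kappa_0}$ is an isomorphism of $\widehat H^{\kappa_j,q}_w$ onto $\widehat H^{\kappa_j-\kappa_0,q}_w$ for $j=0,1$ and commutes with the couple, we may assume $\kappa_0=0$. The claim then reduces to
\[
[\mathbb L^q_w,\widehat H^{\kappa_1,q}_w]_\theta=\widehat H^{\theta\kappa_1,q}_w .
\]
Next we set $B=(-\Delta)^{\kappa_1/2}$, understood as the operator on $\mathbb L^q_w(\IR^n)$ whose homogeneous domain is $\widehat\D(B)=\widehat H^{\kappa_1,q}_w$. The operator $-\Delta$ admits a bounded $\mathscr H^\infty$-calculus on $\mathbb L^q_w(\IR^n)$ with angle $0$. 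This follows from Proposition \ref{thm-hoermander}(ii): for $h\in\HH_0(\Sigma_\phi)$ the symbol $h(|\xi|^2)$ is a Mikhlin multiplier with constant bounded by $C|h|_{\infty,\phi}$, which one checks via Cauchy estimates on $|\xi|^{|\alpha|}\partial^\alpha_\xi h(|\xi|^2)$. The same argument applies to $B$ and yields $BIP$ with $\|B^{it}\|\le Ce^{\epsilon|t|}$; here $B^{it}$ is simply the multiplier $|\xi|^{i\kappa_1t}$.

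With $BIP$ in hand, we invoke the homogeneous version of the complex interpolation theorem for sectorial injective operators, $[X,\widehat\D(B)]_\theta=\widehat\D(B^\theta)$. Since $\widehat\D(B^\theta)=\widehat H^{\theta\kappa_1,q}_w$ with equal norms, this concludes the proof. To prove the interpolation identity directly I would proceed as follows. For the inclusion ``$\supset$'', take $u$ in the dense set $C_0^\infty$ (Lemma \ref{hatH0-dense}(iii), or Schwartz functions with Fourier support away from $0$) and set
\[
F(z)=e^{(z-\theta)^2}\,I^{-(z-\theta)\kappa_1}u .
\]
On $\Re z=0$ this gives $\|F(it)\|_{\mathbb L^q_w}\le C\|u\|_{\widehat H^{\theta\kappa_1,q}_w}$, by boundedness of the imaginary multipliers $|\xi|^{i s}$ with polynomial growth in $s$ (Mikhlin), which the Gaussian factor absorbs; on $\Re z=1$ the same bound holds in $\widehat H^{\kappa_1,q}_w$. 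For the inclusion ``$\subset$'', given $F$ in the interpolation class we consider $G(z)=e^{(z-\theta)^2}I^{z\kappa_1}F(z)$ and apply the three lines lemma to $G$, paired with test functions.

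The main obstacle is the homogeneous setting. The spaces $\mathbb L^q_w$ and $\widehat H^{\kappa_1,q}_w$ live only in $Z'$, so the sum space and the analyticity of $z\mapsto I^{z\kappa_1}F(z)$ must be controlled there. I would handle this by working on the dense class $\{u\in\mathcal S:\hat u\in C_0^\infty(\IR^n\setminus\{0\})\}$, which is invariant under all $I^s$ and dense in every $\widehat H^{s,q}_w$ because $\mathbb L^q_w$ is dense in the weighted Littlewood–Paley sense. I would then use duality with $\widehat H^{-\kappa,q'}_{w'}$, $w'=w^{-1/(q-1)}\in\mathscr A_{q'}$, to justify the three lines argument. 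Finally, \eqref{interp hatH sigma} follows from \eqref{interp hatH} by \eqref{interpol-P} of Theorem \ref{retract-co}, applied with the Helmholtz projection $\mathbb P$ of Lemma \ref{Helmh} on $\IR^n$. This $\mathbb P$ is the Fourier multiplier $I-\xi\xi^T/|\xi|^2$; it commutes with $I^s$ and is therefore bounded on every $\widehat H^{\kappa_j,q}_w(\IR^n)$, and its range there is $\widehat H^{\kappa_j,q}_{\sigma,w}(\IR^n)$.
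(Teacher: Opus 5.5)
Your proposal is correct and follows essentially the paper's route: the bounded $\mathscr H^\infty$-calculus (hence $BIP$) of $-\Delta$ on $\mathbb L^q_w(\IR^n)$ via the Mikhlin theorem, the homogeneous interpolation identity $[X,\widehat\D(B)]_\theta=\widehat\D(B^\theta)$ for $BIP$ operators, and then the Helmholtz projection $\mathbb P$ as retraction in Theorem \ref{retract-co} for the solenoidal case. Your reduction to $\kappa_0=0$ via the isomorphism $I^{\kappa_0}$, together with the explicit three-lines argument on the class of functions with Fourier support in $\IR^n\setminus\{0\}$, only spells out what the paper leaves implicit: the case of general $\kappa_0$, and the compatibility of the abstract completions $\widehat\D(\cdot)$ with the concrete spaces in $Z'(\IR^n)$.
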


\begin{proof}
Since $\widehat H^{\kappa,q}(\IR^n) = \D((-\Delta_{q,w})^{\kappa/2})$, \eqref{interp hatH} follows from complex interpolation and the property $BIP$ of $-\Delta$, {\em cf.} Corollary \ref{cor:Rn-domains} for the Stokes operator. Concerning \eqref{interp hatH sigma} we apply \eqref{interp hatH} and Theorem \ref{retract-co} with $\mathbb P_{q,w}$ as retraction.
\end{proof}

\subsection{Bogovski\u{\i} operators on weighted homogeneous spaces}\label{S2.2}

Let us recall several well known properties of Bogovski\u{\i} operators on bounded domains and then move on to exterior domains.

\begin{prop}\label{Bog}
Let $D \subset \IR^n$ denote a bounded domain of class $C^1$, and let $1<q<\infty$. 
(i) There exists a linear bounded Bogovski\u{\i} operator %\RED (it seems to better to call a bounded domain $D$, say, since $\Omega_1$ is confusing.) \BLACK
$$ \mathbb B: L^q_{(m)}(D) \to H^{1,q}_0(D) $$
such that $\div \mathbb Bg=g$ for $g\in L^q_{(m)}(D) := \big\{h\in L^q(D): \int_{D} h\dx=0 \big\}$. 
Moreover, if $g\in H^{1,q}_0(D)$, then $u = \mathbb Bg\in H^{2,q}_0(D)$ and $\|\mathbb Bg\|_{H^{2,q}(D)} \leq C \|g\|_{H^{1,q}(D)}$. 

(ii) If $\partial D\in C^2$, then the operator $B$ extends to a bounded operator 
$$ \mathbb B: H^{-1,q}_0(D)\to L^q(D)$$
where $H^{-1,q}_0(D) := (H^{1,q'}(D))^*$, {\em i.e.}, the dual to the Sobolev space $H^{1,q'}(D)$ allowing for nonzero traces.

(iii) For any $p\in L^q(D)$ there holds $\nabla p\in H^{-1,q}(D)$ and the estimate 
\begin{equation}\label{p_m}
\Big\|p-\frac{1}{|D|} \int_D p \dx\Big\|_{L^q(D)} \leq C \|\nabla p\|_{H^{-1,q}(D)}. \end{equation}
\end{prop}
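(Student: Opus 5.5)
Since these are the classical facts on the Bogovski\u{\i} operator, I would follow Galdi's monograph and work first on star-shaped domains. First reduce to the case that $D$ is a finite union $D=\bigcup_{j=1}^N D_j$ of domains, each star-shaped with respect to a ball $B_j$; this is possible because $\partial D\in C^1$ is locally Lipschitz. On a domain $D_j$ star-shaped with respect to $B_j$, I would define Bogovski\u{\i}'s explicit formula
\begin{equation*}
\mathbb B_j g(x)=\int_{D_j} g(y)\,\Big(\frac{x-y}{|x-y|^n}\int_{|x-y|}^\infty \omega\Big(y+r\frac{x-y}{|x-y|}\Big) r^{n-1}\,dr\Big)\dy ,
\end{equation*}
where $\omega\in C_0^\infty(B_j)$ and $\int\omega=1$. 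For $g\in C_0^\infty(D_j)$ with mean zero, a direct computation gives $\div\mathbb B_jg=g$ and $\supp\mathbb B_jg\subset D_j$. Writing $\nabla\mathbb B_j g$ as a Calder\'on--Zygmund singular integral plus a weakly singular term (the $g\,\cdot$ multiplicative part), Proposition \ref{thm-hoermander}(i) with $w=1$ yields $\|\nabla\mathbb B_jg\|_{L^q}\le C\|g\|_{L^q}$. Because the kernel is translation-like in the first variable after the substitution, derivatives can be moved onto $g$, so $\|\nabla^2\mathbb B_jg\|_{L^q}\le C\|\nabla g\|_{L^q}$ for $g\in H^{1,q}_0$. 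This gives (i) on each $D_j$ by density.

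To glue the pieces together, decompose a mean-zero $g$ on $D$ as $g=\sum_j g_j$ with $g_j\in L^q_{(m)}(D_j)$, using a partition of unity $\{\psi_j\}$ subordinate to $\{D_j\}$ and correcting the means successively along a chain of overlapping $D_j$. The correction terms $\chi_j\int(\ldots)$ are fixed smooth bumps in $D_j\cap D_{j+1}$, so $\|g_j\|_{L^q}\le C\|g\|_{L^q}$, and likewise in $H^{1,q}_0$. Setting $\mathbb Bg=\sum_j\mathbb B_jg_j$ then proves (i).

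For (ii) I would use that $\mathbb B_j$ is also bounded from $(H^{1,q'}(D_j))^*$ to $L^q$. The plan is to write $\mathbb B_j g=\sum_k\partial_k K_k g$ with Calder\'on--Zygmund operators $K_k$, equivalently to estimate the adjoint $\mathbb B_j^*$ in $\mathcal L(L^{q'},H^{1,q'})$, which is again a singular integral estimate. The decomposition step must be compatible with this: on distributions, localization by $\psi_j$ and the mean corrections act boundedly on $H^{-1,q}_0$ (test against $\psi_j\phi$), and $C^2$ regularity of $\partial D$ lets me choose the covering so that these bounds hold. I expect (ii) to be the main obstacle, namely the singular-integral representation of $\mathbb B^*$ (Geissert--Heck--Hieber type estimate) with control up to the boundary; I would cite it rather than reprove it.

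For (iii), I would argue by duality. Set $p_0=p-\frac1{|D|}\int_D p\dx$ and take $\phi\in L^{q'}(D)$ with $\|\phi\|_{q'}\le1$. Let $\phi_0=\phi-\frac{1}{|D|}\int_D\phi\dx$ and $v=\mathbb B\phi_0\in H^{1,q'}_0(D)$, which is bounded by (i). Then
\begin{equation*}
\int_D p_0\phi\dx=\int_D p\,\phi_0\dx=\int_D p\,\div v\dx=-\langle\nabla p,v\rangle ,
\end{equation*}
and therefore $|\int_D p_0\phi\dx|\le C\|\nabla p\|_{H^{-1,q}}$. Taking the supremum over $\phi$ gives \eqref{p_m}, and it also shows $\nabla p\in H^{-1,q}$ trivially, since $|\langle\nabla p,v\rangle|\le\|p\|_q\|\nabla v\|_{q'}$.
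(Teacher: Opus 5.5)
Your proposal is correct. For (i) and (ii) you follow the same route as the paper, which simply cites Galdi's construction for (i) and the negative-order extension for (ii). Your sketch of (i) is faithful, with one imprecision: $\partial_j(\mathbb B_jg)$ splits into three pieces, which you conflate.
\begin{itemize}
\item A singular integral whose kernel $K(x,x-y)$ depends on $x$, so one needs the Calder\'on--Zygmund theorem for variable kernels rather than a convolution-type statement.
\item A genuinely weakly singular integral.
\item The separate multiplicative term $g(x)\int\frac{(x-y)_i(x-y)_j}{|x-y|^2}\omega(y)\,{\rm d}y$.
\end{itemize}

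For (iii) your route differs from the paper's. The paper argues abstractly. It uses only that $\div\colon H^{1,q'}_0(D)\to L^{q'}_{(m)}(D)$ is surjective, so its range is closed. The closed range theorem then makes $\nabla\colon L^q_{(m)}(D)\to H^{-1,q}(D)$ injective with closed range, and the open mapping theorem inverts it. You instead use the bounded linear right inverse $\mathbb B$ of $\div$ from (i) explicitly. You test $p-\frac1{|D|}\int_D p\dx$ against $\phi$ via $v=\mathbb B\big(\phi-\frac1{|D|}\int_D\phi\dx\big)$, and your computation $\int_D p_0\phi\dx=\int_D p\,\phi_0\dx=-\langle\nabla p,v\rangle$ is correct.

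Both arguments rest on the same fact from (i). Yours is more elementary and gives an explicit constant, at most $2\|\mathbb B\|_{\mathcal L(L^{q'}_{(m)},H^{1,q'}_0)}$. The paper's version needs only surjectivity of the divergence, not a linear bounded solution operator. It is also the template the paper later invokes for the exterior-domain estimate \eqref{p_ext}.
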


\begin{proof}
For the proof of (i), (ii) we refer to \cite[Theorems III.3.1, III.3.3, and III.3.5]{Galdi-steady}. 

 Concerning (iii) we follow \cite[Proposition 3.8 (i)]{BoMi90} and note that by (i) 
$$ \div\! : H^{1,q'}_0(D) \to L^{q'}_{(m)}(D) \subset L^{q'}(D) $$ 
is a bounded surjective operator with closed range in $L^{q'}(D)$. Hence the closed range theorem implies that 
$$ \nabla: L^q_{(m)}(D) \to H^{-1,q}(D) $$
is a bounded injective operator with closed range. Hence the open mapping theorem implies that $\nabla$ is even an isomorphism from $L^q_{(m)}(D)$ to $\nabla L^q_{(m)}(D) \subset H^{-1,q}(D)$. Therefore, we obtain \eqref{p_m} for $p\in L^q_{(m)}(D)$ and similarly for $p\in L^q(D)$ when subtracting its integral mean on \BLACK $D$. 
\end{proof}

\vspace*{1ex}

There exist also Bogovski\u{\i} operators on exterior domains such that $\nabla \mathbb Bg$ can be estimated by $g$; in this case there is no integral mean condition. 
For the result yielding the estimate $\|\mathbb Bg\|_{\widehat H^{1,q}(\Omega)}\leq C \|g\|_{L^q(\Omega)}$ we refer to \cite[Theorem III.3.6]{Galdi-steady}. 
 In our case a corresponding result in weighted spaces is more interesting. We note that 
Proposition \ref{Bog-ext} below will not be used in the following. %((DELETE: \BLACK will be used in the proof of Proposition \ref{lem-var-ineq-nabla_Omega0} on a  variational inequality for $\nabla u$ in $L^q_\ell(\Omega)$, see \eqref{var-ineq-nabla}.))

\begin{prop}\label{Bog-ext}
Let $\Omega\subset \IR^n$ be an exterior domain with $\partial\Omega\in C^1$, let $1<q<\infty$ and $-\frac{n}{q} < \ell <\frac{n}{q'}$. 
Then there exists a linear bounded operator
$$ \mathbb B: L^q_\ell(\Omega) \to \widehat H^{1,q}_{\ell,0}(\Omega) $$
such that $\div \mathbb Bg=g$ for $g\in L^q_\ell(\Omega)$ and 
$$ \|\mathbb Bg\|_{\widehat H^{1,q}_{\ell,0}(\Omega)} \leq C \|g\|_{L^q_\ell(\Omega)}. $$

 Moreover, for any $p\in L^q_\ell(\Omega)$ there holds $\nabla p\in H^{-1,q}_\ell(\Omega)$ and the estimate \BLACK
\begin{equation}\label{p_ext}
\|p\|_{L^q_\ell(\Omega)} \leq C \|\nabla p\|_{H^{-1,q}_\ell(\Omega)}. \end{equation}
%
%Moreover, if $g\in H^{1,q}_0(\Omega_1)$, then $u=Bg\in H^{2,q}_0(\Omega_1)$ and $\|Bg\|_{H^{2,q}(\Omega_1)} \leq C \|g\|_{H^{1,q}(\Omega_1)}$. 
%
%\RED DELETE (ii): \BLACK (ii) Let $F$ be a bounded linear functional on $\widehat H^{1,q}_{\ell,0}(\Omega)$ vanishing on $\widehat H^{1,q}_{\sigma,\ell,0}(\Omega)$. %
%Then there exists a unique $\pi\in L^{q'}_{-\ell}(\Omega)$ (up to an additive constant) such that $F=-\nabla \pi$. Moreover, there exists $C=C(q,\ell,\Omega)>0$ such that 
%\begin{equation}\label{pi-nablapi}
% \|\nabla \pi\|_{\widehat H^{-1,q'}_{-\ell}(\Omega)} \leq \|\pi\|_{L^{q'}_{-\ell}(\Omega)} \leq C\|\nabla \pi\|_{\widehat H^{-1,q'}_{-\ell}(\Omega)}.    
%\end{equation}
\end{prop}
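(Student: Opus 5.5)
We would construct $\mathbb B$ by the standard cut-off decomposition of Galdi (Theorem III.3.6 in \cite{Galdi-steady}), splitting $\Omega$ into a bounded part near $\partial\Omega$ and an exterior part where the problem reduces to the whole space, and checking that every step is compatible with weights $\langle x\rangle^{q\ell}$. Fix $R$ as in Assumption \ref{weight-Omega} and a cut-off $\zeta\in C^\infty_0(B_{2R})$ with $\zeta=1$ on $B_R$. On $\IR^n$ we would first solve $\div V = G$ for $G\in L^q_\ell(\IR^n)$ by $V=\nabla \Delta^{-1} G$, i.e.\ $\widehat V(\xi) = -\frac{i\xi}{|\xi|^2}\widehat G(\xi)$. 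Then $\nabla V = \nabla^2\Delta^{-1}G$ is a composition of Riesz transforms, hence bounded on $L^q_\ell(\IR^n)$ by Proposition \ref{thm-hoermander}. So $V\in \widehat H^{1,q}_\ell(\IR^n)$ with $\|\nabla V\|_{L^q_\ell}\le C\|G\|_{L^q_\ell}$; density via Lemma \ref{hatH0-dense} makes this rigorous. We take $G$ to be the extension of $g$ by zero.

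Next we correct $V$ so that it vanishes on $\partial\Omega$. We set $u = (1-\zeta)V + w$ and require $w\in H^{1,q}_0(\Omega_{2R})$ with
\[ \div w = \zeta g - \nabla\zeta\cdot (V - c) \]
in $\Omega_{2R}$, replacing $V$ by $V-c$ where $c$ is the mean of $V$ over the annulus $B_{2R}\setminus B_R$; this changes nothing in $\nabla V$, and we write $V$ again for $V-c$. The compatibility condition $\int_{\Omega_{2R}} \div w\dx=0$ must be checked. Since $\div V = g$ on $\Omega$, the divergence theorem gives
\[ \int_{\Omega_{2R}}\big(\zeta g - \nabla\zeta\cdot V\big)\dx = \int_{\Omega_{2R}} \div (\zeta V)\dx - \int_{\Omega_{2R}}\nabla\zeta\cdot V\dx \cdot 0 = -\int_{\partial\Omega} V\cdot \textsl{n}\,{\rm d}S, \]
which need not vanish. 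So the construction should be modified: solve $\div V=G$ on $\IR^n$ with $G$ equal to $g$ extended by zero, and absorb the boundary flux into the local part. Concretely, we add to $\zeta g-\nabla\zeta\cdot V$ a fixed smooth function $\psi$ supported in the annulus with $\int\psi\dx = 1$, multiplied by the flux $-\int_{\partial\Omega}V\cdot\textsl n$. We then compensate $\psi$ in the exterior by $\nabla \Delta^{-1}\psi$, whose gradient lies in every $L^q_\ell$ for the admissible range because $\nabla^2\Delta^{-1}\psi = O(|x|^{-n})$. The flux is controlled by $\|V\|_{H^{1,q}(\Omega_{2R})}$, hence by $\|\nabla V\|_{L^q}$ on the annulus together with the mean normalization. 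The bounded-domain Bogovski\u{\i} operator of Proposition \ref{Bog}(i) then gives $w$ with $\|\nabla w\|_{L^q(\Omega_{2R})}\le C\big(\|g\|_{L^q(\Omega_{2R})} + \|V\|_{L^q(B_{2R}\setminus B_R)}\big)$. The latter term is bounded by Poincaré's inequality on the annulus by $\|\nabla V\|_{L^q(\text{annulus})}$. Since the weight is $\sim 1$ on $B_{2R}$ (after enlarging $R$), all local norms are equivalent to the weighted ones. Linearity holds because each step is linear, and $u\in\widehat H^{1,q}_{\ell,0}(\Omega)$ follows by approximating with $C_0^\infty$ data.

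For \eqref{p_ext} we argue by duality, with $H^{-1,q}_\ell(\Omega)$ understood as the dual of $\widehat H^{1,q'}_{-\ell,0}(\Omega)$. Apply the construction to the conjugate pair $(q',-\ell)$, which is again admissible. Then for $h\in L^{q'}_{-\ell}(\Omega)$,
\[ \langle p,h\rangle = \langle p,\div\mathbb Bh\rangle = -\langle\nabla p,\mathbb Bh\rangle \le \|\nabla p\|_{H^{-1,q}_\ell}\,C\|h\|_{L^{q'}_{-\ell}}, \]
and taking the supremum over $h$ gives \eqref{p_ext}. We must justify the integration by parts for $p\in L^q_\ell$ and $\mathbb Bh\in\widehat H^{1,q'}_{-\ell,0}$, which follows by density of $C_0^\infty(\Omega)$. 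The bound $\nabla p\in H^{-1,q}_\ell$ is immediate.

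The main obstacle is the flux and constant bookkeeping near $\partial\Omega$. $V$ is only defined up to constants in a homogeneous space, and we need the local $L^q$ norm of $V$ and its boundary flux controlled by $\|\nabla V\|_{L^q_\ell}$ alone. We would handle this by fixing the annulus mean as above and relying on Poincaré and trace inequalities on bounded sets.
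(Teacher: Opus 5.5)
Your overall strategy is essentially the paper's: extend $g$ by zero to $G$, take $V=\nabla\Delta^{-1}G$ bounded via Riesz transforms, correct locally on $\Omega_{2R}$ with a Bogovski\u{\i} operator, control the additive constant, and use duality for \eqref{p_ext}. (The paper realizes the local step as $\mathbb Bg=\nabla\tilde\psi+v$, with $v$ a divergence-free lift of $-\nabla\tilde\psi$ on $\partial\Omega$, instead of your cut-off.) However, two steps are wrong as written.

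\textbf{The flux does vanish.} Since $G=0$ on the bounded set $\IR^n\setminus\overline\Omega$ and $V\in H^{1,q}_{\rm loc}(\IR^n)$, the divergence theorem on $\Omega^c$ shows that $\int_{\partial\Omega}V\cdot\textsl{n}\dsigma$ equals, up to sign, $\int_{\Omega^c}\div V\dx=0$. This is exactly the paper's remark that $\div\nabla\psi=0$ in $\Omega^c$. Your flux carrier is therefore superfluous. As described it would not close anyway: $\nabla\Delta^{-1}\psi$ does not vanish on $\partial\Omega$, so it would need its own correction near $\partial\Omega$, which raises the same compatibility question again. Also, the local datum must be $\div w=\zeta g+\nabla\zeta\cdot V$, not $\zeta g-\nabla\zeta\cdot V$.

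\textbf{The genuine gap is the constant.} Replacing $V$ by $V-c$ globally changes $u$ at infinity: $u=(1-\zeta)V-(1-\zeta)c+w$. Whether $(1-\zeta)c$ lies in $\widehat H^{1,q}_{\ell,0}(\Omega)$ depends on $\ell$, so ``approximating with $C_0^\infty$ data'' does not settle membership in the target space.

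For $-\frac nq<\ell\le 1-\frac nq$ the membership holds, but this is precisely Lemma \ref{D^1}. Its proof uses cut-offs $\chi_k$ with $\|\nabla\chi_k\|_{L^q_\ell}\sim k^{\ell-1+n/q}$, plus Mazur's lemma at the endpoint $\ell=1-\frac nq$.

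For $\ell>1-\frac nq$ it is false whenever $c\neq 0$. By Proposition \ref{embed-ext-weight} with $r=q$, $\ell'=\ell-1$, every element of $\widehat H^{1,q}_{\ell,0}(\Omega)$ has a unique representative in $L^q_{\ell-1}(\Omega)$, and that representative has zero trace on $\partial\Omega$. Take for $V$ its representative in $L^q_{\ell-1}(\IR^n)$. Then the representative of your $u$ in $L^q_{\ell-1}(\Omega)$ is $u+c$, whose trace on $\partial\Omega$ is $c$. So in this range you must not normalize. Keep that decaying $V$ and bound the local term by Hardy:
$\|V\|_{L^q(B_{2R}\setminus B_R)}\le C\|V\|_{L^q_{\ell-1}(\IR^n)}\le C\|\nabla V\|_{L^q_\ell(\IR^n)}$.
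This case distinction is exactly what the paper's proof does.

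The same issue affects your proof of \eqref{p_ext}. The identity $\langle p,\div\mathbb Bh\rangle=-\langle\nabla p,\mathbb Bh\rangle$ requires $\mathbb Bh\in\widehat H^{1,q'}_{-\ell,0}(\Omega)$, and for the pair $(q',-\ell)$ the same case split applies. Once the construction is repaired, your duality argument is correct. It is a more direct substitute for the closed-range argument the paper alludes to.
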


\begin{proof}
Given $g\in L^q_\ell(\Omega)$ we denote the extension by $0$ to $\IR^n$ also by $g$. Then the equation 
$$ \Delta \psi = g \quad \textrm{in }\;\IR^n$$
possesses a solution $\psi\in \widehat H^{2,q}_\ell(\IR^n)$
satisfying 
\begin{equation}\label{est-psi2}
    \|\nabla^2 \psi\|_{L^q_\ell(\IR^n)} \leq C \|g\|_{L^q_\ell (\IR^n)}; 
\end{equation}
actually, $\psi$ is defined by its Fourier transform $|\xi|^{-2} \hat g(\xi)$ and estimated by the H\"ormander-Mikhlin multiplier theorem in weighted spaces, see Theorem \ref{thm-hoermander}. 

To construct $\mathbb B$ and to prove that it maps $L^q_\ell(\Omega)$ into $\widehat H^{1,q}_{\ell,0}(\Omega)$ we first assume that 
\begin{equation}\label{g-psi-C0infty}
     \psi\in C^\infty_0(\Omega),\;\; g=\Delta \psi\in C^\infty_0(\IR^n). 
\end{equation}
Later, a density argument will complete the proof. Further, to fulfill the integral mean condition 
\begin{equation}\label{int-psi}
\int_{\Omega_{2R}} \nabla\psi\dx = 0
\end{equation}
on the bounded domain $\Omega_{2R}$, we add an adequate linear polynomial $\pi_1$ to $\psi$ and set $\tilde\psi = \psi+\pi_1$. This will have an impact neither on $\nabla^2\tilde\psi$ nor on the integral 
$$ \int_{\partial\Omega} -(\nabla\tilde\psi)\cdot \textsl{n} \dsigma = \int_{\partial\Omega} -\nabla\psi\cdot \textsl{n} \dsigma = 0; $$
%\RED ((The strange step with $\pi_1$ is important in the following and due to Galdi. If $\ell>1-n/q$ (i.e. $q<n$ if there is no weight) this step is not necessary since $\nabla\psi\in L^r_{\ell'}$. But if $-n/q<\ell\leq 1-n/q$, there is no embedding. Galdi wrote the proof in 2 steps, I unified both cases somehow. Later it will be important to get the result for "all" admissible $q,\ell$)) \BLACK
for the latter integral we used Gauss' theorem and that $\div\nabla\psi=0$ in $\Omega^c$. 
%
%Moreover, with $\nabla^2\psi \in L^q_{\rm loc}$ also $\nabla\psi \in L^q_{\rm loc}$, and 
Due to \eqref{int-psi}, now holding for $\tilde \psi$, Friedrichs' inequality implies that 
\begin{equation}\label{nabla-psi-nabla^2}
\int_{\Omega_{2R}} |\nabla\tilde\psi|^q\dx \leq C\int_{\Omega_{2R}} |\nabla^2\tilde\psi|^q \dx = C\int_{\Omega_{2R}} |\nabla^2\psi|^q \dx\leq C \|g\|^q_{L^q_\ell(\Omega)}. 
\end{equation}

It is important to note that in the special case when additionally $\ell>1-\frac{n}{q}$ we may put $\nabla\tilde\psi = \nabla\psi$ since  by the weighted Hardy inequality \eqref{Hardy-ineq} 
the condition $\nabla^2\psi\in L^q_\ell(\IR^n)$ implies $\nabla\psi\in L^q_{\ell'}(\IR^n)$ where $\ell'=\ell-1$ satisfying $-\frac{n}{q} < \ell' < -1+\frac{n}{q'}$. 
Since $\|\nabla\psi\|_{L^q_{\ell'}(\IR^n)} \leq C \|\nabla^2\psi\|_{L^q_{\ell}(\IR^n)}$, the estimate of $\nabla\psi$ by $g$ in \eqref{nabla-psi-nabla^2} holds in this case as well.

Let $v\in H^{1,q}_0(\Omega_{2R})$ with $\Omega_{2R}=\Omega\cap B_{2R}(0)$ be a solution of the equation 
\begin{equation}\label{div-problem} 
\div v = 0\; \textrm{ in } \Omega_{2R},\quad v=-\nabla\tilde\psi\;\textrm{  on }\;\partial\Omega,\quad v=0\;\textrm{ on }\;\partial B_{2R}.
\end{equation}
Such a vector field can be found by a Bogovski\u{\i} operator $\mathbb B(\Omega_{2R})$ on $\Omega_{2R}$ by extending in a first step the boundary values $-\nabla \tilde\psi$ and $0$ on $\partial\Omega$ and $\partial B_{2R}$, respectively, to a function $w\in H^{1,q}(\Omega_{2R})$. 
Note that we get the estimate $\|w\|_{H^{1,q}(\Omega_{2R})} \leq c \|\nabla\tilde\psi\|_{H^{1,q}(\Omega_{2R})}$. 
Moreover, $\div w$ satisfies 
$$ \int_{\Omega_{2R}} \div w\dx = \int_{\partial\Omega} -\nabla\tilde\psi\cdot  \textsl{n} \BLACK   \dsigma = 0. $$
%Note that the boundary integral $\int_{\partial\Omega_0} -\nabla\psi\cdot \textsl{n} \dsigma$ does not change when we add a constant $\mathcal C$ to $\nabla\psi$.
Then we solve the divergence problem 
$$ \div v' = -\div w\; \textrm{ in } \Omega_{2R},\quad v' = 0\;\textrm{  on }\;\partial\Omega\, \cup\,\partial B_{2R}$$
by the operator $\mathbb B(\Omega_{2R})$ satisfying the estimate $\|v'\|_{H^{1,q}(\Omega_{2R})} \leq c\|w\|_{H^{1,q}(\Omega_{2R})}$.
Summarizing the above local estimates, we find a solution $v:=v'+w$  to \eqref{div-problem}, extend $v$ by $0$ to $\Omega$ and get due to \eqref{nabla-psi-nabla^2}, \eqref{est-psi2} that 
%, \eqref{est-psi1}
\begin{align*}
  \|v\|_{H^{1,q}_\ell(\Omega)} & \leq C\|\nabla\tilde\psi\|_{H^{1,q}(\Omega_{2R})} \leq C \|\nabla^2 \tilde\psi\|_{L^{q}_\ell(\IR^n)} 
  \leq C \|g\|_{L^q_\ell(\Omega)}.  
\end{align*}

Adding $v$ and $\nabla\tilde\psi$ we get a linear operator 
$$ \mathbb B: L^q_\ell(\Omega)\to \widehat H^{1,q}_{\ell,0}(\Omega),\quad g\mapsto \mathbb Bg = v+\nabla\tilde\psi, $$ satisfying $\div \mathbb Bg=g$, $\|\mathbb Bg\|_{\widehat H^{1,q}_{\ell}(\Omega)} \leq C \|g\|_{L^q_\ell(\Omega)}$ and $\mathbb Bg\big|_{\partial\Omega}=0$.

In the special case when $\ell>1-\frac{n}{q}$ and $\nabla\tilde\psi = \nabla\psi\in \widehat H^{1,q}_{\ell,0}(\IR^n)$ it follows that $\mathbb Bg\in \widehat H^{1,q}_{\ell,0}(\Omega)$. 
Indeed, we choose a cut off function $\phi\in C^\infty_0(\Omega_{3R})$ such that $\phi(x)=1$ for $x\in \Omega_{2R}$ and write 
$$ \mathbb Bg = \phi (v + \nabla\psi) + (1-\phi)(v +  \nabla\psi).$$
Obviously, $(1-\phi)(v+\nabla\psi) = (1-\phi)\nabla\psi \in C^\infty_0(\Omega) \subset \widehat H^{1,q}_{\ell,0}(\Omega)$, since $\psi\in C^\infty_0(\Omega)$. 
Moreover, $\phi(v + \nabla\psi) \in \widehat H^{1,q}_{\ell,0}(\Omega_{3R})$ so that there exists a sequence $(\Phi_k)_k\subset C^\infty_0(\Omega_{3R})$ converging to $\phi(v + \nabla\psi)$ in $H^{1,q}_0(\Omega_{3R})$.  
Then $\Phi_k + (1-\phi)\nabla \psi \in C^\infty_0(\Omega)$ converges to $\mathbb Bg$ in $\widehat H^{1,q}_{\ell,0}(\Omega)$. 
%Consequently, $\mathbb Bg\in \widehat H^{1,q}_{\ell,0}(\Omega)$.

If $-\frac{n}{q} < \ell \leq 1-\frac{n}{q}$ and $\nabla\tilde\psi = \nabla\psi +\nabla\pi_1 = \nabla\psi +\pi_0$ with a constant vector $\pi_0\in \IR^n$, it remains to show that $1-\phi \in \widehat H^{1,q}_{\ell,0}(\Omega)$. This result is proved in Lemma \ref{D^1} below. Thus $(1-\phi) \pi_0 \in \widehat H^{1,q}_{\ell,0}(\Omega)$ and hence $\mathbb Bg\in \widehat H^{1,q}_{\ell,0}(\Omega)$.

Now the proof that $\mathbb Bg \in \widehat H^{1,q}
_{\ell,0}(\Omega)$ is complete for all admissible pairs $q,\ell$ and $g \in C^\infty_0(\Omega)$.
A final density argument by Lemma \ref{hatH0-dense}  (ii) completes the proof for general $g\in L^q_\ell(\Omega)$. 

The estimate \eqref{p_ext} follows the lines of the proof of Proposition \ref{Bog}; however, the integral mean condition is not defined and \BLACK unnecessary.
\end{proof}

%\RED DELETE this proof of (ii): \BLACK (ii) The divergence can be considered as a linear bounded operator 
%$$\div: \widehat H^{1,q}_{\ell,0}(\Omega) \to L^q_\ell(\Omega).$$
%By part (i) the range $\mathcal R(\div) = L^q_\ell(\Omega)$ is closed, and, obviously, \RED $\mathcal N(\div)=\widehat H^{1,q}_{\sigma,\ell,0}(\Omega)$ (in the sense of density of $C^\infty_{0,\sigma}(\Omega)$ or as the larger space $\{u\in \widehat H^{1,q}_{\ell,0}(\Omega): \div u=0\}$ or is it the same?)\BLACK. The adjoint operator $(\div)^*=-\nabla$ is a bounded map 
%$$ -\nabla: L^{q'}_{-\ell}(\Omega) \to \widehat H^{-1,q'}_{-\ell}(\Omega) = \big(\widehat H^{1,q}_{\ell,0}(\Omega)\big)^*$$
%for which by the closed range theorem 
%$\mathcal R(-\nabla)= \mathcal N(\div)^\perp = \widehat H^{1,q}_{\sigma,\ell,0}(\Omega)^\perp$. Here ${}^\perp$ denotes the annihilator. 
%Since $-\nabla$ is injective due to the surjectivity of $\div$, we conclude that for $F \in \widehat H^{1,q}_{\sigma,\ell,0}(\Omega)^\perp$ there exists a unique $\pi\in L^{q'}_{-\ell}(\Omega)$ such that $-\nabla\pi = F$.

%By the open mapping theorem, the map $-\nabla: L^{q'}_{-\ell}(\Omega) \to \widehat H^{1,q}_{\sigma,\ell,0}(\Omega)^\perp$ is an isomorphism. The first part of the estimate \eqref{pi-nablapi} is trivial by a formal integration by parts. The second part of \eqref{pi-nablapi} follows from the isomorphism $\pi \mapsto F:=-\nabla\pi$.
 
\begin{lem}\label{D^1}
Let $-\frac{n}{q} < \ell \leq 1-\frac{n}{q}$\,. Then the set
$$ \mathcal D^1(\Omega) := \big\{\phi\in C^\infty(\Omega):  \phi|_{\partial\Omega}=0,\BLACK\, \nabla\phi\in C^\infty_0(\overline\Omega) \big\} $$
is contained in $\hat H^{1,q}_{\ell,0}(\Omega)$.  An analogous result holds for the whole space $\IR^n$. \BLACK
\end{lem}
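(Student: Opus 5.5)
The plan is to approximate a given $\phi\in\mathcal D^1(\Omega)$ in the norm $\|\nabla\cdot\|_{L^q_\ell(\Omega)}$ by compactly supported functions, using a logarithmic cut-off at infinity. The condition $\ell\le 1-\frac{n}{q}$ enters only to make the cut-off error tend to zero; the boundary part is handled by the classical $H^{1,q}_0$ theory, since by Assumption~\ref{weight-Omega} the weight is $\sim 1$ near $\partial\Omega$.

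Fix $\phi\in\mathcal D^1(\Omega)$ and choose $R_0>R$ with $\supp\nabla\phi\subset \overline{B_{R_0}}$. Since $\Omega$ is connected outside $B_R$, $\phi$ equals a constant $c$ on $\Omega\setminus B_{R_0}$. Take $\eta\in C^\infty(\IR)$ with $\eta=1$ on $(-\infty,1]$ and $\eta=0$ on $[2,\infty)$. For $k>R_0$ set
$$\chi_k(x)=\eta\big(\log|x|/\log k\big),\qquad \phi_k=\chi_k\phi .$$
Then $\chi_k=1$ on $B_k$ and $\chi_k=0$ outside $B_{k^2}$, so $\phi_k$ has bounded support. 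Moreover $\nabla(\phi-\phi_k)=-c\,\nabla\chi_k$, and $|\nabla\chi_k(x)|\le C/(|x|\log k)$ on $k\le|x|\le k^2$. Hence
$$\|\nabla(\phi-\phi_k)\|^q_{L^q_\ell}\le \frac{C|c|^q}{(\log k)^q}\int_k^{k^2} r^{\,q\ell-q+n-1}\,dr .$$
Since $q\ell-q+n\le 0$, the integrand is bounded by $r^{-1}$, so the right-hand side is at most $C(\log k)^{1-q}$, which tends to $0$ because $q>1$. A power cut-off $\chi(x/k)$ would only give the bound $k^{n+q\ell-q}$, which stays bounded but does not decay in the borderline case $\ell=1-\frac{n}{q}$. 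This is why I choose the logarithmic cut-off.

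It remains to show that each $\phi_k$ lies in $\widehat H^{1,q}_{\ell,0}(\Omega)$. The function $\phi_k$ is smooth up to $\partial\Omega$, vanishes on $\partial\Omega$, and is supported in the bounded set $\Omega_{k^2+1}$. Therefore $\phi_k\in H^{1,q}_0(\Omega_{k^2+1})$, and there exist $\psi_j\in C^\infty_0(\Omega)$ with $\psi_j\to\phi_k$ in $H^{1,q}$. On this bounded set $\langle x\rangle^{q\ell}$ is bounded above and below, so $\psi_j\to\phi_k$ in the weighted gradient norm as well. A diagonal argument then gives $\phi\in\widehat H^{1,q}_{\ell,0}(\Omega)$. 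Here $\phi$ is viewed through its gradient, since the norm is $\|\nabla\cdot\|_{L^q_\ell}$.

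For $\IR^n$ the same cut-off argument applies without the boundary step, because $\phi_k\in C^\infty_0(\IR^n)$ directly. The only delicate point is the borderline exponent $\ell=1-\frac{n}{q}$, which is handled by the logarithmic cut-off above.
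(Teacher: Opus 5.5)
Your proof is correct. Its skeleton matches the paper's: cut $\phi$ off at infinity, note that the error is exactly $c\,\nabla\chi_k$ on an annulus, and estimate its $L^q_\ell$-norm.

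The difference is the choice of cut-off. The paper uses standard cut-offs with $\chi_k=1$ on $B_k$, $\chi_k=0$ outside $B_{2k}$ and $|\nabla\chi_k|\sim 1/k$. These give $\|\nabla\chi_k\|_{L^q_\ell}^q\sim k^{q(\ell-1+n/q)}\to 0$ only for $\ell<1-\frac nq$. In the borderline case $\ell=1-\frac nq$ the paper instead observes that $(\nabla\chi_k)$ is bounded and tends weakly to $0$ in $L^q_\ell$. It then invokes Mazur's lemma to get convex combinations of the $\chi_k$ whose gradients converge strongly.

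Your logarithmic cut-off $\eta(\log|x|/\log k)$ handles both cases at once, with the explicit rate $(\log k)^{1-q}$. It is constructive and avoids the appeal to weak convergence, which the paper asserts without checking (it is easy, since the supports escape to infinity). A natural choice of Mazur's convex combinations, Ces\`aro averages of dyadic cut-offs, is essentially a logarithmic cut-off anyway. The paper's route is shorter to state; yours gives a quantitative estimate.

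You also make explicit a step the paper leaves tacit: the bounded-support function $\chi_k\phi$, which vanishes on $\partial\Omega$, lies in the closure of $C^\infty_0(\Omega)$. This holds because its approximants can be taken in $C^\infty_0(\Omega_{k^2+1})$, where the weight is bounded above and below.
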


\begin{proof}
Assume that $\phi(x)=c=const$ for $|x|\geq 3R$. \BLACK For the proof choose a family of smooth cut off functions $\chi_k, k\in\IN,$ such that $\chi_k(x)=1$ in $B_k$, but $\chi_k(x)=0$ for $|x|\geq 2k$ such that $|\nabla\chi_k|\sim \frac1k$. 
Thus $\nabla (\chi_k(c-\phi)) - \nabla (c-\phi) = (\nabla\chi_k) (c-\phi) $ since $(\chi_k-1) \nabla(c-\phi) = 0$ for $k>3R$.
Now 
$$ \int |\nabla\chi_k|^q \langle x\rangle^{\ell q} \dx \sim \frac1{k^q} k^{\ell q+n} = k^{q \big(\ell-1+\frac{n}{q}\big)} \to 0 \quad \textrm{as }\; k\to\infty $$
provided that $\ell < 1-\frac{n}{q}$. If $\ell = 1-\frac{n}{q}$, then $(\nabla\chi_k)\subset L^q_\ell$ is a bounded sequence; moreover, it converges weakly in $L^q_\ell$ to $0$. Hence Mazur's lemma yields the existence of a sequence of convex combinations of $(\nabla\chi_k)_k$ converging strongly in $L^q_\ell$ to $0$.
\end{proof}
\BLACK

\subsection{Stokes operators}\label{S2.3}

For a smooth exterior domain $\Omega\subset \IR^n$ let 
$$ A_q = A = -\mathbb P\Delta: \D(A_q) = H^{2,q}(\Omega)\cap H^{1,q}_0(\Omega) \cap L^q_\sigma(\Omega) $$ 
denote the Stokes operator.
By definition of the graph norm and the triangle inequality we see that $\mathcal D(A) = \mathcal D(I+A)$ with equivalent norms. 
For a bounded domain the Stokes operator $A$ has a bounded inverse $A^{-1}$ and there holds $0\in\rho(A)$ so that $\|u\|_A \sim \|Au\|_{L^q}$. However, for an exterior domain as well as for the whole space there holds $0\in \sigma(A)$, $A$ is densely defined with dense range $\mathcal  R(A)\subsetneq L^q_\sigma$.
This disadvantage yields several problems on adequate {\em a priori} estimates and properties of underlying function spaces, leading to severe restrictions on admissible exponents such as $q$. 
A second possibility to discuss the Stokes operator on domains where $0\in\sigma(A)$ is to consider homogeneous function spaces. 
More generally, for a closed densely defined operator $A$ on a Banach space $X, \|\cdot\|_X$ we define an extension $\dot A$ with domain $\widehat{\D}(A)$, the completion of $\D(A )$ with respect to the norm $\|A \,\cdot\|$. To be more precise,
%\begin{equation}\label{defhomogDA}  
%\widehat{\D}(A) := \textrm{completion of } \D(A ) \textrm{ with respect to } \| \cdot\|_{A} = \|\dot A \,\cdot\|.
%\end{equation}
\begin{align}\label{Def-hatAx}
\begin{aligned}
\widehat\D(A) & = \{\dot x=(x_k)_{k\in\IN}\subset \D(A): (Ax_k)_k \;\textrm{ is a Cauchy sequence in }\; X\} \\ 
\dot A\dot x & := \lim_{k\to\infty} A x_k \quad \textrm{for}\; \dot x=(x_k)_{k\in\IN} \in \widehat\D(A), \quad \|\dot x\|_{\dot A} = \lim_{k\to\infty} \|Ax_k\|. 
\end{aligned}
\end{align}
\BLACK
%\begin{align}\label{Def-hatAx}
%\begin{aligned}
%x \in \widehat\D(A)\;&\; \textrm{if and only if} \;\;\exists (x_k)\subset \D(A) \;\;\textrm{such that}\;\; (Ax_k)  \;\;\textrm{converges in}\;\; X, \\
%\dot Ax & := \lim_{k\to\infty} A x_k, \quad \D(\dot A) := \widehat\D(A) . 
%\end{aligned}
%\end{align}
By definition of completion, $(\widehat{\D}(A), \|\cdot\|_{\dot A})$ is a Banach space, but in general $\widehat{\D}(A) \not\subset X$. 
On the other hand, under the assumption that $A$ is sectorial (see Definition \ref{op} (i) below), 
\begin{equation}\label{Ahat_isom}  
\dot A: \D(\dot A) = \widehat{\D}(A) \to X\quad \textrm{is an isomorphism},
\end{equation}
{\em cf.} \cite[p. 750]{KaKuWe06}  for the general case and Lemma \ref{conv-u-D-u} below for the Stokes operator on weighted spaces. For further general results on $\widehat\D(A)$ and properties of fractional powers of $\dot A$ we refer to \cite[Appendix E]{KuWe04}.
 \BLACK 

Under the assumption of sectoriality on $A$ we may  define the fractional powers $A^\theta$, $0<\theta<1$, as densely defined closed operators with domain $\D(A^\theta)\subset X$. Exploiting the property of bounded imaginary powers ($BIP$), see Definition \ref{op} (ii), we get for the domains $\D(A^\theta)$, $0<\theta<1$,  
of  $A^\theta$
the identity  
$$\D(A^\theta) = [L^q_{\sigma,\ell}(\Omega_0), \D(A)]_\theta. $$
Since $\D(A)= \D(I+A)$ where $I$ denotes the identity, this relation extends to $I+A$.

Concerning the Stokes operator on weighted spaces  $L^q_{\sigma,\ell}(\Omega)$ with $1<q<\infty$ and $-\frac{n}{q} <\ell<\frac{n}{q'}$ we define
$$ A_{q,\ell} = -\mathbb P_{q,\ell}\Delta_{q,\ell}: \D(A_{q,\ell}) = H^{2,q}_\ell(\Omega)\cap H^{1,q}_{\ell,0}(\Omega) \cap L^q_{\sigma,\ell}(\Omega) \to L^q_{\sigma,\ell}(\Omega); $$
here $\Delta_{q,\ell}$ denotes the Dirichlet Laplacian on $L^q_\ell(\Omega)$ with domain $H^{2,q}_\ell(\Omega)\cap H^{1,q}_{\ell,0}(\Omega)$.
We mention that $A_{q,\ell}$ is a closed, injective and densely defined operator on $L^q_{\sigma,\ell}(\Omega)$ with $0\in\sigma(A_{q,\ell})$. Many times we simply write $A=-\mathbb P\Delta$.

By analogy to \eqref{Def-hatAx} we define its homogeneous version, {\em i.e.},
\begin{equation}\label{defhomogDA_s}  
\D(\dot A^\theta_{q,\ell}) := \widehat{\D}(A^\theta_{q,}\ell) := \textrm{completion of } \D(A^\theta_{q,\ell}) \textrm{ with respect to } \|A^\theta_{q,\ell}\,\cdot\|.
\end{equation}
Then Proposition \ref{domain-of-fractional-power} admits the generalization 
\begin{equation}\label{Dhat(Atheta)}
\D(\dot A^\theta) = [X, \D(\dot A)]_\theta,\quad 0<\theta<1,
\end{equation} 
to homogeneous spaces $\D(\dot A^\theta)$,  with equivalent norms; for more details of the abstract theory we refer to \cite{KaKuWe06}, \cite{KuWe04} and
Proposition \ref{Complex Interpolation} below.
For more concrete results on the Stokes operator and proofs see \cite{FS} and Sect. \ref{S3} below. The whole space case is considered in Corollary \ref{cor:Rn-domains}. The definitions $A_{q,\ell}$, $\widehat\D(\dot A_{q,\ell}^\theta)$  {\em etc.} can be extended to arbitrary weights $w\in \mathscr A_q(\IR^n)$ yielding $\dot A_{q,w}$, $\widehat\D(\dot A_{q,w}^\theta)$ {\em etc.}

\begin{thm}\label{res-weighted} {\em \cite[Theorems 1.5 and 5.5]{FS}}
Let $n \geq 3$, $\Omega \subset \mathbb{R}^n$ be an exterior domain with boundary of class $C^2$, let $q \in (1, \infty)$ and $w\in \mathscr{A}_q$. 
%Further let $\Sigma_\omega$ denote the complex sector $\Sigma_\omega =\{0\neq \mu\in\IC: |{\rm arg}\, \mu|<\omega\}$ with $0<\omega<\pi$,
\begin{enumerate}
\item[{\rm (i)}]
For $\lambda\in\Sigma_\omega$, $0<\omega<\pi$, satisfying $|\lambda|\geq\delta>0$ the Stokes resolvent problem
$\lambda u + A_{q,w} u =f$, $f\in \mathbb L^q_w(\Omega)$, 
has a unique solution $u\in {\D}(A_{w,q})$ satisfying the resolvent estimate
\begin{align}\label{equ:rse-w}
	\|\lambda u\|_{\mathbb{L}^q_w(\Omega)}  + \|A_{q,w} u\|_{\mathbb{L}^q_w(\Omega)}  \leq c_{\omega,\delta} \|f\|_{\mathbb{L}^q_w(\Omega)}.
\end{align}
\item[{\rm (ii)}]
If $w(x)=(1+|x|)^\ell$ and $-n < \ell < n(q-1)$, 
 {\em i.e.} $w\in \mathscr{A}_q$, the constant $c_{\omega,\delta}$ can be replaced by $c_{\omega}>0$, a constant independent of $\delta$. 
\item[{\rm (iii)}]
Let %$x_0 \in \mathbb{R}^n \setminus \overline\Omega$ and 
$ w^{\ell/q} \langle x\rangle^{-\gamma \ell} \in  \mathscr{A}_\ell,$ 
where 
$\gamma=n\big(\frac{2}{n}+\frac{1}{\ell}-\frac{1}{q}\big) \geq 0 $, $n\geq 3$ %(\RED $n\geq 3$ is mentioned in [8, Thm. 5.5]) 
\BLACK and $\ell \geq q$. Then 
the resolvent estimate 
\begin{align}\label{equ:rse-w2}
	\|\lambda u\|_{\mathbb{L}^q_w(\Omega)}  +  |\lambda|^\frac12 \|\nabla u\|_{\mathbb{L}^q_w(\Omega)}  +\|\nabla^2 u\|_{\mathbb{L}^q_w(\Omega)}  \leq c \|(\lambda+A_{q,w})u\|_{\mathbb{L}^q_w(\Omega)}
\end{align}
holds for $u\in\mathcal D(A_{q,w})$ and all $\lambda\in \Sigma_\omega$ uniformly.  
In particular, if  $w=\langle \cdot\rangle^\ell$   and
$$
2q-n < \ell < n(q-1), $$
then \eqref{equ:rse-w2} is satisfied. 
\item[{\rm (iv)}] For $u\in\mathcal D(A_{q,w})$ and $w=\langle\cdot\rangle^\ell$, $2q-n < \ell < n(q-1)$, there holds the estimate
\begin{align}\label{equ:nabla2-A}
	\|\nabla^2 u\|_{\mathbb{L}^q_w(\Omega)}  \leq c \|A_{q,w}u\|_{\mathbb{L}^q_w(\Omega)}.
\end{align}
\end{enumerate}
\end{thm}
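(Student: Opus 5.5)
The plan is the cut-off and perturbation strategy of \cite{FS94}, transferred to weighted spaces using Proposition \ref{thm-hoermander}. For (i), first treat the whole-space problem $\lambda u-\Delta u+\nabla\pi=f$, $\div u=0$ in $\IR^n$. Its solution is $u=(\lambda+|\xi|^2)^{-1}\widehat{\mathbb P f}$ on the Fourier side, where $\mathbb P$ is the whole-space Helmholtz projection. The symbols
$$\lambda(\lambda+|\xi|^2)^{-1},\qquad \xi_j\xi_k(\lambda+|\xi|^2)^{-1},\qquad \xi_j\xi_k|\xi|^{-2}$$
satisfy the Hörmander--Mikhlin condition uniformly in $\lambda\in\Sigma_\omega$. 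Hence Proposition \ref{thm-hoermander}(ii) gives
$$|\lambda|\,\|u\|_{\mathbb L^q_w}+\|\nabla^2u\|_{\mathbb L^q_w}\le c_\omega\|f\|_{\mathbb L^q_w}$$
for every $w\in\mathscr A_q$, with $c_\omega$ independent of $\lambda$. On the bounded domain $\Omega_{R+2}$ we have $w\sim1$ by Assumption \ref{weight-Omega}, so the classical unweighted resolvent theory \cite{Giga81,Sol77} applies there.

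With a partition $1=\phi+(1-\phi)$, where $\phi$ is supported near $\Omega^c$, we glue the two solutions and correct the divergence errors $\nabla\phi\cdot u$ with the Bogovski\u{\i} operator of Proposition \ref{Bog} on an annulus. This produces a parametrix $R(\lambda)$ with $(\lambda+A)R(\lambda)=I+K(\lambda)$. The remainder $K(\lambda)$ contains only lower-order terms ($\nabla u$, $u$, $\pi$) supported in a fixed compact set. For $|\lambda|\ge\delta$ these terms are small when $|\lambda|$ is large, and compact (by Rellich's theorem) in general. For large $|\lambda|$ a Neumann series gives existence together with \eqref{equ:rse-w}. For $\delta\le|\lambda|\le\Lambda$ we use the Fredholm alternative, reducing everything to uniqueness. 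Uniqueness follows because a solution with $f=0$ has $w\sim1$ near the boundary and decays at infinity, so it lies in unweighted $L^r$ spaces, where uniqueness is known; this is also the source of the dependence of $c_{\omega,\delta}$ on $\delta$.

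For (ii) and (iii) the task is to remove the dependence on $\delta$ as $\lambda\to0$, and this is the main obstacle: $K(\lambda)$ no longer becomes small as $\lambda\to 0$. I would argue by contradiction. Take $\lambda_k\to0$ and $u_k$ normalized with $\|(\lambda_k+A)u_k\|\to0$. The decisive point is to control the local terms $\|\nabla u_k\|_{L^q(\Omega_{R+2})}$ and $\|\pi_k\|_{L^q(\Omega_{R+2})}$, uniformly in $k$, by norms that stay bounded as $\lambda\to0$.

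For the power weights $\langle x\rangle^\ell$ I would do this through a weighted Sobolev embedding (Proposition \ref{embed-ext-weight}) applied to the whole-space part, which shows that the $u_k$ are locally bounded in $H^{2,q}$. A compactness argument then yields a nontrivial limit solving the stationary Stokes system with $f=0$, in a weighted homogeneous space. Liouville-type uniqueness in that space gives a contradiction.

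The conditions in (iii), namely $w^{\ell/q}\langle x\rangle^{-\gamma\ell}\in\mathscr A_\ell$, and in particular $2q-n<\ell$ for power weights, are exactly what the Hardy and Rellich inequalities (Proposition \ref{Hardy-Rellich}) need to bound $|\lambda|^{1/2}\nabla u$ and the pressure cut-off terms by $\nabla^2u$ in the weighted norm. In that regime the commutator terms can be absorbed directly, which gives \eqref{equ:rse-w2} uniformly in $\lambda$. Finally, (iv) follows from (iii) by letting $\lambda\to0$ in \eqref{equ:rse-w2} for fixed $u\in\D(A_{q,w})$.
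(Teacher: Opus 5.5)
The paper does not prove this theorem. It quotes \cite[Theorems 1.5 and 5.5]{FS} and only remarks, as you do, that (iv) follows from (iii) by letting $\lambda\to0$.

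Taken on its own, your sketch has a genuine gap in (ii). Your contradiction argument needs the local quantities $\|u\|_{H^{1,q}(K)}$ and $\|\pi\|_{L^q(K)}$ of $u=(\lambda+A_{q,w})^{-1}f$ to be bounded by $\|f\|_{\mathbb L^q_w}$ uniformly as $\lambda\to0$. For $w=\langle x\rangle^\ell$ this is expected to hold only when $\ell>2q-n$. The reason is that the Stokes fundamental solution, of size $|x|^{2-n}$, must lie near infinity in the dual space $\mathbb L^{q'}_{w'}$ with $w'=w^{-1/(q-1)}$, and that is exactly the condition $\ell>2q-n$. It fails, for example, for $w\equiv1$ and $q\ge\frac n2$. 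So on $-n<\ell\le 2q-n$ you cannot expect a compactness argument for the $u_k$ to give \eqref{equ:rse-w} uniformly in $\lambda$.

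There are two further problems with this step. Proposition \ref{embed-ext-weight} itself requires $\ell>q-n$ in this normalization. And normalizing only $\|\lambda_k u_k\|$ gives no control of $\nabla^2u_k$, so your local $H^{2,q}$ bound already presupposes \eqref{equ:rse-w2}.

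The missing idea is duality plus interpolation. As in the proof of Corollary \ref{theorem-H-infty-2}, $(\lambda+A_{q,w})^*=\bar\lambda+A_{q',w'}$, hence $\|\lambda(\lambda+A_{q,w})^{-1}\|=\|\bar\lambda(\bar\lambda+A_{q',w'})^{-1}\|$. The uniform bound obtained in the range $2q-n<\ell<n(q-1)$ of (iii) therefore carries over to $-n<\ell<n(q-1)-2q$. For $n\ge5$ these two ranges overlap and cover the whole Muckenhoupt range. For $n=3,4$ the remaining strip must be filled by Stein--Weiss interpolation in the weight exponent at fixed $q$, exactly as in Corollary \ref{theorem-H-infty-2} (Fig.~1).

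Three further steps fail as written.

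First, in (iii) the commutator and pressure cut-off terms cannot be ``absorbed directly''. Proposition \ref{Hardy-Rellich} bounds them by $C\|\nabla^2u\|_{\mathbb L^q_w}$ (resp.\ $C\|\nabla\pi\|_{\mathbb L^q_w}$), but these constants are scale invariant and not small. You only get $\|\nabla^2u\|\le C\|f\|+C'\|\nabla^2u\|$ with no reason for $C'<1$. What does work in this range is your contradiction argument, normalized by the full left-hand side of \eqref{equ:rse-w2} and closed by uniqueness of the limiting stationary problem, as in the uniqueness part of Lemma \ref{conv-u-D-u}. It should be run for (iii) first, with (ii) derived from it afterwards.

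Second, the hypothesis $w^{\ell/q}\langle x\rangle^{-\gamma\ell}\in\mathscr A_\ell$ for non-power weights is not addressed at all. It encodes a second-order two-weight Sobolev inequality, and your argument only treats power weights.

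Third, in (i) the pressure cut-off term is not small for large $|\lambda|$. The whole-space pressure does not depend on $\lambda$ unless the data are solenoidal in $\IR^n$. The local pressure of the bounded piece decays like $|\lambda|^{-\alpha}$ only for small $\alpha$ (cf.\ Lemma \ref{p-local-decay}, $\alpha<\frac1{2q}$). That is a separate, nontrivial estimate, and without it your Neumann series for large $|\lambda|$ does not close.
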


Note that (iv) is obtained from \eqref{equ:rse-w2} by letting $\lambda$ tend to $0$.\\[-1ex]

On the whole space we have the following more detailed generalized resolvent estimate.

\begin{prop}\label{FS-weighted}
%{\rm (\cite[Theorem 4.5]{Farwig-Sohr})}
%\label{resolvent-est-inhomogeneous-condition}
%
Let $1<q < \infty$ with $-\frac{n}{q} < \ell < \frac{n}{q'}$.
For every $f \in L^q_\ell (\IR^n)$, $g= H^{1,q}_\ell(\IR^n) \cap \widehat{H}^{-1,q}_\ell (\IR^n)$ and $\lambda\in\Sigma_\omega$, $0<\omega<\pi$, the generalized Stokes problem 
\begin{align}\label{generalized-stokes-problem}
\lambda u -\Delta u + \nabla p =f,  \ \ \div u=g \mbox{ in } \IR^n
\end{align}
has a unique solution $(u,p) \in H^{2, q}_\ell (\IR^n) \times \widehat{H}^{1,q}_\ell(\IR^n)$ satisfying
\begin{align}\label{St-res-wRn}
\begin{aligned}
\|\lambda u\|_{L^{q}_\ell(\IR^n) } & \leq 
C \big( \|f\|_{L^q_\ell(\IR^n)} + \|\lambda g\|_{\widehat{H}^{-1,q}_\ell(\IR^n)} \big),\\[1ex]
\||\lambda|^{1/2} \nabla u\|_{L^{q}_\ell(\IR^n) } &   \leq C \big( \|f\|_{L^q_\ell(\IR^n)} + \||\lambda|^{1/2} g\|_{L^q_\ell(\IR^n)} \big),\\[1ex] 
\|\nabla^2 u\|_{L^{q}_\ell(\IR^n) } & \leq 
C\big(\|f\|_{L^q_\ell(\IR^n)}  + \|\nabla g\|_{L^q_\ell(\IR^n)} \big),\\
\|\nabla p\|_{L^{q}_\ell(\IR^n) } & \leq 
C\big(\|(f, \nabla g)\|_{L^q_\ell(\IR^n)} + \|\lambda g\|_{\widehat{H}^{-1,q}_\ell(\IR^n)} \big)
\end{aligned} 
\end{align}
uniformly in $\lambda$. 
%\RED (Actually, by the explicit formula, $\|\nabla^2 u \|_{L^{q}_\ell(\IR^n) } \leq C \|f, \nabla g\|_{L^q_\ell(\IR^n)}$, but for $\nabla p$ we only get (due to $(\lambda-\Delta)u$ that 
%$$ \|\nabla p\|_{L^{q}_\ell(\IR^n) }  \leq 
%C\big(\|(f, \nabla g)\|_{L^q_\ell(\IR^n)} + \|\lambda g\|_{\widehat{H}^{-1,q}_\ell(\IR^n)} \big)$$
\end{prop}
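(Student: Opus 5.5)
The proof reduces the generalized problem to the classical whole space Stokes resolvent, which is a Fourier multiplier, and then estimates each piece with the weighted H\"ormander--Mikhlin theorem (Proposition \ref{thm-hoermander}(ii)). The weight $\langle x\rangle^{q\ell}$ lies in $\mathscr A_q$, so every zero-order multiplier satisfying the Mikhlin condition uniformly in $\lambda\in\Sigma_\omega$ is bounded on $L^q_\ell(\IR^n)$, with a bound independent of $\lambda$.

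First we remove the divergence. Set $u_g := -\nabla(-\Delta)^{-1}g$, i.e.\ $\hat u_g(\xi) = -i\xi|\xi|^{-2}\hat g(\xi)$, so that $\div u_g = g$. Then $\nabla^2 u_g$ equals Riesz transforms applied to $\nabla g$, and $\nabla u_g$ equals Riesz transforms applied to $g$. Moreover $\|u_g\|_{L^q_\ell}\le C\|g\|_{\widehat H^{-1,q}_\ell}$, since $u_g=-\nabla(-\Delta)^{-1/2}\,(-\Delta)^{-1/2}g$ and $\nabla(-\Delta)^{-1/2}$ is a Riesz transform. These give exactly the terms $\|\lambda g\|_{\widehat H^{-1,q}_\ell}$, $\||\lambda|^{1/2}g\|_{L^q_\ell}$ and $\|\nabla g\|_{L^q_\ell}$ on the right-hand sides of \eqref{St-res-wRn}.

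We then write $u=u_g+v$, where $v$ solves
\[
\lambda v-\Delta v+\nabla p=F:=f-(\lambda-\Delta)u_g,\qquad \div v=0 .
\]
Since $u_g$ is a gradient, $(\lambda-\Delta)u_g$ is also a gradient, so $\mathbb P F=\mathbb P f$. Hence $v=(\lambda-\Delta)^{-1}\mathbb P f$ and
\[
\nabla p=(I-\mathbb P)f-(\lambda-\Delta)u_g .
\]

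For $v$ we use the multipliers $\lambda(\lambda+|\xi|^2)^{-1}$, $|\lambda|^{1/2}\xi_j(\lambda+|\xi|^2)^{-1}$ and $\xi_j\xi_k(\lambda+|\xi|^2)^{-1}$, each composed with the projection symbol $\delta_{jk}-\xi_j\xi_k|\xi|^{-2}$. The key check, and the only real work, is that these satisfy $|\xi|^{|\alpha|}|\partial^\alpha m|\le C_\omega$ uniformly in $\lambda\in\Sigma_\omega$. This follows from $|\lambda+|\xi|^2|\ge c_\omega(|\lambda|+|\xi|^2)$ together with homogeneity, by the standard induction on $|\alpha|$. Proposition \ref{thm-hoermander}(ii) then yields $\|\lambda v\|+\||\lambda|^{1/2}\nabla v\|+\|\nabla^2 v\|\le C\|f\|_{L^q_\ell}$.

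Adding the bounds for $u_g$ and $v$ gives the first three estimates. For the pressure, $\|(I-\mathbb P)f\|\le C\|f\|$ by Lemma \ref{Helmh}, and
\[
\|(\lambda-\Delta)u_g\|\le \|\lambda u_g\|+\|\nabla^2u_g\|\le C\big(\|\lambda g\|_{\widehat H^{-1,q}_\ell}+\|\nabla g\|_{L^q_\ell}\big),
\]
which is the fourth estimate.

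The solution lies in $H^{2,q}_\ell$: membership of $u$ itself in $L^q_\ell$ comes from the $\lambda u$ bound with $\lambda\neq 0$, and $\widehat H^{1,q}_\ell$ for $p$ comes from the $\nabla p$ bound. To make the formulas rigorous we first take $f,g\in\mathcal S$ with $\hat g$ vanishing near $0$, then argue by density using Lemma \ref{hatH0-dense}.

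For uniqueness, suppose $(u,p)$ solves the homogeneous problem. Taking the divergence gives $\Delta p=0$ with $\nabla p\in L^q_\ell$, so $\nabla p=0$ by the Liouville argument in $Z'$; this uses that polynomials are not in $L^q_\ell$ for admissible $\ell$. Then $(\lambda-\Delta)u=0$ forces $\hat u$ to be supported where $\lambda+|\xi|^2=0$, which is empty for $\lambda\in\Sigma_\omega$, so $u=0$.

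I expect the main obstacle to be the uniform-in-$\lambda$ Mikhlin bounds together with this density and uniqueness bookkeeping in the homogeneous space $\widehat H^{-1,q}_\ell$.
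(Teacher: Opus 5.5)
Your proposal is correct and follows essentially the same route as the paper. The paper uses the explicit solution formula $u=(\lambda-\Delta)^{-1}\mathbb P f-(-\Delta)^{-1}\nabla g$, $\nabla p=-(-\Delta)^{-1}\nabla\div f+\nabla(-\Delta)^{-1}(\lambda-\Delta)g$, which is exactly your $u=v+u_g$ and $\nabla p=(I-\mathbb P)f-(\lambda-\Delta)u_g$, together with the weighted H\"ormander--Mikhlin theorem; the only difference is that it delegates most estimates to \cite[Theorem 4.5]{FS}, whereas you carry out the uniform-in-$\lambda$ multiplier checks and the Liouville-type uniqueness argument yourself.
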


\begin{proof}
Parts of \eqref{St-res-wRn} are proved in \cite[Theorem 4.5]{FS} which is based on  Proposition \ref{thm-hoermander} and the explicit solution formula 
$u = (\lambda-\Delta)^{-1} \mathbb Pf - (-\Delta)^{-1}\nabla g$,  $\;\nabla p = -(-\Delta)^{-1}\nabla\div f + \nabla(-\Delta)^{-1}(\lambda-\Delta)g$. 
Recall that $\mathbb P_{q,\ell}$ is bounded on $L^q_\ell(\IR^n)$, see Lemma \ref{Helmh}. 
Indeed, $\eqref{St-res-wRn}_{1,3,4}$ are proved in \cite{FS}. 
Moreover, $\eqref{St-res-wRn}_{2}$ follows from multiplier theory. 
%Finally, $\eqref{St-res-wRn}_{4}$ is a consequence of \eqref{generalized-stokes-problem} and $\eqref{St-res-wRn}_{1,3}$.
\end{proof}

 Now we prove in a detailed way that $\dot A:\widehat \D(A) \to L^q_{\sigma,\ell}(\Omega)$ is an isomorphism.

\begin{lem}\label{conv-u-D-u} \BLACK
Let $n \geq 3$, $\Omega \subset \mathbb{R}^n$ be an exterior domain with boundary of class $C^2$ and $1<q<\infty$.

(i) Let $w\in\mathscr A_q$. Then the Stokes operator 
$A: \D(A_{q,w}) \to \R(A_{q,w}) \subsetneq L^{q}_{\sigma,w}(\Omega)$ extends to an isomorphism
$$ \dot A: \widehat\D(A_{q,w}) \to L^{q}_{\sigma,w}(\Omega).$$

(ii)
Let $2-\frac{n}{q}<\ell<\frac{n}{q'}$ and  define the exponents $r,s,\ell_r,\ell_s$ such that 
\begin{equation}\label{r,s,lr,ls}
\ell-\ell_r = 1+\frac{n}{r} - \frac{n}{q}\geq 0, \quad \ell-\ell_s = 2+\frac{n}{s} - \frac{n}{q}\geq 0. 
\end{equation}
%((In the former version with $r=\frac{nq}{n-q}$, $s=\frac{nq}{n-2q}$ we needed $q<n/2$)). 
\BLACK 
Then for any $f\in L^q_{\sigma,\ell}(\Omega)$ there exists a unique $\hat u\in {\D}(\dot A)$ such that $\dot A \hat u = f$. Moreover, there exists $p\in \widehat H^{1,q}_\ell(\Omega)$ and a unique representative 
%\begin{equation}\label{U0-res}
$\hat U\in \widehat H^{2,q}_\ell(\Omega) \cap \widehat H^{1,r}_{\ell_r,0}(\Omega) \cap L^s_{\sigma,\ell_s}(\Omega)$
%\end{equation}
of $\hat u$ such that 
\begin{equation}\label{Stokes-hat}
 -\Delta \hat U + \nabla p = f, \quad \div \hat U=0\textrm { in }\;\Omega, \;\;\hat U=0\textrm { on }\;\partial\Omega. 
\end{equation} 
In particular, $\dot A: \widehat{\D}(A_{q,\ell}) \to L^q_{\sigma,\ell}(\Omega)$
is an isomorphism.  
%\begin{equation}\label{mathcal-P}
%\mathcal P:\widehat{\D}(A_q) \to L^s_\sigma(\Omega)
%\end{equation}
%is a linear bounded  operator.
\end{lem}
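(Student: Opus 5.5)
Part (i) is a statement about abstract sectorial operators, and I would prove it directly from the resolvent estimate. By Theorem \ref{res-weighted} the operator $A=A_{q,w}$ is sectorial on $X=\mathbb L^q_{\sigma,w}(\Omega)$ and injective. For $\dot x=(x_k)\in\widehat\D(A)$ define $\dot A\dot x=\lim Ax_k$. By construction $\|\dot A\dot x\|_X=\|\dot x\|_{\dot A}$, so $\dot A$ is isometric; in particular it is injective and has closed range. Since $\R(\dot A)\supset\R(A)$, surjectivity reduces to density of $\R(A)$ in $X$. Take $f\in X$ and set $x_t:=A(t+A)^{-1}f\in\R(A)$ for $t>0$. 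Then $f-x_t=t(t+A)^{-1}f$, and sectoriality gives $\|t(t+A)^{-1}\|\le C$ uniformly. This family converges to $0$ as $t\to0$ on $\R(A)$, because $t(t+A)^{-1}Ay=t\,y-t^2(t+A)^{-1}y$ has norm at most $Ct\|y\|$. Uniform boundedness then carries the convergence to $\overline{\R(A)}$. To obtain density in all of $X$, I would use reflexivity: by the mean ergodic decomposition for sectorial operators, $X=\mathcal N(A)\oplus\overline{\R(A)}$, and $\mathcal N(A)=\{0\}$. So $\R(A)$ is dense and $\dot A$ is an isometric isomorphism onto $X$.

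For (ii), fix $f\in L^q_{\sigma,\ell}(\Omega)$ and choose $u_k\in\D(A)$ with $Au_k\to f$, which is possible by (i). Each $u_k$ solves $-\Delta u_k+\nabla p_k=Au_k$ with $u_k=0$ on $\partial\Omega$. The key a priori estimate is
\begin{equation*}
\|\nabla^2u_k\|_{L^q_\ell}+\|\nabla p_k\|_{L^q_\ell}\le C\|Au_k\|_{L^q_\ell}.
\end{equation*}
Theorem \ref{res-weighted}(iv) gives this estimate for $\nabla^2u_k$; its weight range, once translated to the present normalisation of $\ell$, matches $2-\frac nq<\ell<\frac n{q'}$. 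The pressure bound then follows from $\nabla p_k=(I-\mathbb P)\Delta u_k$ together with Lemma \ref{Helmh}. Consequently $\nabla^2u_k$ is Cauchy in $L^q_\ell$. Next, the functions $\nabla u_k$ and $u_k$ are controlled by Proposition \ref{Hardy-Rellich}(iii) applied on $\Omega$ with $\kappa=1,2$. Since $u_k$ lies in the nonhomogeneous space, the polynomials vanish. Applying the embedding to an extension $Eu_k$ from Lemma \ref{extension} (or directly to $u_k\in H^{2,q}_\ell\cap H^{1,q}_{\ell,0}$), I get
\begin{equation*}
\|\nabla u_k\|_{L^r_{\ell_r}}+\|u_k\|_{L^s_{\ell_s}}\le C\|\nabla^2u_k\|_{L^q_\ell}.
\end{equation*}
The conditions \eqref{r,s,lr,ls} are exactly the hypotheses of (iii) for $\kappa=1$ and $\kappa=2$. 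So $(u_k)$ is Cauchy in $\widehat H^{2,q}_\ell\cap\widehat H^{1,r}_{\ell_r,0}\cap L^s_{\sigma,\ell_s}$. Let $\hat U$ denote its limit. Passing to the limit in the weak formulation gives \eqref{Stokes-hat}, with $p$ the limit of $p_k$ in $\widehat H^{1,q}_\ell$. The boundary condition $\hat U=0$ survives because the traces are continuous in the local $H^{1,r}$ topology.

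Uniqueness of the representative follows because $\hat U$ lies in $L^s_{\ell_s}$, which excludes nonzero polynomials (constants or linear terms). Independence of the approximating sequence comes from the isometry in (i). Uniqueness of $\hat u$ is simply the injectivity of $\dot A$.

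The main obstacle is the a priori estimate $\|\nabla^2u\|_{L^q_\ell}\le C\|Au\|_{L^q_\ell}$ on the full range $2-\frac nq<\ell<\frac n{q'}$. This range has to be matched carefully against Theorem \ref{res-weighted}(iv), whose weight exponent is written as $w=\langle x\rangle^{\ell}$ there but $\langle x\rangle^{q\ell}$ here. If the ranges do not fit directly, I would first try the cut-off argument: split $u$ into a whole-space part, estimated by Proposition \ref{FS-weighted} with $\lambda\to0$, and a bounded-domain part, controlled by Rellich's inequality and the Bogovski\u{\i} correction of Proposition \ref{Bog}.
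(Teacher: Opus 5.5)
Your proof is correct. Part (i) is the paper's argument: $\dot A$ is an isometry, and the range of $A$ is dense. Part (ii) takes a genuinely different route.

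\textbf{Your route in (ii).} You take $u_k\in\D(A)$ with $Au_k\to f$ from (i). You then use the two-sided estimate $\|Au\|_{L^q_\ell}\simeq\|\nabla^2u\|_{L^q_\ell}$ on $\D(A_{q,\ell})$, which follows from Theorem \ref{res-weighted}(iv) and the boundedness of $\mathbb P$. Your translation of that theorem's weight range to $2-\frac nq<\ell<\frac n{q'}$ is correct. Consequences:
\begin{itemize}
\item every approximating sequence is automatically Cauchy in $\widehat H^{2,q}_\ell\cap\widehat H^{1,r}_{\ell_r,0}\cap L^s_{\sigma,\ell_s}$;
\item the pressures $\nabla p_k=(I-\mathbb P)\Delta u_k$ converge strongly;
\item the map $\hat u\mapsto\hat U$ is immediately well defined and injective.
\end{itemize}
In effect you identify $\widehat\D(A_{q,\ell})$ with the closure of $\D(A)$ in the norm $\|\nabla^2\cdot\|_{L^q_\ell}$. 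You get strong convergence throughout, with no subsequences.

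\textbf{The paper's route in (ii).} The paper deliberately avoids (i). It solves resolvent problems with $\lambda_k\to0$, uses the uniform bounds of Theorem \ref{res-weighted}(iii), and extracts weak limits. It then needs Mazur's lemma to show that the limit lies in $\widehat\D(A)$ with $\dot A\hat U=f$. This gives only weak convergence, but it yields surjectivity of $\dot A$ without using density of $\R(A)$, which the paper merely asserts.

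\textbf{Uniqueness.} The paper's uniqueness step proves something different and stronger. A cut-off, Bogovski\u{\i} and elliptic-regularity bootstrap upgrades a solution of the homogeneous problem \eqref{Stokes-hat} in that function class to $\nabla u\in L^2$. Testing with the solution then gives zero, so this is a Liouville-type uniqueness result for the Stokes system. Your argument — injectivity of the isometry $\dot A$, together with $L^s_{\ell_s}$ excluding affine functions — covers exactly the uniqueness of the representative of $\hat u$ that the statement asks for.

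\textbf{A caveat in (i).} For general $w\in\mathscr A_q$, Theorem \ref{res-weighted}(i) gives resolvent bounds only for $|\lambda|\ge\delta$, with $\delta$-dependent constants. So the uniform bound on $t(t+A)^{-1}$ as $t\to0$, which your mean ergodic decomposition needs, is available only for power weights (Theorem \ref{res-weighted}(ii)). For general $w$, density of $\R(A_{q,w})$ must come from elsewhere, e.g.\ by duality from injectivity of $A_{q',w'}$. The paper simply assumes this density, so you are not behind it. For the power weights used in (ii), your argument is complete.
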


\begin{proof} \BLACK
(i) For $\hat u\in \widehat\D(A_{q,w})$ there exists, by definition \eqref{Def-hatAx} a sequence $(u_k)_k\subset \D(A_{q,w})$ such that $u_k\to \hat u$, {\em i.e.}, $A_{q,w} u_k\to \dot A_{q,w} \hat u$ in $L^q_{\sigma,w}(\Omega)$ as $k\to \infty$. Since $\R(A_{q,w})$ is dense in $L^q_{\sigma,w}(\Omega)$, we get that $\dot A_{q,w}$ is surjective.

Next let $\dot A_{q,w}\hat u =0$ for $\hat u \in \widehat\D(\dot A_{q,w}).$ By definition, there exists $(u_k)\subset \D(A_{q,w})$ such that $A_{q,w} u_k\to \dot A_{q,w} \hat u =0$ in $L^q_{\sigma,w}(\Omega)$. Again, by definition, $\hat u=0$.
%\BLUE(I think that here we also used that $A$ is closed. 
%$u_k \rightarrow \hat{u}$ in $\widehat{D}(A)$, $A: D(A) \subset \widehat{D}(A) \rightarrow L^q_{\sigma,w}$, $Au_k \rightarrow 0$, and thus $A\hat{u} =0$ which implies that $\hat{u}$ is $0$ in the homogeneous domain $\widehat{D}(A)$. 
%)
\BLACK 

(ii)
The proof is not based on part (i), but on an explicit approximation procedure.

For $f\in L^q_{\sigma,\ell}(\Omega)$ and an arbitrary sequence of resolvent parameters $(\lambda_k) \subset  \Sigma_\omega = \{\lambda\in\IC: |\arg \lambda| < \omega, \lambda\neq 0\}$, $0<\omega<\pi$,  converging to $0$ as $k\to\infty$ we find solutions $(u_k)\subset \D(A_{q,\ell})$ of the resolvent problem
\begin{equation}\label{u_k-resolvent}
\lambda_k u_k -\Delta u_k + \nabla p_k=f,\;\; \div u_k=0,\;\; u_k\big|_{\partial\Omega} = 0. 
\end{equation}
By Theorem \ref{res-weighted} (iii)
%\cite[Theorems 1.2 and 5.5]{FS} 
the sequences $(\lambda_k u_k)$, $(\lambda_k^{1/2} u_k)$ and $(\nabla^2 u_k)$ are uniformly bounded in $L^q_\ell(\Omega)$.
By Sobolev embeddings, see Proposition \ref{Hardy-Rellich} (iii)  we may choose arbitrary exponents $r,s,\ell_r,\ell_s$ satisfying \eqref{r,s,lr,ls} 
%with $\ell=\ell'$, 
such that $(u_k) \subset L^s_{\sigma,\ell_s}(\Omega)$ and $(\nabla u_k) \subset L^r_{\ell_r}(\Omega)$ \BLACK are bounded. 
Hence there exist  $\dot U \in L^s_{\ell_s}(\Omega),\, \dot U_1\in L^r_{\ell_r}(\Omega)$ \BLACK and $\dot U_2, \dot V, \dot V_1\in L^q_\ell(\Omega)$ such that, for a subsequence labeled by $k$ again,
\begin{align}\label{weak1}
 u_k \rightharpoonup \dot U \textrm{ in } L^s_{\sigma,\ell_s}(\Omega),\ \ &   \nabla u_k\rightharpoonup \dot U_1 \textrm{ in } L^r_{\ell_r}(\Omega),\,\ \nabla^2 u_k \rightharpoonup \dot U_2\textrm{ in }L^q_\ell(\Omega) \BLACK \\
\lambda_k u_k \rightharpoonup \dot V \textrm{ in } L^q_{\sigma,\ell}(\Omega),\ \ &  (\lambda_k)^{1/2} \nabla u_k\rightharpoonup \dot V_1 \textrm{ in } L^q_\ell(\Omega).\label{weak2}
\end{align}
Hence $\nabla^2 \dot U= \dot U_2$ and $\nabla \dot U_1= \dot U_2$. Moreover, combining \eqref{weak1}, \eqref{weak2},  we obtain that $\dot V=0$, $\dot V_1=0$. 
Since by \cite[Theorems 1.2 and 5.5]{FS} also $(\nabla p_k)$ is bounded in $L^q_\ell(\Omega)$, there exists $p$ such that $\nabla p_k\rightharpoonup \nabla p\in L^q_\ell(\Omega)$ as $k\to \infty$. 
Thus we obtain from \eqref{u_k-resolvent} in the limit that
$$ \int_\Omega f\phi \dx = -\int_\Omega \Delta \dot U\,\phi \dx +\int_\Omega \nabla p\, \phi \dx,\quad \phi\in C^\infty_0(\Omega), $$
{\em i.e.,} \eqref{Stokes-hat} is satisfied.

Since $\nabla^2 u_k \rightharpoonup \dot U_2=\nabla^2 \dot U$, also $A_q u_k = -\mathbb P\Delta u_k \rightharpoonup -\mathbb P\Delta \dot U$. 
Then Mazur's lemma yields the existence of a sequence of convex combinations of the $u_k's$, say $(\tilde u_k)$, such that $-\mathbb P\Delta \tilde u_k \to -\mathbb P\Delta \dot U$ in $L^q_\ell(\Omega)$. This proves $\dot U\in \widehat \D(A_{q,\ell})$ and $\dot A_{q,\ell} \dot U=f$.

Concerning uniqueness consider a solution $\dot u\in \widehat \D(A_{q,\ell})$ of the equation $\dot A_{q,\ell} \dot u =0$, {\em cf.} %\eqref{U0-res} and 
\eqref{Stokes-hat} such that $\dot u\in L^s_{\sigma,\ell}(\Omega)$ and $\nabla \dot u \in L^r_\ell(\Omega)$.
Introduce a cut off function $\chi\in C_0^\infty(\IR^n)$ such that $\chi(x)=1$ for $|x|<R$ and $\supp\nabla \chi\subset B_{2R} \setminus \overline{B_R}$ where $\IR^n\setminus \Omega \subset B_R$. 
Then we write $\dot u=\chi \dot u + (1-\chi)\dot u$, consider inhomogeneous Stokes equations with nonzero divergences on $\Omega\cap \overline{B_{2R}}$ for $\chi \dot u$ and on $\IR^n$ for $(1-\chi)\dot u$, respectively, apply Bogovski\u{\i}'s operator on $B_{2R} \setminus\overline{B_R}$, and elliptic regularity results to both parts to get in a finite number of steps that $\nabla \dot u\in L^2(\Omega)$. Testing the equation $-\mathbb P\Delta \dot u = 0$ with $\dot u$ we obtain that $\dot u=0$. We omit further details of these standard arguments.  
\end{proof}

We note that Lemma \ref{conv-u-D-u} (i) holds not only for $\dot A$, but also for fractional powers $\dot A^\theta$ for any $\theta>0$, see \cite[Proposition 15.23, a)]{KuWe04}.
\BLACK

\subsection{$\mathscr H^\infty$-calculus, $BIP$ and the Stokes operator on $\IR^n$}\label{S2.4}

We recall the definition of the $\mathscr H^\infty$-calculus of a general sectorial operator $A$ on a Banach space $(X, \|\cdot\|_X)$ as follows. 
%For any $0<\theta<\pi$, let $\Sigma_\theta \subset \IC$ be the open sector 
%$$ \Sigma_\theta = \{z\in\IC: |\arg z| <\theta, z\neq 0\}.$$
On the open sector $\Sigma_\theta\subset \IC$, $0<\theta<\pi$, we consider two spaces of holomorphic functions,
\begin{align*} %\label{HH0}
\HH_0(\Sigma_\theta) & = \Big\{f: \Sigma_\theta\to \IC: \exists\, \alpha, \beta>0: \sup_{z\in \Sigma_\theta: |z|\leq 1} \frac{|f(z)|}{|z|^\alpha}  + \sup_{z\in \Sigma_\theta: |z|\geq 1} |z|^{\beta} |f(z)|<\infty \Big\},\\[1ex]
\HH^\infty(\Sigma_\theta) & = \big\{f: \Sigma_\theta\to \IC: |f|_{\infty,\theta} = \sup_{z\in \Sigma_\theta}  |f(z)|<\infty \big\} . %\label{HHinfty}
\end{align*}

\begin{defi}\label{op}
Let $A:\mathcal D(A)\subset X\to X$ be a closed injective operator with dense domain $\D(A)$ and dense range $\R(A)$ in $X$. Then the domains $\mathcal D(A^\theta)$ of fractional powers will be equipped with the graph norms 
$$ \|u\|_{A^\theta} = \|u\|_X + \|A^\theta u\|_X. $$

(i) We say that $A$ is {\em sectorial} if there exists a {\em spectral angle} $\Phi_A\in \big[0,\frac{\pi}{2}\big)$ such that for each $\phi\in (\Phi_A,\pi)$
\begin{equation}\label{classSEC}
\rho(-A) \supset \Sigma_{\pi-\phi}, \quad \sup_{z\in\Sigma_{\pi-\phi}} \|z(z+A)^{-1}\|<\infty. \end{equation}
In this case, there exists a least constant $S_A(\phi)$ such that 
$ \|z(z+A)^{-1}\| \leq S_A(\phi), \;z\in\Sigma_{\pi-\phi}$.
%
%for any $\phi\in (\Phi_A,\pi)$ there exists least constants $S_A(\phi)$  and $\hat S_A(\phi)$ such that 
%$$ \|z(z+A)^{-1}\| \leq S_A(\phi)\quad \textrm{and }\; \;\|A(z+A)^{-1}\| \leq \hat S_A(\phi), \;\;z\in\Sigma_{\pi-\phi}, $$
%respectively.

(ii) The sectorial operator $A$ has {\em bounded (purely) imaginary powers} if $A^{it}\in \mathcal L(X)$ for all $t\in\IR$ and there exists a {\em power angle} $\Theta_A$ such that for any $\theta>\Theta_A$ there exists a least constant $J_A(\theta)\geq 0$ such that
\begin{equation}\label{classBIP} 
\|A^{it}\|_{\mathcal L(X)} \leq J_A(\theta) e^{\theta|t|}, \quad t\in\IR. 
\end{equation}
We also say that $A$ has property $BIP$.

(iii) The sectorial operator $A$ is said to have a {\em bounded $\mathscr H^\infty$-calculus} or, for short, to be of {\em class $\mathscr H^\infty(X)$} if there exists an angle $\Phi_A^\infty$ such that $\Phi_A\leq \Phi_A^\infty < \frac{\pi}{2}$ and for each $\theta>\Phi_A^\infty$ there exists a least constant $H_A^\infty(\theta)\geq 0$ such that 
\begin{equation}\label{classHinfty} 
\|h(A)\|_{\mathcal L(X)} \leq H_A^\infty(\theta) |h|_{\infty,\theta}, \quad h\in \HH_0(\Sigma_\theta), 
\end{equation}
where a functional calculus $h(A)$ of $A$ is defined by 
\begin{align}\label{Atslambda2} 
\begin{aligned}
h(A) = 
\frac{1}{2\pi i}\int_{\Gamma_\omega} h(\lambda)
(\lambda-A)^{-1}\,{\rm d}\lambda,
\end{aligned} 
\end{align}
using the contour $\Gamma_{\omega}=\Gamma_{\omega +}\cup \Gamma_{\omega -}$,
$\Gamma_{\omega \pm }: \lambda=\rho e^{\pm i\omega}, \, 0 \leq \rho <\infty$, with $\Phi_A<\omega<\theta<\frac{\pi}{2}$; to be more precise, $\Gamma_\omega=(\infty,0]e^{+i\omega} \cup [0,\infty)e^{-i\omega}$. 
\end{defi}

\begin{rem}
    {\rm 
%(i) For a sectorial operator $A$ the set $\mathcal D(A) \cap\mathcal R(A)$ is dense in $X$, see \cite[Proposition 1.2]{DHP2}. Using the graph norms $\|u\|_{A^\theta}$ we will consider only non-homogeneous Banach spaces rather than the homogeneous case where $\|u\|_{A^\theta}= \|A^\theta u\|_X.$ \RED ((Do we really exclude the homogeneous case? Delete (i)?)) \BLACK

(i) In Definition \ref{op} (i), (iii) the conditions $\Phi_A < \frac{\pi}{2}$ and $ \Phi_A^\infty < \frac{\pi}{2}$ imply that $-A$ generates a bounded, analytic $C^0$-semigroup. In the general case, only $\Phi_A^\infty < \pi$ is assumed.

(ii) In \eqref{Atslambda2} the contour integral of $h(A)$, $h\in\HH_0(\Sigma_\theta)$, is absolutely convergent. Under the assumption \eqref{classHinfty} this formula can be extended to all $h\in \HH^\infty(\Sigma_\theta)$ with the help of the operator $A +2I + A^{-1}$ and its inverse on $\mathcal D(A) \cap\mathcal R(A)$, see {\em e.g.} \cite[p. 656]{NollSaal}. In other words, the estimate \eqref{classHinfty} for all $h\in \HH_0(\Sigma_\theta)$ extends to all $h\in \HH^\infty(\Sigma_\theta)$.
}
\end{rem}

We recall the following facts about the relation between the existence of the $\mathscr H^\infty$-calculus, the property {\em BIP} and fractional powers of operators, see \cite[(2.15), (2.16)]{DHP2}:
%\begin{thm}\label{angles}{\rm(\cite[(2.15), (2.16)]{DHP2}.)  }
% The following inclusions hold:
\begin{equation}\label{classesSEC}  
\mathscr H^\infty(X)\, \Rightarrow \textit{ BIP } \Rightarrow \textrm{ sectoriality}\quad\textrm{and }\; \Phi_A^\infty \geq \Theta_A\geq \Phi_A.
\end{equation}
%\end{thm}

\begin{prop}\label{Complex Interpolation}{\em(}{\rm \cite[Proposition 4.3]{FT-lin-half}, \cite[Proposition 2.2]{KaKuWe06}}{\em )} 
Let $A:\D(A)\subset X \to X$ be a sectorial operator with property {\em BIP} such that
\begin{equation}\label{Ait}
		\| A^{is}\| \leq J_A(\phi)\, e^{\phi |s|}, \quad \phi>\Theta_A, \;s\in \IR.
\end{equation}

(i) For all $0<\theta<1$ there holds 
\begin{equation}\label{CI}
	\widehat{\D}(A^\theta) = [X,\widehat{\D}(A)]_\theta
\end{equation}
and for $u\in \widehat{\D}(A^\theta)$
\begin{equation}\label{Aitest}
		\frac{1}{J_A(\phi) e^{\phi}}\|A^\theta u\| \leq \|u\|_{[X,\widehat{\D}(A)]_\theta} \leq J_A(\phi)e^{\phi} \|A^\theta u\|.
\end{equation}

(ii) Concerning non-homogeneous spaces, for all $0<\theta<1$, 
\begin{align}\label{BIP-nonhomogeneous-domain-abstr}
{\D}(A^\theta) =  [X, {\D}(A)]_{\theta} 
\end{align}
and 
$\| u\|_X + \|A^{\theta} u\|_X \simeq  \|u\|_{A^{\theta}}%\quad (not: \simeq \|A^{\theta}u\|_X)
$.
\end{prop}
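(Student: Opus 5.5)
The plan is to prove \eqref{CI} by the standard Stein interpolation argument for an analytic family of operators, combined with the identification $\widehat{\D}(A^\theta)\cong X$ via $\dot A^\theta$ from Lemma \ref{conv-u-D-u} (i) and its fractional analogue \cite[Proposition 15.23]{KuWe04}. The two spaces are compatible inside the completion of $\D(A)\cap\R(A)$, which is dense in $X$, in $\widehat\D(A)$ and in $\widehat\D(A^\theta)$ by sectoriality. It therefore suffices to prove the two-sided estimate \eqref{Aitest} for $u\in\D(A)\cap\R(A)$ and then pass to completions.

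\emph{Upper bound.} Fix $u\in\D(A)\cap\R(A)$ and put $v=A^\theta u$. Define on the strip $S=\{0\le\Re z\le1\}$
$$ F(z)=e^{(z-\theta)^2}A^{-z}A^{\theta}u = e^{(z-\theta)^2}A^{\theta-z}v. $$
On $\Re z=0$ we have $\|F(it)\|_X\le e^{\theta^2-t^2}J_A(\phi)e^{\phi|t|}\|v\|$. On $\Re z=1$ we have $AF(1+it)=e^{(1+it-\theta)^2}A^{-it}v$, so $\|F(1+it)\|_{\widehat\D(A)}\le C J_A(\phi)e^{\phi|t|-t^2}\|v\|$. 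The Gaussian factor absorbs $e^{\phi|t|}$, and $F(\theta)=u$. Hence $\|u\|_{[X,\widehat\D(A)]_\theta}\le CJ_A(\phi)\|A^\theta u\|$. The constant is bounded by $J_A(\phi)e^{\phi}$ after choosing the auxiliary factor appropriately, for instance $e^{\varepsilon(z-\theta)^2}$ and optimizing.

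One must check that $F$ is admissible, that is, bounded and continuous on $S$ and analytic inside, with values in $X+\widehat\D(A)$. For this I write $u=A^{-1}w$ with $w\in\D(A)$, or rather use $u\in\D(A)\cap\R(A)$, so that $A^{-z}u$ is given by the Komatsu/Dunford representation and is analytic. This regularity check on a dense class is the technical point I expect to be the main obstacle: the homogeneous space $\widehat\D(A)$ is not inside $X$, so boundedness of $F$ in $X+\widehat\D(A)$ has to be verified through the decomposition $u=(u-(1+A)^{-1}u)+(1+A)^{-1}u$, or via the functional calculus for $z\mapsto \lambda^{-z}(\lambda/(1+\lambda)^2)$-type regularizers.

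\emph{Lower bound.} Take $u$ in the dense class with $\|u\|_{[X,\widehat\D(A)]_\theta}<1$, and an admissible $G$ with $G(\theta)=u$ and boundary norms at most $1$. Consider $H(z)=e^{(z-\theta)^2}\dot A^{z}G(z)$, interpreted via $\dot A^{z}=\dot A\,\dot A^{z-1}$. On $\Re z=0$ one has $\|H(it)\|\le CJ_A e^{\phi|t|-t^2}$. On $\Re z=1$, $H(1+it)=c\,A^{it}\dot AG(1+it)$ is bounded similarly. The three lines lemma, after pairing with functionals in $X^*$, gives $\|A^\theta u\|\le C\,J_A(\phi)e^{\phi}$. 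Density again justifies the analyticity; I would first approximate $G$ by functions of the form $\sum e^{\delta z^2}\lambda_j(z)x_j$ with $x_j\in\D(A)\cap\R(A)$, using \cite[Theorem 1.9.3]{Triebel}.

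For (ii) I apply (i) to $I+A$, which is sectorial with $0\in\rho(I+A)$ and has BIP since $A$ does (a standard perturbation result, or via $\mathscr H^\infty$ when available). Its homogeneous domains coincide with $\D((I+A)^\theta)=\D(A^\theta)$ with graph norm. Moreover, $\D(I+A)=\D(A)$ with equivalent norms. This yields \eqref{BIP-nonhomogeneous-domain-abstr}.
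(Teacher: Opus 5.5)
Your argument is correct and is essentially the standard proof behind the results the paper cites. It uses the Stein/three-lines argument with $e^{\varepsilon(z-\theta)^2}A^{-z}$ and the BIP bound as in \cite[Theorem 1.15.3]{Triebel}, density of $\D(A)\cap\R(A)$ and of functions $e^{\delta z^2}\sum_k e^{\lambda_k z}x_k$ for the lower bound, and the optimization in $\varepsilon$, which gives $J_A(\phi)e^{\phi\max(\theta,1-\theta)}\le J_A(\phi)e^{\phi}$; for (ii) it uses the passage to $I+A$, which the paper itself uses in Subsect.~\ref{S3.4}. Only small repairs are needed: the second expression for $F(z)$ should be $e^{(z-\theta)^2}A^{-z}v=e^{(z-\theta)^2}A^{\theta-z}u$ (as written it gives $F(\theta)=A^\theta u$); the common ambient space must be a completion in a norm weaker than both $\|\cdot\|$ and $\|A\cdot\|$, e.g.\ $\|A(1+A)^{-2}\cdot\|$, since in the norm of $X$ the completion of $\D(A)\cap\R(A)$ is just $X$; admissibility of $F$ is immediate, because for $u\in\D(A)\cap\R(A)$ it is bounded and holomorphic with values in $X$, so no decomposition of $u$ is needed; and the BIP of $I+A$ comes from the Pr\"u{\ss}--Sohr perturbation result, stated for $\Theta_A<\pi$, which covers all applications here since $\Theta_A=0$.
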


For the proof of (ii) see {\em e.g.} Triebel \cite[Theorem 1.15.3]{Triebel} when $0\in\rho(A)$ and Denk, Hieber and Pr\"u{\ss} \cite[Theorem 2.5]{DHP2} when $0\notin\rho(A)$ holds.

\vspace{2ex}

Let $1<q< \infty$, $n \geq 3$, and let $\mathbb P = \mathbb P_{q,\ell}$ and $A=A_{q,\ell} = -\mathbb P \Delta$ be the Helmholtz projection and Stokes operator on $L^q_{\sigma, \ell }(\Omega)$, respectively, for $-\frac{n}{q} < \ell < \frac{n}{q'}$.  
To consider the characterization and embeddings of domains of fractional powers $A_{q,\ell}^\theta$, $0<\theta<1$, with  weight exponent $\ell$,  we will show the $\mathscr H^\infty$ calculus of $A_{q,\ell}$ on the weighted space $L^q_{\sigma, \ell }(\Omega)$.

In this Subsection \ref{S2.4} we consider the case of the whole space, $\Omega = \IR^n$; for a similar result on an exterior domain we refer to Subsect. \ref{S3.2}. Given any Muckenhoupt weight $w\in \mathscr A_q(\IR^n)$  we recall from
%from Definition \ref{Muckenhoupt-class} the notation $\mathbb L^q_w(\IR^n)$.  By 
\cite[Theorem 1.4]{FS} that $A=A_{q,w}$ is sectorial with spectral angle $\Phi_A=0$. Moreover, for the Stokes resolvent problem 
$$ \lambda u - \Delta u + \nabla p = f,\; \div u =0 \;\textrm{ on }\IR^n,$$
$f\in \mathbb L^q_w(\IR^n)$, we apply $\mathbb P$ to see that 
 $u = (\lambda+A)^{-1}\mathbb P f$. \BLACK This problem admits in Fourier space the symbol 
$$ M_\lambda(\xi) = \Big\{\Big(\delta_{jk} - \frac{\xi_j\xi_k}{|\xi|^2}\Big) \frac{1}{\lambda+|\xi|^2}\Big\}_{j,k=1}^n, $$
where the first term $ \delta_{jk} - \frac{\xi_j\xi_k}{|\xi|^2}$ is the symbol of the Helmholtz projection $\mathbb P$. It is immediate to see by Proposition \ref{thm-hoermander} that $M_\lambda \in\mathcal L (\mathbb L^q_w(\IR^n))$ for $\lambda\in\IC\setminus (-\infty,0]$ and hence $\sigma(A)=[0,\infty)$. 
For the $\mathscr H^\infty$-calculus which is based on Dunford's calculus  and the Stokes resolvent $(\lambda-A)^{-1}$ \BLACK we consider for any $h\in \HH_0(\Sigma_\theta)$  the contour integral \eqref{Atslambda2}, {\em i.e.,}
\begin{align*}
h(A)  = \mathbb P_{q,w} \mathcal F^{-1}\Big(\frac{1}{2\pi i} \int_{\Gamma_\omega} \frac{h(\lambda)}{
\lambda-|\xi|^2}\,{\rm d}\lambda\Big) \mathcal F.
\end{align*}

Now it suffices to prove that 
$$ m(\xi) = \frac{1}{2\pi i}\int_{\Gamma_\omega} \frac{h(\lambda)}{
\lambda-|\xi|^2}\, {\rm d}\lambda $$
\BLACK defines a bounded multiplier operator on $\mathbb L^q_w(\IR^n)$.
Actually, $m$ is the symbol of $h(-\Delta)$ in the $\mathscr H^\infty$-calculus of the Laplacian on $\IR^n$ which  is well-known to be bounded on $ L^q(\IR^n)$, $1<q<\infty$, see {\em e.g.} \cite{DHP2}. 
To extend this result to weighted spaces $\mathbb L^q_w$ we use the residue theorem to see that $m(\xi) = h(|\xi|^2)$. 
Indeed, the contour is approximated by the closed contour $\Gamma_\omega \cap B_R$, where $|\xi|^2<R\to\infty$, and the arc $\gamma_{R,\omega}: Re^{i[-\omega,\omega]}$ in the half plane $\Re\lambda>0$, $0<\omega<\theta$.
By the assumption on $h\in\HH_0$ the contour integrals $\int_{\Gamma_\omega\setminus B_R} \frac{h(\lambda)}{
\lambda-|\xi|^2}\dlambda$ and $\int_{\gamma_{R,\omega}}\frac{h(\lambda)}{\lambda-|\xi|^2} \dlambda$ converge to $0$ as $R\to\infty$. 
In particular, 
\begin{equation}\label{m:k=0}
    |m(\xi)|= |h(|\xi|^2)| \leq |h|_{\infty,\theta}.
\end{equation}
To apply the H\"ormander-Mikhlin multiplier theorem to $m(\xi)$ on $\mathbb L^q_w(\IR^n)$, see Proposition \ref{thm-hoermander}, it suffices to prove that there exists a constant $c=c_m>0$ such that
\begin{equation}\label{m:k=n}
|\xi|^k |\nabla^k m(\xi)| \leq c_m,\quad \xi\in\IR^n, \quad k=0,1,\ldots,n.
\end{equation}
The case $k=0$ is given by \eqref{m:k=0}. For $k=1$ note that $|\xi| \, |\nabla_\xi m(\xi)| = 2|\xi| \, |h'(|\xi|^2)| \, |\xi|$ so that it suffices to consider $\lambda h'(\lambda)$, $\lambda>0$. 
More general, for $\lambda\in \Sigma_{\omega-\varepsilon}$ with $0 < \varepsilon<\omega$ and $0<r\sim \epsilon|\lambda|$ such that the disc $D_r(\lambda)$ lies to the right of $\Sigma_\omega$, Cauchy's integral formula yields the estimate 
\begin{align*}
    |\lambda h'(\lambda)| & = \Big|\frac{\lambda}{2\pi i}\int_{\partial D_r(\lambda)} \frac{ h(\lambda')}{
(\lambda'-\lambda)^2}\, {\rm d}\lambda'\Big|\\
&  \leq \frac{c}{r} |\lambda| |h|_{\infty,\omega}\\
& \leq c_\varepsilon |h|_{\infty,\omega}.
\end{align*}
Estimates for $|\xi|^k |\nabla^k m(\xi)|$, $2\leq k\leq n$, are obtained by analogy. Thus \eqref{m:k=n} is proved, and consequently $h(A)$ is bounded on $\mathbb L^q_{\sigma,w}(\IR^n)$  with norm bounded by $C_{q,w} |h|_{\infty,\omega}.$ The admissible angle is $\omega-\varepsilon>0$, but since $\varepsilon>0$ can be chosen arbitrarily small and $\Phi_A=0$, we finally obtain that $\Phi_A^\infty=0$.
%\BLUE (I agree the proof. By Theorem \cite[Theorem 2]{KurtzWheeden} %\RED ( I do not understand this citation), \BLUE  the proof is reduced to the $H^\infty $ calculus for the Laplacian on the whole space. ) \BLACK 
Hence we proved the following result: \BLACK

\vspace{1ex}
 
\begin{prop}\label{HinftyRn}
Let $1<q<\infty$ and $w\in \mathscr A_q(\IR^n)$. Then the Stokes operator $A_{q,w}$ possesses a bounded $\mathscr H^\infty$-calculus on $\mathbb L^q_{\sigma,w}(\IR^n)$ with $\mathscr H^\infty$-angle $\Phi_A^\infty=0$.
\end{prop}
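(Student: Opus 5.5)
The plan is to realize $h(A_{q,w})$ as a Fourier multiplier composed with the Helmholtz projection, and then bound that multiplier uniformly in $h$ via Proposition \ref{thm-hoermander} (ii). The discussion preceding the statement already contains the skeleton; I will organize it as follows.

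\emph{Step 1: reduce to a scalar symbol.} Fix $0<\omega<\theta<\frac{\pi}{2}$ and $h\in\HH_0(\Sigma_\theta)$. By \cite[Theorem 1.4]{FS}, $A_{q,w}$ is sectorial with $\Phi_A=0$. Its resolvent $(\lambda-A)^{-1}$ acts on $\mathbb L^q_{\sigma,w}(\IR^n)$ as $\mathbb P_{q,w}\mathcal F^{-1}(\lambda-|\xi|^2)^{-1}\mathcal F$. The contour integral \eqref{Atslambda2} converges absolutely in $\mathcal L(\mathbb L^q_{\sigma,w})$, so we may interchange it with $\mathcal F^{-1}$. We check this interchange first on Schwartz functions and then extend by density. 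Since $\mathbb P$ commutes with Fourier multipliers and is the identity on solenoidal fields, the result is
\[
h(A)=\mathcal F^{-1} m\,\mathcal F \quad\textrm{on } \mathbb L^q_{\sigma,w}(\IR^n),
\qquad
m(\xi)=\frac{1}{2\pi i}\int_{\Gamma_\omega}\frac{h(\lambda)}{\lambda-|\xi|^2}\,{\rm d}\lambda .
\]
Boundedness of $\mathbb P_{q,w}$ by Lemma \ref{Helmh} keeps everything inside the solenoidal space.

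\emph{Step 2: evaluate $m$ by residues.} Fix $\xi\neq0$. Close the contour $\Gamma_\omega\cap B_R$ with the arc $Re^{i[-\omega,\omega]}$, taking $R>|\xi|^2$. Inside this closed curve the integrand has its only pole at $\lambda=|\xi|^2$. The decay $|h(\lambda)|\le C|\lambda|^{-\beta}$ as $|\lambda|\to\infty$ makes the arc contribution vanish as $R\to\infty$. Near $0$, the bound $|h(\lambda)|\le C|\lambda|^\alpha$ makes the integrand integrable. Keeping track of orientation, this gives $m(\xi)=h(|\xi|^2)$, and hence $|m|\le|h|_{\infty,\theta}$.

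\emph{Step 3: Mikhlin bounds.} We need $|\xi|^{|\alpha|}|\partial^\alpha m(\xi)|\le C|h|_{\infty,\theta}$ for $|\alpha|\le n$. By the chain rule, $\partial^\alpha[h(|\xi|^2)]$ is a finite sum of terms $h^{(j)}(|\xi|^2)\,p_{j,\alpha}(\xi)$, where $j\le|\alpha|$ and each $p_{j,\alpha}$ is a homogeneous polynomial of degree $2j-|\alpha|$. It therefore suffices to show
\[
\sup_{s>0}|s^jh^{(j)}(s)|\le C_j|h|_{\infty,\theta}.
\]
This follows from Cauchy's formula on the disc $D_r(s)$ with $r=s\sin\theta/2$, which lies inside $\Sigma_\theta$: it gives $|h^{(j)}(s)|\le j!\,r^{-j}|h|_{\infty,\theta}$. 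Proposition \ref{thm-hoermander} (ii) then yields \eqref{classHinfty} with a constant $C(q,w,\theta)$.

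\emph{Step 4: the angle.} The estimate holds for every $\theta\in(0,\frac{\pi}{2})$, with a constant that blows up only as $\theta\to0$. Hence $\Phi_A^\infty=0$.

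I expect the main obstacle to be the justification in Step 1: passing between the operator-valued Dunford integral and the pointwise-in-$\xi$ integral. The clean route is to test against $\varphi\in\mathcal S$ with $\hat\varphi$ compactly supported away from $0$ and use Fubini. Such functions are dense in $\mathbb L^q_w$ by Lemma \ref{hatH0-dense} (ii), since $\Delta C_0^\infty$ is dense there.
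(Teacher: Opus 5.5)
Your proposal is correct and follows the paper's own argument: you reduce $h(A)$ to the scalar symbol $m(\xi)=h(|\xi|^2)$ via the residue theorem, get the Mikhlin bounds from Cauchy estimates on discs of radius comparable to $|\xi|^2$, apply the weighted multiplier theorem of Proposition~\ref{thm-hoermander}~(ii), and use the freedom in the angle to get $\Phi_A^\infty=0$. One small correction to Step~1: density of your restricted test class does not follow directly from density of $\Delta C_0^\infty(\IR^n)$, but you do not need that class, since for $h\in\HH_0(\Sigma_\theta)$ the integral $\int_{\Gamma_\omega}|h(\lambda)|\,|\lambda-s|^{-1}\,|{\rm d}\lambda|$ is bounded uniformly in $s\geq 0$, so Fubini already applies to every $f\in C^\infty_{0,\sigma}(\IR^n)$, which is dense by definition.
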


\vspace{1ex}

As consequence of 
Propositions \ref{Complex Interpolation} and
\ref{HinftyRn} we get the important

\begin{cor}\label{cor:Rn-domains}
Let $1<q< \infty$ and $n\geq 3$, and let $A=A_{q,\ell}$ be the Stokes operator on $L^q_{\sigma, \ell }(\mathbb{R}^n)$ for $-\frac{n}{q} < \ell < \frac{n}{q'}$. 
For $0<\theta < 1$ there holds for homogeneous domains
\begin{align}\label{BIP-homogeneous-domain-Rn}
\widehat{\D}(A^\theta) =  [L^q_{\sigma, \ell}, \widehat{\D}(A)]_{\theta} 
\end{align}
with the norm equivalence 
$$
 \|A^{\theta} u\|_{L^q_{\ell}(\mathbb{R}^n)} \simeq \|u\|_{[L^q_{\sigma, \ell}, \widehat{\D}(A]_{\theta} }. 
 %\simeq \|u\|_{\widehat{H}^{2\theta}_{q,\ell}(\mathbb{R}^n)}. 
$$
However, if $\mathcal D(A^\theta)$ is equipped with the graph norm, then 
\begin{align}\label{BIP-nonhomogeneous-domain-Rn}
{\D}(A^\theta) =  [L^q_{\sigma, \ell}, {\D}(A)]_{\theta} 
\end{align}
and 
$\| u\|_{L^q_{\ell}(\mathbb{R}^n)} + \|A^{\theta} u\|_{L^q_{\ell}(\mathbb{R}^n)} \simeq \|u\|_{[L^q_{\sigma, \ell}, {\D}(A)]_{\theta} }.$
\end{cor}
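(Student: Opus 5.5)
The corollary follows by inserting the bounded $\mathscr H^\infty$-calculus of Proposition \ref{HinftyRn} into the abstract interpolation result, Proposition \ref{Complex Interpolation}. Specialize Proposition \ref{HinftyRn} to the weight $w=\langle x\rangle^{q\ell}$. This weight lies in $\mathscr A_q(\IR^n)$ exactly when $-\frac{n}{q}<\ell<\frac{n}{q'}$, so $X:=\mathbb L^q_{\sigma,w}(\IR^n)=L^q_{\sigma,\ell}(\IR^n)$ and $A_{q,w}=A_{q,\ell}$. Hence $A_{q,\ell}$ has a bounded $\mathscr H^\infty$-calculus with $\Phi_A^\infty=0$.

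By \eqref{classesSEC}, this calculus implies property $BIP$ with power angle $\Theta_A\le\Phi_A^\infty=0$. Concretely, for any $\phi>0$ apply \eqref{classHinfty} to $h_t(\lambda)=\lambda^{it}$, for which $|h_t|_{\infty,\phi}\le e^{\phi|t|}$. Strictly speaking $h_t\notin\HH_0(\Sigma_\phi)$, so we use the extension to $\HH^\infty(\Sigma_\phi)$ from Remark (ii), or equivalently approximate by $\lambda^{it}\lambda^{\varepsilon}(1+\lambda)^{-2\varepsilon}$ and let $\varepsilon\to0$. This gives $\|A^{it}\|\le H_A^\infty(\phi)\,e^{\phi|t|}$, i.e.\ \eqref{Ait} holds with $J_A(\phi)=H_A^\infty(\phi)$.

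We also need that $A$ meets the standing hypotheses of Definition \ref{op}: closed, injective, densely defined, with dense range and sectorial. The paper takes these as known in Subsect.~\ref{S2.4}. Sectoriality with $\Phi_A=0$ comes from \cite[Theorem 1.4]{FS}, and dense range is used in Lemma \ref{conv-u-D-u} and its whole-space analogue. Injectivity follows from the Fourier symbol $M_\lambda$: $Au=0$ forces $\hat u$ to be supported at $\xi=0$, so $u$ is a polynomial, and a polynomial in $L^q_\ell$ with $\ell>-\frac nq$ vanishes.

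Proposition \ref{Complex Interpolation}(i) now gives \eqref{BIP-homogeneous-domain-Rn}. Estimate \eqref{Aitest} gives the equivalence $\|A^\theta u\|\simeq\|u\|_{[X,\widehat\D(A)]_\theta}$ with constants $J_A(\phi)e^{\phi}$, for any fixed $\phi>0$. Proposition \ref{Complex Interpolation}(ii) gives \eqref{BIP-nonhomogeneous-domain-Rn} together with the graph-norm equivalence. The only real point to watch is the passage from $\HH_0$ to the unbounded-at-zero function $\lambda^{it}$; this is standard and settled by the extension recorded in Remark (ii).
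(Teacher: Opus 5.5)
Your proposal is correct and follows the paper's argument: the paper presents the corollary as a direct consequence of the bounded $\mathscr H^\infty$-calculus of Proposition \ref{HinftyRn}, specialized to $w=\langle x\rangle^{q\ell}$. It passes to $BIP$ via \eqref{classesSEC} and then applies Proposition \ref{Complex Interpolation}(i) and (ii). Your explicit treatment of $\lambda^{it}\notin\HH_0(\Sigma_\phi)$ and of injectivity and dense range only spells out details the paper takes for granted.
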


\vspace{1ex}

Since $\D(A) = \D(I+A)$ with equivalent norms and since with $A$ also $I+A$ has a bounded $\mathscr H^\infty$-calculus, see \cite[Proposition 2.11 (iv)]{DHP2}, \eqref{BIP-nonhomogeneous-domain-Rn} can be extended to   
$$ {\D}((I+A)^\theta) =  \big[L^q_{\sigma, \ell}, {\D}(I+A_q)\big]_{\theta} = \big[L^q_{\sigma, \ell}, {\D}(A)\big]_{\theta} = {\D}(A^\theta). $$
The same results holds for \eqref{BIP-nonhomog-Rn} below.

To work with Corollary \ref{cor:Rn-domains} concretely, we have to estimate $\|A^\theta u\|_{L^q_w}$ by $\|(-\Delta)^\theta u\|_{L^q_w}$, {\em i.e.,} by a norm in a homogeneous  Riesz \BLACK potential space. 
Here we recall the homogeneous spaces $\widehat{\mathbb H}^{2\theta,q}_w(\IR^n)$, $\theta\in\IR$, for which the norm $\|\mathcal F^{-1}(|\xi|^{\theta} \mathcal F u)\|_{\mathbb L^q_w}$ is finite, and their subspaces, 
$\widehat{\mathbb H}^{2\theta,q}_{\sigma,w}(\IR^n)$, of solenoidal vector fields, see Subsect. \ref{S2.1}.

Concerning non-homogeneous Bessel potential spaces we introduce by analogy the spaces $H^{2\theta,q}_w(\IR^n)$ and $\mathbb H^{2\theta,q}_{\sigma,w}(\IR^n)$ via the norm $\|\mathcal F^{-1}((1+|\xi|^2)^{\theta/2} \mathcal F u)\|_{ \mathbb L^q_w} $.
 
\vspace{1ex}

\begin{cor}\label{cor:Rn-domains-char}
Let $1<q< \infty$ and $n\geq 3$, and let $A=A_{q,\ell}  = -\mathbb P \Delta_{q,\ell}$ be the Stokes operator on $L^q_{\sigma, \ell }(\mathbb{R}^n)$ for $-\frac{n}{q} < \ell < \frac{n}{q'}$. 
For $0<\theta < 1$ there hold the characterizations
\begin{align}\label{BIP-nonhomog-Rn}
\D(A^\theta) =  \big[L^q_{\sigma, \ell}, \D(A)\big]_{\theta} = \big[L^q_{\ell}, \D(-\Delta)\big]_{\theta} \cap L^q_{\sigma}(\IR^n) =  H^{2\theta,q}_{\sigma,\ell}(\IR^n)
\end{align}
and
\begin{align}\label{BIP-homogeneous-domain-Rn-char}
\widehat{\D}(A^\theta) =  \big[L^q_{\sigma, \ell}, \widehat{\D}(A)\big]_{\theta} = \big[L^q_{\ell}, \widehat{\D}(-\Delta)\big]_{\theta} \cap L^1_{\rm loc,\sigma}(\IR^n) = \widehat H^{2\theta,q}_{\sigma,\ell}(\IR^n).
\end{align}
\end{cor}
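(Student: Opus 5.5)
The first equalities in \eqref{BIP-nonhomog-Rn} and \eqref{BIP-homogeneous-domain-Rn-char} are Corollary \ref{cor:Rn-domains}, so it remains to identify the interpolation spaces. I will use Theorem \ref{retract-co} with the Helmholtz projection $\mathbb P=\mathbb P_{q,\ell}$ as a bounded projection. On $\IR^n$, $\mathbb P$ is a Fourier multiplier with symbol $\delta_{jk}-\xi_j\xi_k/|\xi|^2$, homogeneous of degree $0$. It therefore commutes with $(\lambda-\Delta)^{-1}$ and with $(1+|\xi|^2)^{\theta}$ and $|\xi|^{2\theta}$. By Proposition \ref{thm-hoermander} (ii) and Lemma \ref{Helmh}, $\mathbb P$ is bounded on $L^q_\ell(\IR^n)$, on $\D(-\Delta)=H^{2,q}_\ell(\IR^n)$ with the graph norm, and on $\widehat\D(-\Delta)=\widehat H^{2,q}_\ell(\IR^n)$, where the latter is taken modulo polynomials.

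Nonhomogeneous case. First I check that $\mathbb P\D(-\Delta)=\D(A)$ with equivalent norms. On $\IR^n$, $A=-\mathbb P\Delta=-\Delta\mathbb P$, so for solenoidal $u\in H^{2,q}_\ell$ we have $Au=-\Delta u$. Theorem \ref{retract-co}, i.e. \eqref{interpol-P}, then gives
$$[L^q_{\sigma,\ell},\D(A)]_\theta=[\mathbb P L^q_\ell,\mathbb P H^{2,q}_\ell]_\theta=\mathbb P[L^q_\ell,H^{2,q}_\ell]_\theta .$$
Since $\mathbb P$ is a projection, the right-hand side equals $[L^q_\ell,H^{2,q}_\ell]_\theta\cap L^q_\sigma$. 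Next I identify $[L^q_\ell,H^{2,q}_\ell]_\theta=H^{2\theta,q}_\ell(\IR^n)$. For this I use the bounded $\mathscr H^\infty$-calculus, hence $BIP$, of $I-\Delta$ on $L^q_\ell(\IR^n)$. It is obtained exactly as in the proof of Proposition \ref{HinftyRn}: the symbol $h(1+|\xi|^2)$ satisfies the Mikhlin bounds, now including the shift. Proposition \ref{Complex Interpolation} (ii) then gives $\D((I-\Delta)^\theta)=[L^q_\ell,H^{2,q}_\ell]_\theta$. The norm $\|(I-\Delta)^\theta u\|_{L^q_\ell}$ is by definition the $H^{2\theta,q}_\ell$ norm. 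Intersecting with $L^q_\sigma$ yields $H^{2\theta,q}_{\sigma,\ell}$.

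Homogeneous case. Here I argue in the same way with the pair $\{L^q_\ell,\widehat H^{2,q}_\ell\}$ and the spaces $\widehat\D(A)$. I must check that $\widehat\D(A)\cong\mathbb P\widehat H^{2,q}_\ell$, meaning the solenoidal elements of $\widehat H^{2,q}_\ell$ modulo polynomials. By Lemma \ref{conv-u-D-u} (i), $\dot A$ is an isomorphism onto $L^q_{\sigma,\ell}$. On the other side, $-\Delta$ maps $\mathbb P\widehat H^{2,q}_\ell$ isomorphically onto $L^q_{\sigma,\ell}$, as seen by Riesz transforms via Proposition \ref{thm-hoermander}. Both spaces contain $\D(A)$ densely: for the second this follows from the density in Lemma \ref{hatH0-dense} (iii), applying $\mathbb P$, which preserves $C_0^\infty$ approximations in the $\widehat H^{2,q}_\ell$ norm up to Schwartz-type tails that are handled by density. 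Hence the completions agree. Next, \eqref{interp hatH} gives $[L^q_\ell,\widehat H^{2,q}_\ell]_\theta=\widehat H^{2\theta,q}_\ell$, and \eqref{interp hatH sigma} gives the solenoidal version. Together with \eqref{interpol-P} this yields the final equalities. Here "$\cap L^1_{\rm loc,\sigma}$" is read as taking the divergence-free members, i.e. the range of $\mathbb P$.

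The main obstacle is this homogeneous identification $\widehat\D(A)=\mathbb P\widehat H^{2,q}_\ell$. The difficulty is that elements are classes modulo polynomials, so it must be checked that $\mathbb P$ is well defined there: it annihilates the relevant polynomial ambiguity only after projecting, since the multiplier is singular at $\xi=0$. To handle this I would work on the dense subspace of Schwartz functions whose Fourier transforms vanish near $0$, where all operators act classically, and extend by continuity using Lemma \ref{hatH0-dense}.
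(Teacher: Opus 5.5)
Your proposal is correct and follows the paper's route. The paper likewise applies Theorem \ref{retract-co} with $R=\mathbb P_{q,\ell}$, justified by the symbol computation $(I+A)^{-1}\mathbb P(I-\Delta)=\mathbb P$ and its homogeneous analogue, takes the trivial injection as coretraction, and uses Corollary \ref{cor:Rn-domains} for the first equalities. You additionally spell out the identifications $\D(A)=\mathbb P H^{2,q}_\ell$ and $\widehat\D(A)=\mathbb P\widehat H^{2,q}_\ell$, and the well-definedness of $\mathbb P$ modulo polynomials, all of which the paper leaves implicit. Your density step is simpler than you suggest: $\mathbb P$ maps $C_0^\infty(\IR^n)$ directly into $H^{2,q}_\ell\cap L^q_{\sigma,\ell}=\D(A)$, so no remark about Schwartz-type tails is needed.
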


\vspace{1ex}

\begin{proof} 
Concerning the non-homogeneous setting we will apply Theorem \ref{retract-co}. Hence it suffices to find a retraction
$R: \D(-\Delta) \to \D(A_{q,\ell})$ which can be extended to a bounded operator $R_0:L^q_\ell \to L^q_{\sigma,\ell}$. In the case of the whole space $ \Omega=\IR^n$ where $\mathbb P_{q,\ell}$ commutes with partial derivatives we choose $R=\mathbb P_{q,\ell}$  and as corresponding coretraction $S=j$, the trivial injection $j: \D(A) \to \D(-\Delta)$. To be more precise, we may follow \cite[Chapter 7]{Giga85} and define $R=(I+A)^{-1}\mathbb P(I-\Delta)$ which in Fourier space has the symbol
$$ \frac{1}{1+|\xi|^2}\Big(\delta_{jk}-\frac{\xi_j\xi_k}{|\xi|^2}\Big)_{j,k=1}^n (1+|\xi|^2) =  \delta_{jk}-\frac{\xi_j\xi_k}{|\xi|^2}, $$
{\em i.e.} $R=\mathbb P_{q,\ell}$.
As coretraction we take the trivial injection $S_0=j: L^q_{\ell,\sigma} \to L^q_{\ell}$. Now Theorem \ref{retract-co} implies that 
$$
\big[L^q_{\sigma, \ell}, \D(A)\big]_{\theta} = \mathbb P_{q,\ell} \big[L^q_{\ell}, \D(-\Delta)\big]_{\theta} =  \big[L^q_{\ell}, \D(-\Delta)\big]_{\theta} \cap L^q_{\sigma,\ell}(\IR^n). $$ 

In the homogeneous case we proceed by analogy. Let 
$R=\mathbb P_{q,\ell}$ which in phase space coincides with the multiplier operators 
$$ \frac{1}{|\xi|^2}\Big(\delta_{jk}-\frac{\xi_j\xi_k}{|\xi|^2}\Big)_{j,k=1}^n |\xi|^2 =  \Big(\delta_{jk}-\frac{\xi_j\xi_k}{|\xi|^2}\Big)_{j,k=1}^n .$$
With $S=j$ we arrive by Theorem \ref{retract-co} and \eqref{BIP-homogeneous-domain-Rn} at
$$ \big[L^q_{\sigma, \ell}, \widehat{\D}(A)\big]_{\theta} = \mathbb P_{q,\ell} \big(\big[L^q_{\ell}, \widehat{\D}(-\Delta)\big]_{\theta} \big) = \big[L^q_{\ell}, \widehat{\D}(-\Delta)\big]_{\theta} \cap L^1_{\rm loc,\sigma}.$$
where $[L^q_{\ell}, \widehat{\D}(-\Delta)]_{\theta}  = \widehat H^{2\theta,q}_{\ell}(\IR^n)$.
 %
%Following Y. Giga \cite[xxx]{Giga-fract}} let
%$$ R = A_{q,\ell}^{-1} \mathbb P_{q,\ell}(-\Delta): \widehat\D(\Delta)\to  \widehat\D(A).$$
%Since the symbol in Fourier space of $R$ equals 
%$$ \frac{1}{|\xi|^2} \Big(\delta_{ij}-\frac{\xi_i\xi_j}{|\xi|^2}\Big) |\xi|^2 = \delta_{ij}-\frac{\xi_i\xi_j}{|\xi|^2} $$
%we get that $R=\mathbb P_{q,\ell}$. Obviously $R$ is bounded on $\widehat\D(-\Delta)$ as well as on $L^q_{\ell}$, and $S=j$, the trivial injection from $L^q_{\ell,\sigma}$ into $L^q_{\ell}$ is a coretraction to $R$. 
%
\end{proof}

\vspace{1ex}

Note that in \eqref{BIP-homogeneous-domain-Rn-char} the intersection of $[L^q_{\ell}, \widehat{\D}(-\Delta)]_{\theta}$ is not taken with a space $L^r_{\sigma,\ell}$ since in general $[L^q_{\ell}, \widehat{\D}(-\Delta)]_{\theta}$ will not be embedded into any weighted $L^r(\IR^n)$ space. 
We refer to Proposition \ref{domain-of-fractional-power} \BLACK below  for conditions on $n,q,\ell$ so that the intersection with a space of type $L^r_{\sigma,\ell'}(\IR^n)$ will hold, see also Lemma \ref{conv-u-D-u} (ii).

\begin{section}{Main Results and Proofs}\label{S3}
We consider an exterior domain $\Omega\subset \IR^n$, $n\geq 3$, with $C^3$-boundary.  The high regularity $\partial\Omega\in C^3$ is used only in Theorem \ref{theorem-H-infty} and Corollary \ref{theorem-H-infty-2}. \BLACK
%and return to radial weights $\langle x\rangle^{\ell q}$ and the notation $L^q_\ell(\Omega_0)$. 
Let $1<q< \infty$, and let $\mathbb P = \mathbb P_{q,\ell}$ and $A=A_{q,\ell} = -\mathbb P \Delta$ denote the Helmholtz projection and Stokes operator on $L^q_{\sigma, \ell }(\Omega_0)$, respectively,  where $-\frac{n}{q} < \ell < \frac{n}{q'}$.
%\BLUE  $\ell\geq 0$\BLACK. In Corollary \ref{theorem-H-infty-2} we will consider the case when $-\frac{n}{q} < \ell < \frac{n}{q'}-2$, \BLUE $\ell\leq 0$ ?. \RED In most results $\ell\geq 0$ and $\ell\leq 0$ play no role. Therefore, in these introductory lines I would omit it.) \BLACK

\vspace*{1ex}

\subsection{Weak Stokes problems on exterior domains with weights}\label{S3.1}

In the following we prove important {\em a priori} estimates for the stationary weak $L^q_\ell$ Stokes problem in the whole space, bounded domains $D$ and the exterior domain $\Omega$. For later use it will be important to consider also the nonhomogeneous problem where $g=\div u \neq 0$ is prescribed. The weak Stokes problem for any $f\in \widehat H^{-1,q}_{\ell}(D)$ and  $g\in L^q_{\ell,(m)}(D)=\{h\in L^q_{\ell}(D): \int_D h \dx =0\}$: %((Here and in Lemma 3.1 we forgot the condition $\int_D g\dx=0$ for bounded domains $D$)) 
\BLACK  reads as follows: 
\begin{align}\label{weak Stokes}
 -\Delta u + \nabla p =f,\quad \div u=g\; \textrm{ in } D,\quad u=0\; \textrm{ on } \partial D,
\end{align}
where the velocity field $u\in \widehat H^{1,q}_{\ell,0}(D)$ and an associated pressure $p\in L^q_\ell(D)$ satisfy 
\begin{align}\label{St-weakRn}
\langle \nabla u,\nabla v\rangle - \langle p,\nabla v\rangle = \langle f, v\rangle, \quad -\langle u,\nabla \psi\rangle = \langle g,\psi\rangle  
\end{align}
for all $v\in \widehat H^{1,q'}_{-\ell,0}(D)$ and $\psi\in L^{q'}_{-\ell}(D)$. Of course, in the case of $\Omega=\IR^n$ there is no boundary condition for $u$. We note that the norm of $f\in \widehat H^{-1,q}_{\ell}(D)$ is defined as
$$ \|f\|_{\widehat H^{-1,q}_{\ell}(D)} = \sup_{v\neq 0} \frac{|\langle f, v\rangle\big|}{\|\nabla v\|_{L^{q'}_{-\ell}(D)}} $$
where the supremum is taken over all $0\neq v\in \widehat H^{1,q'}_{0,-\ell}(D)$. For further results on the weak Stokes equations in various settings we refer to \cite{BoMi90, Cat, FaSiSo93, KoSo92}.

\begin{lem}\label{lem-var-ineq-nabla-Rn-bdd}
Let $1<q<\infty$ and $-\frac{n}{q} <\ell< \frac{n}{q'}$.

(i) For any $f\in \widehat H^{-1,q}_\ell(\IR^n)$, $g\in L^q_\ell(\IR^n)$ the weak Stokes problem \eqref{weak Stokes} has a unique solution $u\in \widehat H^{1,q}_{\ell}(\IR^n)$, $p\in L^q_\ell(\IR^n)$. Moreover, there exists a constant $C=C(q,\ell,n)>0$ such that the estimate 
\begin{align}\label{var-ineq-nabla_Rn-fg}
  \|\nabla u\|_{L^q_\ell(\IR^n)} + \|p\|_{L^q_\ell(\IR^n)} \leq C\big(\|f\|_{\widehat H^{-1,q}_\ell(\IR^n)} + \|g\|_{L^q_\ell(\IR^n)}\big)
\end{align} 
holds. In particular, if $g=0$ and $u\in \widehat H^{1,q}_{\sigma,\ell}(\IR^n)$, there holds the variational inequality 
\begin{align}\label{var-ineq-nabla_Rn}
  \|\nabla u\|_{L^q_\ell(\IR^n)} \leq C\sup_{v\neq 0} \frac{|\langle \nabla u,\nabla v\rangle\big|}{\|\nabla v\|_{L^{q'}_{-\ell}(\IR^n)}} \leq C\|\nabla u\|_{L^q_\ell(\IR^n)}, 
\end{align}
where $v$ is running through all of $\widehat H^{1,q'}_{\sigma,-\ell}(\IR^n)$.  
If additionally $f\in \widehat H^{-1,q_1}_{\ell_1}(\IR^n)$, $g\in L^q_{\ell_1}(\IR^n)$ where $1<q_1<\infty$ and $-\frac{n}{q_1} <\ell_1< \frac{n}{q_1'}$, then even $u\in \widehat H^{1,q_1}_{\ell_1}(\IR^n)$, $p\in L^{q_1}_{\ell_1}(\IR^n)$. \BLACK

(ii) For any $\nabla p\in \widehat H^{-1,q}_{\ell}(\IR^n)$ there exists  $\pi\in L^q_\ell(\IR^n)$ such that $\nabla p = \nabla\pi$. \BLACK
 
(iii) Let $D \subset\IR^n$ be a bounded domain of class  $C^{1}$. %((not $C^2$)). \BLACK 
Then for any $f\in \widehat H^{-1,q}_\ell(D)$, $g\in L^q_{\ell,(m)}(D)$ there exists a unique weak solution $u\in \widehat H^{1,q}_{\ell,0}(D)$, $p\in L^q_\ell(D)$ with $\int_D p\dx = 0$ of the Stokes problem \eqref{weak Stokes}. Moreover, there exists $C=C(q,\ell,D)>0$ such that  
\begin{align}\label{var-ineq-nabla_bdd-fg}
  \|\nabla u\|_{L^q_\ell(D)}+ \|p\|_{L^q_\ell(D)} \leq C\big(\|f\|_{\widehat H^{-1,q}_\ell(D)} + \|g\|_{L^q_\ell(D)}\big).
\end{align} 
Furthermore,   if $g=0$, \BLACK then
\begin{align}\label{var-ineq-nabla-bdd}
  \|\nabla u\|_{L^q(D)} \leq C\sup_{v\neq 0} \frac{|\langle \nabla u,\nabla v\rangle\big|}{\|\nabla v\|_{L^{q'}(D)}} \leq \|\nabla u\|_{L^q(D)},
\end{align}
where $v$ is running through all of $\widehat H^{1,q'}_{\sigma,0}(D)$.  
\end{lem}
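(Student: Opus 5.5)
We will handle the three parts by Fourier multipliers on $\IR^n$ and by duality plus Bogovski\u{\i}-type arguments on bounded domains.

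\textbf{Part (i): explicit solution on $\IR^n$.} For $f\in C^\infty_0$ and $g\in C^\infty_0$ we use the explicit formulas
\[
\nabla u = \nabla(-\Delta)^{-1}\mathbb P f - \nabla(-\Delta)^{-1}\nabla g,
\qquad
p = -(-\Delta)^{-1}\div f + g.
\]
Here $(-\Delta)^{-1}\nabla g$ is understood via its gradient, which is a matrix of Riesz transforms of $g$. We would write $f=\div F$ with $F\in L^q_\ell$ and $\|F\|_{L^q_\ell}\le C\|f\|_{\widehat H^{-1,q}_\ell}$. This representation comes from duality and Hahn--Banach: $\widehat H^{1,q'}_{-\ell}$ is isometric via $\nabla$ to a closed subspace of $L^{q'}_{-\ell}$, and the Helmholtz-type projection of Lemma \ref{Helmh} gives a bounded extension.

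With this representation, $\nabla u$ and $p$ are zero-order multipliers $m(\xi)$ applied to $F$ and $g$. Examples are $\xi_j\xi_k\xi_m/|\xi|^4$ and $\xi_j\xi_k/|\xi|^2$. Proposition \ref{thm-hoermander}(ii) then yields \eqref{var-ineq-nabla_Rn-fg}. Density of $C^\infty_0$ (Lemma \ref{hatH0-dense}) extends the estimate to general data.

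Uniqueness follows by Fourier transform. If $-\Delta u+\nabla p=0$ and $\div u=0$, then $\Delta p=0$ and $p\in L^q_\ell$, so $p=0$ by Liouville in $\mathcal S'$. Likewise $\nabla u$ is a harmonic $L^q_\ell$ function, hence zero. Note that $L^q_\ell\subset\mathcal S'$ since the weight is polynomially bounded.

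The regularity statement for $(q_1,\ell_1)$ follows from the same formula: the solution is given by the same multipliers, and by uniqueness it coincides with the $(q_1,\ell_1)$-solution.

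For the variational inequality \eqref{var-ineq-nabla_Rn}, the upper bound is H\"older's inequality. For the lower bound, we define $f:=-\Delta u$ as a functional. Testing only with solenoidal $v$ determines $f$ up to gradients. We extend the functional $v\mapsto\langle\nabla u,\nabla v\rangle$ from $\widehat H^{1,q'}_{\sigma,-\ell}$ to all of $\widehat H^{1,q'}_{-\ell}$ by composing with $\mathbb P$; this is bounded since $\mathbb P$ commutes with $\nabla$ on $\IR^n$. Uniqueness then gives $\|\nabla u\|\le C\|f\|$.

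\textbf{Part (ii): pressure in $L^q_\ell$.} Given $\nabla p\in\widehat H^{-1,q}_\ell$, set $\pi:=-(-\Delta)^{-1}\div(\nabla p)$, realized through $\nabla p=\div F$ and a Riesz-transform multiplier, so that $\pi\in L^q_\ell$. Then $\nabla(p-\pi)$ is a distribution whose divergence vanishes. Since it is a gradient, $p-\pi$ is harmonic, and hence a polynomial. Boundedness of $\nabla(p-\pi)$ in $\widehat H^{-1,q}_\ell$ forces $\nabla(p-\pi)=0$ after modding out, because the weight restriction excludes nonzero constant gradients from the dual space. We expect this exclusion of constant gradients, including the endpoint $\ell$ range, to be the main delicate step. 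Our first attempt is to test the constant gradient against the cut-off functions $\chi_k$ as in Lemma \ref{D^1}.

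\textbf{Part (iii): bounded domains.} On a bounded $D$ the weight is equivalent to $1$ by Assumption \ref{weight-Omega}, so the weighted spaces coincide with the unweighted ones. We then invoke the classical $L^q$ weak Stokes theory on bounded $C^1$ domains (e.g.\ Galdi, Borchers--Miyakawa). The steps are:
\begin{enumerate}
\item Remove $g$ via the Bogovski\u{\i} operator of Proposition \ref{Bog}(i).
\item Solve for the solenoidal part by the $L^q$-variational inequality.
\item Recover the pressure via the closed-range argument and \eqref{p_m} of Proposition \ref{Bog}(iii), normalized by $\int_D p\,dx=0$.
\end{enumerate}
Inequality \eqref{var-ineq-nabla-bdd} follows by restricting the estimate to $g=0$ and noting that the functional on solenoidal test fields determines $f$ modulo gradients. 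Those gradients are absorbed into $p$ by de Rham's theorem in the form of Proposition \ref{Bog}(iii).
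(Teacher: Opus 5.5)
Your proposal is correct and follows the paper's route: the explicit Riesz-multiplier solution on $\IR^n$ with Liouville uniqueness; the passage to solenoidal test fields via the Helmholtz projection (the paper's decomposition $v=v_0+\nabla\pi$ is the same device); and, for bounded $D$, the classical weak Stokes theory combined with Hahn--Banach and de Rham/closed range to obtain \eqref{var-ineq-nabla-bdd}. The only real deviation is in (ii): the paper solves $-\Delta u+\nabla\pi=\nabla p$, $\div u=0$ by (i) and applies Liouville to the biharmonic field $\nabla u\in L^q_\ell$, whereas you exclude the polynomial $\nabla(p-\pi)$ directly in the dual norm. Your version works for every admissible $\ell$, with no endpoint difficulty: dilates $\varphi_0(x/k)$ give $k^{n+d}\lesssim k^{n/q'-1-\ell}$, which is impossible since $d+1+\ell+n/q>0$. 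However, $\nabla(p-\pi)$ is a harmonic polynomial of arbitrary degree $d$, not just a constant, and cut-offs centred at the origin only detect its value at $0$. You should therefore use non-radial bumps, or translated cut-offs together with the mean value property.
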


The variational estimates \eqref{var-ineq-nabla_Rn} and \eqref{var-ineq-nabla-bdd} will not be used in the following, but are proved for the sake of completeness. However, an estimate of this type for exterior domains will be a crucial tool, see \eqref{nablau A12u} below. Note that in \eqref{var-ineq-nabla_Rn},  \eqref{var-ineq-nabla-bdd} the supremum is taken over solenoidal vector fields $v\in \widehat H^{1,q'}_{\sigma,-\ell,0}$ rather than over $v\in \widehat H^{1,q'}_{-\ell,0}$.

\begin{proof} 
(i) For any $f\in \widehat H^{-1,q'}_{\ell}(\IR^n)$ and $g\in L^q_\ell(\IR^n)$ the weak Stokes problem \eqref{weak Stokes}
possesses the weak solution in $\widehat H^{1,q}_{\ell}(\IR^n)$ with associated pressure $p\in L^q_\ell(\IR^n)$   given by 
$$ u= (-\Delta)^{-1}f - (-\Delta)^{-1}\nabla p, \quad  p=g-(-\Delta)^{-1} \div f.$$
Hence %the multiplier theorem on weighted spaces $L^q_\ell(\IR^n)$, see 
Proposition \ref{thm-hoermander}  proves the estimate
$\|p\|_{L^q_\ell(\IR^n)} \leq C \big(\|g\|_{L^q_\ell(\IR^n)} + \|f\|_{\widehat H^{-1,q}_\ell(\IR^n)}\big) $ and 
$$ \|\nabla u\|_{L^q_\ell(\IR^n)} \leq C \big(\|f\|_{\widehat H^{-1,q'}_{\ell}(\IR^n)} + \|p\|_{L^q_\ell(\IR^n)}\big) 
\leq C \big(\|f\|_{\widehat H^{-1,q'}_{\ell}(\IR^n)} + \|g\|_{L^q_\ell(\IR^n)}\big).$$ 

Concerning uniqueness for $f=0$, $g=0$ in the sense of Schwartz' distributions note that $p$ is harmonic, hence a polynomial; then the condition $p\in L^q_\ell(\IR^n)$ with $\ell q>-n$ implies that $p=0$. Hence $u$ is harmonic as well, so that $\nabla u \in L^q_\ell(\IR^n)$ must vanish. Hence $u$ is unique up to  constants. 

To prove \eqref{var-ineq-nabla_Rn} we consider $u\in \widehat H^{1,q}_{\sigma,\ell}(\IR^n)$ as a weak solution with right hand side $f:=-\Delta u$ and $g=0$ so that by \eqref{var-ineq-nabla_Rn-fg}
\begin{equation}\label{var-nosigma}
\|\nabla u\|_{L^q_\ell(\IR^n)} \leq C \|f\|_{\widehat H^{-1,q'}_{\ell}(\IR^n)} = C\sup_{v\neq 0} \frac{|\langle \nabla u,\nabla v\rangle|}{\|\nabla v\|_{L^{q'}_{-\ell}(\IR^n)} } 
\end{equation}
where the supremum is taken over all $0\neq v\in \widehat H^{1,q'}_{-\ell}(\IR^n)$. 
To pass from $v\in \widehat H^{1,q'}_{-\ell}(\IR^n)$ to solenoidal test functions, $v\in\widehat H^{1,q'}_{\sigma,-\ell}(\IR^n)$, we use the Helmholtz decomposition  
\begin{equation}\label{Helmh-Rn-Hhat} 
v = [v+\nabla(-\Delta)^{-1}\div v] - \nabla[(-\Delta)^{-1}\div v] =  v_0 + \nabla \pi,\end{equation}
where $v_0, \nabla \pi \in H^{1,q'}_{-\ell}(\IR^n)$ and $\div v_0=0$. 
Since by \eqref{Helmh-Rn-Hhat} $\nabla v = \nabla v_0 + \nabla:\nabla \pi$, we get that
$$\langle \nabla u,\nabla v\rangle = \langle \nabla u,\nabla v_0\rangle + \langle \nabla u,\nabla:\nabla \pi\rangle . 
$$
By definition 
$u\in \widehat H^{1,q}_{\sigma,\ell}(\IR^n) = \overline{C^\infty_{0,\sigma}(\IR^n)}^{\|\nabla \cdot\|_{L^q_\ell(\IR^n)}}$
so that we may assume $u\in C^\infty_{0,\sigma}(\IR^n)$. In that case $\langle \nabla u,\nabla:\nabla \pi\rangle = 0$ by an integration by parts.
Hence a density argument shows that $\langle \nabla u,\nabla v\rangle = \langle \nabla u,\nabla v_0\rangle $ for each $u\in \widehat H^{1,q}_{\sigma,\ell}(\IR^n)$. 
Finally, since $\|\nabla v_0\|_{L^{q'}_{-\ell}(\IR^n)} \leq c\|\nabla v\|_{L^{q'}_{-\ell}(\IR^n)}$, \eqref{var-nosigma} yields \eqref{var-ineq-nabla_Rn}.

The proof of the additional regularity is based on the above proof of uniqueness for solutions in Schwartz' class $\mathcal S'$. \BLACK 

 (ii) By (i) the weak Stokes problem $-\Delta u +\nabla \pi =\nabla p$, $\div u=0$, on $\IR^n$ possesses a unique solution $u\in \widehat H^{1,q}_{\sigma,\ell}(\IR^n)$, $\pi\in L^q(\IR^n)$. Applying the divergence operator, we get $\Delta\pi=\Delta p$ and hence $\Delta^2 u=0$. Since $\nabla u\in L^q_\ell(\IR^n) \subset \mathcal S'(\IR^n)$, we see that $\nabla u$ is polynomial. But $\nabla u\in L^q_\ell$ shows that $\nabla u=0$. Thus $\nabla\pi=\nabla p$. \BLACK

(iii) By \cite{BoMi90, Cat, KoSo92} the weak Stokes problem \eqref{weak Stokes} possesses for any $f\in \widehat H^{-1,q}(D)$ and $g\in L^q(D)$ a unique solution $u\in \widehat H^{1,q}_{0}(D) = H^{1,q}_{0}(D)$, $p\in L^q(D)$.
%such that $-\Delta u + \nabla p=f$, $\div u=g$ and $u=0$ on $\partial D$. 
Moreover, $(u,p) $ satisfies the estimate \eqref{var-ineq-nabla_bdd-fg}.

Concerning \eqref{var-ineq-nabla-bdd} we start with $u\in \D(A)$, set $h = -\Delta u \in L^q(D)$ and consider $h$ as a functional in $H^{1,q'}_{\sigma,0}(D)^*$. 
By Hahn-Banach's theorem there exists $f\in H^{-1,q}(D)$ such that $h=f$ on $H^{1,q'}_{\sigma,0}(D)$ and $\|f\|_{H^{-1,q}(D)} = \|h\|_{H^{1,q'}_{\sigma,0}(D)^*}$. 
Since $f-h\big|_{H^{1,q'}_{\sigma,0}(D)} = 0$, \cite[Theorem III.5.3]{Galdi-steady} implies that $f-h=\nabla \pi$ where $\pi\in L^{q}(D)$. 
The crucial condition in \cite[Theorem III.5.3]{Galdi-steady}, {\em i.e.} the solvability of problem \cite[(III.3.65)]{Galdi-steady}, is justified by properties of Bogovski\u{\i}'s operator on $D$, see Proposition \ref{Bog}. Since $f = h +\nabla \pi= -\Delta u +\nabla \pi$, \eqref{var-ineq-nabla_bdd-fg} yields the estimate 
\begin{align*} 
\|\nabla u\|_{L^q(D)} \leq C\|f\|_{\widehat H^{-1,q}(D)} = C\|h\|_{\widehat H^{1,q'}_{\sigma,0}(D)^*} 
& = C\sup_{0\neq v\in H^{1,q'}_{\sigma,0}(D)} \frac{|\langle \nabla u, \nabla v\rangle\big|}{\|\nabla v\|_{L^{q'}(D)}}.
\end{align*} 
By the density of $C^\infty_{\sigma,0}(D)$ in $H^{1,q}_{\sigma,0}(D)$ the last estimate extends from $\D(A)$ to $H^{1,q}_{\sigma,0}(D)$.\BLACK
\end{proof}

\begin{rem}\label{varineq}
{\rm 
(i) The assertions Lemma \ref{lem-var-ineq-nabla-Rn-bdd} (i), (ii) hold for any weight $w\in\mathscr A_q(\IR^n)$. The only difference in the proof is the argument to show that {\em e.g.} $p\in \mathbb L^q_w(\IR^n)$ as a harmonic polynomial vanishes. Indeed, by \cite[Theorem 7.3.3 (b)]{Grafakos-C}, any weight $w\in\mathscr A_q(\IR^n)$ satisfies the following doubling condition: There exist $0<r,K<1$  such that for each cube $Q\subset \IR^n$ there holds $w(rQ)\leq K w(Q)$. Then an iterative argument shows that $w(\frac1r Q) \to 0$ as $r\to 0$, {\em i.e.}, $\int_{\IR^n} w\dx = \infty$. Thus also $\int_{\IR^n} |\pi| w\dx = \infty$ for each polynomial $\pi$, in contradiction to the condition $p=\pi\in \mathbb L^q_w(\IR^n)$. The same idea applies to $\nabla u$ {\em etc.} \BLACK

(ii) The crucial assertions in Lemma \ref{lem-var-ineq-nabla-Rn-bdd} are the variational estimates \eqref{var-ineq-nabla_Rn}  and \eqref{var-ineq-nabla-bdd} in which the supremum is taken over solenoidal vector fields only. 
Although finally only the corresponding estimates for an exterior domain will be used, see Proposition \ref{lem-var-ineq-nabla_Omega0}, Theorem \ref{A1/2-nabla_A1/2} below, the estimates \eqref{var-ineq-nabla_Rn} and \eqref{var-ineq-nabla-bdd} are of independent interest.
In \cite{BoMi90} \eqref{var-ineq-nabla-bdd} is proved by the estimate $\|\nabla u\|_{L^q(D)} \leq C\|A_q^{1/2}u\|_{L^q(D)}$ based on the identity $\D(A_q^{1/2}) = H^{1,q}_0(D) \cap L^q_\sigma(D)$. 
These latter deep results are due to Giga in \cite{Giga85}. Our proof is based on more standard methods; the crucial step is the use of Bogovski\u{\i}'s operator for bounded domains.
}
\end{rem}

\vspace*{1ex}

The exterior domain case is studied in the following Proposition \ref{lem-var-ineq-nabla_Omega0}. Since for our purposes only the {\em a priori} estimate \eqref{var-ineq-nabla} is necessary, we omitted results on solvability of the weak $L^q_\ell(\Omega)$ Stokes problem. A second variational inequality, \eqref{var-ineq-nabla-sigma}, excludes the pressure, whereas the supremum is taken over the smaller space $\widehat H^{1,q'}_{\sigma,-\ell,0}(\Omega)$. \BLACK

\begin{prop}\label{lem-var-ineq-nabla_Omega0}
Let $1<q<\infty$, and let $\Omega\subset\IR^n$ be an exterior domain of class $C^2$.
 
(i) Additionally assume that $1-\frac{n}{q} <\ell< \frac{n}{q'}$. %
Then there exists a constant $C$ such that for any  $u\in \D(A_{q,\ell})$  and $p\in L^q_\ell(\Omega)$ 
there holds the variational inequality
 \begin{align}\label{var-ineq-nabla}
  \|\nabla u\|_{L^q_\ell(\Omega)} + \|p\|_{L^q_\ell(\Omega)} \leq C\sup_{0\neq v\in \widehat H^{1,q'}_{-\ell,0}(\Omega)} \frac{\big|\langle \nabla u,\nabla v\rangle - \langle p,\div v\rangle\big|}{\|\nabla v\|_{L^{q'}_{-\ell}}} .
\end{align}

(ii) If $\nabla p\in \widehat H^{-1,q}_\ell(\Omega)$, then there exists $\pi\in L^q_\ell(\Omega)$ such that $\nabla p=\nabla \pi$.
\end{prop}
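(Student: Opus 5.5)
The plan is the standard cut-off argument of Borchers--Miyakawa and Kozono--Sohr: split $(u,p)$ with a smooth cut-off into a part near the obstacle and a part near infinity. We use the whole-space estimate \eqref{var-ineq-nabla_Rn-fg} of Lemma \ref{lem-var-ineq-nabla-Rn-bdd}~(i) for the far part and the bounded-domain estimate \eqref{var-ineq-nabla_bdd-fg} of Lemma \ref{lem-var-ineq-nabla-Rn-bdd}~(iii) for the near part. The resulting lower-order error terms are then removed by a compactness (contradiction) argument that uses uniqueness.

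Concretely, let $F(v) := \langle\nabla u,\nabla v\rangle - \langle p,\div v\rangle$ and let $[F]$ denote the supremum on the right of \eqref{var-ineq-nabla}. Fix $\chi\in C^\infty_0(B_{2R})$ with $\chi=1$ on $B_R$. The pair $((1-\chi)u,(1-\chi)p)$, extended by zero, solves a weak Stokes problem on $\IR^n$ with
\[
f_1 = (1-\chi)F + 2\nabla\chi\cdot\nabla u + (\Delta\chi)u - (\nabla\chi)p, \qquad g_1 = -\nabla\chi\cdot u .
\]
Testing with $v\in \widehat H^{1,q'}_{-\ell}(\IR^n)$ gives $\|(1-\chi)F\|_{\widehat H^{-1,q}_\ell}\le C[F] + C\|u\|_{L^q(\Omega_{2R})}$, because $(1-\chi)v - c_v$, with a suitable constant $c_v$, lies in $\widehat H^{1,q'}_{-\ell,0}(\Omega)$. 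The term $c_v$ is harmless: it is controlled by Poincaré's inequality on the annulus, and by Lemma \ref{D^1} when needed. All the other terms have compact support in the annulus $B_{2R}\setminus B_R$, where the weight is comparable to $1$. They are bounded by $C(\|u\|_{L^q(\Omega_{2R})} + \|p\|_{H^{-1,q}(\Omega_{2R})})$, plus $\|\nabla u\|$ terms that we treat the same way via $\widehat H^{-1}$-norms.

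Similarly, $(\chi u,\chi p - \text{mean})$ solves a Stokes problem on $D=\Omega_{2R}$. Here the divergence $\nabla\chi\cdot u$ has mean zero by Gauss' theorem, since $u\cdot\textsl{n}=0$ on $\partial\Omega$ and $\div u=0$. We apply \eqref{var-ineq-nabla_bdd-fg} to this part. Adding the two estimates and using the Hardy inequality \eqref{Hardy-ineq}, which requires $\ell>1-\frac{n}{q}$ and is precisely where the hypothesis of part~(i) enters, yields
\[
\|\nabla u\|_{L^q_\ell} + \|p\|_{L^q_\ell} \le C[F] + C\big(\|u\|_{L^q(\Omega_{2R})} + \|p\|_{H^{-1,q}(\Omega_{2R})}\big).
\]
The mean of $p$ is absorbed using Proposition \ref{Bog}~(iii) and the fact that $p$ is determined by $F$.

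The main obstacle is removing the compact lower-order terms. We argue by contradiction. Suppose there are $(u_k,p_k)$ normalized so that the left-hand side of \eqref{var-ineq-nabla} equals $1$ while $[F_k]\to0$. By the Hardy inequality, Rellich's compactness theorem on $\Omega_{2R}$, and the compact embedding $L^q\hookrightarrow H^{-1,q}$, a subsequence converges weakly to some $(u,p)$ with $F(v)=0$ for all $v$, and the lower-order terms converge strongly. It then suffices to show uniqueness: $u\in\widehat H^{1,q}_{\ell,0}$ with $\div u=0$ and $p\in L^q_\ell$ solving the homogeneous weak Stokes system must vanish. For this we use local regularity and the same cut-off bootstrap sketched in Lemma \ref{conv-u-D-u}. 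This shows $\nabla u\in L^2$ and $p\in L^2$ near infinity, with decay that follows from the Hardy bound. Testing with $u$ then gives $u=0$, and afterwards $p=0$ because $p\in L^q_\ell$ excludes nonzero constants. This uniqueness step is the delicate one. Once it holds, the strong convergence of the lower-order terms contradicts the normalization.

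For part~(ii), the functional $\nabla p$ annihilates $C^\infty_{0,\sigma}(\Omega)$. We use the Bogovski\u{\i} operator $\mathbb B$ of Proposition \ref{Bog-ext}, which maps $L^{q'}_{-\ell}$ into $\widehat H^{1,q'}_{-\ell,0}$ with $\div\mathbb B=I$. Define $\pi$ by $\langle\pi,g\rangle := -\langle\nabla p,\mathbb Bg\rangle$. Then $\pi\in L^q_\ell(\Omega)$, and $\nabla\pi=\nabla p$ follows because $v-\mathbb B\div v$ is solenoidal, together with de Rham's theorem.
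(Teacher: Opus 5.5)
Your part (i) follows the paper's route. You cut off into a bounded-domain and a whole-space weak Stokes problem, use \eqref{var-ineq-nabla_bdd-fg} and \eqref{var-ineq-nabla_Rn-fg}, remove the compact terms by contradiction, and prove uniqueness by a bootstrap to $L^2$. Your variants are fine: weak limits instead of a Cauchy argument, and excluding a constant at infinity via weighted integrability instead of the paper's mean-value argument for $u'-\eta_2u$. Two small corrections:
\begin{enumerate}
\item The admissible test function is $(1-\chi)(v-c_v)$, not $(1-\chi)v-c_v$.
\item The hypothesis $\ell>1-\frac nq$ is not needed for the preliminary estimate \eqref{est-u-prel}, which holds for all admissible $\ell$. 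It is needed only for uniqueness, since for $\ell\le 1-\frac nq$ fields tending to a constant at infinity belong to $\widehat H^{1,q}_{\ell,0}(\Omega)$ by Lemma \ref{D^1}.
\end{enumerate}

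The genuine gap is in part (ii). To obtain $\nabla\pi=\nabla p$ you need $\langle\nabla p,v-\mathbb B\div v\rangle=0$ for all $v\in C^\infty_0(\Omega)$. The operator $\mathbb B$ of Proposition \ref{Bog-ext} is not local. $\mathbb B\div v$ contains $\nabla\tilde\psi$ with $\Delta\psi=\div v$ on all of $\IR^n$, possibly plus a constant vector. Hence $w=v-\mathbb B\div v$ is a solenoidal element of $\widehat H^{1,q'}_{-\ell,0}(\Omega)$ \emph{without} compact support.

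All you know about $\nabla p$ is that it annihilates $C^\infty_{0,\sigma}(\Omega)$, hence its closure $\widehat H^{1,q'}_{\sigma,-\ell,0}(\Omega)$. Your conclusion therefore requires $\{w\in\widehat H^{1,q'}_{-\ell,0}(\Omega):\div w=0\}=\widehat H^{1,q'}_{\sigma,-\ell,0}(\Omega)$. This is Proposition \ref{HhatOmega}~(i) for the pair $(q',-\ell)$, and the paper proves it by invoking part (ii) of the very proposition you are proving. So the argument is circular.

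De Rham's theorem does not fill this. It only gives a distributional potential for a functional vanishing on $C^\infty_{0,\sigma}(\Omega)$, which you already have. Nor can you pass to the limit in $-\langle p,\div w_j\rangle$ along approximants $w_j\in C^\infty_0(\Omega)$, since $p\in L^q_\ell(\Omega)$ is exactly what is to be shown. On bounded domains the closed-range argument of Proposition \ref{Bog}~(iii) gets the needed density of $C^\infty_{0,\sigma}$ in the kernel of $\div$ for free; in the weighted exterior setting it is not available.

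The paper sidesteps this by localizing. The part $\eta_1p$ is harmless because $p\in L^q_{\rm loc}(\overline\Omega)$. The part $\nabla(\eta_2p)$, viewed as a distribution on $\IR^n$, is bounded in $\widehat H^{-1,q}_\ell(\IR^n)$ by $\|\nabla p\|_{\widehat H^{-1,q}_\ell(\Omega)}$ plus a local $L^q$ norm of $p$, using the same two-case treatment of test functions as in (i). The whole-space Lemma \ref{lem-var-ineq-nabla-Rn-bdd}~(ii), proved by Fourier analysis and a Liouville argument with no density issue, then gives $\tilde\pi\in L^q_\ell(\IR^n)$ with $\nabla\tilde\pi=\nabla(\eta_2p)$, and one sets $\pi=\tilde\pi+\eta_1p$.

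To keep your duality route, you would have to prove directly that $v-\mathbb B\div v$ can be approximated in $\|\nabla\cdot\|_{L^{q'}_{-\ell}}$ by elements of $C^\infty_{0,\sigma}(\Omega)$. One way would be cut-offs at scale $k$ corrected by Bogovski\u{\i} operators on annuli with scale-invariant constants, using the decay of $v-\mathbb B\div v$ and Lemma \ref{D^1} for the constant part. That argument is missing from your proposal.
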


\begin{proof}
(i) Let $\eta_1\in C^\infty_0(\IR^n)$ denote a cut off function such that $0\leq \eta_1\leq 1$, $\eta_1(x)=1$ in $\Omega_R=\Omega\cap B_R$ and $\eta_1(x)=0$ in $B_{2R}^c$. 
Further let $\eta_2=1-\eta_1$. Given $u\in \D(A_{q,\ell})$, $p\in L^q_\ell(\Omega)$ and $f := -\Delta u +\nabla p$, we consider $u,p$ as solution of the weak Stokes problem 
%with an associated pressure $p$ as a (weak) solution of the Stokes problem 
\begin{equation}\label{eq-u-p-f}
-\Delta u + \nabla p = f, \;\; \div u=0\;\textrm{ in }\;\;\Omega,\;\;u=0\;\textrm{ on }\;\partial\Omega, 
\end{equation}
where $f \in \widehat H^{-1,q}_{\ell}(\Omega)$. 
Then $(\eta_j u, \eta_j p)$, $j=1,2$, solve weak Stokes problems on $\Omega_1=\Omega\cap B_{2R}$ and $\Omega_2=\IR^n$, respectively, namely
\begin{align}\label{eta_j-u-p}
\begin{aligned}
    -\Delta(\eta_j u) +\nabla(\eta_j p) =f_j, & \quad \div(\eta_j u)=g_j \;\textrm{ on }\Omega_j,\\[1ex]
    f_j = \eta_j f - 2\nabla\eta_j\cdot \nabla u -(\Delta \eta_j)u + (\nabla\eta_j)p, 
    & \quad g_j= u\cdot \nabla\eta_j,
\end{aligned}\end{align}
together with the boundary condition $\eta_1u\big|_{\partial\Omega_1}=0$. \BLACK To apply \eqref{var-ineq-nabla_bdd-fg} on $\Omega_1$ and 
\eqref{var-ineq-nabla_Rn-fg} on $\Omega_2$ we derive suitable estimates of $f_j,g_j$.

For $j=1$  note that by Poincar\'e's inequality on the bounded domain $\Omega_1$ satisfying $\overline{\Omega_1}\supset \supp\nabla\eta_1$, there holds $H^{1,q}_0(\Omega_1) = \widehat H^{1,q}_0(\Omega_1)$ and $H^{-1,q}(\Omega_1) = \widehat H^{-1,q}(\Omega_1)$. 
Hence  
\begin{align}\label{est-eta_1u}
\begin{aligned}
 \|f_1\|_{H^{-1,q}(\Omega_1)} & \leq C \big(\|f\|_{\widehat H^{-1,q}_{\ell}(\Omega)} + \|u\|_{L^q(\Omega_1)} + \|p\|_{H^{-1,q}(\Omega_1)}\big), \\[1ex]
 \|g_1\|_{L^q(\Omega_1)} & \leq C\|u\|_{L^q(\Omega_1)}.
\end{aligned}\end{align}
Then \eqref{var-ineq-nabla_bdd-fg} implies that 
$$ \|\nabla(\eta_1 u)\|_{L^q(\Omega_1)} + \| \eta_1 p\|_{L^q(\Omega_1)} \leq C\Big(\|f\|_{\widehat H^{-1,q}_\ell(\Omega)} + \|u\|_{L^q(\Omega_1)} + \|p\|_{H^{-1,q}(\Omega_1)} + \Big|\int_{\Omega_1} \eta_1 p\dx\Big| \Big).$$
The integral $\int_{\Omega_1} \eta_1 p\dx$ on the right hand side is needed to satisfy the vanishing integral  mean condition on the pressure in Lemma \ref{lem-var-ineq-nabla-Rn-bdd} (iii).  
\vspace*{1ex}

For $j=2$ we have to consider two cases.
\vspace*{1ex}

{\bf Case 1:}  $\ell<-1+\frac{n}{q'}$ or, equivalently, $-\ell>1-\frac{n}{q'}$. Note that $\supp \nabla\eta_2 \subset \overline{\Omega_1}$. 
Moreover, for any $r'\geq q'$, $\ell'\geq \ell$ satisfying $-\ell+\ell' = 1+ \frac{n}{r'} - \frac{n}{q'}$, and any test function $\phi\in C^\infty_0(\IR^n)$ by H\"older's inequality and the Sobolev embedding, see Proposition \ref{embed-ext-weight}  applied to $\phi$ with $\mathcal C=0$  in \eqref{E3.12'}, \BLACK we get that 
\begin{equation}\label{embed-phi}
\|\phi\|_{L^{q'}(B_{2R})} \leq C\|\phi\|_{L^{r'}_{-\ell'}(B_{2R})} \leq C\|\phi\|_{L^{r'}_{-\ell'}(\IR^n)} \leq C\|\nabla\phi\|_{L^{q'}_{-\ell}(\IR^n)}.
\end{equation}
For simplicity, assume that even $\Omega^c\subset B_{R/2}$ and fix a cut off function $\eta'\in C^\infty(\IR^n)$ such that $\supp \nabla\eta' \subset B_R\setminus B_{R/2}$, $\eta'(x)=0$ for $|x|\leq \frac{R}{2}$, and $\eta'(x)=1$ for $|x|\geq R$. Since $\supp f_2\subset B_R^c$ we may write
$\langle f_2,\phi\rangle = \langle f_2,\eta' \phi\rangle$.
Hence 
\begin{equation}\label{phi-eta}
\|\nabla(\eta'\eta_2\phi)\|_{L^{q'}_{-\ell}(\Omega)} \leq C\big(\|\nabla\phi\|_{L^{q'}_{-\ell}(\Omega)} + \|\phi\|_{L^{q'}(B_{2R})}\big) \leq C\|\nabla\phi\|_{L^{q'}_{-\ell}(\IR^n)},
\end{equation} 
and consequently $\|f_2\|_{\widehat H^{-1,q}_\ell(\IR^n)} \leq C\|f_2\|_{\widehat H^{-1,q}_\ell(\Omega)} $. 
Then as in the estimate of $f_1$ we obtain that 
\begin{align}\label{est-eta_2u}
\begin{aligned}
\|f_2\|_{\widehat H^{-1,q}_\ell(\IR^n)} & \leq C \|f_2\|_{\widehat H^{-1,q}_\ell(\Omega)} \leq C \big(\|f\|_{\widehat H^{-1,q}_\ell(\Omega)} + \|u\|_{L^q(\Omega_1)} + \|p\|_{H^{-1,q}(\Omega_1)} \big)\\
\|g_2\|_{L^q_\ell(\IR^n)} & \leq C\|u\|_{L^q(\Omega_1)}.
\end{aligned} \end{align} 
Finally, \eqref{var-ineq-nabla_Rn-fg} together with \eqref{est-eta_2u} implies that
$$ \|\nabla(\eta_2 u)\|_{L^q(\Omega_2)} + \| \eta_2 p\|_{L^q(\Omega_2)} \leq C\big(\|f\|_{\widehat H^{-1,q}_\ell(\Omega)} + \|u\|_{L^q(\Omega_1)} + \|p\|_{H^{-1,q}(\Omega_1)}\big).$$

Adding the estimates for $\nabla(\eta_1 u)$, $\eta_jp$, $j=1,2$,  we get from \eqref{est-eta_1u}, \eqref{est-eta_2u} the preliminary result
\begin{equation}\label{est-u-prel}
\|\nabla u\|_{L^q_\ell(\Omega)} + \|p\|_{L^q_\ell(\Omega)}
\leq C\Big(\|f\|_{\widehat H^{-1,q}_\ell(\Omega)} + \|u\|_{L^q(\Omega_1)} + \|p\|_{H^{-1,q}(\Omega_1)} + \Big|\int_{\Omega_1} \eta_1 p\dx\Big| \Big) .
\end{equation}

{\bf Case 2:}  $\ell\geq-1+\frac{n}{q'}$ or, equivalently, $-\ell\leq 1-\frac{n}{q'}$. The critical terms of $f_2$ in $\eqref{eta_j-u-p}_2$ are  those where \eqref{embed-phi} had been used in Case 1; however, \eqref{embed-phi} is not available in Case 2. \BLACK On the other hand, due to $\eqref{eta_j-u-p}_1$ in its weak formulation, we have 
$$\langle f_2,\phi\rangle = \langle f_2,\phi-c\rangle = \langle f,\eta_2(\phi-c)\rangle + \ldots$$
for any  constant $c\in\IR$;  this step is made rigorous by approximating $c$ by $(c\chi_k)$ as in the proof of Lemma \ref{D^1}. In the present case, by Lemma \ref{D^1}, $\phi-c_\phi\in H^{1,q'}_{-\ell}(\IR^n)$ and $\eta_2(\phi-c_\phi)\in \widehat H^{1,q'}_{-\ell,0}(\Omega)$ where
$$ c_\phi=\frac{1}{|B_{2R}|}\int_{B_{2R}} \phi\dx, $$
Friedrichs' inequality yields for the test function $\phi-c_\phi$ the estimate
\begin{equation}\label{Fried-phi}
\|\phi-c_\phi\|_{L^{q'}(B_{2R})} \leq C\|\nabla\phi \|_{L^{q'}(B_{2R})} \leq C\|\nabla\phi \|_{L^{q'}_{-\ell}(\IR^n)} . 
\end{equation}
 Replacing $\phi$ in \eqref{phi-eta} by $\phi-c_\phi$ and \BLACK summarizing these ideas we arrive at \eqref{est-eta_2u} and finally at \eqref{est-u-prel} as in Case 1.

To get rid of the perturbation terms involving $u,p$ on the right hand side of \eqref{est-u-prel} assume that there exist sequences $(u_k)\subset \widehat H^{1,q}_{\sigma,\ell,0}(\Omega)$, $(p_k)\subset L^q_\ell(\Omega)$ with $f_k = -\Delta u_k + \nabla p_k \in H^{-1,q}_\ell(\Omega)$ such that  
\begin{equation}\label{u_kp_kf_k}
 \|u_k\|_{\widehat H^{1,q}_\ell(\Omega)} + \|p_k\|_{L^q_\ell(\Omega)}=1,\quad\textrm{ but }\; 
\|f_k\|_{\widehat H^{-1,q}_\ell(\Omega)} \to 0 \end{equation} 
as $k\to\infty$.
Then a compactness argument implies that there exists a subsequence, labeled again by $k\in\IN$, such that $(u_k)$ converges strongly in $L^q(\Omega_1)$ and $(p_k)$ converges strongly in $H^{-1,q}(\Omega_1)$. 
Moreover, the real sequence $\big(\int_{\Omega_1} \eta_1 p_k\dx\big)$ converges in $\IR$. 
Thus by \eqref{est-u-prel} with $u$ replaced to $u_k-u_m$ we see that $(u_k)$ is a Cauchy sequence in $\widehat H^{1,q}_{\sigma,\ell,0}(\Omega)$ and hence converges strongly to a velocity field $u$ in $\widehat H^{1,q}_{\sigma,\ell,0}(\Omega)$. 
By analogy, $(p_k)$ converges to some $p \in L^q_\ell(\Omega)$.
Finally, since $(f_k)$ converges to $0$, we obtain that $-\Delta u + \nabla p =0$, $\div u =0$ in $\Omega$ and $u=0$ on $\partial\Omega$.
For a moment take injectivity of the Stokes problem for granted. Then we get that $u\equiv 0$ as well as $p\equiv \textrm{const}$; actually, $p\equiv 0$ since $p\in L^q_\ell(\Omega)$. 
However, this contradicts \eqref{u_kp_kf_k}
%the property $\|u\|_{\widehat H^{1,q}_\ell(\Omega)} =1$ which follows from 
and the strong convergence of $(u_k), (p_k)$.
%in $\widehat H^{1,q}_\ell(\Omega)$. 
Hence the above assumptions are impossible. %\BLUE (Could you teach me why this contradiction removes the reminder perturbation terms?) %\RED (Since $\|u_k\|_{\widehat H^{1,q}_\ell(\Omega)} + \|p_k\|_{L^q_\ell(\Omega)}=1$, we get in the limit  $0 = \|u=0\|_{\widehat H^{1,q}_\ell(\Omega)} + \|p=0\|_{L^q_\ell(\Omega)}=1$, a contradiction) \BLACK 

Summarizing,  we proved that \begin{align}\label{var-ineq-nabla-Om}
  \|\nabla u\|_{L^q_\ell(\Omega)} + \|p\|_{L^q_\ell(\Omega)} \leq C\|f\|_{\widehat H^{-1,q}_\ell(\Omega)} = C\sup_{v\neq 0} \frac{\big| \langle \nabla u,\nabla v\rangle - \langle p,\div v\rangle \big|}{\|\nabla v\|_{L^{q'}_{-\ell}}}  
\end{align} 
where the supremum is taken over all $0\neq v\in \widehat H^{1,q'}_{-\ell,0}(\Omega)$. \\[-1ex]

It remains to prove uniqueness, {\em i.e.}, a weak solution $u\in \widehat H^{1,q}_{\ell,0}(\Omega),\, p\in L^q_\ell(\Omega)$ of the Stokes system $-\Delta u + \nabla p =0$, $\div u =0$ in $\Omega$ and $u\big|_{\partial\Omega}=0$ is trivial. The idea is to prove that $u,p$ satisfies $u\in \widehat H^{1,2}_0(\Omega)$. 
Then testing the weak Stokes system with the solution $u$ itself, we get that $\| \nabla u\|_{L^2(\Omega)} = 0$, hence $u\equiv 0$ since $u\big|_{\partial\Omega} =0$, and $p=0$.
Note that in view of $\eqref{eta_j-u-p}_2$ $f_j,\, g_j$ have compact support in $\Omega_1$ so that the weight $\langle x\rangle^{\ell p}$ will not play any role below. We follow arguments as in \cite[Proof of Theorems 3.1 and 3.3]{KoSo92}.

In a first step assume $1<q<\infty$ and $r>n'$ such that
\begin{equation}\label{qrn}
\frac{1}{q}- \frac{1}{n}\leq \frac{1}{r} < \frac{1}{n'},
\end{equation} 
and let $\frac{1}{s} = \frac{1}{r} + \frac{1}{n}$ so that $s\leq q$, $r=s^*$, $s'=(r')^*$; moreover, $r\leq q^*$, if $q<n$, and $r<\infty$, if $q\geq n$. 
Then by Poincar\'e's inequality ($j=1$) and Sobolev embeddings ($j=2$) there holds 
$$ \|\phi\|_{L^{q'}(\Omega_1)} \leq c\|\phi\|_{L^{s'}(\Omega_1)} \leq C 
\|\nabla\phi\|_{L^{r'}(\Omega_j)}, \quad \phi\in C_0^\infty(\Omega_j),\; j=1,2,$$
respectively. Therefore, by $\eqref{eta_j-u-p}_2$ we get for $f_2$ the estimate
$$ |\langle f_2,\phi\rangle| \leq C\big(\|u\|_{L^{q}(\Omega_1)} + \|\nabla u\|_{L^{q}(\Omega_1)} + \|p\|_{L^{q}(\Omega_1)}\big) \|\nabla\phi\|_{L^{r'}(\IR^n)} $$
which shows that $f_2\in \widehat H^{-1,r}(\IR^n)$; by analogy,  $\|f_1\|_{H^{-1,r}(\Omega_1)} \leq C\|\{u,\nabla u,p\}\|_{L^{q}(\Omega_1)}$. Moreover, $\int_{\Omega_1} g_1\dx=0$, and there holds the estimate 
$$ \|g_j\|_{L^r(\Omega_j)} \leq c\|u\|_{L^r(\Omega_1)} \leq c\|u\|_{H^{1,q}(\Omega_j)},\quad j=1,2.$$ 
We obtain with the regularity and uniqueness assertions in Lemma \ref{lem-var-ineq-nabla-Rn-bdd} (i) and (iii) that  for all $r>n'$ satisfying \eqref{qrn}
\begin{equation} \label{local-result-L^r}
\nabla( \eta_j u)\in L^r(\Omega_j), \;\;\eta_j p\in L^r(\Omega_j)\;\; (j=1,2),\quad \nabla u,p\in L^r(\Omega).
\end{equation} 

To achieve \eqref{local-result-L^r} for smaller exponents $q$ avoiding the restriction $\frac{1}{q}- \frac{1}{n}\leq \frac{1}{r}$ in \eqref{qrn} consider the case that
$$\frac{1}{q} - \frac{2}{n} \leq \frac{1}{r} < \frac{1}{q} -\frac{1}{n},$$ 
and define $q_1$ by $\frac{1}{q_1} = \frac{1}{q} - \frac{1}{n}$. 
Then $r=q_1$ is an admissible exponent in \eqref{qrn} so that \eqref{local-result-L^r} holds for $r=q_1$. Hence \eqref{qrn} holds with $q$ replaced by $q_1$, and the above argument proves  \eqref{local-result-L^r} for all $\frac{1}{q} - \frac{2}{n}\leq  \frac{1}{r}  < \frac1{n'}$. By a bootstrap argument we conclude that $\nabla u,\, p\in L^r(\Omega)$ for all $1<q<\infty$ and $r>n'$. 

Consequently, for all $1<q<\infty$ and $r=2$ we obtain that a weak solution $u,p$ with $\nabla u \in L^q(\Omega),\,\, p\in L^q(\Omega)$ of the homogeneous Stokes system ($f=0, g=0$) satisfies 
\begin{equation} \label{local-result-L^2}
\nabla( \eta_j u)\in L^2(\Omega_j), \;\;\eta_j p\in L^2(\Omega_j)\;\; (j=1,2),\quad \nabla u,p\in L^2(\Omega).
\end{equation} 

It still remains to show that $u\in\widehat H^{1,2}_0(\Omega)$. Since $f_2\in \widehat H^{-1,2}(\IR^n)$ and $g_2\in L^2(\IR^n)$, Lemma \ref{lem-var-ineq-nabla-Rn-bdd} implies that the weak Stokes system on 
$\IR^n$ with right hand side $f_2,g_2$ has a unique solution $(u',p') \in \widehat H^{1,2}(\IR^n) \times L^2(\IR^n)$. 
Then $(v,\pi)= (u'-\eta_2 u, p'-\eta_2 p)$ is a weak solution of the homogeneous Stokes system. In particular, in view of \eqref{local-result-L^r} with $r=2$, $\pi\in L^2(\IR^n)$ and $\pi$ is harmonic so that $\pi=0$. 
Moreover, due to the embeddings $\widehat H^{1,2}_0(\IR^n) \hookrightarrow L^{2^*}(\IR^n)$ and $\widehat H^{1,q}_{\ell,0}(\IR^n)\hookrightarrow L^{q^*}_\ell(\IR^n)$ we obtain that $u'\in L^{2^*}(\IR^n)$ and $\eta_2 u\in L^{q^*}_\ell(\IR^n)$; note that for the second embedding we used the assumption  $1-\frac{n}{q}<\ell$.
Hence $v=u'-\eta_2 u\in L^{2^*} + L^{q^*}_\ell$ is harmonic and satisfies the integral mean formula 
$v(x)=\frac{1}{|B_\rho(x)|} \int_{B_\rho(x)} (u'-\eta_2 u)\dy$ for any $x\in\IR^n$ and $\rho>0$.
Here the second part admits the estimate
\begin{align*} 
\frac{1}{|B_\rho(x)|} \int_{B_\rho(x)} |\eta_2 u(y)|\,\langle y\rangle^\ell \langle y\rangle^{-\ell} \dy & \leq \frac{c}{\rho^n} \Big(\int_{B_\rho(x)} (|\eta_2 u|\langle y\rangle^{\ell})^{q^*} \dy \Big)^{\frac1{q^*}} \Big(\int_{B_{\rho+|x|}(0)} \langle y\rangle^{-\ell(q^*)'}\dy\Big)^{\frac1{(q^*)'}}\\
& \leq \frac{c}{\rho^n} \|\eta_2 u\|_{L^{q^*}_\ell} (\rho+|x|)^{-\ell+n-\frac{n}{q^*}} \\
& \leq C \rho^{-\ell+(1-n/q)} \to 0 
\end{align*} 
as $\rho\to\infty$ since $\ell>1-\frac{n}{q}$. A similar estimate with $\ell=0$ and  $2^*$ for $q^*$ holds for $u'(x)$. Consequently, $v=u'-\eta_2 u \equiv 0$, $\eta_2 u=u' \in L^{2^*}(\IR^n)$ and $\nabla (\eta_2 u) \in L^2(\IR^n)$. Then \cite[Theorem II.7.5]{Galdi-steady} implies that $u\in \widehat H^{1,2}_0(\Omega)$; hence it is an admissible test function for the weak homogeneous Stokes system on $L^2(\Omega)$ and yields $u=0$. \BLACK

Now the proof of uniqueness is complete.\\

%\footnote{ 
%We refer to \cite{KoSo92} for the exterior weak Stokes problem in $L^q$ for which \RED $q<n$ \BLACK  is needed to get injectivity, and to \cite{FS} for the (strong) resolvent problem of the weak Stokes problem in weighted $L^q_\ell$ spaces where injectivity holds for all resolvent parameters $\lambda\neq 0$, $1<q<\infty$ and $-\frac{n}{q}<\ell<\frac{n}{q'}$.}.

(ii)  With $\eta', \eta_2\in C^\infty(\IR^n)$ as in (i) we consider $\eta_2 p = \eta'\eta_2 p$ as a distribution on $\IR^n$ and estimate as follows: 
\begin{align}\label{1-eta1-p}
\begin{aligned} 
\|\nabla(\eta'\eta_2 p)\|_{H^{-1,q}_\ell(\IR^n)} 
& \leq C\big( \|\nabla p\|_{H^{-1,q}_\ell(\Omega)} + \|\nabla(\eta'\eta_2 p)\|_{H^{-1,q}_\ell(\Omega_R)} \big)\\
& \leq C\big( \|\nabla p\|_{H^{-1,q}_\ell(\Omega)} + \| p\|_{L^q_\ell(\Omega_R)} \big).
\end{aligned}
\end{align}
In the first step we differ between $\ell<-1+\frac{n}{q'}$, see Case I above, and $\ell\geq -1+\frac{n}{q'}$, see Case 2, with $f_2$ replaced by $\nabla(\eta'\eta_2 p)$, and apply \eqref{embed-phi}, \eqref{phi-eta}
and \eqref{Fried-phi}, respectively. The second step is trivial.
\BLACK
%\RED (This is the proof by Borchers-Miyakawa of their Thm. 3.5 (iii) on p. 207. The estimate of $(\eta_1 \phi)$ is based on their proof of Thm. 3.5 (ii), that is our Case 1 and Case 2. Therefore, the estimate of $\phi$ again needs two cases, \eqref{embed-phi} and \eqref{Fried-phi}, since we do not exploit $\ell>1-n/q$ here)) 
%\BLUE (I have several questions here. 
 %(3)  How to estimate $\|p\|_{L^q(\Omega_R)}$? 
%Here we use $\|\nabla(\eta_1\phi)\|_{L^{q'}_{-\ell}(\Omega)} \leq C\big(\|\nabla\phi\|_{L^{q'}_{-\ell}(\Omega)} + \|\phi\|_{L^{q'}(B_{2R})}\big) \leq C\|\nabla\phi\|_{L^{q'}_{-\ell}(\IR^n)}$ in \eqref{1-eta1-p} as you recently wrote, not Poincar\'e's inequality. 
%)
Hence Lemma \ref{lem-var-ineq-nabla-Rn-bdd} (ii)  
%together with \cite[Proposition 3.8 (i)]{BoMi90}, a result for bounded domains analogous to Lemma \ref{lem-var-ineq-nabla-Rn-bdd} (ii), 
yields a function $\tilde\pi\in L^q_\ell(\IR^n)$ such that $\nabla\tilde\pi = \nabla((1-\eta_1)p)$ on $\IR^n$. Thus
$$ \nabla p = \nabla((1-\eta_1)p) + \nabla(\eta_1 p) = \nabla (\tilde\pi+\eta_1 p)\quad\textrm{in }\Omega.$$
Obviously, $\pi = \tilde\pi+\eta_1 p\in L^q_\ell(\Omega)$ solves (ii). 
\end{proof}

\vspace*{1ex}

Our main aim is to improve \eqref{var-ineq-nabla}
%\eqref{var-ineq-nabla-Om} 
to \eqref{var-ineq-nabla-sigma} below where the test functions run through the set of all {\em solenoidal} $v\in \widehat H^{1,q'}_{\sigma,-\ell,0}(\Omega)$.

\begin{thm}\label{var-ineq-sol}
Assume $1<q<\infty$, and $1-\frac{n}{q} <\ell< \frac{n}{q'}$. Let $\Omega\subset\IR^n$ be an exterior domain of class $C^2$.
Then there exists a constant $C$ such that for any  $u\in \widehat \D(A_{q,\ell})$ 
there holds the variational inequality 
\begin{align}\label{var-ineq-nabla-sigma}
  \|\nabla u\|_{L^q_\ell}\; \leq\; C\sup_{0\neq v\in \widehat H^{1,q'}_{\sigma,-\ell,0}(\Omega)} \frac{\big|\langle \nabla u,\nabla v\rangle\big|}{\|\nabla v\|_{L^{q'}_{-\ell}}} \;\leq\; C\|\nabla u\|_{L^q_\ell}.
\end{align}
\end{thm}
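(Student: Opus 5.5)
The right inequality in \eqref{var-ineq-nabla-sigma} is just H\"older's inequality, $|\langle \nabla u,\nabla v\rangle|\le \|\nabla u\|_{L^q_\ell}\|\nabla v\|_{L^{q'}_{-\ell}}$. So the work is in the left inequality. We first prove it for $u\in\D(A_{q,\ell})$ and then extend to $\widehat\D(A_{q,\ell})$ by density. The extension uses the representative $\hat U$ from Lemma \ref{conv-u-D-u} (ii) and the convergence of gradients along approximating sequences, which follows from Theorem \ref{res-weighted}.

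Fix $u\in\D(A_{q,\ell})$ and define the functional
\[
F(v):=\langle \nabla u,\nabla v\rangle, \qquad v\in \widehat H^{1,q'}_{-\ell,0}(\Omega).
\]
Let $N$ denote its norm restricted to the solenoidal subspace $\widehat H^{1,q'}_{\sigma,-\ell,0}(\Omega)$. By Hahn--Banach, $F|_{\sigma}$ extends to some $\tilde F\in \widehat H^{-1,q}_\ell(\Omega)$ with $\|\tilde F\|_{\widehat H^{-1,q}_\ell(\Omega)}=N$.

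The difference $G:=F-\tilde F$ vanishes on $\widehat H^{1,q'}_{\sigma,-\ell,0}(\Omega)$, hence in particular on $C^\infty_{0,\sigma}(\Omega)$. De Rham's theorem then gives a distribution $p$ with $G=\nabla p$ in $\mathcal D'(\Omega)$. Since $G\in \widehat H^{-1,q}_\ell(\Omega)$, Proposition \ref{lem-var-ineq-nabla_Omega0} (ii) lets us replace $p$ by some $\pi\in L^q_\ell(\Omega)$ with $\nabla\pi=G$. To pass from equality on test functions to equality on all of $\widehat H^{1,q'}_{-\ell,0}(\Omega)$ we use density of $C_0^\infty(\Omega)$ there; this holds by definition of the space.

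Consequently,
\[
\langle\nabla u,\nabla v\rangle-\langle \pi,\div v\rangle = F(v)+\langle\nabla\pi,v\rangle = \tilde F(v)
\]
for all $v$, up to the sign convention for $\nabla\pi$, which only replaces $\pi$ by $-\pi$. The variational inequality \eqref{var-ineq-nabla} of Proposition \ref{lem-var-ineq-nabla_Omega0} (i), applied to the pair $(u,-\pi)$, therefore yields
\[
\|\nabla u\|_{L^q_\ell}+\|\pi\|_{L^q_\ell}\le C\|\tilde F\|_{\widehat H^{-1,q}_\ell(\Omega)} = C\,N,
\]
which is the left inequality of \eqref{var-ineq-nabla-sigma}.

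\textbf{Main obstacle.} The delicate step is the de Rham argument in the weighted homogeneous setting. We must make sure that annihilating $C^\infty_{0,\sigma}(\Omega)$ is equivalent to annihilating the closure $\widehat H^{1,q'}_{\sigma,-\ell,0}(\Omega)$; this holds by that space's definition as a closure. We must also make sure that the resulting pressure lies in $L^q_\ell(\Omega)$ rather than only in $L^q_{\rm loc}$. For the latter, first obtain $p\in L^q_{\rm loc}(\overline\Omega)$ from the classical theory on bounded subdomains, using Proposition \ref{Bog} (iii). Then invoke Proposition \ref{lem-var-ineq-nabla_Omega0} (ii) to obtain global integrability via the cut-off estimate \eqref{1-eta1-p}.

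**Extension to $\widehat\D(A_{q,\ell})$.** Take $u_k\in\D(A_{q,\ell})$ with $Au_k\to\dot A\hat u$. From the resolvent bounds and the Sobolev embeddings used in Lemma \ref{conv-u-D-u} we expect $\nabla u_k\to\nabla\hat U$ in $L^q_\ell$. If this strong convergence is not directly available, use weak convergence together with lower semicontinuity of the norm on the left, plus continuity of the supremum on the right, which is bounded by $\|\nabla(u_k-\hat U)\|_{L^q_\ell}$. Either way, \eqref{var-ineq-nabla-sigma} passes to the limit.
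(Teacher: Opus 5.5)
For $u\in\D(A_{q,\ell})$ your argument is the paper's proof. You extend $v\mapsto\langle\nabla u,\nabla v\rangle$ from $\widehat H^{1,q'}_{\sigma,-\ell,0}(\Omega)$ by Hahn--Banach, identify the difference as a gradient by de Rham, use Proposition~\ref{lem-var-ineq-nabla_Omega0}~(ii) to place the pressure in $L^q_\ell(\Omega)$, and conclude with \eqref{var-ineq-nabla}. Your remark that one must first have $p\in L^q_{\rm loc}(\overline\Omega)$ before using the cut-off estimate \eqref{1-eta1-p} is a fair refinement; the paper only makes this point in the proof of Proposition~\ref{HhatOmega}~(i). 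Note that the paper's proof treats only $u\in\D(A_{q,\ell})$ and never extends to $\widehat\D(A_{q,\ell})$.

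Your extension step to $\widehat\D(A_{q,\ell})$ does not work. The homogeneous norm $\|Au\|_{L^q_\ell}$ does not control $\|\nabla u\|_{L^q_\ell}$. Take $\phi\in C^\infty_{0,\sigma}(\IR^n)$ supported in $\{1<|x|<2\}$ and $u_\lambda=\phi(\cdot/\lambda)\in C^\infty_{0,\sigma}(\Omega)$ with $\lambda>R$. Then $\|\nabla u_\lambda\|_{L^q_\ell}\sim\lambda^{-1+\ell+n/q}$, while $\|Au_\lambda\|_{L^q_\ell}\le C\|\Delta u_\lambda\|_{L^q_\ell}\sim\lambda^{-2+\ell+n/q}$.

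Consequently, $Au_k\to\dot A\hat u$ gives neither convergence nor boundedness of $(\nabla u_k)$ in $L^q_\ell$. Your weak-convergence fallback presupposes exactly that boundedness, so it is circular. Theorem~\ref{res-weighted} only controls second derivatives as $\lambda\to0$. Lemma~\ref{conv-u-D-u}~(ii) needs $2-\frac nq<\ell$, which is not assumed here. Even then it only gives $\nabla\hat U\in L^r_{\ell_r}$ with $\ell-\ell_r=1+\frac nr-\frac nq$; for $r=q$ this is $L^q_{\ell-1}$, not $L^q_\ell$.

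The correct extension is in the gradient norm. The middle term of \eqref{var-ineq-nabla-sigma} is a seminorm bounded by $\|\nabla u\|_{L^q_\ell}$. Hence both inequalities pass from $\D(A_{q,\ell})$ to its closure with respect to $\|\nabla\cdot\|_{L^q_\ell}$, which contains $\widehat H^{1,q}_{\sigma,\ell,0}(\Omega)$ because $C^\infty_{0,\sigma}(\Omega)\subset\D(A_{q,\ell})$. This is also the only way the result is used later: in Theorem~\ref{A1/2-nabla_A1/2}~(ii) it is applied for $u\in\D(A_{q,\ell})$ and then extended by density in $\widehat\D(A^{1/2}_{q,\ell})$.

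For a general element of $\widehat\D(A_{q,\ell})$ there is no reason for $\nabla u$ to lie in $L^q_\ell$, since $\nabla$ is unbounded on $(\D(A_{q,\ell}),\|A\cdot\|_{L^q_\ell})$. The claim is therefore only meaningful for those $u$ that can be approximated in the gradient norm.
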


\begin{proof}
Given $u\in \D(A_{q,\ell})$ we set $h = -\Delta u \in L^q_\ell(\Omega)$ which will be considered as a functional in $\widehat H^{1,q'}_{\sigma,-\ell,,0}(\Omega)^*$.
By Hahn-Banach's theorem there exists $f\in \widehat H^{-1,q}_\ell(\Omega)$ such that $h=f\big|_{\widehat H^{1,q'}_{\sigma,-\ell,0}(\Omega)}$ and $\|f\|_{\widehat H^{-1,q}_\ell(\Omega)} = \|h\|_{(\widehat H^{1,q'}_{\sigma,-\ell,0}(\Omega))^*}$. 
Since $f-h\big|_{\widehat H^{1,q'}_{\sigma,-\ell,0}(\Omega)} = 0$, de Rham's Theorem  \cite[Lemma II.2.2.1]{Sohr} yields a distribution $p\in L^q_{\rm loc}(\Omega)$ such that $f-h=\nabla p$ on $\Omega$. Moreover, $\nabla p\in \widehat H^{-1,q}_\ell(\Omega)$. Then Proposition \ref{lem-var-ineq-nabla_Omega0} (ii) %Proposition \ref{Bog-ext} (ii) 
yields $\pi\in L^{q}_{\ell}(\Omega)$ such that $f-h = \nabla p = \nabla \pi$ on $\widehat H^{1,q'}_{\sigma,-\ell,0}(\Omega)$. Since $f = h +\nabla \pi= -\Delta u +\nabla \pi$, \eqref{var-ineq-nabla-Om} applied to \eqref{eq-u-p-f}
%\eqref{var-ineq-nabla_bdd-fg} and the fact that $\langle \nabla:\nabla \pi, \nabla v\rangle = 0$ for $v\in C^\infty_{0,\sigma}(\Omega)$ 
yields the estimate 
\begin{align*} 
\|\nabla u\|_{L^q(\Omega)}  \leq C\|f\|_{\widehat H^{-1,q}_\ell(\Omega)} = C\|h\|_{(\widehat H^{1,q'}_{\sigma,-\ell,0}(\Omega))^*}
& = C\sup_{v\neq 0} \frac{|\langle \nabla u, \nabla v\rangle\big|}{\|\nabla v\|_{L^{q'}_{-\ell}}}
\end{align*}  
where $v\neq 0$ runs through the set $H^{1,q'}_{\sigma,-\ell,0}(\Omega)$.
%Finally, a density argument extends the result to $u\in \widehat \D(A_{q,\ell}^{1/2})$.
\end{proof}  
 §3.2 is also completed. \BLACK 

%\begin{cor}\label{domains:A1/2-nabla_A1/2}
%Let $\Omega_0$ be a $C^2$ exterior domain in $\IR^n$ and assume $1<q<\infty$ and $1-\frac{n}{q} <\ell< \frac{n}{q'}$. 

%(i) There holds $\widehat \D(A_{q,\ell}^{1/2}) = \widehat H^{1,q}_{0,\sigma,\ell}(\Omega_0)$.

%(ii) There holds $\widehat \D((-\Delta_{q,\ell})^{1/2}) = \widehat H^{1,q}_{0,\ell}(\Omega_0)$. In particular,  
%$$ \|(-\Delta_{q,\ell})^{1/2} u\|_{L^q_\ell} \sim \|\nabla u\|_{L^q_\ell} $$
%with embedding constants independent of $u\in \widehat H^{1,q}_{0,\ell}(\Omega_0)$.
%\end{cor}

%\begin{proof} 
%(i) is an immediate consequence of Theorem \ref{A1/2-nabla_A1/2}. The proof of (ii) follows the lines of (i), but is shorter, since solenoidality does not enter the problem. 
%\end{proof}

\subsection{Construction of the $\mathscr H^\infty$-calculus in Exterior Domains}\label{S3.2}

\begin{thm}\label{theorem-H-infty}
Let $n\geq3 $, $1<q< \infty$, and $\Omega\subset\IR^n$ be an exterior domain with boundary of class $C^3$. Moreover, let $A_{q,\ell} = -\mathbb P \Delta$ be the Stokes operator on $L^q_{\sigma, \ell }(\Omega)$ for $2-\frac{n}{q} < \ell < \frac{n}{q'}$  satisfying $0 \leq \ell$.
Then there exists $C>0$ such that for any $\phi\in (0,\frac{\pi}{2})$ and $h\in \HH_0(\Sigma_\phi)$
%\BLUE 
%\begin{align}
%\begin{aligned}
%& {\rm (i)} \quad \Big\|\frac{1}{2\pi i}\int_{\Gamma_{\omega,\infty}} h(\lambda)
%(\lambda +A_q)^{-1} f \,{\rm d}\lambda \Big\|_{L^q_\ell(\Omega_0)}
%\leq C |h|_{\infty,\phi\BLACK} \|f\|_{L^q_\ell(\Omega_0)},  \\ 
%& {\rm (ii)} \quad \Big\|\frac{1}{2\pi i}\int_{\Gamma_{\omega,1}} h(\lambda)(\lambda +A_q)^{-1}
%f \,{\rm d}\lambda \Big\|_{L^q_\ell(\Omega_0)}
%\leq C |h|_{\infty,\phi\BLACK} \|f\|_{L^q_\ell(\Omega_0)},  
%\end{aligned} 
%\end{align}
%\BLACK 
%where 
%$\Gamma_{\omega,1 }: \lambda=\rho e^{\pm i\omega}, \, 0 \leq \rho <1$ and $\Gamma_{\omega,\infty }: \lambda=\rho e^{\pm i\omega}, \, 1 \leq \rho < \infty$. 
%\BLACK 
%
\begin{align}\label{H-infty}
\|h(A_{q,\ell})\|_{\mathcal{L}(L^q_{\sigma,\ell}(\Omega))} \leq C |h|_{\infty,\phi\BLACK}.    
\end{align}
In particular, $A_{q,\ell}$ possesses a bounded $\mathscr H^\infty$-calculus on $L^q_{\sigma.\ell}(\Omega)$ with $\mathscr H^\infty$-angle $\Phi_ {A}^\infty=0$.

%(\RED To my opinion, using the notation introduced below, with $u=u(f,\lambda,\Omega_0) = u\eta_1 + u\eta_2 = (v_1+w_1) + (v_2+w_2)$ we may write 
%\begin{align*}
%\int_{\Gamma_\omega} h(\lambda) & (\lambda-A_{\Omega_0})^{-1}f \dlambda = \int_{\Gamma_\omega} h(\lambda) (v_1+v_2) \dlambda + \dots\\
%& = \int_{\Gamma_\omega} h(\lambda) (\lambda-A_{\Omega_1})^{-1} \mathbb P_{\Omega_1}(\eta_1 f) \dlambda + \ldots\\
%& \quad  + \int_{\Gamma_\omega} h(\lambda) (\lambda-A_{\Omega_2})^{-1} \mathbb P_{\Omega_2}(\eta_2 f) \dlambda + \ldots.
%\end{align*}
%where the known $\mathcal H^\infty$ calculi for $A_{\Omega_j}$ maybe applied to the two contour integrals involving $\mathbb P_{\Omega_1}(\eta_1 f)$ and $\mathbb P_{\Omega_2}(\eta_2 f)$.
%All other integrands include $\lambda$ directly or indirectly (e.g. in $u,g_j$, hence in $w_j$) and therefore need an estimate by $|\lambda|^{+\alpha}$ for small $\lambda$ and   $|\lambda|^{-\alpha}$ for large $\lambda$, with $\alpha>0$. There the splitting of $\Gamma_\omega$ into two parts is natural.
\BLACK
\end{thm}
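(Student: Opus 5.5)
Let $f\in L^q_{\sigma,\ell}(\Omega)$, $\lambda\in\Gamma_\omega$ and $u=(\lambda+A)^{-1}f$ with pressure $p$; the sign convention of \eqref{Atslambda2} is irrelevant, since the contour is symmetric. The plan is a cut-off decomposition as in the proof of Proposition \ref{lem-var-ineq-nabla_Omega0}. Take $\eta_1,\eta_2=1-\eta_1$ as there, $\Omega_1=\Omega\cap B_{2R}$, $\Omega_2=\IR^n$. Then $(\eta_j u,\eta_j p)$ solves on $\Omega_j$ the generalized resolvent problem
$$\lambda(\eta_ju)-\Delta(\eta_ju)+\nabla(\eta_jp)=\eta_jf-2\nabla\eta_j\cdot\nabla u-(\Delta\eta_j)u+(\nabla\eta_j)p,\qquad \div(\eta_ju)=u\cdot\nabla\eta_j=:g_j .$$
I remove the divergence with the Bogovski\u{\i} operator $\mathbb B$ on the annulus $B_{2R}\setminus\overline{B_R}$ (Proposition \ref{Bog}), setting $w_j=\mathbb B g_j$. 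This gives
$$\eta_ju=v_j+w_j,\qquad v_j=(\lambda+A_{\Omega_j})^{-1}\mathbb P_{\Omega_j}\big(\eta_jf+F_j(\lambda)\big),$$
where $F_j(\lambda)$ collects the compactly supported commutator terms and $-(\lambda-\Delta)w_j$. Integrating $h(\lambda)$ over $\Gamma_\omega$, the principal parts $\frac1{2\pi i}\int_{\Gamma_\omega}h(\lambda)(\lambda+A_{\Omega_j})^{-1}\mathbb P_{\Omega_j}(\eta_jf)\dlambda=h(A_{\Omega_j})\mathbb P_{\Omega_j}(\eta_jf)$ are bounded by $C|h|_{\infty,\phi}\|f\|_{L^q_\ell}$. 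On $\Omega_1$ this is Noll--Saal \cite{NollSaal}; there the weight is $\sim1$ by Assumption \ref{weight-Omega}, and this is where $\partial\Omega\in C^3$ enters. On $\IR^n$ it is Proposition \ref{HinftyRn}. Because $\mathbb P_{\Omega_j}$ is bounded, all operators are applied to $\eta_jf$ in the correct spaces.

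It remains to show that every perturbation integrand is absolutely integrable along $\Gamma_\omega$ with respect to $|{\rm d}\lambda|$. Concretely, I need a bound $\le C\,m(\lambda)\,\|f\|_{L^q_\ell}$ with $\int_{\Gamma_\omega}m(\lambda)\frac{|{\rm d}\lambda|}{|\lambda|}<\infty$, so that the factor $|h(\lambda)|\le|h|_{\infty,\phi}$ simply comes out. I split $\Gamma_\omega$ into $|\lambda|\ge1$ and $|\lambda|\le1$. For large $|\lambda|$, the commutator terms involve only $u,\nabla u,p$ on the bounded set $\Omega_1$. The resolvent bounds of Theorem \ref{res-weighted} (iii) give $\|u\|\le C|\lambda|^{-1}\|f\|$ and $\|\nabla u\|\le C|\lambda|^{-1/2}\|f\|$. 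Applying the resolvent once more yields an extra $|\lambda|^{-1/2}$, hence decay $|\lambda|^{-1-\varepsilon}$ after the $|\lambda|^{-1}$ from the outer resolvent; interpolating via a negative-order norm if needed, as in Noll--Saal, gives $\varepsilon>0$. For the pressure on $\Omega_1$ I use a local estimate $\|p\|_{L^q(\Omega_1)}\le C|\lambda|^{-\delta}\|f\|$ for large $|\lambda|$. I would prove it via Proposition \ref{Bog} (iii), writing $\|p\|\lesssim\|\nabla p\|_{H^{-1,q}}=\|f-\lambda u+\Delta u\|_{H^{-1,q}}$ and interpolating. For the $w_j$-terms I use Proposition \ref{Bog} (i), (ii): $\lambda w_j$ is controlled by $\lambda\|g_j\|_{H^{-1,q}}\lesssim|\lambda|\,\|u\|_{W^{-1,q}(\Omega_1)}$, which is of order $|\lambda|^{-1/2}$ by duality with the adjoint resolvent.

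For $|\lambda|\le1$ one needs decay $|\lambda|^{\alpha}$, $\alpha>0$, and this is the main obstacle. Here I will use the homogeneous estimates of Theorem \ref{res-weighted} (iii), which require $2-\frac nq<\ell$; this is where the hypothesis $2-\frac nq<\ell<\frac n{q'}$ enters. Combined with the weighted Sobolev/Rellich inequalities of Proposition \ref{Hardy-Rellich}, they give $\|u\|_{L^q(\Omega_1)}+\|\nabla u\|_{L^q(\Omega_1)}+\|p\|_{L^q(\Omega_1)}\le C\|f\|_{L^q_\ell}$ uniformly in $\lambda$, but with no decay. To obtain the factor $|\lambda|^\alpha$ I would subtract the $\lambda=0$ contribution: $(\lambda+A)^{-1}-\dot A^{-1}=-\lambda(\lambda+A)^{-1}\dot A^{-1}$ on the compact set $\Omega_1$. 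The homogeneous solution $\dot A^{-1}f$ from Lemma \ref{conv-u-D-u} (ii) lies in $L^s_{\ell_s}$. The $\lambda$-independent part contributes $\int_{\Gamma_\omega,|\lambda|\le1}h(\lambda)(\lambda+A_{\Omega_j})^{-1}G\dlambda$ with fixed compactly supported $G$. I plan to control this using the decay of $\|(\lambda+A_{\IR^n})^{-1}G\|$ on bounded sets for $n\ge3$, namely $|\lambda|^{-1+\frac n2(\frac1{q_0}-\frac1q)}$ with $q_0<q$, which is integrable near $0$ with respect to $|{\rm d}\lambda|$. This is where $n\ge3$ and $\ell\ge0$ are used, the latter to pass from local $L^{q_0}$ to $L^q_\ell$ decay. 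I expect this low-frequency bookkeeping to be the delicate part. Summing all pieces and letting $\omega\downarrow0$ gives \eqref{H-infty} with angle $\Phi_A^\infty=0$.
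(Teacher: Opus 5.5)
There is a genuine gap in your large-$|\lambda|$ treatment of the divergence-correction terms.

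\textbf{The false decay claim.} You claim $|\lambda|\,\|u\|_{W^{-1,q}(\Omega_1)}$ is of order $|\lambda|^{-1/2}$ via duality. This is false. In $\langle u,\phi\rangle=\langle f,(\lambda+A^*)^{-1}\mathbb P\phi\rangle$, smoothness of $\phi$ does not make $(\lambda+A^*)^{-1}\mathbb P\phi$ smaller than $|\lambda|^{-1}$. Concretely, take $f\in C^\infty_{0,\sigma}(\Omega)$. Then $u=\lambda^{-1}f-\lambda^{-1}(\lambda+A)^{-1}Af=\lambda^{-1}f+O(|\lambda|^{-2})$. Hence $w_j=\lambda^{-1}\mathbb B(f\cdot\nabla\eta_j)+O(|\lambda|^{-2})$ is of exact order $|\lambda|^{-1}$ whenever $f\cdot\nabla\eta_j\not\equiv0$. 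The same holds for $(\lambda+A_{\Omega_j})^{-1}\mathbb P_{\Omega_j}(\lambda w_j)$.

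\textbf{Recombining does not help.} One has $w_j-\lambda(\lambda+A_{\Omega_j})^{-1}\mathbb P_{\Omega_j}w_j=(I-\mathbb P_{\Omega_j})w_j+A_{\Omega_j}(\lambda+A_{\Omega_j})^{-1}\mathbb P_{\Omega_j}w_j$. Only the second piece gains a factor $|\lambda|^{-\alpha}$. The gradient part $(I-\mathbb P_{\Omega_j})w_j$ stays of order $|\lambda|^{-1}$, which is not integrable against $|{\rm d}\lambda|$ at infinity. So the plan that every perturbation integrand is absolutely integrable cannot be carried out.

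\textbf{The pressure sketch has the same problem.} In $\|f-\lambda u+\Delta u\|_{H^{-1,q}(\Omega_1)}$, the terms $\|f\|_{H^{-1,q}}$ and $|\lambda|\,\|u\|_{H^{-1,q}}$ do not decay at all. One must use $\nabla p=(I-\mathbb P)\Delta u$, so that $f$ drops out, plus a finer argument. That is Lemma \ref{p-local-decay} (Noll--Saal), an estimate in unweighted $L^q$.

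\textbf{The missing idea.} The bad term is a $\lambda$-independent bounded operator applied to $u\cdot\nabla\eta_j$: $w_j=L_j(u\cdot\nabla\eta_j)$. In your version $L_j=\mathbb B$; in the paper it is the solution operator of the generalized resolvent problem with parameter $1$, e.g. $L_2=-(-\Delta)^{-1}\nabla$. So the contour integral can be moved inside:
$\int h(\lambda)\,w_j\,{\rm d}\lambda=L_j\big(\nabla\eta_j\cdot\int h(\lambda)\,u(\lambda)\,{\rm d}\lambda\big)$.
Thus one only needs $h(A)f$ on the compact set $\supp\nabla\eta_j$, and in \emph{unweighted} $L^q$.

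For $\ell\ge0$ one has $L^q_{\sigma,\ell}(\Omega)\subset L^q_\sigma(\Omega)$ with consistent resolvents. The known unweighted $\mathscr H^\infty$-calculus of the exterior Stokes operator (Noll--Saal) then gives the bound $C|h|_{\infty,\phi}\|f\|_{L^q}\le C|h|_{\infty,\phi}\|f\|_{L^q_\ell}$. This step, together with the unweighted pressure lemma, is where $\ell\ge0$ is really needed, not in the low-frequency bookkeeping as you suggest.

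\textbf{Small $|\lambda|$.} This part of your proposal essentially matches the paper. You use uniform local bounds from Theorem \ref{res-weighted} (iii) with Hardy/Rellich, and on $\IR^n$ the gain $|\lambda|^{-1+\alpha}$ for compactly supported data in $L^{\tilde q}_\ell$, $\tilde q<q$. The subtraction of $\dot A^{-1}f$ there is unnecessary: uniform boundedness of the compactly supported data already suffices.
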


\begin{proof}
To prove Theorem \ref{theorem-H-infty}, we apply the cut off procedure. Let $(u,p)$ be a solution to the resolvent problem \begin{align}\label{res-eq}
\lambda u  -\Delta  u + \nabla p =f,  \ \ \div u = 0\mbox{ in } \Omega, \quad u=0 \mbox{ on } \partial\Omega.
\end{align}  
Recall the domains $\mathbb{R}^n\setminus \Omega \subset B_R(0)$, 
$\Omega_1:=\Omega \cap B_{2R}(0)$  
and $\Omega_2:=\mathbb{R}^n$, and the cut off functions $\eta_1$ and $\eta_2=1-\eta_1$ from the proof of Proposition \ref{lem-var-ineq-nabla_Omega0}. 
%$\eta_1 \in C^\infty_0(\mathbb{R}^n)$, $0 \leq \eta_1 \leq 1$, $\eta_1=1$ on $B_R(0)$, $\supp \eta_1 \subset B_{2R}(0)$ and $\eta_2 = 1- \eta_1$. 
Then  $(\eta_j u, \eta_j p) $ for $j=1,2$ solve the resolvent equation on $\Omega_j$:
\begin{align}\label{eq-eta-u}
\lambda\eta_j u  -\Delta (\eta_j u) + \nabla (\eta_j p) =f_j,  \ \ \div (\eta_j u) =g_j \mbox{ in } \Omega_j, 
\end{align}
where 
\begin{equation}\label{eq-eta-u-f-eta-j}
f_j = \eta_j f -2 \nabla u \cdot \nabla \eta_j -u \Delta \eta_j + p\nabla \eta_j,  \ \ 
g_j= u\cdot \nabla \eta_j. 
\end{equation}

We exploit that the $\lambda$-independent terms $f\eta_j$ can be estimated directly by the $\mathscr H^\infty$-calculus results \cite[Theorem 16]{NollSaal} on $\Omega_1$ and Proposition \ref{HinftyRn} on $\Omega_2$. Indeed, for any $h\in\mathcal H^\infty_0(\Sigma_\omega)$ and any $\phi\in (0,\frac{\pi}{2})$ there holds
\begin{align}\label{Hinfty-f eta j}
\begin{aligned}
\sum_{j=1}^2 & \Big\|\frac{1}{2\pi i} \int_{\Gamma_{\omega}} h(\lambda)
(\lambda -  \BLACK A_{\Omega_j})^{-1} \mathbb P_{\Omega_j}(f\eta_j) \dlambda \Big\|_{L^q_\ell(\Omega_j)}\\
%& \RED = 
%\sum_{j=1}^2 \Big\|\frac{-1}{2\pi i} \int_{\Gamma_{\omega}} h(\lambda)
%((-\lambda) + A_{\Omega_j})^{-1} \mathbb P_{\Omega_j}(f\eta_j) \dlambda \Big\|_{L^q_\ell(\Omega_j)}\BLACK \\
& \leq  C\sum_{j=1}^2|h|_{\infty,\phi} \|f\eta_j\|_{L^q_\ell( \Omega_j) \BLACK}\\
& \leq C|h|_{\infty,\phi} \|f\|_{L^q_\ell(\Omega_0)};
\end{aligned}
\end{align}
Now we subtract in \eqref{eq-eta-u} the function $u_j$ defined as the solution of the resolvent problem 
$$
\lambda u_j -\Delta u_j + \nabla  p_j = \mathbb P(f\eta_j), \quad \div u_j=0 \;\textrm{in } \Omega_j, \;u_1=0 \textrm{ on }\partial\Omega_1.
$$
Then $ u_j':= u\eta_j-u_j$, $j=1,2$, satisfy the generalized Stokes resolvent problems
\begin{align}\label{eq-eta-uu}
\lambda u_j' -\Delta u_j' + \nabla p_j' = f_j',  \ \ \div u_j' =g_j \mbox{ in } \Omega_j, 
\end{align}
with $u_1' = \eta_1 u-u_1=0$ on $\partial\Omega_1$, and
\begin{align}\label{eq-f-eta-u} 
f_j' & = (I-\mathbb P_{\Omega_j})(f\eta_j)-2 \nabla u \cdot \nabla \eta_j -u \Delta \eta_j + p\nabla \eta_j,  \ \ 
g_j= u\cdot \nabla \eta_j.
\end{align}

Next we write $u_j' =: v_j+w_j$ where $w_j$ is the solution to a generalized resolvent problem on $\Omega_j$ with resolvent parameter $\lambda=1$, $j=1,2$:
\begin{align}\label{w1-eq}
 w_1 -\Delta w_1 + \nabla  p^{w_1} =0, & \ \ \div w_1  =g_1 \mbox{ in } \Omega_1,  \ \ w_1=0  \mbox{ on } \del\Omega_1, \\
  \label{w2-eq}
 w_2  -\Delta w_2  + \nabla  p^{w_2} =0, & \ \ \div w_2  =g_2 \mbox{ in } \Omega_2. 
\end{align}
\BLACK
Moreover,  $v_j$ $(j=1,2)$ solves the Stokes resolvent equations
\begin{align}\label{v_j-res}
   (\lambda - \Delta ) v_j   + \nabla ( p_j'\BLACK- p^{w_j}) = f_j' +\BLACK (1-\lambda ) w_j ,  \ \ \div v_j  =0  \mbox{ in } \Omega_j. 
\end{align}
In addition, $v_1= (\eta_1 u-u_1) -w_1=0$ on $\del \Omega_1$. Hence  we get that
$$
v_j = (\lambda + A_{\Omega_j,q,\ell})^{-1} 
\mathbb{P}_{\Omega_j} (f_j' +\BLACK (1-\lambda)w_j)
$$
where in $\mathbb{P}_{\Omega_j}f_j'$, see \eqref{eq-f-eta-u}, the term $\mathbb P_{\Omega_j}(I-\mathbb P_{\Omega_j})(f\eta_j)=0$.
It follows from the identity 
$$
\lambda (\lambda + A_{\Omega_j,q,\ell})^{-1}
\mathbb{P}_{\Omega_j}w_j= \mathbb{P}_{\Omega_j}w_j -A_{\Omega_j,q,\ell}(\lambda + A_{\Omega_j,q,\ell})^{-1}
\mathbb{P}_{\Omega_j}w_j
$$
that 
$$
 v_j +  w_j = \sum_{k=1}^4 Q_{j,k}(u,p),  
$$
with 
\begin{align*}
  Q_{j,1}(u,p) & = (\lambda + A_{\Omega_j,q,\ell})^{-1}\mathbb{P}_{\Omega_j}  f_j'\BLACK, \\
  Q_{j,2}(u,p) & = (\lambda + A_{\Omega_j,q,\ell})^{-1}
\mathbb{P}_{\Omega_j}w_j\\
Q_{j,3}(u,p) & = A_{\Omega_j,q,\ell}(\lambda + A_{\Omega_j,q,\ell})^{-1}
\mathbb{P}_{\Omega_j}w_j\\
Q_{j,4}(u,p) & =  (I-\mathbb{P}_{\Omega_j}) w_j.  
\end{align*}
%\BLUE 
%The term $Q_{2,4}(u,p)$ is by \cite[Corollary 4.4]{FS} a gradient field  which in \eqref{v_j-res}  will be considered as part of the pressure and hence can be ignored in estimates of $\eta_2 u$; actually, $(I-\mathbb P_{\IR^n})w_2 = (I-\mathbb P_{\IR^n})u_2' = (I-\mathbb P_{\IR^n})(\eta_2 u)$.   \RED (But this statement does not help. These lines can be omitted, isn't it?)

The terms $Q_{j,1}, Q_{j,2},Q_{j,3}$  which depend on $\lambda$ via the resolvents $(\lambda+A)^{-1}$ will be estimated pointwise in $\lambda$ for small and large $|\lambda|$ whereas we will apply the $\mathscr H^\infty$-calculus of  $A_{\Omega_j,q}$ without weight to $Q_{j,4}$ and $w_j$.

First we consider the case of %estimate $Q_{2,1}, Q_{2,2},Q_{2,3}$ for 
large  $|\lambda|$, $\arg\lambda = \phi\in (0,\frac{\pi}{2})$.  
%On the weighted spaces $L^q_\ell$, we focus on $\Omega_2 = \mathbb{R}^n$ because any weight function $\langle x\rangle^\ell$ is bounded in estimates on the bounded domain $\Omega_1$  so that $L^q_\ell(\Omega_1) = L^q(\Omega_1)$.  
By \cite[Theorem 1.2, (1.3)]{FS94} when $j=1$, and   Proposition \ref{FS-weighted}, 
%\ref{generalized-stokes-problem}, 
$\eqref{St-res-wRn}_1$ when $j=2$ we know that 
\begin{align} \label{w-est0}
\|w_j\|_{ L^{q}_{\ell}  (\Omega_j)} \leq C\|g_j\|_{\widehat H^{-1,q}_{\ell,0\; \BLACK}(\Omega_j)}  = C\sup_{\varphi\neq 0} \frac{\int_{\Omega_j}  u\cdot \nabla \eta_j \,\varphi\dx}{\|\varphi\|_{\widehat H^{1,q'}_{-\ell}(\Omega_j)} }, 
%\leq C\|\nabla \eta_j u\|_{L^q_\ell(\{R \leq x \leq 2R\})} 
%\leq C \|u\|_{L^q(\Omega_0)}. 
\end{align}
 where $\widehat H^{-1,q}_{\ell,0}(\Omega_1) = \big(\widehat H^{1,q'}_{-\ell}(\Omega_1)\big)^*$. \BLACK
In the case $j=1$ note that $\int_{\Omega_1} u\cdot\nabla\eta_1 \dx =0$ so that we may assume that $\int_{\Omega_1}\varphi\dx =0$ and Friedrichs' inequality can be applied to $\varphi$ on $\Omega_1$.  Since the weight $\langle x\rangle^{-\ell}$ has no influence on the bounded domain $\Omega_1$, there holds
\begin{equation} \label{u-nabla-eta-j_phi}
\Big|\int_{\Omega_1} u\cdot\nabla\eta_1\, \varphi\dx\Big|
\leq C\|u\|_{L^q_\ell(\Omega_1)} \|\nabla\varphi\|_{L^{q'}_{-\ell}(\Omega_1)} .
\end{equation}
Similarly, since 
$$\int_{\Omega_2} u\cdot\nabla\eta_2 \dx = - \int_{\Omega_1} u\cdot\nabla\eta_1 \dx = 0, $$
we assume again that $\int_{\Omega_1}\varphi\dx =0$. Hence 
\eqref{u-nabla-eta-j_phi} holds for $j=2$ as well.

%((\RED In the argument below we need that if $j=2$ for Hardy, that $1-n/q' <-\ell <n/q'-1$, or equivalently, $\ell < -1+n/q'$. But we only know that $-n/q<\ell<n/q'$. Thus for $-\ell\leq 1-n/q'$ we need a further argument such as Lemma 2.16.)) ... we apply Hardy's inequality \eqref{Hardy-ineq} to $\varphi$ and see that due to the compact support of $\nabla\eta_2$ the integral in \eqref{w-est0} is bounded by 
%\begin{equation}\label{hatW-1toLq}
%\Big|\int_{\Omega_2} u\cdot\nabla\eta_2\, \varphi\dx\Big|
%= \Big|\int_{\Omega_1} u\cdot\nabla(\eta_2-1) \varphi\dx \Big| \leq C\|u\|_{L^q_\ell(\Omega_1)} \|\nabla\varphi\|_{L^{q'}_{-\ell}(\Omega_1)} .\BLACK ))\end{equation}
%\RED (In the second integral I removed power 2 for $(1+|x|)^\pm$) 
%
Summarizing \eqref{w-est0} together with \eqref{u-nabla-eta-j_phi},  %\eqref{hatW-1toLq}, 
we obtain in both cases \BLACK that
\begin{equation}\label{w-est}
 \|w_j\|_{ L^{q}_{\ell}  (\Omega_j)} \leq C\|u \cdot\nabla \eta_j \|_{L^q_\ell(\Omega_j)} 
\leq C \|u\|_{L^q_\ell(\Omega)}. 
\end{equation}
Modifying the estimate of the second term in \eqref{w-est}, we also get that  
\begin{equation}\label{w-est3}
 \|w_j\|_{ L^{q}_{\ell}  (\Omega_j)} 
\leq C \|\nabla u\|_{L^q_\ell(\Omega)}. 
\end{equation}
% ((\RED no longer needed: Indeed, if $j=2$, we write the integrand as $(u\cdot\nabla\eta_2) \varphi =  
%\frac{u}{1+|x|} \,\nabla\eta_2\,(1+|x|)^2 \, 
%\frac{\varphi}{1+|x|}$ and apply Hardy's inequality to both $u$ and $\varphi$; %(\RED I removed powers 2 and changed power 4 to 2) 
%here, $u$ is extended by $0$ to $\IR^n$ to yield a vector field $u\in H^{1,q}(\IR^n)$. If $j=1$, then by Poincar\'e's inequality on $\Omega_1$ we get that
%$\|u\cdot \nabla \eta_1\|_{L^q_\ell(\Omega_1)} 
%\leq C \|u\|_{L^q(\Omega_1)} \leq C \|\nabla u\|_{L^q(\Omega_1)}.$ \BLACK ))

Thus $Q_{j,2}(u,p)$, $j=1,2$, is estimated by Proposition \ref{FS-weighted} and \eqref{w-est} as 
\begin{align}\label{Qj2}
\begin{aligned}
\|Q_{j,2}(u,p)\|_{L^q_{\ell}(\Omega_j)} & \leq 
C\dfrac{1}{|\lambda|} \|\mathbb{P}_{\Omega_j} w_j \|_{L^q_{\ell}(\Omega_j)}  
\leq C\dfrac{1}{|\lambda|} \|w_j\|_{L^q_{\ell}(\Omega_j)} \\
& \leq  C\dfrac{1}{|\lambda|} \|u\|_{L^q_\ell(\Omega)}\\
& \leq C\dfrac{1}{|\lambda|^2} \|f\|_{ L^q_\ell(\Omega)} .
\end{aligned}
\end{align}
for large $|\lambda|$.

For later use, we prepare further estimates for $w_j$. The generalized resolvent estimates \cite[Theorem 1.2]{FS94} for $\Omega_1$,  together with Poincar\'e's and Friedrichs' inequality, \BLACK and Proposition \ref{FS-weighted}, $\eqref{St-res-wRn}_3$ for $\Omega_2=\IR^n$ %\RED (Delete: even for all $q,\ell$)  
\BLACK imply that 
\begin{align}\label{Delta-w-est}
\|\nabla^2 w_j\|_{ L^{q}_{\ell}  (\Omega_j)} \leq C\|(-\Delta) w_j\|_{L^q_\ell(\Omega_j)} \leq C\|\nabla g_j\|_{L^q_\ell(\Omega_j)}. 
\end{align}
%Here we apply Remark \ref{rem-whole-space- case} for $\lambda=1$ and $f=0$. 
%We note that 
%$$
%\|\nabla^2 w_j\|_{L^q_\ell} \leq C\|(-\Delta) w_j\|_{L^q_\ell} $$
%due to the weight condition $2-n/q < \ell < n/q'$. 
%\RED (I do not really understand this argument: there is no $\lambda$ involved in \eqref{w1-eq}, \eqref{w2-eq} for $w_j$ which can go to $0$. But the result is hidden in the proof of \cite[Theorem 4.5]{Farwig-Sohr} in the form $\|u_g\|_{L^q_w} \leq C \|g\|_{\hat H^{-1,q}_w}$ where $u_g=-(-\Delta)^{-1}\nabla g$. Indeed, the unique solution of the generalized resolvent problem with r.h.s. $f,g$ on $\IR^n$ in weighted spaces $L^q_w$ is
%$$u=(\lambda-\Delta)^{-1} \mathbb Pf-(-\Delta)^{-1}\nabla g.$$
%In our case $\lambda=1$ and $w_2=u=u_g$. I propose to write down in Subsect. 2.2 a general estimate for the generalized resolvent problem on $\IR^n$  including $u, \nabla u$ and $\nabla^2 u$ which was missing explicitly in \cite{Farwig-Sohr}.) 
Moreover, for $j=2$, $\eqref{St-res-wRn}_2$ and the Poincar\'e inequality applied to $u\cdot\nabla\eta_2$ imply that 
\begin{align}\label{w-est2}
\begin{aligned}
\|\nabla w_2\|_{ L^q_{\ell} (\Omega_2)} & \leq C \|g_2\|_{L^q_\ell(\Omega_2)}
= C\|u\cdot\nabla \eta_2\|_{L^q_\ell(\Omega_2)}
\leq C \|u\|_{L^q(\Omega_1)}. %\leq C\|\nabla u\|_{L^q(\Omega)}.\\
\end{aligned}
\end{align}
Next, \eqref{w-est0} and Friedrichs' inequality for $\varphi$ as in \eqref{u-nabla-eta-j_phi}  
%and Friedrichs' inequality for $\varphi$, which may be taken with vanishing integral mean on $\Omega_1$ since $\int_{\Omega_1} u\cdot\nabla\eta_1 \dx=0$,\RED)) \BLACK 
imply that
$$
\|w_1 \|_{L^q(\Omega_1)} \leq 
\|g_1\|_{\widehat{H}^{-1,q}_{ 0\BLACK}(\Omega_1)}
\leq C\|u\|_{H^{-1,q}_0(\Omega_1)} \,. 
$$
In addition, we see from \eqref{w-est3} and \eqref{Delta-w-est} that 
$$
\|w_1 \|_{H^{2,q}(\Omega_1)} \leq C\|u\|_{H^{1,q}(\Omega_1)}.
$$
Hence complex interpolation, {\em cf.} \cite[Proof of Lemma 14]{NollSaal}, derives that 
\begin{equation}\label{w_1H^1-u}
    \|w_1\|_{H^{1,q}(\Omega_1)} \leq C\|u\|_{L^q(\Omega_1)}. 
\end{equation}
 
% In addition, the Hardy inequality \eqref{Hardy-ineq} implies that for $1-\frac{n}{q} <\ell< \frac{n}{q'}$
%\begin{align}\label{w-est3}
%\|w_j\|_{ W^{1,q}_{\ell} (\Omega_j)} \leq C \|\nabla u\|_{L^q_{\ell}(\Omega_0)}. 
%\end{align}
%(We will apply this estimate for small $|\lambda|$.)
%
%(Here the second estimate in [Noll-Saal, Theorem 1], is used where $\nabla g$ is not necessary; moreover, $\ell\geq 0$) \BLACK
%(Here we used the duality argument for $\widehat{H}^{-1,q}$ norm. )
%(The same argument as Noll and Saal, pp.24.) 

As for $Q_{2,3}(u,p)$ we recall the {\em BIP} property of the Stokes operator in $L^{q}_{\ell}(\mathbb{R}^n)$, 
 see Proposition \ref{HinftyRn} or \cite[Theorem 1.4 (iii)]{FS} and Corollary \ref{cor:Rn-domains-char}.
Therefore as in \eqref{Qj2} together with   
\eqref{w-est} and  \eqref{w-est2}, \BLACK
 we obtain for $\alpha \in (0,1)$ that
\begin{align} 
\begin{aligned}\label{Q23}
\|Q_{2,3}(u,p)\|_{L^q_{\ell}(\Omega_2)} & = \|A_{q,\ell}^{1-\alpha }(\lambda + A_{q,\ell})^{-1} A_{q,\ell}^\alpha 
\mathbb{P}w_2\|_{L^q_{\ell}(\Omega_2)}\\
& \leq C\frac{1}{|\lambda|^\alpha }\|\mathbb{P}w_2\|_{\widehat{H}^{2\alpha }_{q,\ell}(\mathbb{R}^n)}\\
&\leq C\frac{1}{|\lambda|^\alpha }\|\mathbb{P}w_2\|_{{H}^{1 }_{q,\ell}(\mathbb{R}^n)}\\
&\leq C\frac{1}{|\lambda|^\alpha }\|u\|_{L^q(\Omega)}\\
&\leq C\frac{1}{|\lambda|^{\alpha+1}} \|f\|_{ L^q_\ell (\Omega)}.
\end{aligned}
\end{align} 
For $j=1 $ we argue by analogy, but exploit \eqref{w_1H^1-u} rather than \eqref{w-est2} and obtain that
\begin{equation} \label{Q13}
\|Q_{1,3}(u,p)\|_{L^q_{\ell}(\Omega_1)} \leq C\frac{1}{|\lambda|^{\alpha+1}} \|f\|_{ L^q_\ell (\Omega)}.
\end{equation}

Since $Q_{2,1}(u,p)$ contains the pressure term $p\nabla\eta_2$, we recall the following lemma about local decay estimates of the pressure $p$ of the Stokes resolvent problem \eqref{res-eq}.  

\vspace{2ex}

\begin{lem} {\rm \cite[Lemma 13]{NollSaal}} \label{p-local-decay}
Let $\alpha \in (0, \frac{1}{2q})$. Then it holds for any bounded domain $D\subset \Omega$ of class $C^{1,1}$ that
$$
\|p\|_{L^q(D)} \leq C|\lambda|^{-\alpha}\|f\|_{L^q(\Omega)}
$$
for $\lambda \in \Sigma_\theta $ with $|\lambda| \geq 1$. 
\end{lem}

Hence the term $Q_{j,1}(u,p)$ admits by Proposition \ref{FS-weighted} for $\Omega_2$, classical Stokes resolvent estimates for $\Omega_1$, and Lemma \ref{p-local-decay} for $|\lambda|\geq 1$ the estimate
\begin{align}
\begin{aligned}
\label{Qj1}
\|Q_{j,1}(u,p) \|_{L^q_{\ell}(\Omega_2)} & \leq \frac{C}{|\lambda|} \|f\|_{L^q_\ell(\Omega_0)} \Big(\frac{1}{|\lambda|^{1/2}} + \frac{1}{|\lambda|} + \frac{1}{|\lambda|^\alpha}\Big) \\
& \leq \frac{C}{|\lambda|^{1+\alpha}} \|f\|_{L^q_\ell(\Omega)}.
\end{aligned}
\end{align}

Summarizing \eqref{Qj2}, \eqref{Q23}, \eqref{Q13}, \eqref{Qj1} for $|\lambda|>1$ we can estimate contour integrals of the type $\int_\Gamma h(\lambda)Q_{j,k}(u,p)\dlambda$. 
However, since $Q_{j,k}(u,p) = Q_{j,k}(u,p)(\lambda)$ depends on $(\lambda+A)^{-1} = -((-\lambda)-A)^{-1}$ we use the change of variables $\lambda\mapsto -\lambda$ to get for the contour $\Gamma_{\omega} = \Gamma_{\omega +}\cup \Gamma_{\omega -}$ in the right half plane of $\IC$ the contour $-\Gamma_{\omega} = -\Gamma_{\omega +}\cup -\Gamma_{\omega -}$ in the left half plane. Hence
the term to be estimated in the $\mathcal H^\infty$-calculus is the left hand side of the identity
$$ \int_{\Gamma_\omega} h(\lambda) Q_{jk}(-\lambda) \dlambda = \int_{-\Gamma_\omega} h(-\lambda) Q_{jk}(\lambda) \dlambda, $$
whereas we estimate its right hand side. To that aim we consider the unbounded subset $-\Gamma_{\omega,\infty} = -\Gamma_\omega \setminus B_1(0)$ in \eqref{Atslambda2} of the contour $-\Gamma_\omega $ and obtain that 
\begin{equation}\label{Gomegainftyj=123}
\Big\|\int_{-\Gamma_{\omega,\infty}} h(-\lambda) (Q_{j,1}+Q_{j,2} + Q_{j,3})(u,p)(\lambda) \dlambda\Big\|_{L^q_\ell(\Omega_j)}   \leq C|h|_{\infty,\phi} \|f\|_{L^q_\ell (\Omega)},\quad j=1,2. 
\end{equation}\BLACK

Next we consider $\lambda\in\Sigma_\omega$ with $0<|\lambda|<1$.
For $j=1$ we see from the bounded domain case that 
$$ 
\|(\lambda + A_{\Omega_1, q,\ell})^{-1}\|_{\mathcal{L}(L^q_\ell(\Omega_1))}  \leq 
C\|(\lambda + A_{\Omega_1, q,\ell})^{-1}\|_{\mathcal{L}(L^q(\Omega_1)}
 \leq C\frac{1}{1+|\lambda|}.
$$ 
In addition, it follows from the definition of $f_1'$, Poincar\'e's inequality applied to $u$ and Friedrichs' inequality to $p$ under the admissible assumption $\int_{\Omega_1} p\dx=0$ that
\begin{align}\label{Q11}
\begin{aligned}
\|Q_{1,1}(u,p)\|_{L^q_{\ell}( \Omega_1\BLACK)} & \leq C\|\mathbb Pf_1'\|_{L^q_\ell(\Omega_1)} \leq 
C(\|\nabla u\|_{L^q(\Omega)}+
\|p\|_{L^q(\Omega_1)} )\\
& \leq C|\lambda|^{-1/2} \|f\|_{L^q_{\ell}(\Omega)}. 
\end{aligned}
\end{align}
By analogy, using \eqref{w-est3} for $w_1$ in $Q_{1,2}$, $Q_{1,3}$, there holds 
\begin{align}\label{Q1,23}
\|Q_{1,2}(u,p)\|_{L^q_{\ell}(\Omega_1)} + \|Q_{1,3}(u,p)\|_{L^q_{\ell}(\Omega_1)} \leq C(1+|\lambda|^{-1/2}) \|f\|_{L^q_{\ell}(\Omega)}. 
\end{align}
Summarizing \eqref{Q11}, \eqref{Q1,23} and integrating along the bounded contour  $-\Gamma_{\omega,1} = -\Gamma_\omega \cap B_1(0)$ together with $h(-\lambda)$ we are led to the estimate
\begin{equation}\label{Gomega1j=123}
\Big\| \int_{-\Gamma_{\omega,1}} h(-\lambda) \BLACK (Q_{1,1}+Q_{1,2} + Q_{1,3})(u,p) \dlambda\Big\|_{L^q_\ell(\Omega_1)} \leq C|h|_{\infty,\phi} \|f\|_{L^q_\ell (\Omega)}.
\end{equation}

%These estimates together with \eqref{w-est3}, where we may replace $\|\nabla u\|_{L^q(\Omega_0)}$ by $\|\nabla u\|_{L^q(\Omega_1)}$, imply that 
%\begin{align}
%\begin{aligned}\label{small-lambda-est-eta1}
%\Big\|\frac{1}{2\pi i}\int_{\Gamma_{\omega,1}} h(\lambda)\,u_1' \dlambda \Big\|_{L^q_\ell(\Omega_1)} & = \Big\|\frac{1}{2\pi i}\int_{\Gamma_\omega,1} h(\lambda)(v_1 +w_1) \,{\rm d}\lambda \Big\|_{L^q(\Omega1)}\\
%& \leq C|h|_{\infty,\phi} \int_0^1 \frac{1}{|1+s e^{i \phi}|}\Big(1+\frac{1}{\sqrt{s}%\RED\;Delete:+s^\alpha\BLACK}\Big)\ds \, \|f\|_{L^q(\Omega_0)}  \\
%& \leq C |h|_{\infty,\phi} \|f\|_{L^q_\ell (\Omega_0)}. 
%\end{aligned} 
%\end{align}

For small $\lambda \in\Sigma_\omega$ and $j=2$ we exploit the condition $2-\frac{n}{q}<\ell<\frac{n}{q'}$ and apply Hardy's \eqref{Hardy-ineq} and Rellich's inequality \eqref{Rellich-ineq}. Then we obtain from the cut off property of $\nabla\eta_2$  so that \eqref{w-est} can be modified to the estimate $\|w_2\|_{L^{\tilde{q}}_\ell(\Omega_2)} \leq c\|\nabla^2 u\|_{L^q_\ell(\Omega_2)}$ with $\tilde{q}<q$ close to $q$ the estimates
\begin{align}
\begin{aligned}\label{small-lambda-est1}
\|\nabla u \cdot \nabla \eta_2\|_{L^{\tilde{q}}_\ell(\Omega_2) } +\| u \Delta \eta_2 \|_{L^{\tilde{q}}_\ell (\Omega_2) }  + \| w_2\|_{L^{\tilde{q}}_\ell (\Omega_2)}   \leq C\|\nabla^2 u\|_{L^q_\ell (\Omega)}
\end{aligned} 
\end{align}
and 
\begin{align}
\begin{aligned}\label{small-lambda-est2}
\|p \nabla \eta_2 \|_{L^{\tilde{q}}_\ell(\Omega_2) } \leq 
c\|\nabla p\|_{L^q_\ell (\Omega)}. 
\end{aligned} 
\end{align}
Furthermore, 
we see from Proposition \ref{FS-weighted} and the Sobolev inequality on weighted spaces \eqref{fractSob}, \eqref{pi_k} %\eqref{Hardy-Rellich} 
that for  $0<\alpha  = \frac{n}{2}\big(\frac{1}{\tilde q}-\frac{1}{q}\big) \BLACK<1$ 
and any $\psi \in L^{\tilde{q}}_{\sigma, \ell}(\mathbb{R}^n)$ with $\tilde{q}<q $ chosen close to $q$ such that $2-\frac{n}{\tilde{q}} < \ell < \frac{n}{\tilde{q}'}$ that  
\begin{align}
\begin{aligned}\label{small-lambda-est3}
\|(\lambda + A_{\Omega_2,q})^{-1} \psi\|_{L^q_\ell(\IR^n)}  & \leq \|(-\Delta)^\alpha (\lambda + A_{\Omega_2, \tilde{q}})^{-1}\psi \|_{ L^{\tilde{q}}_\ell(\IR^n )}\BLACK\\
& \leq 
C\|\nabla^2 (\lambda + A_{\Omega_2, \tilde{q}})^{-1}\psi \|_{ L^{\tilde{q}}_\ell(\IR^n )}^{\alpha} \|(\lambda + A_{\Omega_2, \tilde{q}})^{-1}\psi \|_{ L^{\tilde{q}}_\ell(\IR^n )}^{1-\alpha} \\
& \leq C|\lambda|^{\alpha-1} \|\psi\ \|_{L^{\tilde{q}}_{\ell}(\IR^n)} .
\end{aligned} 
\end{align}
Note that in the second step we also used the moment inequality. 
Substituting \eqref{small-lambda-est1}, \eqref{small-lambda-est2} and \eqref{small-lambda-est3}  %to the estimate 
%$$
%\Big\|\frac{1}{2\pi i}\int_{\Gamma_{\omega,1}} h(\lambda)\RED (\eta_2 u-u_2)\BLACK \dlambda \Big\|_{L^q_\ell(\Omega_2)} = \Big\|\frac{1}{2\pi i}\int_{\Gamma_{\omega,1}} h(\lambda)(v_2+w_2) \,{\rm d}\lambda \Big\|_{L^q_\ell(\Omega_2)}
%$$
and applying Theorem \ref{res-weighted} to $u$ and $\nabla p$ we obtain that %\RED ($+1$ from $\nabla p$) 
\BLACK
\begin{align}
\begin{aligned}\label{small-lambda-est-eta2}
\Big\| \int_{-\Gamma_{\omega,1}} h(-\lambda)\BLACK  (Q_{2,1}+Q_{2,2}+Q_{2,3}) \dlambda \Big\|_{L^q_\ell(\Omega_2)} 
& \leq C |h|_{\infty,\phi} \int_0^1  \Big(1+\BLACK \frac{1}{s^{1-\alpha }}\Big) \ds \, \|f\|_{L^q(\Omega)}  \\
& \leq C |h|_{\infty,\phi} \|f\|_{L^q_\ell (\Omega)}.
\end{aligned} 
\end{align}
%
%Therefore we get that 
%\begin{align}
%\begin{aligned}\label{small-lambda-est-result}
%sum_{j=1}^2 \Big\|\frac{1}{2\pi i}\int_{\Gamma_{\omega,1}} h(\lambda)(v_j + w_j) \dlambda \Big\|_{L^q_\ell(\Omega_0)} 
%&\leq C \sum_{j=1}^2 \|\frac{1}{2\pi i}\int_{\Gamma_{\omega,1}} h(\lambda)
%(\lambda+A_{\Omega_j,q})^{-1} f \,{\rm d}\lambda \|_{L^q_\ell(\Omega_j)} \leq C|h|_{\infty,\phi} \|f\|_{L^q_\ell(\Omega_0)}.
% \end{aligned} 
%\end{align}

Finally, for $Q_{2,4}(u,p)$,
recall that $w_2, g_2$ depend on $\lambda$ via \eqref{w2-eq} and 
$$ g_2=u\cdot \nabla\eta_2 = \big(( \lambda+A_{\Omega})^{-1}f\big) \cdot\nabla\eta_2  = -\big(( (-\lambda)-A_{\Omega}\BLACK)^{-1}f\big) \cdot\nabla\eta_2.$$ 
Hence the decay in $\lambda$ is not faster than $|\lambda|^{-1}$ and fails to yield integrability along $\Gamma_{\omega,\infty}$. Therefore, we use directly the $\mathscr H^\infty$-calculus of $A_{\Omega_2,q}$.  
We write  $L_2 (g_2):= w_2$, {\em i.e.}, $L_2 = -(-\Delta)^{-1}\nabla$ which maps $\widehat H^{-1,q}_\ell(\Omega_2)$ to $L^q_\ell(\Omega_2)$. Moreover, we recall  \eqref{u-nabla-eta-j_phi} \BLACK with $\psi\in L^q(\Omega)$ replacing $u$ \BLACK which % Do we use this: 
by \eqref{Hardy-ineq} \BLACK yields the estimate
$$ \|\psi \nabla\eta_2\|_{\hat H^{-1,q}_\ell(\Omega_2)} \leq C \|\psi\|_{L^q(\Omega)}, \quad \psi \in L^q(\Omega). $$
Then due to the $\mathscr H^\infty$-calculus of  $A_{\Omega,q}$ without weight, see \cite[Theorem 17]{NollSaal},
\begin{align}
%\begin{aligned}
\label{estimate-forth-term}
\Big\|\frac{1}{2\pi i} \int_{-\Gamma_\omega} \!\!h(-\lambda)w_2 \dlambda \Big\|_{L^q_\ell(\Omega_2)}
& = \Big\|L_2 \Big(\frac{1}{2\pi i} \int_{-\Gamma_\omega} \!\!h(-\lambda) (u\cdot\nabla\eta_2) \dlambda \Big)\Big\|_{L^q_\ell(\Omega_2)}\nonumber\\[1ex]
& \leq C \Big\|\frac{1}{2\pi i}  \int_{-\Gamma_\omega} \!\!h(-\lambda) (\lambda-A_{\Omega,q})^{-1} \BLACK f \dlambda \cdot\nabla\eta_2 \Big\|_{\hat H^{-1,q}_\ell(\Omega_2)} \nonumber\\[1ex]
& \leq C
\| h(A_{\Omega,q}) f \|_{L^q(\Omega)}\nonumber\\
& \leq C |h|_{\infty,\phi} \|f\|_{L^q_\ell (\Omega)}.
%\\[1ex]
%\RED Delete:&\;\leq C \|\nabla \eta_2 \cdot h(A_{\Omega_2}) f\|_{\hat H^{-1,q}_{\ell}(\Omega_2)}\\
%\RED Delete:&\;\leq C \|\nabla \eta_2 \cdot h(A_{\Omega_2}) f\|_{\hat H^{-1,q}(\Omega_2)}
%\\
%& \leq C |h|_{\infty,\phi} \|f\|_{L^q(\Omega_0)}\\
%& \leq C |h|_{\infty,\phi} \|f\|_{L^q_\ell(\Omega_0)}.
%\end{aligned} 
\end{align}
 Note that in the last step of \eqref{estimate-forth-term}
the assumption $\ell\geq 0$ is crucial. \BLACK
%Here in the third line we used the cut off property of $\supp \nabla\eta_2 \subset \{R \leq |x| \leq 2R\}$ and in the last line we used the same argument as that in \cite[pp.678-679]{NollSaal} based on the duality argument without weight functions. 
%\RED (Is this still needed here?: We decompose the contour %$\Gamma_\omega$ to 
%$\Gamma_{\omega}=\Gamma_{\omega,1} + \Gamma_{\omega,\infty}$, where 
%$\Gamma_{\omega,1 }: \lambda=\rho e^{\pm i\omega}, \, 0 \leq \rho <1$ and $\Gamma_{\omega,\infty }: \lambda=\rho e^{\pm i\omega}, \, 1 \leq \rho < \infty$. Then \eqref{w-est2} and \eqref{estimate-forth-term} derive that 
%\begin{align}
%\begin{aligned}\label{Q24}
%\|\frac{1}{2\pi i}\int_{\Gamma_{\omega,\infty}} h(\lambda)
%Q_{2,4}(u,p) \,{\rm d}\lambda \|_{L^q_\ell(\Omega_2)} & \leq \|\frac{1}{2\pi i}\int_{\Gamma_\omega} h(\lambda)
%L_2 (g_j) \,{\rm d}\lambda \|_{L^q_\ell(\Omega_2)}\\
%&\quad + \|\frac{1}{2\pi i}\int_{\Gamma_{\omega,1}} h(\lambda)
%L_2 (g_j) \,{\rm d}\lambda \|_{L^q_\ell(\Omega_2)}\\
%& \leq C |h|_{\infty,\phi} \|f\|_{L^q(\Omega_0)} + C\int_{0}^1
%|\lambda|^{-1/2}\,{\rm d}\lambda   |h|_{\infty,\phi}\|f\|_{L^q(\Omega_0)}\\
%& \leq C |h|_{\infty,\phi} \|f\|_{L^q(\Omega_0)}. \;)?
%\end{aligned} 
%\end{align}

%The same estimates are obtained for $j=1$, {\em i.e.} for the bounded domain $\Omega_1 = B_{2R}(0)$. \BLUE 

On the bounded domain $\Omega_1$, the weight $\langle x\rangle^\ell$ has no influence, and there holds
$$\D(A_{q,\Omega_1}^\theta) = [L^q_{\sigma,\Omega_1}, \D(A_{q,\Omega_1})]_{\theta}  = {H}^{2\theta}_{q}(\Omega_1)\cap L^q_{\sigma}(\Omega_1)$$
with nonhomogeneous spaces, see \cite[Theorems 2 and 3]{Giga85}, instead of  \eqref{BIP-homogeneous-domain-Rn}. 
Moreover, due to \cite[Theorem 3]{NollSaal}, the Stokes operator $A_{\Omega_1}$ has a bounded $\mathscr H^\infty$-calculus. 
Hence, by analogy to \eqref{estimate-forth-term}, using a bounded operator $L_1:\hat H^{-1,q}_\ell(\Omega_1)$ to $L^q_\ell(\Omega_1)$, $L_1(g_1):=w_1$, we obtain that
\begin{align}
\begin{aligned}\label{estimate-forth-term1}
\Big\|\int_{-\Gamma_\omega} h(-\lambda) \BLACK w_1 \dlambda \Big\|_{L^q_\ell(\Omega_1)}
%& = \Big\|L_1 \Big(\frac{1}{2\pi i}  \int_{\Gamma_\omega} h(\lambda) (u\cdot\nabla\eta_1) \dlambda \Big)\Big\|_{L^q_\ell(\Omega_1)}\\[1ex]
%& \leq C \Big\|\frac{1}{2\pi i} \int_{\Gamma_\omega} h(\lambda) (\lambda-A_{\Omega_0})^{-1} f \dlambda \cdot\nabla\eta_1 \Big\|_{\hat H^{-1,q}_\ell(1)} \\[1ex]
& \leq C |h|_{\infty,\phi} \|f\|_{L^q_\ell (\Omega)}. 
\end{aligned} 
\end{align}
Replacing $w_j$ by $(I-\mathbb P_{\Omega_j})w_j$, \eqref{estimate-forth-term}, \eqref{estimate-forth-term1} yield the result
\begin{equation}\label{h_Q_{j,4}}
\Big\| \int_{-\Gamma_\omega} h(-\lambda)\BLACK Q_{j,4} (u,p) \dlambda \Big\|_{L^q_\ell(\Omega_j)} \leq C |h|_{\infty,\phi} \|f\|_{L^q_\ell (\Omega)},\quad j=1,2.
\end{equation} 

Now the proof of Theorem \ref{theorem-H-infty} is complete.
\end{proof}

\vspace{2ex}

A duality argument and complex interpolation yield the $\mathscr H^\infty$-calculus of $A_{q,\ell}$ for all admissible pairs $q,\ell$, {\em i..e.} for $1<q< \infty$ and $-\frac{n}{q} < \ell < \frac{n}{q'}$.  

\vspace{1ex}

\begin{cor}\label{theorem-H-infty-2}
Let $n\geq3 $, $1<q< \infty$, and  $-\frac{n}{q} < \ell < \frac{n}{q'}$.
Then the Stokes operator $A=A_{q,\ell}$ possesses on $L^q_{\sigma, \ell }(\Omega)$ a bounded $\mathscr H^\infty$-calculus with $\mathscr H^\infty$-angle $\Phi_ {A}^\infty=0$.
\end{cor}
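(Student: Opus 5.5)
We derive the full range from Theorem~\ref{theorem-H-infty} by duality and then complex interpolation in the weight exponent, keeping $q$ fixed. Write
\[
\mathcal R_1=\Big\{(q,\ell):\ \max\big(0,2-\tfrac nq\big)<\ell<\tfrac n{q'}\Big\}
\]
for the set of pairs covered by Theorem~\ref{theorem-H-infty}.

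\emph{Step 1 (duality).} By Lemma~\ref{Helmh}, $(L^q_{\sigma,\ell}(\Omega))^*=L^{q'}_{\sigma,-\ell}(\Omega)$ via the $L^2$ pairing, and $\mathbb P_{q,\ell}^*=\mathbb P_{q',-\ell}$. Hence $A_{q,\ell}^*=A_{q',-\ell}$, which is the standard symmetry of the Stokes operator. Since the resolvents are adjoint to each other, \eqref{Atslambda2} gives
\[
h(A_{q,\ell})^*=h^\sharp(A_{q',-\ell}),\qquad h^\sharp(z):=\overline{h(\bar z)}.
\]
Moreover $h^\sharp\in\HH_0(\Sigma_\phi)$ and $|h^\sharp|_{\infty,\phi}=|h|_{\infty,\phi}$. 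Thus \eqref{H-infty} holds on the dual set
\[
\mathcal R_2=\{(q,\ell):(q',-\ell)\in\mathcal R_1\}.
\]
Written out, this is $-\frac nq<\ell<\min\big(0,\frac n{q'}-2\big)$, where we used $\frac n{(q')'}=\frac nq$.

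\emph{Step 2 (interpolation of the spaces).} Fix $q$ and two weights $\ell_0,\ell_1$, and let $\ell=(1-\theta)\ell_0+\theta\ell_1$. By the Stein--Weiss interpolation theorem with change of measure,
\[
[L^q_{\ell_0},L^q_{\ell_1}]_\theta=L^q_\ell,
\]
because $\langle x\rangle^{q\ell}=(\langle x\rangle^{q\ell_0})^{1-\theta}(\langle x\rangle^{q\ell_1})^{\theta}$. The Helmholtz projections $\mathbb P_{q,\ell_j}$ coincide on $L^q_{\ell_0}\cap L^q_{\ell_1}$. Taking $P=\mathbb P$ in \eqref{interpol-P} of Theorem~\ref{retract-co} then gives
\[
[L^q_{\sigma,\ell_0},L^q_{\sigma,\ell_1}]_\theta=L^q_{\sigma,\ell}.
\]

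\emph{Step 3 (interpolation of the operators).} For $f\in L^q_{\sigma,\ell_0}\cap L^q_{\sigma,\ell_1}$ the Stokes resolvents in both spaces agree. This follows from the uniqueness part of Theorem~\ref{res-weighted} in $L^q_{\ell_0}+L^q_{\ell_1}$, which we may argue via $L^q_{\rm loc}$ regularity and the injectivity proof of Proposition~\ref{lem-var-ineq-nabla_Omega0}. Consequently the operators $h(A_{q,\ell_0})$ and $h(A_{q,\ell_1})$ are consistent, and the Riesz--Thorin-type property of $[\cdot,\cdot]_\theta$ yields
\[
\|h(A_{q,\ell})\|\le C_0^{1-\theta}C_1^\theta|h|_{\infty,\phi}.
\]
The operator obtained by interpolation is indeed $h(A_{q,\ell})$, since the contour integral \eqref{Atslambda2} converges absolutely in all three spaces.

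\emph{Step 4 (covering the strip).} For fixed $t=1/q\in(0,1)$ we check the admissible weights. The section of $\mathcal R_1$ is the interval $\big(\max(0,2-nt),\,n(1-t)\big)$, which is nonempty since $n\ge3>2$. The section of $\mathcal R_2$ is $\big(-nt,\,\min(0,n(1-t)-2)\big)$, which is nonempty for the same reason. Convex combinations of one point from each interval, together with the intervals themselves, fill the whole open interval $(-nt,n(1-t))$. This is exactly the Muckenhoupt range $-\frac nq<\ell<\frac n{q'}$.

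\emph{Angle and main obstacle.} The constants $C_0,C_1$ exist for every $\phi\in(0,\frac\pi2)$, so $\Phi_A^\infty=0$. The delicate point is the consistency in Step 3. We must know that the Stokes resolvents (equivalently, the $\mathscr H^\infty$ functionals) on $L^q_{\sigma,\ell_0}$ and $L^q_{\sigma,\ell_1}$ coincide on the intersection. We also need this intersection to be dense in $L^q_{\sigma,\ell}$, for which $C^\infty_{0,\sigma}(\Omega)$ suffices. If the uniqueness for data in the sum space proves troublesome, the fallback is to argue on $C^\infty_{0,\sigma}(\Omega)$ and pass to general data via the uniform bounds.
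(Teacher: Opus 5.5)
Your proposal is correct and takes the paper's route. You use duality via $A_{q,\ell}^*=A_{q',-\ell}$ to cover $-\frac nq<\ell<\min(0,\frac n{q'}-2)$, then complex interpolation in the weight exponent at fixed $q$ between some $\ell_0<\min(0,\frac n{q'}-2)$ and $\ell_1>\max(0,2-\frac nq)$; the details you spell out (Stein--Weiss interpolation, the Helmholtz projection as a common projection, consistency of the operators) are what the paper delegates to standard weighted interpolation results. The consistency point you flag as delicate is in fact immediate: for fixed $q$ and $\ell_0<\ell_1$ one has $L^q_{\sigma,\ell_1}\subset L^q_{\sigma,\ell_0}$ and $\D(A_{q,\ell_1})\subset\D(A_{q,\ell_0})$, so the resolvents agree simply because $\lambda+A_{q,\ell_0}$ is injective.
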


\vspace{1ex}

\begin{proof}
For $h\in\mathcal H_0(\Sigma_\phi)$, $0<\phi<\frac{\pi}{2}$, $\varphi \in C^{\infty}_{0,\sigma}(\Omega)$ and $f \in L^q_{\ell}(\Omega)$ it holds that  
\begin{align*}
    \begin{aligned}
    \langle h(A)f, \varphi\rangle & =
\Big\langle \frac{1}{2\pi i}\int_{\Gamma_\omega} h(\lambda)
(\lambda  -A\BLACK)^{-1} f  \,{\rm d}\lambda, \varphi\Big\rangle\!\!\!  & = \Big\langle f, \frac{1}{2\pi i}\int_{\Gamma_\omega} h(\lambda)
(\lambda   -A^*\BLACK)^{-1}\varphi\Big\rangle  \\
& = \langle f, h(A^*)\varphi\rangle ,
\end{aligned}
\end{align*}
where $A^*=(A_{q,\ell})^* = A_{q',-\ell}$ on $L^{q'}_{\sigma, -\ell }(\Omega)$ with $2-\frac{n}{q'} < -\ell < \frac{n}{q}$. 
Hence Theorem \ref{theorem-H-infty} implies that 
$$
|\langle h(A)f, \varphi\rangle | \leq C |h|_{\infty, \phi}\|f\|_{L^q_{\ell}(\Omega)}\|\varphi\|_{L^{q'}_{-\ell}(\Omega)}. 
$$ 
  
This estimate together with a duality argument completes the proof for pairs $(q,\ell)$ satisfying $-\frac{n}{q} < \ell < \frac{n}{q'}-2$, $\ell\leq 0$ with $\mathscr H^\infty$-angle $\Phi_ {A}^\infty=0$. 

In a second step we use complex interpolation. 
Consider any $q\in(1,\infty)$ and admissible $\ell$, say $\ell>0$. Set $q_0=q_1=q$ and find suitable
$\ell_0< \min(0,-2+n/q')$  and  $\ell_1> \max(0,2-n/q)$  such that $\ell_0 < \ell < \ell_1.$
Then there exist $\theta\in (0,1)$ such that 
$\ell = (1-\theta)\ell_0 + \theta \ell_1.$
Now complex interpolation in weighted spaces, see \cite[Theorem 1.18.5]{Triebel}, implies that  for any $h\in\mathcal H_0(\Sigma_\phi)$
$$ \|h(A)\|_{\mathcal L(L^q_{\sigma,\ell}(\Omega))} \leq C_\theta |h|_{\infty, \phi}.$$
Thus $A$ possesses also on $L^q_{\sigma,\ell}(\Omega)$ a bounded $\mathscr H^\infty$-calculus with $\Phi_{A}^\infty=0$.
 For more details on complex interpolation applied to operators $h(A)$ in an $\mathscr H^\infty$-calculus we refer to \cite[Proposition 4.9]{KaKuWe06}. 
\end{proof}

\setlength{\unitlength}{0.9mm}
%\makebox{
\hspace*{3mm}
\vspace*{5mm}
\begin{picture}(90,120) 
\vector(0,1){130}
\put(0,60){\vector(1,0){140}}
\put(0,120){\line(2,-1){140}}
\put(0,100){\line(2,-1){140}}
\put(0,80){\line(2,-1){140}}
\put(0,60){\line(2,-1){140}}
\put(-4,59){$0$}
\put(-4,79){$1$}
\put(-4,99){$2$}
\put(-4,119){$3$}
\put(-7,39){$-1$}
\put(-7,19){$-2$}
\put(-7,-1){$-3$}
\put(-4,128){$\ell$}

\put(119,55){$3$}
\put(141,59){$\frac{3}{q}$}
\put(143,49){$\frac{3}{q'}$}
\put(143,29){$2-\frac{3}{q}$}
\put(143,9){$-2+\frac{3}{q'}$}
\put(143,-11){$-\frac{3}{q}$}

\put(-1,120){\line(1,0){2}}
\put(-1,100){\line(1,0){2}}
\put(-1,80){\line(1,0){2}}
\put(-1,60){\line(1,0){2}}
\put(-1,40){\line(1,0){2}}
\put(-1,20){\line(1,0){2}}
\put(-1,0){\line(1,0){2}}
\put(120,59){\line(0,1){2}}

\put(30,90){$\ell\geq 0$}
\put(80,67){$\ell\geq 0$}
\put(30,50){$\ell\leq 0$}
\put(80,27){$\ell\leq 0$}
%\put(10,0){\multiput(0,0)(2.5,5){10}{\line(1,2){2}}}
%\put(30,0){\multiput(0,0)(0,5){10}{\line(0,1){2}}}
% For interpolation
\put(15,56){$\bullet$} 
\put(15,101){$\bullet$} 
\put(16,56){\line(0,1){45}}
\put(15,83){$\ast$} 

%%%%%%%%%%% Dashed set \ell\geq 0
\put(81,61){\multiput(0,0)(4,0){9}{\line(1,0){3}}}
\put(2,102){\multiput(0,0)(0,4){4}{\line(0,1){3}}}
\put(1,101){\multiput(0,0)(6,-3){14}{\line(2,-1){4}}}
\put(1,118){\multiput(0,0)(6,-3){19}{\line(2,-1){4}}}

%%%%%%%%%%% Dashed set \ell\leq 0
\put(3,59){\multiput(0,0)(4,0){9}{\line(1,0){3}}}
\put(2,59){\multiput(0,0)(4,-2){15}{\line(2,-1){2}}}
\put(121,2){\multiput(0,0)(0,4){4}{\line(0,1){3}}}
\put(39,59){\multiput(0,0)(6,-3){14}{\line(2,-1){4}}}
\put(4,59){\multiput(0,0)(6,-3){20}{\line(2,-1){4}}}
 
\end{picture}

\vspace*{7mm}
%\hspace*{10mm} 
\begin{minipage}[t][25mm][c]{0.9\textwidth}{\bf Figure 1: Case $n=3$.} The areas bounded by the dashed polygonal lines define pairs $\frac{3}{q},\ell$ for which Theorem \ref{theorem-H-infty} and the adjoint setting apply. Interpolation of both results is indicated by the vertical line with bullets as endpoints yielding a result at the asterisk. For $n=4$ the middle strip collapses to the line $2-\frac{4}{q} = -2+\frac{4}{q'}$. For dimensions $n\geq 5$, the two areas with dashed boundaries overlap.
\end{minipage}

\vspace{1ex}

\end{section}

\subsection{The square root of $A$ on exterior domains with weights}\label{S3.3}

In this subsection let $\Omega \subset\IR^n$ be an exterior domain with boundary of class of $C^2$. We start with identities of weighted homogeneous function spaces. The crucial result in Proposition \ref{HhatOmega} is part (ii); however, the range of exponents $q,\ell$ is too restricted for later applications. We follow arguments of Borchers and Miyakwa (\cite{BoMi90, BoMi95}) who obtained similar results for $L^q$ spaces without weights.

\begin{prop}\label{HhatOmega}
(i) For $1<q<\infty $ and  $-\frac{n}{q}<\ell<\frac{n}{q'}$ there holds 
\begin{equation}\label{Hsigma-Hdiv}
 H^{1,q}_{\sigma,\ell,0}(\Omega) = \big\{u\in H^{1,q}_{\ell,0}(\Omega): \div u=0\big\}.
\end{equation}

(ii) If $2-\frac{n}{q}<\ell<\frac{n}{q'}$, then
$$ \big[L^q_{\sigma,\ell}(\Omega), \widehat\D(A_{q,\ell})\big]_{1/2} = \widehat H^{1,q}_{\sigma,\ell,0}(\Omega).$$

(iii) Let $1<q_0<q_1<\infty$, $1-\frac{n}{q_1} < \ell <\frac{n}{q_0'}$ and $\frac{1}{q} = \frac{1-\theta}{q_0} + \frac{\theta}{q_1} $ with $0<\theta<1$. Then
$$ \big[\widehat H^{1,q_0}_{\sigma,\ell,0}(\Omega), \widehat H^{1,q_1}_{\sigma,\ell,0}(\Omega)\big]_\theta = \widehat H^{1,q}_{\sigma,\ell,0}(\Omega).$$
\end{prop}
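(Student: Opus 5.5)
Part (i) is a weighted version of the classical characterization of solenoidal $H^{1,q}_0$ fields. The inclusion ``$\subset$'' is immediate, since elements of $C^\infty_{0,\sigma}(\Omega)$ are divergence free and the divergence is continuous in $H^{1,q}_\ell$. For ``$\supset$'' I would use a cut-off and correction argument. Take $u\in H^{1,q}_{\ell,0}(\Omega)$ with $\div u=0$ and let $\chi_k$ be the cut-off functions from the proof of Lemma~\ref{D^1}. Then $\chi_k u$ has compact support and
\[
\div(\chi_k u)=u\cdot\nabla\chi_k ,
\]
which has vanishing mean on the annulus $B_{2k}\setminus B_k$. I correct this divergence with the Bogovski\u{\i} operator on the annulus, rescaled from a fixed annulus so that the constant is uniform in $k$. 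The weight is comparable to $k^{\ell q}$ there, so the correction is bounded in $H^{1,q}_\ell$ by $C\|u\,\nabla\chi_k\|_{L^q_\ell}\lesssim\|u\|_{L^q_\ell(|x|>k)}\to0$. The remaining compactly supported solenoidal field lies in $H^{1,q}_0$ of a bounded smooth domain, and there the unweighted result (Galdi) gives approximation by $C^\infty_{0,\sigma}$. Because the weight is bounded above and below on bounded sets, this approximation also holds in the weighted norm.

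For part (ii) I would use Theorem~\ref{retract-co} with a retraction built from the $\IR^n$ result and the bounded-domain result via cut-offs, similar to the proof of Theorem~\ref{theorem-H-infty}. It is more direct, though, to reduce to Theorem~\ref{A1/2-nabla_A1/2-all}. By Proposition~\ref{Complex Interpolation}(i) and Corollary~\ref{theorem-H-infty-2},
\[
[L^q_{\sigma,\ell},\widehat\D(A)]_{1/2}=\widehat\D(A^{1/2}),\qquad \|u\|_{[\,\cdot\,]_{1/2}}\simeq\|A^{1/2}u\|_{L^q_\ell}.
\]
It therefore suffices to prove $\|A^{1/2}u\|\simeq\|\nabla u\|$ on $C^\infty_{0,\sigma}(\Omega)$ and to show that $C^\infty_{0,\sigma}$ (or $\D(A)$) is dense in both spaces.

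For $\|\nabla u\|\lesssim\|A^{1/2}u\|$ I use duality. By Theorem~\ref{var-ineq-sol}, $\|\nabla u\|$ is bounded by $\sup|\langle\nabla u,\nabla v\rangle|/\|\nabla v\|$. For $u,v$ in $\D(A)$ and $\D(A^*)$ respectively,
\[
\langle\nabla u,\nabla v\rangle=\langle Au,v\rangle=\langle A^{1/2}u,(A^*)^{1/2}v\rangle .
\]
It then remains to bound $\|(A_{q',-\ell})^{1/2}v\|\lesssim\|\nabla v\|_{L^{q'}_{-\ell}}$. This is the other direction for the dual pair, and it needs no restriction on the weight.

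That other direction, $\|A^{1/2}u\|\lesssim\|\nabla u\|$ for all admissible $(q,\ell)$, I would prove in the same way in reverse. I write
\[
\|A^{1/2}u\|=\sup_{\varphi}\frac{|\langle A^{1/2}u,\varphi\rangle|}{\|\varphi\|},
\]
take $\varphi=(A^*)^{1/2}\psi$ with $\psi\in\D(A^*)$, and obtain $\langle A^{1/2}u,\varphi\rangle=\langle\nabla u,\nabla\psi\rangle$. The task is then $\|\nabla\psi\|_{L^{q'}_{-\ell}}\lesssim\|(A^*)^{1/2}\psi\|$, which is the previous direction for the dual pair and makes the argument circular. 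To break the circularity I plan to prove one direction independently, using the $\IR^n$ identity (Corollary~\ref{cor:Rn-domains-char}: $\widehat\D(A^{1/2})=\widehat H^{1,q}_{\sigma,\ell}$) together with a cut-off perturbation argument for the resolvent. I would write $A^{1/2}=\frac{1}{\pi}\int_0^\infty t^{-1/2}A(t+A)^{-1}\,dt$ and control the commutator terms by Theorem~\ref{res-weighted}(iii), which is exactly where the hypothesis $2-\frac nq<\ell<\frac{n}{q'}$ enters.

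Part (iii) then follows from (ii) and reiteration. Alternatively, and more robustly, it follows from Theorem~\ref{retract-co} with the retraction $\nabla u\mapsto$ the solenoidal Dirichlet projection, using the variational inequality \eqref{var-ineq-nabla-sigma} for $q_0$ and $q_1$ to obtain a common bounded projection from $L^{q_j}_\ell$ onto $\nabla\widehat H^{1,q_j}_{\sigma,\ell,0}$. This reduces (iii) to interpolating weighted $L^q$ spaces. I expect the main obstacle to be establishing one direction of the norm equivalence $\|A^{1/2}u\|\simeq\|\nabla u\|$ independently, and in particular controlling the low-frequency regime $\lambda\to0$ on the exterior domain in weighted norms.
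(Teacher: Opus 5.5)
Your argument for (ii) does not prove either inequality of $\|A_{q,\ell}^{1/2}u\|_{L^q_\ell}\simeq\|\nabla u\|_{L^q_\ell}$. The identification $[L^q_{\sigma,\ell},\widehat\D(A)]_{1/2}=\widehat\D(A^{1/2})$ via $BIP$ is correct, and duality does turn one inequality for $(q,\ell)$ into the reverse inequality for $(q',-\ell)$. But $2-\frac nq<\ell$ puts $(q',-\ell)$ in the range $-\ell<\frac nq-2$, where nothing is known independently, so the argument is circular, as you noticed. The Balakrishnan/cut-off plan meant to break the circle, including the low-frequency control you name as the main obstacle, is never carried out. Also, Theorem~\ref{A1/2-nabla_A1/2-all} (proved as Theorem~\ref{A1/2-nabla_A1/2}) is deduced in the paper \emph{from} this proposition, so it cannot be your reduction target.

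The missing idea is that no resolvent analysis at level $\frac12$ is needed. You only need operators bounded at both endpoints, then interpolate using $BIP$ on $\Omega$ (Corollary~\ref{theorem-H-infty-2}, Proposition~\ref{Complex Interpolation}) and Lemma~\ref{interp-hatH-Rn in k} on $\IR^n$. For $\|\nabla u\|\lesssim\|A^{1/2}u\|$ the paper uses the extension operator $E$ of Lemma~\ref{extension}. It is bounded $L^q_{\sigma,\ell}(\Omega)\to L^q_\ell(\IR^n)$, and also $\widehat\D(A_{q,\ell})\to\widehat H^{2,q}_\ell(\IR^n)$ by $\|\nabla^2u\|_{L^q_\ell}\le c\|Au\|_{L^q_\ell}$ from \eqref{equ:nabla2-A}. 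That homogeneous endpoint estimate is exactly where the low-frequency information and the condition $2-\frac nq<\ell$ enter. For the converse the paper builds the solenoidality-preserving restriction $\mathscr Ru=r_\Omega u+(\mathbb B\div-I)r_{\Omega\cap B}E_c r_{\Omega^c}u$, with $\mathbb B$ a Bogovski\u{\i} operator on $\Omega\cap B$. It shows $\mathscr R$ is bounded $L^q_{\sigma,\ell}(\IR^n)\to L^q_{\sigma,\ell}(\Omega)$, and $\widehat H^{2,q}_{\sigma,\ell}(\IR^n)\to\widehat\D(A_{q,\ell})$ via Hardy \eqref{Hardy-ineq} and Rellich \eqref{Rellich-ineq} on $\IR^n$. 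It then interpolates and uses $\mathscr RE_0=I$ on $\widehat H^{1,q}_{\sigma,\ell,0}(\Omega)$, where $E_0$ is the zero extension.

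Part (iii) does not follow from (ii) by reiteration. Part (ii) interpolates in smoothness at fixed $q$, whereas (iii) interpolates between different integrability exponents and is claimed for $1-\frac n{q_1}<\ell$, where (ii) is not available. Your projection alternative needs $u\mapsto\langle\nabla u,\nabla\cdot\rangle$ to map $\widehat H^{1,q_j}_{\sigma,\ell,0}(\Omega)$ onto the dual of $\widehat H^{1,q_j'}_{\sigma,-\ell,0}(\Omega)$. Theorem~\ref{var-ineq-sol} gives only injectivity with closed range. Surjectivity, i.e.\ injectivity of the adjoint, would require \eqref{var-ineq-nabla-sigma} also for $(q_j',-\ell)$, which adds the restriction $\ell<\frac n{q_0'}-1$. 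The paper instead interpolates $\mathscr R\mathbb P$ between $\widehat H^{1,q_0}_\ell(\IR^n)$ and $\widehat H^{1,q_1}_\ell(\IR^n)$ (Lemma~\ref{interp-hatH-Rn in q}), which needs only Hardy's inequality, and recovers $u=\mathscr R\mathbb PE_0u$.

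Your cut-off/Bogovski\u{\i} approximation for (i) is a legitimate alternative to the paper's duality proof (de Rham plus Proposition~\ref{lem-var-ineq-nabla_Omega0}(ii)), but it has a hole. The spaces are closures in $\|\nabla\cdot\|_{L^q_\ell}$, so in general $u\notin L^q_\ell$, and your bound $\|u\,\nabla\chi_k\|_{L^q_\ell}\lesssim\|u\|_{L^q_\ell(|x|>k)}$ may have an infinite right-hand side. For $\ell>1-\frac nq$ you can use Hardy's inequality instead. For $\ell\le 1-\frac nq$ there is no Hardy inequality and the space contains fields tending to nonzero constants at infinity. There you need Poincar\'e's inequality on dyadic annuli when $\ell<1-\frac nq$, plus an extra averaging argument as in Lemma~\ref{D^1} at $\ell=1-\frac nq$. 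The paper's duality proof avoids this case distinction altogether.
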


\begin{proof} (i) First we recall that, by definition, $H^{
1,q}_{\ell,0}(\Omega) = \overline{C^\infty_0(\Omega)}^{\|\nabla \cdot\|_{L^q_\ell(\Omega)}}$ and similarly $X:=H^{1,q}_{\sigma,\ell,0}(\Omega) = \overline{C^\infty_{0,\sigma}(\Omega)}^{\|\nabla \cdot\|_{L^q_\ell(\Omega)}}.$ 
Since $X$ is a closed subspace of $Y:= \big\{v\in \widehat H^{
1,q}_{\ell,0}(\Omega): \div v=0\big\}$, it suffices to prove that 
$X$ is dense in $Y$. 
To this aim, let $f\in H^{-1,q'}_{-\ell}(\Omega) := \big(H^{1,q}_{\ell,0}(\Omega)\big)^*$ satisfy $f=0$ on $X$. In particular, $\langle f,\phi\rangle =0$ for all $\phi\in C^\infty_{0,\sigma}(\Omega)$.
By \cite[Lemma II.2.2.1]{Sohr} there exists $p\in L^{q'}_{\rm loc} (\overline\Omega)$ such that $f=\nabla p$ in the sense of distributions. Actually, the authors only state that $p\in L^{q'}_{\rm loc}(\Omega)$; however, arguments from the proof of \cite[Lemma II.2.2.2]{Sohr} on bounded Lipschitz domains show that indeed $p\in L^{q'}_{\rm loc}(\overline\Omega)$.

By Proposition \ref{lem-var-ineq-nabla_Omega0} (ii) there holds even $p\in L^{q'}_{-\ell}(\Omega)$ - up to an additive constant which is set equal to $0$. Then for any $v\in Y$ there exists a sequence $(v_j)\in C^\infty_0(\Omega)$ such that $\|\nabla (v_j-v)\|_{L^q_\ell} \to 0$ as $j\to\infty$. Moreover, $\div v_j\to 0$ in $L^q_\ell$  since $\div v=0$. Thus
$$ \langle f,v \rangle = \lim_j\, \langle f,v_j\rangle  = \lim_j\, \langle \nabla p,v_j \rangle = - \lim_j\, \langle p,\div v_j \rangle =0,$$
and we conclude that $f=0$ on Y. Consequently, by Hahn-Banach's theorem, $X=Y$.

(ii) Let $E$ and $E_c$ denote extension operators such that for $1<q<\infty$, $2-\frac{n}{q}<\ell<\frac{n}{q'}$, and for $k=0,1,2$,
\begin{align}\label{ext E} 
E: & \,\widehat H^{k,q}_\ell(\Omega)\, \to \widehat H^{k,q}_\ell(\IR^n),\quad
\|Eu\|_{\widehat H^{k,q}_\ell(\IR^n)}\, \leq C\|u\|_{\widehat H^{k,q}_\ell(\Omega)},
\\
\label{ext E_c} 
E_c: & \, H^{k,q}_\ell(\Omega^c) \to H^{k,q}_\ell(\IR^n),\quad
\|E_cu\|_{H^{k,q}_\ell(\IR^n)} \leq C\|u\|_{H^{k,q}_\ell(\Omega^c)},
\end{align}
satisfying $\supp E_cu \subset B=B_R(0)$ and $ E_cu \in  H^{k,q}_{0,\ell}(B)$. \BLACK
Concerning \eqref{ext E} we find  $E$ by Lemma \ref{extension} on the exterior domain $\Omega$. 
%\BLUE (I agree. This together with the resolvent estimate on weighted spaces, we  do not need the condition $q< n/2$ in contrast to Borchers and Miyakawa.)\BLACK  
%
For \eqref{ext E_c} we use a standard extension operator $E_c$ for $\Omega^c$. %((no homogeneous spaces! But see estimate below)). 
%from Lemma \ref{extension} \BLUE (probably not allowed) \RED and a cut off function $\eta\in C^\infty_0(B)$ to define $E_c =\eta E$. Then for $\nabla^2 (E_cu)$ we get {\em e.g.} the perturbation term $(\nabla^2\eta)Eu$ which is estimated by Rellich's inequality \eqref{Rellich-ineq} as follows: 
%\begin{align*}\|(\nabla^2\eta)Eu\|_{L^q_\ell(\IR^n)} & \leq c\|Eu\|_{L^q_\ell(B)} \leq c\|Eu\|_{L^q_{-2+\ell}(B)} \leq c    \|Eu\|_{L^q_{-2+\ell}(\IR^n)}\\    & \leq c\|\nabla^2 Eu\|_{L^q_{\ell}(\IR^n)} \leq c\|\nabla^2 u\|_{L^q_{\ell}(\Omega^c)}\end{align*} 
%$E_cu$ and $\nabla E_cu$ vanish on $\partial B$. Then Poincar\'e's inequality applied once or twice yields the estimate \eqref{ext E_c} for $k=1$ and $k=2$, respectively. 
%((If we use a standard extension operator $E$ - instead of $E$ from Lemma 2.3 - we do not get the last estimate above.))
%\BLUE(In Theorem 1.4 by Chua, maybe we can take $k=2$ only on the whole space extension. 
%\RED DELETE the following sentence since it concerns only the full norm $H^{k,q}_\ell$: 
%"Here we note that the weight $\langle\cdot\rangle^{\ell q}$ does not play any role on $B$. By analogy, further perturbation term are estimated with the help of Hardy's inequality \eqref{Hardy-ineq}".
%\BLUE We use the weighted Rellich inequality for $E_c$ here by the range $2-n/q<\ell < n/q'$ and get the homogeneous estimate. On the first order derivative, we can use the weighted Hardy inequality for $1-n/q<\ell < n/q'$. In this sense, I think that the weighted approach is crucial.)
\BLACK 

Restricting to solenoidal vector fields we recall from \eqref{equ:nabla2-A} that $\|\nabla^2 u\|_{L^{q}_\ell(\Omega)} \leq c\|A u\|_{L^{q}_\ell(\Omega)}$ for $u\in \widehat \D(A_{q,\ell})$. Hence $E$ is bounded from $L^q_\ell(\Omega)$ to $L^q_\ell(\IR^n)$ and from $\widehat \D(A_{q,\ell})$ to $\widehat H^{2,q}_\ell(\IR^n)$. Now complex interpolation and \eqref{BIP-homogeneous-domain-Rn}  imply that 
$$ E: \D(A_{q,\ell}^{1/2}) \to \widehat H^{1,q}_\ell(\IR^n) $$
is bounded so that 
\begin{equation}\label{nabla by A12} 
\|\nabla u\|_{L^q_\ell(\Omega)} \leq \|\nabla Eu\|_{L^q_\ell(\IR^n)} \leq C \|A_{q,\ell}^{1/2} u\|_{L^q_\ell(\Omega)}. 
\end{equation}

To prove the converse to \eqref{nabla by A12} we will construct a restriction operator $\mathscr R$ preserving solenoidality such that 
\begin{align}\label{A-mathscr R} 
\begin{aligned}
\mathscr R &: L^q_{\sigma,\ell}(\IR^n)\, \to L^q_{\sigma,\ell}(\Omega),\\
\mathscr R  &: \widehat H^{2,q}_{\sigma,\ell}(\IR^n) \to \widehat \D(A_{q,\ell})
\end{aligned}
\end{align}
are bounded.  

For the construction of $\mathscr R$ we use for any domain $D\subset \IR^n$ the trivial restriction, $r_D$, of functions on $\IR^n$ to $D$, and let $\mathbb B$ denote a Bogovski\u{\i} operator on the domain $\Omega\cap B$. 
Then let
\begin{equation}\label{def-mathscr R}  
\mathscr R u =  r_{\Omega} u + (\mathbb B\,\div -I)r_{\Omega\cap B}\, E_c r_{\Omega^c}u, \quad u\in C^\infty_{0,\sigma}(\IR^n).
\end{equation}
Note that the necessary condition to apply $\mathbb B$, namely
$$ \int_{\Omega\cap B} \div( r_{\Omega\cap B} \,E_c r_{\Omega^c}u) \dx = \int_{\partial(\Omega\cap B)} (E_c r_{\Omega^c}u)\cdot \textsl{n} \dsigma =0 $$ 
is satisfied since $\div u=0$ on $\Omega^c$. 
 Hence $ \div \mathscr Ru \big|_{\Omega} = \div u\big|_{\Omega}= 0$.
Moreover, on $\partial(\Omega\cap B)$, we see that $\div( r_{\Omega\cap B}\, E_c r_{\Omega^c}u) = \div( -u+ r_{\Omega\cap B}\, E_c r_{\Omega^c}u)$ vanishes  since $r_{\Omega\cap B}\, E_c r_{\Omega^c}u$ and its first normal derivative $\partial_{\textsl{n}}$ 
%\RED (( correct for $\partial_n$??))\BLACK
vanish on $\partial B$; moreover, $-u+ r_{\Omega\cap B} \,E_c r_{\Omega^c}u$ vanishes on $\partial \Omega$,  including $\partial_{\textsl{n}}$.   
Thus  $\mathbb B\,\div r_{\Omega\cap B}\, E_c r_{\Omega^c}u \in H^{2,q}_{0}(\Omega\cap B)$,  and $\mathscr R u$ as well as $\partial_{\textsl{n}} \mathscr R$ vanish on $\partial\Omega$.  

To estimate second order derivatives of $\mathscr Ru$ in $\eqref{A-mathscr R}_2$ we see that 
\begin{align*}
\big\|\nabla^2 \mathbb B\,\div (r_{\Omega\cap B}\, E_c r_{\Omega^c} u)\big\|_{L^q(\Omega\cap B)} 
& \leq C \Big(\big\|\nabla^2 E_c r_{\Omega^c} u\big\|_{L^q(B)} + \big\|\nabla E_c r_{\Omega^c} u \big\|_{L^{q}(\Omega\cap B)}\Big)\\
&  \leq C\Big(\|\nabla^2 u\big\|_{L^q(\Omega^c)} + \|\nabla u\big\|_{L^q(\Omega^c)} + \|u\big\|_{L^q(\Omega^c)}\Big) \\
&  \leq C\Big(\|\nabla^2 u\big\|_{L^q_\ell(\IR^n)} + \|\nabla u\big\|_{L^{q}_{-1+\ell}(\IR^n)} + \|u\big\|_{L^{q}_{-2+\ell}(\IR^n)}\Big) \\
%& \leq DELETE: C\big\|E_c r_{\Omega^c} u\big\|_{H^{2,q}(\Omega\cap B)}\\
& \leq C\|u\|_{\widehat H^{2,q}_\ell(\IR^n)}
\end{align*}
where we used Hardy's inequality \eqref{Hardy-ineq} and Rellich's inequality \eqref{Rellich-ineq} on the whole space $\IR^n$. 
Recall that the weight $\langle\cdot\rangle^{\ell q}$ does not play any role on $\Omega^c$. 
 Hence $\|\nabla^2 \mathscr R u\|_{L^q_\ell(\Omega)} \leq C\|u\|_{\widehat H^{2,q}_\ell(\IR^n)}$. \BLACK
The estimate of first order derivatives of $\mathscr Ru$ in $\eqref{A-mathscr R}_2$ is similar, but exploits only \eqref{Hardy-ineq}. \BLACK

Furthermore, note that Bogovski\u{\i}'s operator is also well defined and bounded as an operator  $\mathbb B: \widehat H^{-1,q}_0(\Omega\cap B) := (\widehat H^{1,q}(\Omega\cap B))^* \to L^q(\Omega\cap B)$, see \cite[Theorem III.3.5]{Galdi-steady}.  
Thus
\begin{align*}
\big\|\mathbb B\,\div (r_{\Omega\cap B} \,E_c r_{\Omega^c} u)\big\|_{L^q(\Omega)} & =  \big\|\mathbb B\,\div (r_{\Omega\cap B} \,E_c r_{\Omega^c}u) \big\|_{L^q(\Omega\cap B)} \\
& \leq C\big\|\div (r_{\Omega\cap B}\, E_c r_{\Omega^c} u)\big\|_{H^{-1,q}_0(\Omega\cap B)}\\
& \leq C\big\|E_c r_{\Omega^c} u\big\|_{L^q(B)} \leq C\|u\|_{L^q_\ell(\IR^n)}. 
\end{align*}
These estimates yield $\mathscr R: L^q_{\sigma,\ell}(\IR^n) \to L^q_{\sigma,\ell}(\Omega)$, {\em i.e.} $\eqref{A-mathscr R}_1$,  as well as
$\eqref{A-mathscr R}_2$ since by Lemma \ref{Helmh} $\|A u\|_{L^{q}_\ell(\Omega)} \leq c \|\nabla^2 u\|_{L^{q}_\ell(\Omega)}$.
Hence by \eqref{interp hatH sigma}, \eqref{BIP-homogeneous-domain-Rn} and complex interpolation $\mathscr R$ is also a bounded operator $\mathscr R:\widehat H^{1,q}_{\sigma,\ell}(\IR^n) \to \widehat \D(A_{q,\ell}^{1/2}).$ 
Since obviously $\mathscr R E_0=I$ on $\widehat H^{1,q}_{ \sigma,\ell,0\BLACK}(\Omega)$ where $E_0$ denotes the trivial extension by $0$ from $\Omega$ to $\IR^n$, we obtain that 
\begin{equation}\label{A12 by nabla}  
\big\|A_{q,\ell}^{1/2} u\big\|_{L^q_\ell(\Omega)} = \big\|A_{q,\ell}^{1/2} \mathscr R E_0 u\big\|_{L^q_\ell(\Omega)} \leq C\|E_0u\|_{\widehat H^{1,q}_{\sigma,\ell}(\IR^n)} = C\|\nabla u\|_{L^q_\ell(\Omega)}.  
\end{equation}
Combining \eqref{nabla by A12} and \eqref{A12 by nabla}
we proved (ii).

(iii) By definition, $ \mathscr R$ is bounded from $\widehat H^{1,q}_{\sigma,\ell}(\IR^n)$ to $\widehat H^{1,q}_{\sigma,\ell,0}(\Omega)$ provided that $u\in \widehat H^{1,q}_{\sigma,\ell}(\IR^n)$ can be identified with a function. Thus assuming $1-\frac{n}{q}<\ell<\frac{n}{q'}$, also $\mathscr R \mathbb P$ is bounded from $\widehat H^{1,q}_{\ell}(\IR^n)$ to $\widehat H^{1,q}_{\sigma,\ell,0}(\Omega)$. 
Then by Lemma \ref{interp-hatH-Rn in q},  for $0\leq\theta \leq 1$, $1<q_0<q_1< \infty\BLACK$ and $\frac{1}{q} = \frac{1-\theta}{q_0} + \frac{\theta}{q_1}$, such that  $1-\frac{n}{q_0} < 1-\frac{n}{q} <1-\frac{n}{q_1} <\ell$ and $\ell < \frac{n}{q_0'} < \frac{n}{q} <\frac{n}{q_1'}$, \BLACK
$$ \mathscr R \mathbb P: \big[\widehat H^{1,q_0}_{\ell}(\IR^n),\widehat H^{1,q_1}_{\ell}(\IR^n)\big]_\theta = \widehat H^{1,q}_{\ell}(\IR^n) \to \big[\widehat H^{1,q_0}_{\sigma,\ell,0}(\Omega), \widehat H^{1,q_1}_{\sigma,\ell,0}(\Omega)\big]_\theta $$ 
is bounded as well. 
Thus for any $u\in \widehat H^{1,q}_{\sigma,\ell,0}(\Omega)$ we get that 
\begin{align}\label{interp-ZP}
\|u\|_{\big[\widehat H^{1,q_0}_{\sigma,\ell,0}(\Omega),\, \widehat H^{1,q_1}_{\sigma,\ell,0}(\Omega)\big]_\theta} & = \|\mathscr R \mathbb P E_0 u\|_{\big[\widehat H^{1,q_0}_{\sigma,\ell,0}(\Omega),\, \widehat H^{1,q_1}_{\sigma,\ell,0}(\Omega)\big]_\theta} \nonumber\\
& \leq C\|\nabla E_0 u\|_{L^q_\ell(\IR^n)} = C\|\nabla u\|_{L^q_\ell(\Omega)}.
\end{align}

Conversely, we interpolate $\nabla: \widehat H^{1,q}_{\sigma,\ell,0}(\Omega) \to L^q_\ell(\Omega)$ with $q=q_0$, $q=q_1$ and see that 
$$ \nabla: \big[\widehat H^{1,q_0}_{\sigma,\ell,0}(\Omega), \widehat H^{1,q_1}_{\sigma,\ell,0}(\Omega)\big]_\theta \to L^q_\ell(\Omega)$$
is bounded. The corresponding estimate is converse to \eqref{interp-ZP}. Summarizing, we proved (iii).
\end{proof}

\vspace*{2ex}

Next we extend the range of admissible exponents $q,\ell$ of Proposition \ref{HhatOmega} to get with Theorem \ref{A1/2-nabla_A1/2} 
%and Corollary \ref{domains:A1/2-nabla_A1/2} 
one of the main results of this article.

\begin{thm}\label{A1/2-nabla_A1/2}
Let $\Omega$ be a $C^2$ exterior domain in $\IR^n$ of class $C^2$ and assume $1<q<\infty$ and $-\frac{n}{q} <\ell< \frac{n}{q'}$. 

(i) There exists a constant $C>0$ independent of $u\in \widehat H^{1,q}_{\sigma,\ell,0}(\Omega)$ such that 
\begin{equation}\label{A1/2-nabla}
    \big\|A_{q,\ell}^{1/2} u\big\|_{L^q_\ell(\Omega)} \leq C\|\nabla u \|_{L^q_\ell(\Omega)}, \quad u\in \widehat H^{1,q}_{\sigma,\ell}(\Omega).
\end{equation} 

(ii) Additionally let $1-\frac{n}{q} <\ell< \frac{n}{q'}$. Then there exists $C>0$ independent of $u$ such that
\begin{equation}\label{nabla-A1/2}
    \|\nabla u \|_{L^q_\ell(\Omega)} \leq C\big\|A_{q,\ell}^{1/2} u\big\|_{L^q_\ell(\Omega)}, \quad u\in \D(A_{q,\ell}^{1/2}).
    \end{equation}

(iii) Under the assumption $1-\frac{n}{q} <\ell< \frac{n}{q'}$ there holds - with equivalent norms -
$$\widehat \D(A_{q,\ell}^{1/2}) = \widehat H^{1,q}_{\sigma,\ell,0}(\Omega).$$
\end{thm}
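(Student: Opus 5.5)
The plan is a duality argument for (i), the variational inequality of Theorem \ref{var-ineq-sol} for (ii), and density plus completion for (iii). The weighted results of Proposition \ref{HhatOmega} alone cover too narrow a range of exponents.

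For (i), take $u\in C^\infty_{0,\sigma}(\Omega)$ and test $A^{1/2}u$ against $\varphi\in C^\infty_{0,\sigma}(\Omega)$. Since $C^\infty_{0,\sigma}$ is dense in $L^{q'}_{\sigma,-\ell}$, the range of $A_{q',-\ell}^{1/2}$ restricted to a core is dense. By Lemma \ref{conv-u-D-u} (i) and its fractional analogue, we may write $\varphi=A_{q',-\ell}^{1/2}\psi$ with $\psi\in\widehat\D(A_{q',-\ell}^{1/2})$. Formally this gives
\[
\langle A^{1/2}u,\varphi\rangle=\langle A^{1/2}u,A^{*1/2}\psi\rangle=\langle Au,\psi\rangle=\langle\nabla u,\nabla\psi\rangle ,
\]
the last step by integration by parts, using $\mathbb P$-invariance for solenoidal $\psi$. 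Hence
\[
|\langle A^{1/2}u,\varphi\rangle|\le \|\nabla u\|_{L^q_\ell}\,\|\nabla\psi\|_{L^{q'}_{-\ell}} .
\]
To close the estimate we need $\|\nabla\psi\|_{L^{q'}_{-\ell}}\le C\|A^{1/2}_{q',-\ell}\psi\|_{L^{q'}_{-\ell}}=C\|\varphi\|$, which is the direction in (ii) for the dual pair $(q',-\ell)$. This forces $1-\frac{n}{q'}<-\ell$, i.e.\ $\ell<\frac{n}{q'}-1$.

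For the remaining $\ell$ I would interpolate, as in Corollary \ref{theorem-H-infty-2}. Fix $q$ and choose $\ell_0<\ell<\ell_1$ so that both endpoints satisfy (i). Near the upper end this is provided by Proposition \ref{HhatOmega} (ii) when $\ell>2-\frac nq$; otherwise the duality just described covers the endpoint. Interpolation applies to the operators $A^{1/2}$ via the bounded $\mathscr H^\infty$-calculus (uniform constants for $A^{1/2}(A+\varepsilon)^{-1/2}$) combined with $\nabla$-based spaces, which interpolate in $\ell$ by Stein--Weiss (\cite[Theorem 1.18.5]{Triebel}). The same spaces also interpolate in $q$ by the retraction $\mathscr R\mathbb P$ from Proposition \ref{HhatOmega} (iii). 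Because the two dual ranges overlap (Figure 1), every admissible $(q,\ell)$ should be reachable.

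For (ii), with $1-\frac nq<\ell$, I would first take $u\in\D(A_{q,\ell})$ and apply Theorem \ref{var-ineq-sol}:
\[
\|\nabla u\|\le C\sup_v\frac{|\langle\nabla u,\nabla v\rangle|}{\|\nabla v\|_{L^{q'}_{-\ell}}},
\qquad v\in\widehat H^{1,q'}_{\sigma,-\ell,0}(\Omega).
\]
Next, $\langle\nabla u,\nabla v\rangle=\langle Au,v\rangle=\langle A^{1/2}u,A_{q',-\ell}^{1/2}v\rangle$, and part (i) applied to the pair $(q',-\ell)$, which is always admissible there, gives $\|A^{1/2}_{q',-\ell}v\|\le C\|\nabla v\|$. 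This yields (ii) on $\D(A)$, and it extends to $\D(A^{1/2})$, and to the completion, since $\D(A)$ is a core for $A^{1/2}$.

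For (iii), combine (i) and (ii): the two norms are equivalent on $C^\infty_{0,\sigma}(\Omega)\subset\D(A)$. I would then show $C^\infty_{0,\sigma}$ is dense in $\widehat\D(A^{1/2})$, using $\D(A)$ dense there and Proposition \ref{HhatOmega} (i) plus a cutoff/Bogovski\u{\i} approximation of $\D(A)$ elements in the $\nabla$-norm, so both completions coincide.

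The main obstacle is (i) for $\ell$ outside the dual range: making the interpolation of $\widehat\D(A^{1/2}_{q,\ell})$ in $\ell$ rigorous for homogeneous spaces. I would try $\varepsilon$-regularized operators $(A+\varepsilon)^{1/2}$ with uniform bounds and then let $\varepsilon\to0$.
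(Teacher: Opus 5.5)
Your architecture is the paper's: (i) by duality from the reverse estimate for the dual pair, completed by complex interpolation in $\ell$; (ii) from Theorem~\ref{var-ineq-sol} plus (i) for $(q',-\ell)$; and (iii) from both.

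The gap lies in where the reverse estimate for the dual pair comes from. You close $|\langle A^{1/2}u,\varphi\rangle|\le\|\nabla u\|_{L^q_\ell}\|\nabla\psi\|_{L^{q'}_{-\ell}}$ by invoking \eqref{nabla-A1/2} for $(q',-\ell)$ as part (ii) of the theorem, which is where your range $\ell<\frac{n}{q'}-1$ comes from. But your proof of (ii) for $(q',-\ell)$ uses (i) for the pair $(q,\ell)$, so the argument is circular. If $K$ is the best constant in \eqref{A1/2-nabla} for $(q,\ell)$, the two steps together only give $K\le C_{\rm var}K$, which does not even show $K<\infty$. The only source of \eqref{nabla-A1/2} independent of the theorem is Proposition~\ref{HhatOmega}~(ii), valid for $2-\frac{n}{q}<\ell<\frac{n}{q'}$. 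Applied to the dual pair, duality therefore yields (i) only for $-\frac{n}{q}<\ell<\frac{n}{q'}-2$, which is exactly the range in the paper's proof.

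Consequently your remark that the two ranges overlap holds only for $n\ge5$. For $n=3,4$ the closed strip $\frac{n}{q'}-2\le\ell\le2-\frac{n}{q}$ is reached neither by Proposition~\ref{HhatOmega}~(ii) nor by non-circular duality. For $n=3$ the strip is $1-\frac3q\le\ell\le2-\frac3q$, and your duality claim covers all of it except the right endpoint only circularly; for $n=4$ it is the line $\ell=2-\frac4q$. The strip must be bridged by interpolating between some $\ell_0<\frac{n}{q'}-2$ and some $\ell_1>2-\frac{n}{q}$, as in the paper (Figure~1). Your scheme absorbs this once you take $\ell_0$ in that range. Only after (i) holds for all admissible pairs is your proof of (ii) legitimate; it then coincides with the paper's, and your density argument for (iii) is fine.

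Be aware that the interpolation step is only sketched, in your proposal as in the paper. The delicate part is the source side. Stein--Weiss interpolates the spaces $L^q_\ell$, but to interpolate the gradient-normed spaces $\widehat H^{1,q}_{\sigma,\ell,0}(\Omega)$ in $\ell$ you need a common retraction. The retraction $\mathscr R\mathbb P$ of Proposition~\ref{HhatOmega}~(iii) requires $\ell>1-\frac{n}{q}$, which fails at $\ell_0$ when $n=3$. Your $\varepsilon$-regularisation of $A^{1/2}$ does not address this issue.
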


\begin{proof}
By Proposition \ref{HhatOmega} (ii) \eqref{A1/2-nabla} and \eqref{nabla-A1/2} hold when $2-\frac{n}{q}<\ell<\frac{n}{q'}$. Moreover, for sufficiently smooth  vector fields $u,v$ there holds  $\langle A^{1/2}u, A^{1/2}v \rangle = \langle \nabla u, \nabla v \rangle $.

(i) Due to \eqref{nabla-A1/2} the operator $\nabla A^{-1/2}$ extends to a bounded operator $\nabla A^{-1/2}: L^{q}_{\sigma,\ell}(\Omega) \to L^{q}_\ell(\Omega)$ for all $q,\ell$ such that $2-\frac{n}{q}<\ell<\frac{n}{q'}$. 
%\RED(and for $q_1=2$, $\ell=0$. 
%By complex interpolation (\cite[Theorem 1.18.5]{Triebel}) we obtain that $\nabla A^{-1/2}$ is bounded on $L^{q_\theta}_{\sigma,\theta \ell}(\Omega)$ with $\frac{1}{q_\theta} = \frac{1-\theta}{2} + \frac{\theta}{q_0}$, $0<\theta<1$. This proves \eqref{nabla-A1/2} for the pair $q_\theta,\theta\ell$.)\BLACK

To prove \eqref{A1/2-nabla} note that the sectoriality of $A_{q,\ell}$ implies that $\mathcal R(A_{q,\ell})$ and $\mathcal R(A_{q,\ell}^{1/2})$ are dense in $L^q_{\sigma,\ell}(\Omega)$. 
Applying this density and \eqref{nabla-A1/2} for the pair $(q',-\ell)$  satisfying $2-\frac{n}{q'}< -\ell< \frac{n}{q}$ \BLACK we obtain that 
\begin{align}\label{A1/2-nabla-final}
\begin{aligned}
\big\|A_{q,\ell}^{1/2} u\big\|_{L^q_\ell} & = \sup_{v\neq 0}  \frac{\big|\langle A_{q,\ell}^{1/2} u, A_{q',-\ell}^{1/2} v\rangle\big|}{\|A_{q',-\ell}^{1/2} v\|_{L^{q'}_{-\ell}}} =
\sup_{v\neq 0} \frac{|\langle\nabla u, \nabla v\rangle|}{\|A_{q',-\ell}^{1/2}v\|_{L^{q'}_{-\ell}}} \\
& \leq 
\|\nabla u\|_{L^q_\ell}\; \sup_{v\neq 0}  \frac{ \|\nabla v\|_{L^{q'}_{-\ell}}}{\big\|A_{q',-\ell}^{1/2} v\big\|_{L^{q'}_{-\ell}}}
\leq C\|\nabla u\|_{L^q_\ell}; 
\end{aligned}\end{align}
here, $v\neq 0$ is running through all of  $\widehat\D(A_{q',-\ell}^{1/2})\; %not: \widehat H^{1,q'}_{\sigma,-\ell,0}(\Omega)
$. \BLACK By this argument, \eqref{A1/2-nabla} also holds when $-\frac{n}{q}<\ell<-2+\frac{n}{q'}$ 
%to all $-\frac{n}{q'} < - \ell < -2+\frac{n}{q}$ due to \eqref{A1/2-nabla-final}, 
and $u\in\widehat\D(A_{q,\ell})$. 

However, when $n=3$ or $n=4$, the interval $[-2+\frac{n}{q'}, 2-\frac{n}{q}]$ is nonempty.  In this case we use complex interpolation as in the proof of Corollary \ref{theorem-H-infty-2}; we also refer to Fig. 1, but ignore the conditions $\ell\geq 0$ and $\ell\leq 0$. \BLACK
%
%to prove the result also for any fixed $\ell \in [-2+\frac{n}{q'}, 2-\frac{n}{q}]$ where $1<q<\infty$. Indeed, choose $q_0=q=q_1$ and $\ell_ 0<-2+\frac{n}{q'}$ close to $-2+\frac{n}{q'}$ as well as $\ell_1 > 2-\frac{n}{q}$ close to $2-\frac{n}{q}$. Then there exists $0<\theta<1$ such that $\ell =(1-\theta)\ell_0 + \theta\ell_1$ 
%\RED (Proposition \ref{HhatOmega} (iii)) \BLACK
%so that complex interpolation applied to the pairs of indices $(q_0,\ell_0)$ and $(q_1,\ell_1)$ completes the proof.
%\RED(Proposition \ref{HhatOmega} (iii)) \BLACK and a density argument complete the proof of \eqref{A1/2-nabla} for all $-\frac{n}{q} < - \ell < \frac{n}{q'}$ and $u\in\widehat H^{1,q}_{0,\sigma,\ell}(\Omega)$. \RED (I did not check the range for $(q,\ell)$ after this complex interpolation)\BLACK

(ii) Assume $1-\frac{n}{q} <\ell< \frac{n}{q'}$. Then the variational inequality for $\|\nabla u\|_{L^q_\ell}$, see Theorem \ref{var-ineq-sol},
%Proposition \ref{lem-var-ineq-nabla_Omega0}, 
and (i) prove that 
\begin{align}\label{nablau A12u}
\begin{aligned}
\|\nabla u\|_{L^q_\ell} 
& \leq C \sup_{v\neq 0}   \frac{|\langle \nabla u,\nabla v\rangle\big|}{\|\nabla v\|_{L^{q'}_{-\ell}}} = C
\sup_{v\neq 0}  \frac{\big|\langle A_{q,\ell}^{1/2} u, A_{q',-\ell}^{1/2} v\rangle\big|} {\|\nabla v\|_{L^{q'}_{-\ell}}} \\
& \leq C
\big\|A_{q,\ell}^{1/2}u\big\|_{L^q_\ell}\; \sup_{v\neq 0}  \frac{ \big\|A_{q',-\ell}^{1/2} v\big\|_{L^{q'}_{-\ell}} }{ \|\nabla v\|_{L^{q'}_{-\ell}} }
\leq C \big\|A_{q,\ell}^{1/2} u\big\|_{L^q_\ell}   
\end{aligned}\end{align}
for $u\in \D(A_{q,\ell})$, where $v\neq 0$ is running through all of $\widehat H^{1,q'}_{\sigma,-\ell,0}(\Omega)$.  A density argument extends the result to $u\in \widehat \D(A_{q,\ell}^{1/2})$.

(iii) is an immediate consequence of (i), (ii).
\end{proof}

\begin{cor}\label{domains:A1/2-nabla_A1/2}
Let $\Omega$ be a $C^2$ exterior domain in $\IR^n$, and assume $1<q<\infty$ and $1-\frac{n}{q} <\ell< \frac{n}{q'}$. Then there holds $\widehat \D((-\Delta_{q,\ell})^{1/2}) = \widehat H^{1,q}_{\ell,0}(\Omega)$, and in particular,  
$$ \|(-\Delta_{q,\ell})^{1/2} u\|_{L^q_\ell} \sim \|\nabla u\|_{L^q_\ell} $$
with embedding constants independent of $u\in \widehat H^{1,q}_{\ell,0}(\Omega)$.
\end{cor}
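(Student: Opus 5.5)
The proof follows the scheme of Theorem \ref{A1/2-nabla_A1/2}, with the Stokes operator replaced by the Dirichlet Laplacian $-\Delta_{q,\ell}$ on $L^q_\ell(\Omega)$. Solenoidality plays no role here, so several steps simplify; in particular no pressure or de Rham argument is needed.

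\textbf{Ingredients.} We need the following facts about $-\Delta_{q,\ell}$.
\begin{enumerate}
\item[(a)] It is sectorial on $L^q_\ell(\Omega)$ for all $-\frac nq<\ell<\frac n{q'}$, with a bounded $\mathscr H^\infty$-calculus, and hence $BIP$. This is obtained exactly as in Theorem \ref{theorem-H-infty} and Corollary \ref{theorem-H-infty-2}. The cut-off procedure is now simpler, since no pressure terms, Bogovski\u{\i} operators or Lemma \ref{p-local-decay} enter. The whole-space part is the multiplier argument of Subsect.~\ref{S2.4}, and the bounded part uses the Laplacian on $\Omega_1$.
\item[(b)] The estimate $\|\nabla^2u\|_{L^q_\ell}\le C\|\Delta u\|_{L^q_\ell}$ for $2-\frac nq<\ell<\frac n{q'}$. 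This is the scalar analogue of \eqref{equ:nabla2-A}, proved by cut-off together with Hardy's and Rellich's inequalities.
\item[(c)] The identity $\langle(-\Delta)^{1/2}u,(-\Delta)^{1/2}v\rangle=\langle\nabla u,\nabla v\rangle$ for smooth $u,v$ with zero boundary values. Here we use that the adjoint of $-\Delta_{q,\ell}$ is $-\Delta_{q',-\ell}$.
\end{enumerate}

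\textbf{Step 1: the range $2-\frac nq<\ell<\frac n{q'}$.} We repeat the proof of Proposition \ref{HhatOmega}(ii).

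For the upper bound on $\nabla u$, take the extension operator $E$ from Lemma \ref{extension}. By (b), $E$ is bounded $L^q_\ell(\Omega)\to L^q_\ell(\IR^n)$ and $\widehat\D(-\Delta_{q,\ell})\to\widehat H^{2,q}_\ell(\IR^n)$. Complex interpolation, using \eqref{interp hatH} on $\IR^n$ and Proposition \ref{Complex Interpolation} on $\Omega$, then gives
$$\|\nabla u\|_{L^q_\ell}\le C\|(-\Delta)^{1/2}u\|_{L^q_\ell}.$$

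For the reverse bound, use the restriction operator $\mathscr R_0u=r_\Omega u-r_{\Omega\cap B}E_cr_{\Omega^c}u$. This is simply $\mathscr R$ without the Bogovski\u{\i} correction. It is bounded $L^q_\ell(\IR^n)\to L^q_\ell(\Omega)$, and it is bounded $\widehat H^{2,q}_\ell(\IR^n)\to\widehat\D(-\Delta_{q,\ell})$ by Hardy's and Rellich's inequalities. Since $\mathscr R_0E_0=I$, interpolation gives
$$\|(-\Delta)^{1/2}u\|_{L^q_\ell}\le C\|\nabla u\|_{L^q_\ell}.$$

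\textbf{Step 2: the upper bound on $(-\Delta)^{1/2}u$ for all admissible $\ell$.} This is the duality argument \eqref{A1/2-nabla-final}. Pair $(-\Delta)^{1/2}u$ against $(-\Delta_{q',-\ell})^{1/2}v$, use (c), and apply Step 1 to the dual pair $(q',-\ell)$; this covers $-\frac nq<\ell<\frac n{q'}-2$. In the middle strip, which is nonempty when $n=3,4$, interpolate in $\ell$ as in Corollary \ref{theorem-H-infty-2}.

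\textbf{Step 3: the bound on $\nabla u$ for $1-\frac nq<\ell<\frac n{q'}$.} This step is the main obstacle: the analogue of the variational inequality in Theorem \ref{var-ineq-sol},
$$\|\nabla u\|_{L^q_\ell}\le C\sup_{0\ne v\in \widehat H^{1,q'}_{-\ell,0}(\Omega)}\frac{|\langle\nabla u,\nabla v\rangle|}{\|\nabla v\|_{L^{q'}_{-\ell}}}.$$
It follows from Proposition \ref{lem-var-ineq-nabla_Omega0}(i) with $p=0$. Inspecting that proof, with $g_j=0$ and no pressure, shows it applies verbatim to the scalar weak Dirichlet problem $-\Delta u=f$. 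Its cut-off and compactness argument needs injectivity for harmonic $u\in\widehat H^{1,q}_{\ell,0}(\Omega)$. This is obtained by the same bootstrap to $\widehat H^{1,2}_0$ and the mean-value argument, which is exactly where $\ell>1-\frac nq$ is used.

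Then, exactly as in \eqref{nablau A12u}, rewrite $\langle\nabla u,\nabla v\rangle$ as $\langle(-\Delta)^{1/2}u,(-\Delta)^{1/2}v\rangle$ and apply Step 2 to the dual pair $(q',-\ell)$, which is admissible. This yields
$$\|\nabla u\|_{L^q_\ell}\le C\|(-\Delta)^{1/2}u\|_{L^q_\ell}$$
on $\D(-\Delta_{q,\ell})$.

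\textbf{Step 4: identification of the spaces.} Density of $\D(-\Delta_{q,\ell})$ in $\widehat\D((-\Delta)^{1/2})$, and of $C_0^\infty(\Omega)$ in $\widehat H^{1,q}_{\ell,0}(\Omega)$, extends the two estimates. Their combination gives the identification $\widehat\D((-\Delta_{q,\ell})^{1/2})=\widehat H^{1,q}_{\ell,0}(\Omega)$ with equivalent norms.
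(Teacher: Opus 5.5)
Your proposal is correct and follows the paper's route. The paper's proof of this corollary is only the remark that the argument of Theorem \ref{A1/2-nabla_A1/2}, together with Proposition \ref{HhatOmega}(ii) and the variational inequality, carries over once solenoidality is dropped, and your Steps 1--4 carry out exactly that adaptation: the restriction operator $\mathscr R_0$ without the Bogovski\u{\i} correction, and the variational inequality of Proposition \ref{lem-var-ineq-nabla_Omega0} over all of $\widehat H^{1,q'}_{-\ell,0}(\Omega)$ without the de Rham step of Theorem \ref{var-ineq-sol}. The interpolation in $\ell$ across the middle strip for $n=3,4$ is inherited from Theorem \ref{A1/2-nabla_A1/2}(i) at the same sketch level, both in the paper and in your proposal.
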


\begin{proof} 
The proof follows the lines of Theorem \ref{A1/2-nabla_A1/2} and former results, but is shorter, since solenoidality does not enter the problem. 
\end{proof}

\subsection{Applications}\label{S3.4} 

Let $A=A_{q,\ell}$ denote the Stokes operator on an exterior domain $\Omega\subset \IR^n$, $n\geq 3$, with boundary of class $C^3$. \BLACK
By definition of the graph norm and the triangle inequality we see that $\mathcal D(A) = \mathcal D(I+A)$ with equivalent graph norms. The following results will imply that these identities and norm equivalencies can be generalized to arbitrarily powers $\theta\in(0,1)$. 

Indeed, the property $BIP$ also implies that for any $0<\theta<1$
\begin{equation}\label{DA=DI+A} 
\mathcal D(A^\theta) = [L^q_{\sigma,\ell}(\Omega), \mathcal D(A)]_\theta = [L^q_{\sigma,\ell}(\Omega), \mathcal D(I+A)]_\theta  = \mathcal D((I+A)^\theta) 
\end{equation}
and $\|u\|_{A^\theta} \simeq \|u\|_{(I+A)^\theta}$;  here we note that with $A$ also $A+I$ has property $BIP$, see \cite[Proposition 2.6 (iv)]{DHP2}.  
Moreover, these identities and norm equivalencies hold when the identity $I$ is replaced by the operator $\epsilon I$ for any $\epsilon>0$.  However, since $\widehat \D(A)\neq \widehat \D(I+A)  = \D(A)$, %\BLUE ((Please check the last "="))  \eqref{DA=DI+A}
will not hold for $\dot A^\theta$. 

%Since $A_q$ is a sectorial operator, $\D(A_q^\theta) = \D((I+A_q)^\theta)$ for $\theta>0$. 
The $\mathscr H^\infty$-calculus of $A_{q,\ell}$ will imply important embedding estimates of fractional powers of $A_{q,\ell}$.

\vspace{2ex}

\begin{prop}\label{domain-of-fractional-power}
Let $1<q < \infty$ and $A=A_{q,\ell} = -\mathbb P \Delta$ be the Stokes operator on $L^q_{\sigma, \ell }(\Omega)$ for $2-\frac{n}{q} < \ell < \frac{n}{q'}$. %\RED with $0\leq \ell$ (By Cor. 3.7 $\ell\geq 0$ is no longer needed). 
\BLACK 
%\BLUE (I think that due to \eqref{estimate-forth-term}, Theorem 3.5 uses $0 \leq \ell$ and thus we need this condition.) 

(i) For $0<\theta<1$, there holds %\RED (( $2\theta-\frac{n}{q} < \ell < \frac{n}{q'}$: not needed in this step, but below)) 
\BLACK
\begin{align}\label{BIP-homog-ext-domain-0}
\widehat{\D}(A^\theta) & = [L^q_{\ell,\sigma}, \widehat{\D}(A)]_{\theta},\\
\widehat{\D}((-\Delta_{q,\ell})^\theta) & = 
[L^q_{\ell}, \widehat{\D}(-\Delta_{q,\ell})]_\theta. \label{BIP-homog-ext-domain-u}  
\end{align} 

(ii) If $ 1< q\leq r<\infty$, 
\begin{equation} \label{restrictions} 
0<\theta\leq \frac12,\quad  2\theta-\frac{n}{q} < \ell < \frac{n}{q'},\quad \BLACK \ell-\ell' = 2\theta + \frac{n}{r} -\frac{n}{q}\geq 0, \end{equation}
then
\begin{align}\label{BIP-homog-ext-domain}
\begin{aligned}
\widehat{\D}(A^\theta) & = \big[L^q_{\ell,\sigma}, \widehat{\D}(A)\big]_{\theta} =  \big[L^q_{\ell,\sigma}, \widehat{\D}(A^{1/2})\big]_{2\theta} =
\big[L^q_{\ell}, \widehat{\D}((-\Delta_{q,\ell})^{1/2})\big]_{2\theta} \cap L^r_{\ell',\sigma}(\Omega)\\[1ex]
& =\big[L^q_{\ell}, \widehat{\D}(-\Delta_{q,\ell})\big]_\theta \cap L^r_{\ell',\sigma}(\Omega) = \widehat{\D}((-\Delta_{q,\ell})^\theta) \cap L^r_{\ell',\sigma}(\Omega) 
\end{aligned}\end{align}
with the norm equivalence 
\begin{equation}\label{fract-norm-equiv}
 \|A^{\theta} u\|_{L^q_{\ell}(\Omega)} \simeq \|u\|_{[L^q_{\sigma, \ell}, \widehat{\D}(A)]_{\theta} } \simeq \|u\|_{\widehat{H}^{2\theta}_{q,\ell}(\Omega)} \simeq \|(-\Delta)^\theta u\|_{L^q_{\ell}(\Omega)}.\end{equation}

% (iii) \RED (All ideas I tried did not work, maybe we have to delete it) 
%If $ 1< q\leq r<\infty$, 
%\begin{equation} \label{restrictions-2} 
%\frac12<\theta\RED \leq 1\BLACK,\quad   2\theta-\frac{n}{q} < \ell < \frac{n}{q'},\quad \BLACK \ell-\ell' = 2\theta + \frac{n}{r} -\frac{n}{q}\geq 0, \end{equation}
%then 
%\begin{equation}\label{fract-norm-equiv2}
%\|A^{\theta} u\|_{L^q_{\ell}(\Omega)} \simeq \|(-\Delta)^\theta u\|_{L^q_{\sigma, \ell}}.
%\end{equation}
\end{prop}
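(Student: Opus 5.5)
Part (i) follows from the abstract interpolation result, Proposition \ref{Complex Interpolation} (i), once we know that $A_{q,\ell}$ and $-\Delta_{q,\ell}$ have property $BIP$ on $L^q_{\sigma,\ell}(\Omega)$ and $L^q_\ell(\Omega)$, respectively. For the Stokes operator, $BIP$ is a consequence of the bounded $\mathscr H^\infty$-calculus in Theorem \ref{theorem-H-infty} and Corollary \ref{theorem-H-infty-2}, combined with \eqref{classesSEC}. For the Dirichlet Laplacian I would repeat the cut-off argument of Theorem \ref{theorem-H-infty} without the Helmholtz projection. This is simpler: there is no pressure term and no divergence correction $w_j$. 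The bounded-domain $\mathscr H^\infty$-calculus and the whole-space multiplier argument of Subsect.~\ref{S2.4} then give $BIP$ for $-\Delta_{q,\ell}$, and \eqref{BIP-homog-ext-domain-u} follows.

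For (ii), the first equality in \eqref{BIP-homog-ext-domain} is (i). The second follows from the reiteration theorem for complex interpolation: $[X,\widehat\D(A)]_{1/2}=\widehat\D(A^{1/2})$ by (i), so $[X,\widehat\D(A^{1/2})]_{2\theta}=[X,\widehat\D(A)]_\theta$ for $0<\theta\le\frac12$. The same argument works for $-\Delta$ and gives the last two equalities, namely $[L^q_\ell,\widehat\D((-\Delta)^{1/2})]_{2\theta}=[L^q_\ell,\widehat\D(-\Delta)]_\theta=\widehat\D((-\Delta)^\theta)$.

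The core step is the identification
\[
[L^q_{\sigma,\ell},\widehat\D(A^{1/2})]_{2\theta}=[L^q_\ell,\widehat\D((-\Delta)^{1/2})]_{2\theta}\cap L^r_{\sigma,\ell'}(\Omega).
\]
Since $2-\frac nq<\ell$ implies $1-\frac nq<\ell$, Theorem \ref{A1/2-nabla_A1/2} (iii) and Corollary \ref{domains:A1/2-nabla_A1/2} give
\[
\widehat\D(A^{1/2})=\widehat H^{1,q}_{\sigma,\ell,0}(\Omega),\qquad \widehat\D((-\Delta)^{1/2})=\widehat H^{1,q}_{\ell,0}(\Omega).
\]
This is exactly why we restrict to $\theta\le\frac12$. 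The inclusion "$\subset$" then follows because the trivial injection is bounded on both endpoint spaces. To place the left-hand space in $L^r_{\ell'}$, I would use the fractional Sobolev embedding of Proposition \ref{Hardy-Rellich} (iii) with $\kappa=2\theta$ and $\pi_k=0$, which applies under \eqref{restrictions}. Solenoidality passes to the limit by the density in Proposition \ref{dense Htheta}.

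For "$\supset$" I would build a retraction from $L^q_\ell(\Omega)$ onto $L^q_{\sigma,\ell}(\Omega)$ that also maps $\widehat H^{1,q}_{\ell,0}(\Omega)$ boundedly into $\widehat H^{1,q}_{\sigma,\ell,0}(\Omega)$, and then apply Theorem \ref{retract-co}. The Helmholtz projection does not preserve zero boundary values. My first candidate is instead a Bogovski\u{\i}-type correction $P u=u-\mathbb B\,\div u$, using the exterior Bogovski\u{\i} operator of Proposition \ref{Bog-ext}. It maps $\widehat H^{1,q}_{\ell,0}$ into the divergence-free subspace, and by Proposition \ref{HhatOmega} (i) that subspace equals $\widehat H^{1,q}_{\sigma,\ell,0}$. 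It acts as the identity on solenoidal fields.

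The main obstacle is boundedness of $P$ on $L^q_\ell$. This needs the dual bound $\mathbb B:\widehat H^{-1,q}_\ell\to L^q_\ell$ for $\div u$, with $u\in L^q_\ell$, in the spirit of \eqref{p_ext} and Proposition \ref{Bog} (ii). If that fails, the fallback is more direct: for $u$ in the intersection, approximate by $C^\infty_{0,\sigma}$ functions in both norms, using the $L^r_{\ell'}$ membership to control the low-frequency part, and then interpolate via density.

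The norm equivalence \eqref{fract-norm-equiv} then follows from \eqref{Aitest} and the characterization of $\widehat\D((-\Delta)^\theta)$ as $\widehat H^{2\theta,q}_{\ell,0}(\Omega)$.
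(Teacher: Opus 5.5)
\textbf{Overall.} Your treatment of (i), of the reiteration step, and of the inclusion ``$\subset$'' coincides with the paper. For ``$\subset$'' you use the trivial injection on both endpoints, Proposition \ref{Hardy-Rellich} (iii) with $\kappa=2\theta$, and density from Proposition \ref{dense Htheta}. The paper's proof of (ii) consists only of citing Theorem \ref{A1/2-nabla_A1/2}, Corollary \ref{domains:A1/2-nabla_A1/2}, reiteration, and the embedding $\widehat H^{2\theta,q}_{\ell,0}\hookrightarrow L^r_{\ell'}$. You also note that \eqref{BIP-homog-ext-domain-u} needs $BIP$ of the Dirichlet Laplacian, which Corollary \ref{theorem-H-infty-2} does not literally give; the paper passes over this point.

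\textbf{The gap.} The paper does not argue ``$\supset$'' separately. That inclusion is the bound $\|u\|_{[L^q_{\sigma,\ell},\widehat H^{1,q}_{\sigma,\ell,0}]_{2\theta}}\le C\|u\|_{[L^q_{\ell},\widehat H^{1,q}_{\ell,0}]_{2\theta}}$ for solenoidal $u$, and it is needed both for the set equality and for \eqref{fract-norm-equiv}. You correctly see that it requires a common projection, and $P=I-\mathbb B\div$ is the right idea. As written, however, the step is open, for three reasons.
\begin{enumerate}
\item For $u\in L^q_\ell(\Omega)$, the functional $\div u$ must act on $\widehat H^{1,q'}_{-\ell}(\Omega)$, i.e.\ $\phi\mapsto-\int_\Omega u\cdot\nabla\phi\dx$ with test functions having nonzero traces. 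It must not be read in the paper's $\widehat H^{-1,q}_\ell(\Omega)$, the dual of the zero-trace space. Otherwise take $u=\nabla\partial_1\Phi$, with $\Phi$ the fundamental solution of $\Delta$ with pole $0\notin\overline\Omega$. Then $u\in L^q_\ell(\Omega)$ and $\div u=0$, so $Pu=u$, yet $u\notin L^q_{\sigma,\ell}(\Omega)$ because its normal trace does not vanish.
\item The operator of Proposition \ref{Bog-ext} has no such extension as it stands. Its corrector $v$ is built from the trace of $\nabla\tilde\psi$ on $\partial\Omega$, which does not exist for $L^q$ data. Also, \eqref{p_ext} is the dual pressure estimate, not a negative-order bound for $\mathbb B$.
\item The fallback (approximation by $C^\infty_{0,\sigma}$ functions) gives no norm control in $[L^q_{\sigma,\ell},\widehat H^{1,q}_{\sigma,\ell,0}]_{2\theta}$. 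Density in the larger interpolation space says nothing about equivalence of the two interpolation norms on solenoidal fields, and that equivalence is exactly what a common projection supplies.
\end{enumerate}

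\textbf{A repair.} Cut off before the boundary correction, so that only the bounded-domain operator meets $\partial\Omega$.
\begin{itemize}
\item Take $\chi\in C^\infty_0(B_{2R})$ with $\chi=1$ near $\overline{B_R}$. Let $h:=\nabla\Delta^{-1}E_0g$ on $\IR^n$, where $E_0$ is extension by zero; for functionals, $E_0g(\psi):=g(\psi|_\Omega)$.
\item Set $\mathbb Bg:=(1-\chi)h+\mathbb B_{\Omega_{2R}}(\chi g+\nabla\chi\cdot h)$, where $\chi g$ means $\phi\mapsto g(\chi\phi)$ in the dual case. Since $\div h=E_0g$, the compatibility condition $\int_{\Omega_{2R}}(\chi g+\nabla\chi\cdot h)\dx=0$ holds and $\div\mathbb Bg=g$.
\item For $g\in L^q_\ell(\Omega)$: because $\ell>2-\frac nq>1-\frac nq$, Proposition \ref{embed-ext-weight} gives $h$ a canonical representative with $\|h\|_{L^q_{\ell-1}(\IR^n)}\le C\|g\|_{L^q_\ell(\Omega)}$. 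Proposition \ref{Bog} (i) then gives $\mathbb Bg\in\widehat H^{1,q}_{\ell,0}(\Omega)$.
\item For $g=\div u$ with $u\in L^q_\ell(\Omega)$: here $h=(I-\mathbb P_{\IR^n})E_0u\in L^q_\ell(\IR^n)$. Compatibility follows from $\div\mathbb P_{\IR^n}E_0u=0$, and Proposition \ref{Bog} (ii) applies.
\item One checks that $\int_\Omega \mathbb B(\div u)\cdot\nabla\phi\dx=\int_\Omega u\cdot\nabla\phi\dx$ for all $\phi\in\widehat H^{1,q'}_{-\ell}(\Omega)$.
\end{itemize}
Hence $P=I-\mathbb B\div$ maps $L^q_\ell(\Omega)$ onto $L^q_{\sigma,\ell}(\Omega)$ by Lemma \ref{Helmh}, and $\widehat H^{1,q}_{\ell,0}(\Omega)$ onto $\widehat H^{1,q}_{\sigma,\ell,0}(\Omega)$ by Proposition \ref{HhatOmega} (i). It is the identity on solenoidal fields, so \eqref{interpol-P} yields ``$\supset$'' together with the norm bound.
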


\begin{proof}[Proof of Proposition \ref{domain-of-fractional-power}]
(i) The identities \eqref{BIP-homog-ext-domain-0}, \eqref{BIP-homog-ext-domain-u} follow directly by \eqref{classesSEC}, Proposition \ref{Complex Interpolation} (i) and Corollary \ref{theorem-H-infty-2}. 

(ii) We use Theorem \ref{A1/2-nabla_A1/2}, Corollary \ref{domains:A1/2-nabla_A1/2} and the reiteration theorem for complex interpolation. In the third step note that 
$$ [L^q_\ell, \widehat \D((-\Delta_{q,\ell})^{1/2}]_{2 \theta} = [L^q_\ell, \widehat H^{1,q}_{\ell,0}]_{2 \theta} = \widehat H^{2\theta,q}_\ell \hookrightarrow L^r_{\ell'} $$ 
due to \eqref{spaces2}, Proposition \ref{Hardy-Rellich} (iii) and the restriction \eqref{restrictions}. 
%
%\RED DELETE?: For (iii) we interpolate the standard norm equivalence \eqref{fract-norm-equiv2} for $\theta=\frac12$ and $\theta =1$, {\em i.e.} the spaces $\widehat \D(A^{k/2})$ and $\widehat \D((-\Delta_{q,l})^{k/2})$, $k=1,2$, to obtain \eqref{fract-norm-equiv2} for $\frac12<\theta < 1.$ (???) \BLACK
%
%follows directly by Theorems \ref{theorem-H-infty}, \ref{angles},  Proposition \ref{Complex Interpolation} (i). The embedding into $L^r_{\ell',\sigma}(\Omega)$ is based on Proposition \ref{Hardy-Rellich} (iii).
%\BLUE (I think that Theorems \ref{theorem-H-infty}, \ref{angles},  Proposition \ref{Complex Interpolation} (i) are not enough to prove. Indeed, on the BIP on the homogeneous domains, we need Cor. \ref{domains:A1/2-nabla_A1/2}, and thus $\theta \leq 1/2$. )\RED (The pure  BIP part will hold, but for the identification with $\D(-\Delta)$ we need  $\theta \leq 1/2$) \BLACK
\end{proof}

%\RED Needed? We note that  due to the condition $\theta<\frac1{2q}$ vector fields $u$ in $\widehat{H}^{2\theta}_q(\Omega)^n\cap L^q_\sigma(\Omega)$ do not possess any trace except for the condition $u\cdot \textsl{n}=0$ for the normal component of $u$ on $\partial\Omega$. \BLACK 

\vspace{1ex}

\begin{cor}\label{Lp-Lq At} 
%\RED OLD \BLACK Let $1<q \leq p< \infty$ and $2-\frac{n}{q} < \ell < \frac{n}{q'}$ with $0 \leq \ell$.
%Assume $\frac{1}{p} = \frac{1}{q} - \frac{2\theta}{n}$ where $0\leq\theta\leq 1$. 
%Then there holds the $L^q-L^p$-estimate 
%$$ \|e^{- tA_{q,\ell}}u\|_{L^p_{\ell }(\Omega)} \leq C t^{-\frac{n}{2}\big(\frac{1}{q}-\frac{1}{p}\big)} \|u\|_{L^q_{\ell }(\Omega)}, \quad u\in L^q_{\sigma,\ell }(\Omega),$$
%with a constant $C>0$ independent of $t>0$.

Let $1< q\leq r<\infty$, and assume that 
$$ \kappa-\frac{n}{q} < \ell < \frac{n}{q'}, \quad \ell-\ell' = \kappa + \frac{n}{r} -\frac{n}{q}\geq 0. $$
%\RED (Note that these conditions imply that $-\frac{n}{r} < \ell' < \frac{n}{r'}$, $\kappa$ may be arbitrarily large, but $\kappa<n$ by the above first condition. But in the proof using an iteration and the semigroup property, even $\kappa\geq n$ is possible, isn't it?) 
Then there holds the $L^q$-$L^r$-estimate
$$ \|e^{- tA_{q,\ell}}u\|_{L^r_{\ell'}(\Omega)} \leq C t^{-\frac{n}{2}\big(\frac{1}{q}-\frac{1}{r}\big) -\frac{\ell-\ell'}{2}} \|u\|_{L^q_{\ell}(\Omega)}, \quad u\in L^q_{\sigma,\ell }(\Omega),$$
with a constant $C>0$ independent of $t>0$.
%\BLUE( What condition does  $\kappa$ satisfy related to $2-n/q<\ell$?) \RED (See RED above, $\kappa$ replaces $2$, see also $2\theta$ in Prop. 3.10) \BLACK 
\end{cor}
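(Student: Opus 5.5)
The plan is to split $e^{-tA}=e^{-tA/2}e^{-tA/2}$, or more flexibly to factor through a fractional power. We take $\theta=\kappa/2$ and first treat $0<\kappa\le 1$. Then Proposition~\ref{domain-of-fractional-power} (ii), together with Proposition~\ref{Hardy-Rellich} (iii), gives the embedding
\[
\widehat\D(A^{\theta})\hookrightarrow L^r_{\ell',\sigma}(\Omega),
\qquad \|v\|_{L^r_{\ell'}}\le C\|A^{\theta}v\|_{L^q_\ell}.
\]
The assumptions $\kappa-\frac nq<\ell<\frac n{q'}$ and $\ell-\ell'=\kappa+\frac nr-\frac nq$ are exactly \eqref{restrictions} with $2\theta=\kappa$.

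For this $\kappa$, we apply the embedding to $v=e^{-tA}u$. Analyticity of the bounded semigroup, guaranteed by the bounded $\mathscr H^\infty$-calculus of Corollary~\ref{theorem-H-infty-2} (or equivalently by applying the calculus to $h(\lambda)=\lambda^\theta e^{-t\lambda}$), gives
\[
\|A^\theta e^{-tA}u\|_{L^q_\ell}\le C t^{-\theta}\|u\|_{L^q_\ell}.
\]
Combining the two estimates yields the factor $t^{-\kappa/2}$. By the constraint, $\kappa/2=\frac n2(\frac1q-\frac1r)+\frac{\ell-\ell'}2$, which is the claimed exponent. One subtle point must be checked here: $e^{-tA}u$ lies in $\D(A)\subset \D(A^\theta)$, so its homogeneous norm is represented by the actual function and the embedding applies to it without a polynomial or constant ambiguity. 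This holds because $e^{-tA}u\in L^q_\ell$, and because for exterior domains $\pi_k=0$.

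For $1<\kappa<n$, the plan is to iterate using the semigroup property. We choose intermediate pairs $(q,\ell)=(q_0,\ell_0),(q_1,\ell_1),\dots,(q_m,\ell_m)=(r,\ell')$ with increments $\kappa_j\le 1$ summing to $\kappa$, with $q_j$ nondecreasing and $\ell_j$ nonincreasing. Each step must satisfy $\kappa_j-\frac n{q_{j-1}}<\ell_{j-1}<\frac n{q_{j-1}'}$ and $\ell_{j-1}-\ell_j=\kappa_j+\frac n{q_j}-\frac n{q_{j-1}}\ge0$. We then write $e^{-tA}=\prod_j e^{-tA/m}$ and note that the Stokes semigroups on the various $L^{q_j}_{\sigma,\ell_j}$ are consistent on $C^\infty_{0,\sigma}$. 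Multiplying the step estimates gives the exponents additively, $\sum_j\kappa_j/2=\kappa/2$, which is the claimed rate.

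The main obstacle is choosing the intermediate path admissibly. The natural attempt is linear interpolation of the data $(1/q,\ell)$ toward $(1/r,\ell')$ in equal steps. Admissibility of the endpoint must be verified: $\ell'>-\frac nr$ follows from $\ell>\kappa-\frac nq$, and $\ell'<\frac n{r'}$ follows from $q\le r$ and $\ell<\frac n{q'}$. The lower constraint $\kappa_j-\frac n{q_{j-1}}<\ell_{j-1}$ at interior points then follows from convexity of the conditions in $(1/q,\ell)$. If linear steps fail for this lower constraint, the fallback is to first lower $\ell$ by steps with $q$ fixed (pure weight decay, $\kappa_j=\ell_{j-1}-\ell_j$), and then raise $q$ with $\ell$ fixed, adjusting step sizes so that each step is admissible. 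A final density argument from $C^\infty_{0,\sigma}$ to $L^q_{\sigma,\ell}$ completes the proof.
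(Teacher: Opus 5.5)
Your proof is the paper's. For $\kappa\le 1$ you combine the fractional Sobolev embedding with the norm equivalence \eqref{fract-norm-equiv} (Proposition~\ref{domain-of-fractional-power}(ii) with $2\theta=\kappa$) and $\|A^{\kappa/2}e^{-tA}\|\le Ct^{-\kappa/2}$. For $\kappa>1$ you iterate with the semigroup property, and your linear chain of pairs $(1/q_j,\ell_j)$ with $\kappa_j=\kappa/m$ correctly fills in the step the paper only sketches: at each input point the lower constraint follows from $\ell+\frac nq>\kappa$ and the upper one from $\ell+\frac nq<n$.

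The one caveat is shared with the paper. You apply Proposition~\ref{domain-of-fractional-power}(ii) under \eqref{restrictions} alone, but its standing hypothesis is $2-\frac nq<\ell$, and its proof passes through Theorem~\ref{A1/2-nabla_A1/2}(ii), which requires $1-\frac nq<\ell$. So at any step with $\ell_{j-1}\le 1-\frac n{q_{j-1}}$, your argument is exactly as complete as the paper's and no more.
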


\begin{proof} 
%\RED OLD \BLACK With $\theta=\frac{n}{2}\big(\frac{1}{q}-\frac{1}{p}\big)$ the result follows from Proposition \ref{domain-of-fractional-power} and the classical estimate $\|A^\theta e^{-tA}\|\leq C t^{-\theta}$. %Indeed,
%$$ \|A^\theta e^{-tA}\| \leq C\|e^{-tA}\|^{1-\theta}\|Ae^{-tA}\|^{\theta} \leq Ct^{-\theta} $$
%by the moment inequality.
%
By the fractional Sobolev embedding $\widehat H^{\kappa,q}_{\ell}(\Omega) \hookrightarrow L^r_{\ell'}(\Omega)$, see Proposition \ref{Hardy-Rellich}, and the norm equivalence \eqref{fract-norm-equiv} we obtain that
$$\|e^{-tA}u\|_{L^r_{\ell'}(\Omega)} \leq  C\|e^{-tA}u\|_{\widehat H^{\kappa,q}_{\ell}(\Omega)} 
\leq C \|A^{\kappa/2} e^{-tA}u\|_{L^q_{\ell}(\Omega)} \leq Ct^{-\frac{n}{2}\big(\frac{1}{q}-\frac{1}{r}\big) -\frac{\ell-\ell'}{2}} \|u\|_{L^q_{\ell}(\Omega)},$$
provided $\kappa\leq 1$. However, if $\kappa > 1$, we decompose $\kappa$ into finitely many pieces $\leq 1$ and use the previous estimate as well as the semigroup property iteratively. 

\end{proof}

\begin{prop}\label{maxreg-L_qell}
 Let $1<q< \infty$ and  $-\frac{n}{q} < \ell < \frac{n}{q'}$. Then $A_{q,\ell}$ possesses for each $1<p<\infty$ maximal $L^p$-regularity on $L^q_{\ell,\sigma}(\Omega)$, {\em i.e.,} for $f\in L^p(\IR_+;L^q_{\ell,\sigma}(\Omega))$ the instationary Stokes system $u_t -\Delta u + \nabla\pi = f,\; \div u=0,\; u(0)=0,\; u\big|_{\partial\Omega} = 0$ possesses a unique solution $(u,\nabla\pi)$ such that 
\begin{align}\label{maxreg-f-u}
     \|u_t\|_{L^p(\IR_+;L^q_\ell)}  + \|A_{q,\ell}u\|_{L^p(\IR_+;L^q_\ell)}  
     %+ \|\nabla\pi\|_{L^p(\IR_+;L^q_\ell)} 
     \leq C\|f\|_{L^p(\IR_+;L^q_\ell)}
\end{align}
with a constant $C>0$ independent of $f$. Under the stronger assumption  $2-\frac{n}{q} < \ell < \frac{n}{q'}$ also $\nabla^2 u$ and $\nabla\pi$ admit the bound
\begin{align}\label{maxreg-pi}
    \|\nabla^2 u\|_{L^p(\IR_+;L^q_\ell)}  + \|\nabla\pi\|_{L^p(\IR_+;L^q_\ell)} & \leq C\|f\|_{L^p(\IR_+;L^q_\ell)}
\end{align}
 with an associated pressure function $\pi$. \BLACK In case of the whole space both estimates hold for all $-\frac{n}{q} < \ell < \frac{n}{q'}$.
\end{prop}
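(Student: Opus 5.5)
The plan is to derive maximal regularity from the bounded $\mathscr H^\infty$-calculus of Corollary \ref{theorem-H-infty-2} via the Kalton--Weis theorem. First I note that $X=L^q_{\sigma,\ell}(\Omega)$ is a UMD space. Indeed, $L^q_\ell(\Omega)$ is an $L^q$ space with respect to the measure $\langle x\rangle^{q\ell}\dx$, hence UMD, and $L^q_{\sigma,\ell}(\Omega)$ is a complemented subspace via the bounded Helmholtz projection of Lemma \ref{Helmh}. Such $L^q$-type spaces also have Pisier's property $(\alpha)$.

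By Corollary \ref{theorem-H-infty-2}, $A_{q,\ell}$ has a bounded $\mathscr H^\infty$-calculus with angle $\Phi_A^\infty=0<\frac{\pi}{2}$. Property $(\alpha)$ then yields an $\mathcal R$-bounded $\mathscr H^\infty$-calculus \cite{KuWe04}, \cite[p.~652]{NollSaal}; in particular $A$ is $\mathcal R$-sectorial of angle $0$. Weis' theorem (or directly \cite{KuWe04}) now gives maximal $L^p$-regularity on $\IR_+$ for each $1<p<\infty$. Some care is needed on the half line because $0\in\sigma(A)$. Homogeneous maximal regularity on $\IR_+$, with estimate \eqref{maxreg-f-u}, holds for $\mathcal R$-sectorial operators of angle $<\frac{\pi}{2}$ that need not be invertible, and the solution $u(t)=\int_0^t e^{-(t-s)A}f(s)\ds$ satisfies $u_t+Au=f$. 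The pressure comes from writing $-\Delta u+\nabla\pi=\mathbb P\Delta u-\Delta u+f-\mathbb Pf$, i.e.\ $\nabla\pi:=(I-\mathbb P)\Delta u$ for a.e.\ $t$ (noting $f=\mathbb Pf$). Uniqueness follows from injectivity and sectoriality, since any solution with zero data is $e^{-tA}0=0$.

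For \eqref{maxreg-pi} under $2-\frac{n}{q}<\ell<\frac{n}{q'}$, I apply \eqref{equ:nabla2-A} from Theorem \ref{res-weighted}~(iv), in the form used in Lemma \ref{conv-u-D-u} and in the proof of Proposition \ref{HhatOmega}~(ii), pointwise in $t$: $\|\nabla^2u(t)\|_{L^q_\ell}\le C\|Au(t)\|_{L^q_\ell}$. Then $\nabla\pi=(I-\mathbb P)\Delta u$ is bounded by $C\|\nabla^2u\|_{L^q_\ell}$ by Lemma \ref{Helmh}. Integrating in $L^p(\IR_+)$ and using \eqref{maxreg-f-u} gives \eqref{maxreg-pi}.

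In the whole-space case, $\mathbb P$ commutes with derivatives and $Au=-\Delta u$ on solenoidal fields. Hence $\nabla^2u$ is obtained from $Au$ through the Riesz transforms, which are bounded on $L^q_\ell(\IR^n)$ for all $\ell$ with $\langle\cdot\rangle^{q\ell}\in\mathscr A_q$ by Proposition \ref{thm-hoermander}. Moreover, $\nabla\pi=0$ when $f$ is solenoidal. So both estimates hold for the full range of $\ell$.

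The main obstacle is the justification of the abstract step for non-invertible $A$ on $\IR_+$ together with the pressure construction. I would handle the first part by citing the homogeneous version of the Kalton--Weis theorem. For the pressure, I would check that the a.e.\ defined $\nabla\pi$ is measurable in $t$; this holds since $\Delta u\in L^p(\IR_+;\widehat H^{-1})$-type spaces and $I-\mathbb P$ is bounded.
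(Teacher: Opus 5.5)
Your proposal is correct and takes essentially the paper's route. The bounded $\mathscr H^\infty$-calculus from Corollary \ref{theorem-H-infty-2}, combined with property $(\alpha)$, gives an $\mathcal R$-bounded calculus and hence maximal $L^p$-regularity by \cite{KuWe04}, and \eqref{maxreg-pi} then follows from \eqref{equ:nabla2-A} with $\nabla\pi=(I-\mathbb P)\Delta u$. The only difference is that on $\IR^n$ you use the Riesz transforms directly instead of letting $\lambda\to 0$ in \eqref{St-res-wRn}; this gives the same multiplier bound, though for the first estimate on $\IR^n$ you should invoke Proposition \ref{HinftyRn} rather than Corollary \ref{theorem-H-infty-2}.
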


\begin{proof} 
By Theorem \ref{theorem-H-infty} and Corollary \ref{theorem-H-infty-2} $A_{q,\ell}$ possesses a bounded $\mathscr H^\infty$-calculus.
%for each pair $(q,\ell)$ satisfying $2-\frac{n}{q} < \ell < \frac{n}{q'}, \; 0 \leq \ell$, and for   
%$-\frac{n}{q} < \ell < \frac{n}{q'}-2, \; \ell\leq 0.$ 
Since $L^q_{\ell,\sigma}(\Omega)$ as a closed subspace of $L^q_{\ell}(\Omega)$ has Pisier's property $(\alpha)$ (\cite[Definition 3.11]{CdPSW}), the Stokes operator possesses also an $\mathcal R$-bounded $\mathscr H^\infty$-calculus. Thus 
$A_{q,\ell}$ has maximal $L^p$-regularity yielding \eqref{maxreg-f-u}; 
%for each pair $(q,\ell)$ as above; 
for such results see \cite[Theorem 12.8, Remark 12.9]{KuWe04}. 
%Finally, as in the proof of Theorem \ref{A1/2-nabla_A1/2} (i), complex interpolation shows maximal $L^p$-regularity also for the remaining pairs $(q,\ell)$.
The estimate \eqref{maxreg-pi} is based on  \eqref{equ:nabla2-A} \BLACK for an exterior domain and on \eqref{St-res-wRn} together with considering the limit $\lambda\to 0$ for the whole space.
\end{proof}

\BLACK
%\\[1ex]
\noindent {\bf Acknowledgements.}
The second author is supported by JSPS grant whose number is 22K13946.\vspace{1ex} 

\noindent {\bf Statements and Declarations}\vspace{1ex}
 
\noindent {\bf Conflicts of interest statement.}  There is no conflict of interest. \vspace{1ex} 

\noindent {\bf Data Availability statement.} No datasets were generated or analysed during the current study.

%\eqref{n-etAomega>-initial} yeilds that 
%\begin{align}
%\begin{aligned}
%\|\nabla u\|_{L^n(\Omega)} 
%&\leq C t^{-\frac{1}{2}}\|u_0\|_{L^n_{1+2s_1}(\Omega)}
%+ \int_0^{\frac{t}{2}}(t-\tau)^{-\frac{n}{2q}-\frac{1}{2}}\|u\|_{L^n_{s_1}(\Omega)}\|\nabla u\|_{L^q_{s_2}(\Omega)} 
%d\tau\\
%&\quad + \int_{\frac{t}{2}}^t(t-\tau)^{-\frac{1}{2}}\|u\|_{L^n_{s_1}(\Omega)}\|\nabla u\|_{L^q_{s_2}(\Omega)} 
%d\tau
%\end{aligned}
%\end{align}

\end{document}